\newtheorem{prop}{Proposition}[section]
\newtheorem{coro}[prop]{Corollary}
\newtheorem{lemm}[prop]{Lemma}
\newtheorem{clai}[prop]{Claim}
\newtheorem{theo}[prop]{Theorem}
\theoremstyle{definition}
\newtheorem{defi}[prop]{Definition}
\newtheorem{exam}[prop]{Example}
\newtheorem{conv}[prop]{Convention}
\newtheorem{rema}[prop]{Remark}
\thanks{The work is supported by the Russian Science Foundation under grant~22-11-00299.}
\title{Legendrian Lavrentiev links}
\keywords{Legendrian links, smoothing, bi-Lipschitz maps}
\subjclass{57K33}
\author{Maxim Prasolov}
\address{
Steklov Mathematical Institute of Russian Academy of Sciences, 8 Gubkina Str., Moscow 119991, Russia\\
\indent Moscow Center of Fundamental and Applied Mathematics, Moscow, Russia}
\email{0x00002a@gmail.com}
\date{}
\begin{document}

\begin{abstract}
Lavrentiev curves form a special class of rectifiable curves which includes cusp-free piecewise smooth curves. We call a Lavrentiev curve Legendrian if the integral of the contact form equals zero on any its subarc. We define Legendrian isotopies of such curves and prove that the equivalence classes of Legendrian Lavrentiev links with respect to Legendrian isotopies coincide with smooth classes.
\end{abstract}

\maketitle

\tableofcontents

\section{Introduction}
In contact topology sometimes one needs to deal with piecewise smooth objects. It can be a triangulation (see \cite{CGH}) or something obtained from it. For example, contact cell decompositions (see \cite{Gir02}) or Legendrian links associated with a rectangular diagram (see \cite{dp17}). Another examples are Legendrian graphs (see \cite{BI}) and bypasses for surfaces (see \cite{honda}) or for Legendrian links (see \cite{DyPr}). To deform such objects one needs more general category than the smooth one. In the non-contact setting one usually uses the topological one. Let us discuss what category can be used in the contact setting.

Some work is done in the topological category. Let us call a homeomorphism contact if it is the $C^0$-limit of contact diffeomorphisms. In~\cite{MuSpa14} and~\cite{Mu19} it is proved that for diffeomorphisms this definition agrees with the classical one, i.e. that a diffeomorphism is the $C^0$-limit of contact diffeomorphisms if and only if it is contact. Let us call a (topological) submanifold Legendrian if it is the image under a contact homeomorphism of a smooth Legendrian submanifold. In the recent preprint~\cite{DimSull} it is proved that for smooth submanifolds this definition is equivalent to the classical one. This is very interesting approach, but we will not touch it in this paper.

There is no such thing as the piecewise smooth category because even the composition of two piecewise smooth functions on a segment can be not piecewise smooth.

There is one more common category which supports piecewise smooth objects, this is the Lipschitz one. We are going to develop this approach in dimension 3. Isomorphisms in Lipschitz category are bi-Lipschitz homeomorphisms, while curves equivalent to smooth ones are Lavrentiev (also called chord-arc curves in some sources). Since Lavrentiev curves are rectifiable it is natural to call such curve Legendrian if the integral of the contact form on any its subarc is zero. We call a bi-Lipschitz homeomorphism contact if it preserves the class of Legendrian Lavrentiev curves.

We define a Legendrian isotopy (see Definition~\ref{def-projection}) as a generalization of the Legendrian isotopy for piecewise smooth links given in~\cite{dp2?}. The idea behind this definition is the following smoothing procedure. Let $L$ be a piecewise smooth Legendrian knot in $(\mathbb R^3, dz + xdy)$ such that its orthogonal projection to the plane $z=0$ has a finite number of self-intersections. Consider any smooth Legendrian knot sufficiently $C^0$-close to $L$ such that its projection has the same number of self-intersections as the projection of $L$. Then the Legendrian type of this smooth knot depends only on $L$. A similar procedure can be found in~\cite[1.4.3 Elliptic pivot lemma]{EF1}, when smoothing the union of two leaves of the contact foliation on a surface, and in~\cite{OP}, when smoothing paths in Legendrian graphs.

Our main result is the following.

\begin{theo}\label{main-theorem}
Any Legendrian Lavrentiev link is Legendrian isotopic to a smooth Legendrian link. If two smooth Legendrian links are Legendrian isotopic as Lavrentiev links, they are smoothly Legendrian isotopic.
\end{theo}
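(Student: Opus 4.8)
The plan is to reduce everything to the Lagrangian projection $\pi\colon(x,y,z)\mapsto(x,y)$ onto the plane $z=0$. Because $L$ is Legendrian, the condition $\int\alpha=0$ on every subarc (with $\alpha=dz+x\,dy$) lets one recover the $z$-coordinate from the projection via $z(t)=z(0)-\int_0^t x\,dy$, so a Legendrian Lavrentiev link is determined by its planar projection together with the requirement that the signed area $\oint x\,dy$ vanish around each closed component. I would therefore encode a Legendrian Lavrentiev link by the combinatorial data of its (immersed, Lavrentiev) projection, namely its self-intersection pattern, together with the signed areas of the complementary regions, and carry out both assertions at the level of this data.

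For the first assertion, start with a Legendrian Lavrentiev link $L$. I would first make $L$ Legendrian isotopic to one whose projection has only finitely many transverse self-intersections: a Lavrentiev (chord-arc) curve is bi-Lipschitz to a smooth one, and using a contact bi-Lipschitz homeomorphism together with a small perturbation one may remove all but finitely many crossings while keeping the enclosed areas under control. Then I would approximate the projection in $C^0$ by a smooth immersed curve with the same Gauss data and, crucially, the same signed areas of complementary regions, so that its Legendrian lift again closes up and stays $C^0$-close to $L$. By the very definition of Legendrian isotopy through the smoothing procedure (Definition~\ref{def-projection}), this smooth Legendrian link lies in the Legendrian isotopy class of $L$.

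For the second assertion, suppose two smooth Legendrian links $L_0,L_1$ are joined by a Legendrian Lavrentiev isotopy. I would subdivide this isotopy into finitely many short segments and prove a local-constancy statement: along a sufficiently short segment, during which the projection undergoes no change of self-intersection pattern and no complementary area crosses zero, the smooth Legendrian type furnished by the smoothing procedure is constant. At the finitely many transition times the projection undergoes a standard singular event, a triple point or a self-tangency, and I would check that each such event is realized by an honest smooth Legendrian isotopy (a Legendrian Reidemeister-type move in the Lagrangian projection). Concatenating the smooth isotopies across the segments then yields a smooth Legendrian isotopy from $L_0$ to $L_1$.

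The main obstacle is the control of the merely bi-Lipschitz (Lavrentiev) regularity. A chord-arc curve may oscillate wildly, with infinitely many self-intersections and no well-defined tangent directions, so the transversality and genericity arguments that make the smooth picture routine must be re-established in the Lipschitz category. Concretely, the hardest step is the local-constancy lemma: showing that the smooth Legendrian type produced by the smoothing procedure does not jump under small contact bi-Lipschitz deformations. This requires quantitative control of how the self-intersections and the signed areas of the complementary regions vary, uniformly in the bi-Lipschitz constants, together with a proof that the only genuine transitions are the standard smooth Legendrian moves.
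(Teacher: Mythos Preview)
Your proposal contains a genuine gap at its central step, and the paper takes a fundamentally different route precisely to avoid it.

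You want to work with the global Lagrangian projection as an \emph{immersed} planar curve, reduce to finitely many transverse self-intersections, and then track how the crossing pattern and complementary areas evolve along the isotopy via Reidemeister-type events. But the reduction ``using a contact bi-Lipschitz homeomorphism together with a small perturbation one may remove all but finitely many crossings'' is exactly the step that fails, or at least is nowhere established: a Lavrentiev curve has no tangent lines in general, so there is no transversality theory available, and you have given no mechanism for producing a Legendrian-isotopic link with finitely many crossings. The same problem recurs in the second assertion: your ``local-constancy lemma'' and the claim that only finitely many standard singular events occur along a Lavrentiev Legendrian isotopy both presuppose a genericity theory in the Lipschitz category that does not exist in the paper and that you do not supply. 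You correctly flag this as the main obstacle in your last paragraph, but flagging it is not the same as overcoming it.

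The paper sidesteps all of this by never working with an immersed projection. Instead it builds, for any compact Legendrian Lavrentiev curve, a \emph{regular neighborhood} $(U,S,l,r)$ in which the projection along a transverse contact line field to a surface $S$ is \emph{injective} (Proposition~\ref{regular-neighbourhood-existence}). Smoothing then becomes a question about embedded Lavrentiev curves on a surface, handled arc by arc via Tukia's bi-Lipschitz Sch\"onflies theorem together with an integral-correction device (Section~\ref{Lavrentiev-smoothing-section}, Proposition~\ref{smoothing-Legendrian-arc}). For the second assertion, the paper again avoids crossings entirely: it introduces \emph{cores} of regular neighborhoods, shows that along a Legendrian isotopy nearby moments share a common core (this uses only the angle condition in Definition~\ref{def-angle}, not any transversality), and proves that two smooth Legendrian curves with a common core are smoothly Legendrian isotopic (Lemmas~\ref{smooth-isotopy-lemma1}--\ref{smooth-isotopy-lemma3}). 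No Reidemeister moves, no generic-position arguments. Your plan would require building substantial new machinery; the paper's plan replaces that machinery with the regular-neighborhood construction.
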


Using this theorem we can associate canonically an equivalence class of smooth Legendrian links with a given Legendrian Lavrentiev link. 

The proof of Theorem~\ref{main-theorem} occupies Section~\ref{Legendrian-smoothing-section}. Let us discuss the plan of the proof of the first part of the theorem, i. e. that a Legendrian curve admits a smoothing. For simplicity we now assume that the contact structure is coorientable. In Section~\ref{regular-neighborhood-section} we define and prove the existence of a regular neighborhood of a Legendrian curve. This allows us to reduce the problem to the case when the ambient manifold is the solid torus $\{(x,y,z): x\in\mathbb S^1, y\in (-1;1), z\in\mathbb R\}$ with a contact structure $\ker\left(dz + a(x,y)dx + b(x,y)dy\right)$ where $a$ and $b$ are some functions, and the orthogonal projection to the plane $z=0$ of the Legendrian curve is injective. Then using results of Subsection~\ref{regular-projection-section} on the regular projection we reformulate the problem in terms of Lavrentiev curves on the plane. In Section~\ref{Lavrentiev-smoothing-section} we make the final step by using Tukia's theorem to smooth Lavrentiev curves on the plane. We prove the relative variant of this part of the theorem, so that some portion of the link is being smoothed while the other part stays fixed. We also prove that the isotopy can be made Lipschitz.

We use several definitions of Legendrian curves (see definitions~\ref{def-angle} and~\ref{def-projection}) and we prove in propositions~\ref{leg-integral} and~\ref{leg-criteria1} that they are equivalent.

In Section~\ref{Continuous-Legendrian-curves-section} using Definition~\ref{def-projection} we try to define continuous Legendrian curves and we conclude that Theorem~\ref{main-theorem} is wrong in this case.

In Section~\ref{contactomorphisms-section} we define bi-Lipschitz contactomorphisms, give some examples and discuss some basic facts about them. We will continue this work in the future.

Some results on the symplectic side of the Lipschitz category can be found in~\cite{humi} and~\cite{joks}.

\section*{Acknowledgments}
Ivan Dynnikov introduced the problem to me and proposed Definition~\ref{def-projection} -- the key definition of the paper. I am grateful to Ivan Dynnikov and Vladimir Shastin for valuable discussions and continuous support.

\section{Lavrentiev curves}
In this section we remind some basic facts about Lavrentiev curves and bi-Lipschitz maps and prove some technical lemmas needed for the main result. For simplicity we consider embedded curves only.

\begin{defi}\label{Lavrentiev-definition}
A subset of a topological space is called {\it an embedded curve} if it is homeomorphic to a connected subset of a circle.
{\it A length of a curve} $L$, embedded in a metric space, is a finite or infinite number
$$\ell(L) = \sup\limits_{L = \bigcup\limits_i \gamma_i}\sum\limits_i \mathrm{diam} (\partial \gamma_i),$$
where supremum is taken over all unions  $L = \bigcup\limits_i \gamma_i$ of compact subarcs $\gamma_i$ which have no interior points in common and $\mathrm{diam} (\partial \gamma_i)$ is the distance between the endpoints of $\gamma_i.$ An embedded curve is called {\it rectifiable} if the length of any its compact subarc is finite.

We write $L\big|_P^Q$ for any shortest subarc of the curve $L$ which joins two points $P$ and $Q$. We orient this subarc from $P$ to $Q.$

Let $C$ be a number. A rectifiable curve $L$ is called {\it$C$-Lavrentiev} if for any two points $P,Q\in L$
$$
\ell(L\big|_P^Q)\le C\cdot \mathrm{dist}(P,Q),
$$
where $\mathrm{dist}$ denotes the distance.

A curve is called {\it Lavrentiev} if it is $C$-Lavrentiev for some number $C$. An embedded curve $L$ is called {\it locally Lavrentiev} if for any point $p\in L$ there exists a neighborhood $U\ni p$ such that $L\cap U$ is a Lavrentiev curve.
\end{defi}

The notion of a Lavrentiev curve can be found in \cite[Theorem 8 2)]{Lavr}. Definition 2.29 in~\cite{LuukkVai} of a quasiconvex metric space generalizes the concept of a Lavrentiev curve.

Smooth curves and cusp-free piecewise smooth curves are Lavrentiev by Lemma~\ref{Lavrentiev-example}. Logarithmic spirals are also allowed as a partial case of the following example.

\begin{exam}\label{Lavrentiev-example-2}
Let $\alpha$ and $\beta$ be two non-intersecting embedded smooth subarcs of the annulus $1 < |z| < 2$ on the complex plane which are orthogonal to the boundary of the annulus at their endpoints and $\partial \alpha = \{1, 2\},$ $\partial \beta = \{-1, -2\}$. Let $\phi: z\mapsto 2z$ be a homothety. Then $\overline{\bigcup\limits_{k\in\mathbb Z} \phi^{k}\left(\alpha\cup\beta\right)}$ is a Lavrentiev curve.
\end{exam}

A similar example in dimension 3 can be constructed to obtain a wild Lavrentiev curve.

\begin{defi}
Let $X,Y$ be metric spaces and $C$ be a number. A map $f:X\to Y$ is called {\it $C$-bi-Lipschitz} if for any $p,q\in X$ 
$$\frac1C\cdot\mathrm{dist}(p,q)\le\mathrm{dist}(f(p),f(q))\le C\cdot\mathrm{dist}(p,q).$$

A map $f:X\to Y$ is called {\it $C$-Lipschitz}, if for any $p,q\in X$  we have
$\mathrm{dist}(f(p),f(q))\le C\cdot\mathrm{dist}(p,q).$ 

A map is called {\it (bi-)Lipschitz} if it is $C$-(bi-)Lipschitz for some $C$. A map $f:X\to Y$ is called {\it locally (bi-)Lipschitz} if $X$ admits an open cover $X = \bigcup\limits_{\alpha}U_{\alpha}$ such that each map $f\big|_{U_{\alpha}}$ is (bi-)Lipschitz.
\end{defi}

\begin{rema}\label{bi-Lipschitz-curves}
It is clear that a natural parametrization of a $C$-Lavrentiev curve is a $C$-bi-Lipschitz map and that the image of any $C$-bi-Lipschitz map of a segment is a $C^2$-Lavrentiev curve. A ($C$-)bi-Lipschitz parametrization of a Lavrentiev curve will be called {\it ($C$-)bi-Lipschitz curve}. We do not know is it true that any continuous family of Lavrentiev curves is a set of the images of a continuous isotopy of bi-Lipschitz curves.
\end{rema}

\begin{lemm}\label{bi-Lipschitz-family}
Let $X,Y,T$ be metric spaces, $X,T$ be compact, $f:X\times T\to Y$ be a continuous map such that for any $t\in T$ the map $f_t:X\to Y$ is an embedding and for any $x_0\in X$ and $t_0\in T$ there exist their neighborhoods $U\subset X$ and $\mathcal I\subset T$ and a real number $C_0$ such that for any $t\in \mathcal I$ the map $f_t\big|_{U}$ is $C_0$-bi-Lipschitz. Then there exists a number $C$ such that the map $f_t:X\to Y$ is $C$-bi-Lipschitz for any $t\in T.$
\end{lemm}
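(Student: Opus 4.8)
The plan is to reduce the claim to two uniform estimates -- an upper and a lower Lipschitz bound -- and to establish each by a compactness-and-contradiction argument that exploits the joint continuity of $f$ together with the sequential compactness of $X\times T$. Throughout I write $f_t(x)=f(x,t)$. Since $f$ is continuous and $X\times T$ is compact, the image $f(X\times T)$ is compact, hence of finite diameter $D$; this bounds $\mathrm{dist}(f_t(p),f_t(q))$ by $D$ uniformly in $t,p,q$. The guiding idea is that the local hypothesis will only ever be invoked near the diagonal, i.e. for pairs of points forced to collapse to a single limit, while the embedding hypothesis will handle pairs that stay apart.

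For the upper bound I would argue by contradiction. If no uniform Lipschitz constant existed, then for each $n$ there would be $t_n\in T$ and $p_n\ne q_n$ in $X$ with $\mathrm{dist}(f_{t_n}(p_n),f_{t_n}(q_n))>n\,\mathrm{dist}(p_n,q_n)$. As the left-hand side is at most $D$, this forces $\mathrm{dist}(p_n,q_n)<D/n\to 0$. Passing to a subsequence, by compactness $p_n\to x_0$ and $t_n\to t_0$, and then $q_n\to x_0$ as well because $\mathrm{dist}(p_n,q_n)\to0$. Applying the local hypothesis at $(x_0,t_0)$ yields neighborhoods $U\ni x_0$, $\mathcal I\ni t_0$ and a constant $C_0$ with $f_t\big|_U$ being $C_0$-bi-Lipschitz for $t\in\mathcal I$. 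For $n$ large we have $p_n,q_n\in U$ and $t_n\in\mathcal I$, whence $\mathrm{dist}(f_{t_n}(p_n),f_{t_n}(q_n))\le C_0\,\mathrm{dist}(p_n,q_n)$, contradicting the choice of $p_n,q_n$ once $n>C_0$.

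For the lower bound I would again suppose it fails: for each $n$ there are $t_n\in T$ and $p_n\ne q_n$ with $\mathrm{dist}(f_{t_n}(p_n),f_{t_n}(q_n))<\tfrac1n\,\mathrm{dist}(p_n,q_n)$, and I pass to a subsequence with $p_n\to p_*$, $q_n\to q_*$, $t_n\to t_*$. If $p_*\ne q_*$, then $\mathrm{dist}(p_n,q_n)\to\mathrm{dist}(p_*,q_*)>0$ while the right-hand side is at most $\tfrac1n\,\mathrm{diam}(X)\to0$; hence $\mathrm{dist}(f_{t_n}(p_n),f_{t_n}(q_n))\to 0$, and joint continuity gives $f_{t_*}(p_*)=f_{t_*}(q_*)$, contradicting injectivity of the embedding $f_{t_*}$. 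If instead $p_*=q_*=:x_0$, then $\mathrm{dist}(p_n,q_n)\to0$ and the local hypothesis at $(x_0,t_*)$ gives, for large $n$, $\mathrm{dist}(f_{t_n}(p_n),f_{t_n}(q_n))\ge\tfrac1{C_0}\,\mathrm{dist}(p_n,q_n)$, again a contradiction for $n>C_0$. Taking $C$ to be the larger of the two constants produced then finishes the argument.

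The main obstacle is the lower bound, and within it the case $p_*\ne q_*$: this is precisely where purely local bi-Lipschitz control is insufficient and one is forced to use that each $f_t$ is a \emph{global} embedding, which rules out distant points being mapped arbitrarily close together. The role of compactness is exactly to dichotomize every hypothetical failing sequence into a near-diagonal regime, governed by the local hypothesis, and a bounded-away regime, governed by global injectivity and continuity; I do not expect either regime to require more than the hypotheses already stated.
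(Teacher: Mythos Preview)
Your proof is correct and follows essentially the same compactness-and-contradiction approach as the paper: extract convergent subsequences, split into the cases where the two limit points coincide or not, and invoke the local bi-Lipschitz hypothesis in the former case and the global embedding (resp.\ finite diameter) hypothesis in the latter. Your handling of the upper bound is slightly cleaner---you use the boundedness of $f(X\times T)$ to dispose of the off-diagonal case immediately, whereas the paper treats both cases symmetrically---but this is a minor presentational difference, not a different argument.
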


\begin{proof}
Suppose that for any $k\in\mathbb N$ there exists $t\in T$ such that the map $f(\bullet, t)$ is not $k$-Lipschitz. Then there exist sequences $x_{0,k}\in X,$ $x_{1,k}\in X$ and $t_k\in T$ such that 

\begin{equation}\label{bi-Lipschitz-family-bound}
\mathrm{dist}(f(x_{0,k}, t_k), f(x_{1,k}, t_k)) > k\cdot\mathrm{dist}(x_{0,k}, x_{1,k}).
\end{equation}

By compactness, we can assume that the following limits exist: $\lim\limits_{k\to+\infty} x_{0,k} = x_0,$ $\lim\limits_{k\to+\infty} x_{1,k} = x_1$ and $\lim\limits_{k\to+\infty} t_{k} = t_0.$

If $x_0 \neq x_1$ then, taking the limit in the inequality~(\ref{bi-Lipschitz-family-bound}) as $k\to\infty$ we obtain $\mathrm{dist}(f(x_0,t_0),f(x_1,t_0)) = +\infty,$ a contradiction.

If $x_0 = x_1$, the inequality~(\ref{bi-Lipschitz-family-bound}) contradicts the existence of neighborhoods $U$ and $\mathcal I$ for the point $x_0$ and the parameter $t_0.$

Suppose then that there exist sequences $x_{0,k}\in X,$ $x_{1,k}\in X$ and $t_k\in T$ such that

$$\mathrm{dist}(f(x_{0,k}, t_k), f(x_{1,k}, t_k)) < \mathrm{dist}(x_{0,k}, x_{1,k})/k.$$

Again we can assume that the limits exist and we use the same notations. If $x_0 \neq x_1$, the limit of the inequality contradicts the fact that $f(\bullet, t_0)$ is an embedding. If $x_0 = x_1,$ the inequality contradicts the assumption of the lemma on the existence of neighborhoods $U$ and $\mathcal I$ for the pair $(x_0, t_0).$
\end{proof}

\begin{lemm}\label{compact-bi-Lipschitz}
A locally bi-Lipschitz embedding $f:X\to Y$ is bi-Lipschitz if $X$ is compact.
\end{lemm}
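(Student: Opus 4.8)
The plan is to obtain this statement as the special case of Lemma~\ref{bi-Lipschitz-family} in which the parameter space $T$ is a single point. First I would set $T = \{\ast\}$, a one-point metric space, which is trivially compact, and identify $X \times T$ with $X$ so that $f_\ast = f$. The hypothesis that $X$ is compact is given, and $f$ is an embedding by assumption, so the ``global'' hypotheses of Lemma~\ref{bi-Lipschitz-family} are met without further work.

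The one point that requires attention is the local uniform bi-Lipschitz condition appearing in Lemma~\ref{bi-Lipschitz-family}, which is phrased in terms of a neighborhood $U$ of each point $x_0$ together with a constant $C_0$ valid for all parameters in a neighborhood $\mathcal I$ of $t_0$. I would recover this from the definition of a locally bi-Lipschitz map. By that definition there is an open cover $X = \bigcup_{\alpha} U_{\alpha}$ with each restriction $f\big|_{U_{\alpha}}$ being $C_{\alpha}$-bi-Lipschitz for some constant $C_{\alpha}$. Given $x_0 \in X$, choose an index $\alpha$ with $x_0 \in U_{\alpha}$ and take $U = U_{\alpha}$, $\mathcal I = T$ and $C_0 = C_{\alpha}$; then $f_t\big|_U$ is $C_0$-bi-Lipschitz for the unique $t \in \mathcal I$. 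This is precisely the hypothesis needed, so the translation between the open-cover formulation of local bi-Lipschitzness and the pointwise-neighborhood formulation of the previous lemma goes through.

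With all hypotheses verified, Lemma~\ref{bi-Lipschitz-family} produces a single constant $C$ such that $f_\ast = f$ is $C$-bi-Lipschitz, which is exactly the claim. I do not expect any genuine obstacle here: the real content, namely the compactness-and-contradiction argument that rules out sequences along which the bi-Lipschitz constants either blow up or collapse, is already carried out in Lemma~\ref{bi-Lipschitz-family}, so the only remaining task is the routine specialization described above.
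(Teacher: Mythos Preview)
Your proposal is correct and matches the paper's own proof, which simply reads ``Follows from the preceding lemma for $T$ being a point.'' You have merely spelled out the routine verification of the hypotheses, but the approach is identical.
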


\begin{proof}
Follows from the preceding lemma for $T$ being a point.
\end{proof}

\begin{lemm}[a particular case of Lemma 2.33 from \cite{LuukkVai}]\label{compact-Lavrentiev}
Any locally Lavrentiev embedded compact curve is Lavrentiev.
\end{lemm}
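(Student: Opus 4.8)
The plan is to reduce the statement to the bi-Lipschitz result of Lemma~\ref{compact-bi-Lipschitz} by passing to the arc-length parametrization. Since $L$ is compact and rectifiable, it is a compact subarc of itself (in the closed case, a union of two such subarcs), so its total length $\ell(L)$ is finite. Hence $L$ admits a natural parametrization $s\colon I\to L$ by arc length, where $I$ is a compact interval or a circle of total length $\ell(L)$, equipped with its intrinsic metric $\mathrm{dist}_I$. As a continuous bijection from a compact space to a Hausdorff space, $s$ is a homeomorphism onto $L$, hence an embedding, and it is automatically $1$-Lipschitz because a chord never exceeds the corresponding arc. By Remark~\ref{bi-Lipschitz-curves} it therefore suffices to prove that $s$ is bi-Lipschitz: the Lavrentiev property of $L$ then follows, with constant $C^2$ in the arc case, and directly from the lower bi-Lipschitz bound together with the identity $\ell(L\big|_P^Q)=\mathrm{dist}_I(s^{-1}(P),s^{-1}(Q))$ in the closed case.

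First I would show that $s$ is locally bi-Lipschitz. Fix $u_0\in I$ and set $p=s(u_0)$. By hypothesis there is a neighborhood $U\ni p$ for which $L\cap U$ is a $C_0$-Lavrentiev curve; in particular $L\cap U$ is connected, i.e. a subarc $s(K)$ of $L$ with $u_0$ in the interior of the subinterval $K$. Since $s$ is a homeomorphism, I may choose a subinterval $J\ni u_0$, short enough that $s(J)\subset s(K)$ and (in the closed case) that $\mathrm{diam}\, J\le \ell(L)/2$. For $u,v\in J$ the subarc $s([u,v])$ is then the shortest subarc of $L\cap U$ joining $s(u)$ and $s(v)$, so the $C_0$-Lavrentiev inequality for $L\cap U$ gives $\mathrm{dist}_I(u,v)=\ell\big(s([u,v])\big)\le C_0\cdot\mathrm{dist}(s(u),s(v))$. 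Combined with the trivial upper bound $\mathrm{dist}(s(u),s(v))\le\mathrm{dist}_I(u,v)$, this shows $s\big|_J$ is $C_0$-bi-Lipschitz, and the intervals $J$ form an open cover witnessing local bi-Lipschitzness.

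Now $s$ is a locally bi-Lipschitz embedding of the compact space $I$, so Lemma~\ref{compact-bi-Lipschitz} furnishes a single constant $C$ for which $s$ is globally $C$-bi-Lipschitz. Applying Remark~\ref{bi-Lipschitz-curves} (read with the intrinsic metric on $I$ in the closed case) concludes that $L$ is Lavrentiev.

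The delicate point is the passage between the ambient metric and the arc-length metric, namely the guarantee that two points of $L$ which are close in the ambient space are joined by a short subarc lying inside a single local Lavrentiev neighborhood. This is precisely what would fail for an embedded curve that self-approaches a point $p$ along a distant strand, for then any ambient neighborhood $U$ of $p$ would meet $L$ in a disconnected set and the subarc joining two nearby points could be long. The literal reading of the definition of locally Lavrentiev rules this out, since $L\cap U$ is required to be a (connected) Lavrentiev curve; this is what makes the reduction legitimate. The remaining issue of producing a \emph{uniform} constant over all of $I$ is handled cleanly by the compactness argument already packaged in Lemma~\ref{compact-bi-Lipschitz} (equivalently Lemma~\ref{bi-Lipschitz-family}), so no separate covering estimate is needed.
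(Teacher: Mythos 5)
Your proposal is correct and follows essentially the same route as the paper: pass to the natural (arc-length) parametrization, use the local Lavrentiev hypothesis to show this parametrization is locally bi-Lipschitz, invoke Lemma~\ref{compact-bi-Lipschitz} to upgrade to a global bi-Lipschitz constant, and conclude via Remark~\ref{bi-Lipschitz-curves}. You merely spell out the details (the chord-versus-arc inequality and the connectedness of $L\cap U$) that the paper's two-sentence proof leaves implicit.
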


\begin{proof}
Since the curve is locally Lavrentiev, it is covered by open sets such that the intersection of each open set with the curve is a Lavrentiev curve. Therefore a natural parametrization of the curve is locally bi-Lipschitz and thus by Lemma~\ref{compact-bi-Lipschitz}  it is bi-Lipschitz.
\end{proof}

\subsection{Some constructions of Lavrentiev curves on manifolds}

\begin{defi}
Two metrics $d_1$ and $d_2$ on a set $X$ are {\it comparable} if there exists a number $C$ such that for any $p,q\in X$
$$
\frac1C\cdot d_1(p,q)\le d_2(p,q) \le C\cdot d_1(p,q).
$$
\end{defi}

It is clear that classes of (locally) Lavrentiev curves and (locally) (bi)-Lipschitz maps with respect to comparable metrics coincide.

\begin{conv}\label{Lavrentiev-metric-equivalence}
The restrictions of any two Riemannian metrics to a compact subset of a smooth manifold are comparable. So classes of locally Lavrentiev or compact Lavrentiev curves on a smooth manifold do not depend on the choice of a Riemannian metric. By a locally Lavrentiev or compact Lavrentiev curve on a smooth manifold we will mean such a curve with respect to some (any) smooth Riemannian metric. The same holds for locally (bi-)Lipschitz maps and (bi-)Lipschitz maps from a compact manifold.
\end{conv}

The following lemma allows one to concatenate two Lavrentiev curves provided that the minimal angle (see Definition~\ref{minimal-angle-definition}) at their common endpoint is nonzero.

\begin{lemm}\label{Lavrentiev-concatenation}
Suppose that $L_1$ and $L_2$ are respectively $C_1$- and $C_2$-Lavrentiev arcs in Euclidean space, $\partial L_1 = \{P, Q_1\},$ $\partial L_2 = \{P,Q_2\},$ $L_1\cap L_2 = \{P\}$ and $\angle Q_1 P Q_2 > \alpha.$
Then
$$
\ell(L)\le \dfrac{C_1+C_2}{\sin(\alpha/2)}\mathrm{dist}(Q_1,Q_2).
$$

\end{lemm}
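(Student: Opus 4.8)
The plan is to bound the length of the concatenation $L = L_1\cup L_2$ directly, reducing the whole statement to a single planar trigonometric estimate in the triangle $Q_1PQ_2$.

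First I would observe that since $L_1$ and $L_2$ meet only at $P$, the union $L$ is an embedded arc with endpoints $Q_1$ and $Q_2$, and that length is additive under such a splitting: $\ell(L)=\ell(L_1)+\ell(L_2)$. Next, because each $L_i$ is an arc joining its two endpoints, the shortest subarc $L_i\big|_{P}^{Q_i}$ is all of $L_i$; hence the $C_i$-Lavrentiev hypothesis applied to the pair $(P,Q_i)$ gives at once $\ell(L_1)\le C_1\,\mathrm{dist}(P,Q_1)$ and $\ell(L_2)\le C_2\,\mathrm{dist}(P,Q_2)$. Combining, $\ell(L)\le C_1\,\mathrm{dist}(P,Q_1)+C_2\,\mathrm{dist}(P,Q_2)$, so it remains only to estimate the two legs $\mathrm{dist}(P,Q_i)$ of the triangle in terms of the opposite side $\mathrm{dist}(Q_1,Q_2)$.

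The heart of the argument is the purely geometric claim that
$$\mathrm{dist}(P,Q_i)\le \frac{\mathrm{dist}(Q_1,Q_2)}{\sin(\alpha/2)}\qquad (i=1,2).$$
Since $P,Q_1,Q_2$ lie in a plane, I would place $P$ at the origin with the bisector of $\angle Q_1PQ_2$ along one coordinate axis, so that $Q_1=a(\cos(\theta/2),\sin(\theta/2))$ and $Q_2=b(\cos(\theta/2),-\sin(\theta/2))$, where $a=\mathrm{dist}(P,Q_1)$, $b=\mathrm{dist}(P,Q_2)$ and $\theta=\angle Q_1PQ_2$. A direct computation then gives $\mathrm{dist}(Q_1,Q_2)^2=(a-b)^2\cos^2(\theta/2)+(a+b)^2\sin^2(\theta/2)\ge (a+b)^2\sin^2(\theta/2)$, whence $\mathrm{dist}(Q_1,Q_2)\ge \max(a,b)\,\sin(\theta/2)$. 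As $0<\theta\le\pi$, both $\theta/2$ and $\alpha/2$ lie in $(0,\pi/2]$ with $\theta/2>\alpha/2$, and $\sin$ is increasing there, so $\sin(\theta/2)\ge\sin(\alpha/2)$, which yields the claim.

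Feeding this back in gives $\ell(L)\le C_1 a+C_2 b\le \frac{C_1+C_2}{\sin(\alpha/2)}\,\mathrm{dist}(Q_1,Q_2)$, as desired. The only genuinely nontrivial point is the trigonometric estimate; I expect it to be the main obstacle, and the subtlety worth flagging is that the naive bound via the perpendicular distance from $Q_i$ to the opposite side produces only a factor $\sin\theta$, which degenerates as $\theta\to\pi$. It is precisely the half-angle $\sin(\theta/2)$ extracted from the bisector coordinates that survives uniformly and matches the claimed constant. The additivity of length and the reduction to the triangle are routine.
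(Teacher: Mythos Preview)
Your proof is correct and follows essentially the same approach as the paper's. Both arguments use additivity of length, the Lavrentiev bound $\ell(L_i)\le C_i\,\mathrm{dist}(P,Q_i)$, and then the key half-angle estimate obtained by projecting onto the bisector of $\angle Q_1PQ_2$; the paper phrases this last step as ``the distances from $Q_1$ and $Q_2$ to a bisector are less than $\mathrm{dist}(Q_1,Q_2)$,'' which is exactly your inequality $(a+b)\sin(\theta/2)\le\mathrm{dist}(Q_1,Q_2)$ in coordinate form.
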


\begin{proof}
Since the distances from the points $Q_1$ and $Q_2$ to a bisector of the angle $\angle Q_1 P Q_2$ are less than $\mathrm{dist}(Q_1,Q_2)$

\begin{multline*}
\ell(L) = \ell(L_1)+\ell(L_2)\le C_1 \mathrm{dist}(P,Q_1) + C_2 \mathrm{dist}(P,Q_2) < \frac{C_1 \mathrm{dist}(Q_1,Q_2)}{\sin(\alpha/2)} + \frac{C_2 \mathrm{dist}(Q_1,Q_2)}{\sin(\alpha/2)} = \\ = \dfrac{(C_1+C_2)\mathrm{dist}(Q_1,Q_2)}{\sin(\alpha/2)}.
\end{multline*}

\end{proof}

\begin{defi}\label{minimal-angle-definition}
By a {\it minimal angle} at the common point $P$ of two curves $L_1$ and $L_2$ we mean $\lim\limits_{\varepsilon\to+0}\inf\limits_{\mathrm{dist}(Q_i, P)<\varepsilon}\angle Q_1 P Q_2,$ where $Q_i\in L_i\setminus\{P\}$ for $i = 1,2.$

By a {\it minimal angle} at a point $P$ of a curve $L$ we mean the minimal angle at $P$ between two subarcs $L_1$ and $L_2$ of $L$ such that $L_1\cap L_2=\{P\}.$
\end{defi}

\begin{coro}\label{Lavrentiev-concatenation2}
Let $\{L_i\}_{i=1}^n$ be a set of Lavrentiev curves in the Riemannian manifold, $L = \bigcup\limits_{i=1}^N L_i$ be a compact curve and $L_i\cap L_j\subset \partial L_i$ for any $i\neq j.$ Suppose that at any interior point $p$ of the curve $L$ such that $p\in\partial L_i$ for some $i = 1,\dots, n$ the minimal angle  is nonzero. Then $L$ is Lavrentiev.
\end{coro}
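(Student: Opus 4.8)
The plan is to deduce the global Lavrentiev property from a purely local one by appealing to Lemma~\ref{compact-Lavrentiev}: since $L$ is compact, it suffices to prove that $L$ is \emph{locally} Lavrentiev, i.e. that every point of $L$ has a neighborhood whose intersection with $L$ is Lavrentiev. Two preliminary reductions make this feasible. First, because there are only finitely many arcs $L_i$, their endpoints form a finite set, so the junction points (interior points of $L$ lying in some $\partial L_i$) are isolated. Second, all the required estimates are local, so after passing to a coordinate chart around a given point I may, by Convention~\ref{Lavrentiev-metric-equivalence}, replace the Riemannian metric by the Euclidean one at the cost of adjusting the Lavrentiev constants; this puts me in the Euclidean setting where Lemma~\ref{Lavrentiev-concatenation} applies.

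First I would dispose of the non-junction points. If $p\in L$ is an endpoint of $L$, or an interior point lying in no $\partial L_i$, then choosing a neighborhood $U$ so small that it meets no junction point, $L\cap U$ is a subarc of a single $L_i$ and hence Lavrentiev, since a subarc of a $C$-Lavrentiev curve is again Lavrentiev.

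The essential case is an interior junction point $p$. Since $L$ is an embedded curve it is locally homeomorphic to an interval near $p$, so $p$ separates a small piece of $L$ into exactly two branches; because the $L_i$ meet only at endpoints and $U$ can be chosen to contain no other junction, each branch lies in a single arc, say $\ell_1\subset L_i$ and $\ell_2\subset L_j$ with $\ell_1\cap\ell_2=\{p\}$ and Lavrentiev constants $C_1,C_2$. By hypothesis the minimal angle at $p$ is a positive number, so I fix $\alpha$ below it and shrink $U$ so that $\angle Q_1 p Q_2>\alpha$ whenever $Q_1\in\ell_1$ and $Q_2\in\ell_2$ lie in $U$. Now take any $P,Q\in L\cap U$. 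If both lie on the same branch, then for $U$ small the shortest subarc $L\big|_P^Q$ stays in that branch and the bound follows from the branch being Lavrentiev. If they lie on different branches, then $L\big|_P^Q$ passes through $p$ and is the concatenation of $L\big|_P^p\subset\ell_1$ and $L\big|_p^Q\subset\ell_2$ with $\angle P p Q>\alpha$, so Lemma~\ref{Lavrentiev-concatenation} gives $\ell(L\big|_P^Q)\le\frac{C_1+C_2}{\sin(\alpha/2)}\mathrm{dist}(P,Q)$. Taking the maximum of the constants from the two sub-cases shows that $L\cap U$ is Lavrentiev.

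Having verified local Lavrentievness at every point, Lemma~\ref{compact-Lavrentiev} finishes the argument. I expect the main obstacle to be the careful local analysis at a junction point: arguing that exactly two arcs meet there (using that $L$ is an embedded $1$-manifold and that junctions are isolated), and uniformizing the minimal-angle hypothesis into a single bound $\alpha$ valid on a whole neighborhood, together with the bookkeeping needed to pass between the Riemannian and Euclidean settings so that Lemma~\ref{Lavrentiev-concatenation} is applicable.
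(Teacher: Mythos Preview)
Your proposal is correct and follows essentially the same approach as the paper: reduce to a local check via Lemma~\ref{compact-Lavrentiev}, observe that away from junctions one is inside a single Lavrentiev arc, and at a junction apply Lemma~\ref{Lavrentiev-concatenation} after passing to a Euclidean chart. The paper's proof is much terser, but your added detail (isolating junctions, choosing the neighborhood and the angle bound, invoking Convention~\ref{Lavrentiev-metric-equivalence}) only makes explicit what the paper leaves implicit.
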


\begin{proof}
Since the curve is compact, by Lemma~\ref{compact-Lavrentiev} it suffices to check the Lavrentiev condition locally.
At any point of $L_i\setminus \partial L_i$ for every $i = 1,\dots, N$ it is satisfied since $L_i$ is Lavrentiev. At any point of $\partial L_i$ for any $i = 1,\dots,N$ it is satisfied by Lemma~\ref{Lavrentiev-concatenation}.
\end{proof}

Suppose that at each breaking point of a piecewise smooth curve the angle between the one-sided tangent lines to the curve is non-zero. We call such a curve  {\it cusp-free}.

\begin{lemm}\label{Lavrentiev-example}
Compact cusp-free piecewise smooth embedded curve in a manifold is Lavrentiev.
\end{lemm}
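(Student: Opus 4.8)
The plan is to decompose the curve into its maximal smooth pieces and then invoke Corollary~\ref{Lavrentiev-concatenation2}. Let $L$ be a compact cusp-free piecewise smooth embedded curve. Being piecewise smooth and compact, it has finitely many breaking points, so I can write $L=\bigcup_{i=1}^{n}L_i$, where each $L_i$ is a smooth embedded compact arc, the arcs meet only at breaking points, and $L_i\cap L_j\subset\partial L_i$ for $i\neq j$. By Convention~\ref{Lavrentiev-metric-equivalence} it suffices to verify all estimates with respect to a fixed smooth Riemannian metric, and since they are local I may read them off in charts with the Euclidean metric.

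First I would show that each smooth arc $L_i$ is Lavrentiev. Take the natural (arc-length) parametrization $\gamma$ of $L_i$; on $L_i$ the derivative $\gamma'$ is continuous of unit norm, so $|\gamma(t)-\gamma(s)|=\left|\int_s^t\gamma'\right|\le|t-s|$, while for $s,t$ in a small enough interval the difference $\gamma(t)-\gamma(s)$ differs from $\gamma'(s_0)(t-s)$ by a term that is $o(1)\cdot|t-s|$, yielding a local lower bound such as $\tfrac12|t-s|\le|\gamma(t)-\gamma(s)|$. Hence $\gamma$ is locally bi-Lipschitz, and since $L_i$ is compact, Lemma~\ref{compact-bi-Lipschitz} upgrades this to a bi-Lipschitz parametrization; by Remark~\ref{bi-Lipschitz-curves} the arc $L_i$ is then Lavrentiev.

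Next I would check the angle hypothesis of Corollary~\ref{Lavrentiev-concatenation2}. The interior points of $L$ lying in some $\partial L_i$ are exactly the breaking points. At such a point $P$ two smooth pieces meet, and as $Q_i\to P$ along $L_i$ the direction $\overrightarrow{PQ_i}$ converges to the outward one-sided unit tangent $v_i$ of $L_i$ at $P$ (the one-sided derivatives exist since each piece is smooth up to its endpoint). Thus the minimal angle at $P$ in the sense of Definition~\ref{minimal-angle-definition} equals $\angle(v_1,v_2)$. The cusp-free assumption says the two one-sided tangent lines at $P$ are distinct, so $v_1$ and $v_2$ are non-parallel and $\angle(v_1,v_2)$ is nonzero.

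With both hypotheses verified, Corollary~\ref{Lavrentiev-concatenation2} applies and $L$ is Lavrentiev. The only genuinely technical step is establishing that a smooth arc is Lavrentiev; the remainder is bookkeeping about the decomposition and the limiting tangent directions. I expect that step to be the main point, and it is handled cleanly by the local near-isometry estimate together with Lemma~\ref{compact-bi-Lipschitz} and Remark~\ref{bi-Lipschitz-curves}, rather than by a direct global chord-arc computation.
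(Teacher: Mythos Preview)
Your proof is correct and follows essentially the same approach as the paper: first show each smooth arc is Lavrentiev, then apply Corollary~\ref{Lavrentiev-concatenation2} using the cusp-free hypothesis at the breaking points. The only minor difference is in the smooth-arc step: the paper straightens the arc in a local chart to see it is $1$-Lavrentiev there (hence locally Lavrentiev, hence Lavrentiev by Lemma~\ref{compact-Lavrentiev}), whereas you argue directly that the arc-length parametrization is locally bi-Lipschitz and then invoke Lemma~\ref{compact-bi-Lipschitz} and Remark~\ref{bi-Lipschitz-curves}; both arguments are standard and interchangeable.
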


\begin{proof}
Let us check that a compact smooth arc is Lavrentiev. Since at each its point we can choose local Euclidean coordinates such that this arc is a straight-line segment, this piece of the arc is a 1-Lavrentiev curve with respect to Euclidean metric. Therefore this curve is locally Lavrentiev with respect to any Riemannian metric (see Convention~\ref{Lavrentiev-metric-equivalence}). Since the arc is compact, it is Lavrentiev by Lemma~\ref{compact-Lavrentiev}. The rest follows from Corollary~\ref{Lavrentiev-concatenation2}.
\end{proof}

The following lemma has a corollary: any Lavrentiev piecewise smooth curve is cusp-free.

\begin{lemm}\label{Lavrentiev-semitangent}
Let $L_1$ and $L_2$ be two arcs in Euclidean space with common endpoint $P$. Let $L_1\cup L_2$ be a $C$-Lavrentiev curve and $L_1$ have the one-sided tangent at the point $P$. Then the minimal angle at $P$ of the curve $L_1\cup L_2$ is not less than $2\arcsin\left(\frac1C\right)$.
\end{lemm}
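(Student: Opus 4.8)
The plan is to turn the $C$-Lavrentiev condition into a lower bound for the angle $\angle Q_1 P Q_2$ at equidistant test points, and then to pass to the limit using the one-sided tangent of $L_1$. The first ingredient is purely metric. Since $P$ separates $L_1$ from $L_2$ along $L=L_1\cup L_2$, for $Q_1\in L_1$ and $Q_2\in L_2$ the shortest subarc $L\big|_{Q_1}^{Q_2}$ runs through $P$, so $\ell(L\big|_{Q_1}^{Q_2})=\ell(L\big|_{Q_1}^{P})+\ell(L\big|_{P}^{Q_2})\ge \mathrm{dist}(Q_1,P)+\mathrm{dist}(P,Q_2)$; together with the $C$-Lavrentiev bound $\ell(L\big|_{Q_1}^{Q_2})\le C\,\mathrm{dist}(Q_1,Q_2)$ this gives
\[
\mathrm{dist}(Q_1,P)+\mathrm{dist}(P,Q_2)\le C\,\mathrm{dist}(Q_1,Q_2).
\]

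Next I would exploit this for an isoceles configuration. Given $Q_2\in L_2$ near $P$, continuity of $\mathrm{dist}(\cdot,P)$ along $L_1$ lets me choose $Q_1\in L_1$ with $a:=\mathrm{dist}(Q_1,P)=\mathrm{dist}(Q_2,P)$. Then the base of the triangle $Q_1PQ_2$ is $\mathrm{dist}(Q_1,Q_2)=2a\sin\!\big(\tfrac12\angle Q_1PQ_2\big)$, while the inequality above reads $2a\le C\,\mathrm{dist}(Q_1,Q_2)$; hence $\sin\!\big(\tfrac12\angle Q_1PQ_2\big)\ge 1/C$, i.e. $\angle Q_1PQ_2\ge 2\arcsin(1/C)$, using that $\tfrac12\angle Q_1PQ_2\in[0,\pi/2]$, where $\sin$ is increasing. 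The equidistant choice is essential here: for unequal distances $a\ne b$ the same computation only yields the weaker estimate $\sin^2\!\big(\tfrac12\angle Q_1PQ_2\big)\ge\big((a+b)^2/C^2-(a-b)^2\big)/(4ab)$, which can be negative.

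Finally I would invoke the one-sided tangent. Let $u$ be its unit direction, so that $\overrightarrow{PQ_1}/\mathrm{dist}(Q_1,P)\to u$ as $Q_1\to P$ along $L_1$, and hence $\angle Q_1PQ_2=\angle(u,\overrightarrow{PQ_2})+o(1)$ by the triangle inequality for angles between directions. Letting $Q_2\to P$ with $Q_1$ chosen equidistant (so that $\mathrm{dist}(Q_1,P)\to 0$), the previous step gives $\angle(u,\overrightarrow{PQ_2})\ge 2\arcsin(1/C)-o(1)$, whence $\liminf_{Q_2\to P}\angle(u,\overrightarrow{PQ_2})\ge 2\arcsin(1/C)$. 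Since the tangent forces every admissible $\angle Q_1PQ_2$ to lie within $o(1)$ of $\angle(u,\overrightarrow{PQ_2})$, the infimum defining the minimal angle is controlled by this liminf, giving minimal angle at least $2\arcsin(1/C)$.

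The step I expect to be delicate is this last one: the minimal angle is an infimum over \emph{all} nearby pairs $Q_1,Q_2$, while the sharp estimate holds only for the specially chosen equidistant $Q_1$. The one-sided tangent hypothesis is exactly what reconciles the two, pinning $\overrightarrow{PQ_1}$ to $u$ so that the infimum over $Q_1$ is asymptotically realized by the tangent direction rather than by a secant accidentally aligned with $\overrightarrow{PQ_2}$; a curve spiralling into $P$, which carries no one-sided tangent yet can be Lavrentiev, shows that the conclusion fails without it.
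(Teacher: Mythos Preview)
Your proof is correct and follows essentially the same route as the paper: pick $Q_1\in L_1$ equidistant from $P$ with a given $Q_2\in L_2$, combine the lower bound $\ell(L|_{Q_1}^{Q_2})\ge 2\,\mathrm{dist}(P,Q_1)$ with the Lavrentiev upper bound and the isoceles base formula to get $\angle Q_1PQ_2\ge 2\arcsin(1/C)$, then use the one-sided tangent to replace $\overrightarrow{PQ_1}$ by the fixed direction $u$ and thereby control the infimum over all pairs. Your final paragraph correctly identifies (and resolves) the one point that needs care, namely that the sharp estimate is proved only for equidistant pairs while the minimal angle is an infimum over all nearby pairs.
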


\begin{proof}
Let $r$ be such number that for any $r'\in[0;r]$ there exists a point $Q_1\in L_1$ such that $\mathrm{dist}(P,Q_1) = r'.$
Let $B(r)$ be a ball of radius $r$ centered at $P$.

Consider any point $Q_2\in B(r)\cap L_2\setminus\{P\}.$ There exists such point $Q_1\in L_1$ that $\mathrm{dist}(P,Q_1) = \mathrm{dist}(P, Q_2).$
Let $\gamma$ be a subarc of $L$ which connects the points $Q_1$ and $Q_2$ and which contains $P$. Then

$$
\ell(\gamma) \ge \mathrm{dist}(P,Q_1) + \mathrm{dist} (P,Q_2) = 2\mathrm{dist}(P,Q_1)
$$

and

$$
\ell(\gamma) \le C\cdot \mathrm{dist}(Q_1, Q_2) =C\cdot 2\sin\left(\frac12\angle Q_1 P Q_2\right)\mathrm{dist}(P,Q_1).
$$

Therefore $\angle Q_1 P Q_2 \ge 2\arcsin \frac1C.$ Since the angle between the vector $\stackrel{\longrightarrow}{P Q_1}$ and the tangent ray to the curve $L_1$ at the point $P$ tends to zero as $r\to 0$, the minimal angle at $P$ between the tangent ray to the curve $L_1$ at $P$ and the arc $L_2$ is not less than $2\arcsin \frac1C.$ The claim follows.
\end{proof}

\begin{lemm}\label{Lavrentiev-covering}
Let $\pi:\widetilde M \to M$ be a covering of a smooth manifold and $L\subset M$ be a locally Lavrentiev curve. Then $\pi^{-1}(L)$ is also a locally Lavrentiev curve.
\end{lemm}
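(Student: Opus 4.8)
The plan is to use that a covering of a smooth manifold is a local diffeomorphism, so that near each of its points $\pi^{-1}(L)$ is a diffeomorphic copy of a piece of $L$; since the Lavrentiev property is local and, by Convention~\ref{Lavrentiev-metric-equivalence}, insensitive to the choice of Riemannian metric on compacta, everything will reduce to the fact recorded in Remark~\ref{bi-Lipschitz-curves} that a bi-Lipschitz image of a Lavrentiev arc is again Lavrentiev. Being locally Lavrentiev is a pointwise condition, so I fix $\tilde p \in \pi^{-1}(L)$, put $p = \pi(\tilde p) \in L$, and aim to produce a single neighborhood $\widetilde W \ni \tilde p$ for which $\pi^{-1}(L)\cap \widetilde W$ is Lavrentiev.

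First I would take an evenly covered neighborhood of $p$ and let $V \ni \tilde p$ be the sheet over it, so that $\pi|_V \colon V \to \pi(V)$ is a diffeomorphism onto an open set containing $p$. Next, using that $L$ is locally Lavrentiev, choose $U \ni p$ with $L\cap U$ Lavrentiev. Since $L$ is an embedded curve I may shrink $U$ to an open $W$ with $p\in W \subset U\cap\pi(V)$, with $\overline W$ compact in $U\cap\pi(V)$, and with $L\cap W$ a single connected subarc of $L\cap U$; a connected subarc of a Lavrentiev arc is Lavrentiev with the same constant, because the shortest subarc joining two of its points coincides with the one already taken inside the ambient arc. Thus $L\cap W$ is itself a Lavrentiev arc.

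Finally I set $\widetilde W = (\pi|_V)^{-1}(W)$, which is open and satisfies $\pi^{-1}(L)\cap\widetilde W = (\pi|_V)^{-1}(L\cap W)$. The map $(\pi|_V)^{-1}$ is a diffeomorphism, hence bi-Lipschitz on the compact set $L\cap\overline W$, since a smooth map and its inverse have bounded differentials there. Composing a natural (hence bi-Lipschitz) parametrization of $L\cap W$ with $(\pi|_V)^{-1}$ gives a bi-Lipschitz parametrization of $\pi^{-1}(L)\cap\widetilde W$, so by Remark~\ref{bi-Lipschitz-curves} the latter is Lavrentiev. As $\tilde p$ was arbitrary, $\pi^{-1}(L)$ is locally Lavrentiev.

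The one delicate point I expect is the bookkeeping in the middle step: one must select $W$ small enough that $L\cap W$ is a genuine connected arc --- so that the phrase ``shortest subarc'' is unambiguous and such subarcs stay inside $W$ --- while still keeping $W$ inside a single evenly covered sheet. Once the neighborhood is arranged this way, the remaining steps are formal, resting only on the local-diffeomorphism property of coverings and on the already established invariance of the Lavrentiev condition under bi-Lipschitz maps.
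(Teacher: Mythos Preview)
Your proof is correct and follows essentially the same idea as the paper's: the covering map is a local diffeomorphism, hence locally bi-Lipschitz, and the Lavrentiev property passes through bi-Lipschitz maps. The paper's argument is shorter only because it equips $\widetilde M$ with the pullback metric $\pi^*g$, making $\pi$ a local \emph{isometry} rather than merely locally bi-Lipschitz; this eliminates the need to shrink to a precompact $W$ and invoke bounded differentials, since under an isometry the Lavrentiev constant is preserved verbatim. Your route via Remark~\ref{bi-Lipschitz-curves} and Convention~\ref{Lavrentiev-metric-equivalence} reaches the same conclusion with a bit more bookkeeping, and your care in arranging $L\cap W$ to be a single connected subarc inside an evenly covered sheet is exactly the right precaution.
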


\begin{proof}
Let $g$ be an arbitrary smooth Riemannian metric on the manifold $M$. Since $\pi: (\widetilde M, \pi^*g)\to (M,g)$ is a local isometry and $L$ is locally Lavrentiev, then $\pi^{-1}(L)$ can be covered by Lavrentiev subarcs which are open in $\pi^{-1}(L).$ This means that the curve $\pi^{-1}(L)$ is locally Lavrentiev.
\end{proof}

\begin{lemm}\label{Lavrentiev-C1-isotopy-nonclosed}
Let a map $F(s,t): [0;1]\times[0;1]\to \mathbb R^n$ have a continuous derivative $\frac{\partial F}{\partial s}(s, t)$, which is nonzero at every point, and $F(\bullet,t)$ be injective for all $t\in[0;1].$ Then there exists such number $C$ that for all $t\in[0;1]$ the map $F(\bullet, t)$ is $C$-bi-Lipschitz.
\end{lemm}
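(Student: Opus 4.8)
The plan is to deduce the statement directly from Lemma~\ref{bi-Lipschitz-family}, applied with $X = T = [0;1]$ and $Y = \mathbb R^n$, taking $f = F$. Of the hypotheses of that lemma, all but the local uniform bi-Lipschitz condition are immediate: $X$ and $T$ are compact, $F$ is continuous, and for each $t$ the map $F(\bullet,t)$ is a continuous injection of a compact space into a Hausdorff space, hence an embedding. So the entire content of the proof is to verify the remaining condition, namely that for every $(s_0,t_0)$ there are neighborhoods $U\ni s_0$, $\mathcal I\ni t_0$ and a constant $C_0$ such that $F(\bullet,t)\big|_U$ is $C_0$-bi-Lipschitz for all $t\in\mathcal I$.

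First I would dispose of the upper Lipschitz bound, which is in fact global. Since $\frac{\partial F}{\partial s}$ is continuous on the compact square $[0;1]^2$, it is bounded, say $\left\|\frac{\partial F}{\partial s}\right\|\le M$; the integral representation $F(s_1,t)-F(s_2,t)=\int_{s_2}^{s_1}\frac{\partial F}{\partial s}(s,t)\,ds$ then gives $\|F(s_1,t)-F(s_2,t)\|\le M\,|s_1-s_2|$ for all $t$, with no localization needed.

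For the lower bound I would localize using the non-vanishing of the derivative. Fix $(s_0,t_0)$ and set $v=\frac{\partial F}{\partial s}(s_0,t_0)\ne 0$. By continuity choose $U\times\mathcal I$ so small that $\left\|\frac{\partial F}{\partial s}(s,t)-v\right\|<\tfrac{\|v\|}{2}$ throughout $U\times\mathcal I$. Then for $s_1,s_2\in U$ and $t\in\mathcal I$, the same integral representation and the triangle inequality yield
\[
\left\|F(s_1,t)-F(s_2,t)\right\|\ \ge\ \left\|\int_{s_2}^{s_1} v\,ds\right\| - \left\|\int_{s_2}^{s_1}\Bigl(\tfrac{\partial F}{\partial s}(s,t)-v\Bigr)ds\right\|\ \ge\ \Bigl(\|v\|-\tfrac{\|v\|}{2}\Bigr)|s_1-s_2| = \tfrac{\|v\|}{2}\,|s_1-s_2|.
\]
Hence $F(\bullet,t)\big|_U$ is $C_0$-bi-Lipschitz with $C_0=\max\bigl(M,\,2/\|v\|\bigr)$, uniformly in $t\in\mathcal I$, which is exactly the local condition required. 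Lemma~\ref{bi-Lipschitz-family} then produces a single $C$ valid for all $t\in[0;1]$.

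I expect the genuinely delicate point not to lie in the computation above but in the passage from the local lower bound to a global one: the estimate just obtained controls only pairs of nearby parameters, while injectivity of each $F(\bullet,t)$ gives a pointwise—but not obviously uniform—separation of distant points, and this uniformity must moreover hold simultaneously over the whole family $t\in T$. It is precisely the compactness-and-contradiction argument of Lemma~\ref{bi-Lipschitz-family} that absorbs this difficulty, so the only thing one must supply by hand is the local uniform estimate, whose single nontrivial ingredient is the lower Lipschitz bound resting on the continuity and non-vanishing of $\frac{\partial F}{\partial s}$.
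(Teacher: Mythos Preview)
Your proof is correct and follows essentially the same approach as the paper: establish a global upper Lipschitz bound from the boundedness of $\partial F/\partial s$, establish a local uniform lower Lipschitz bound from the continuity and non-vanishing of $\partial F/\partial s$, and then invoke Lemma~\ref{bi-Lipschitz-family}. The only cosmetic difference is in how the local lower bound is extracted: the paper controls the \emph{angle} between $\partial F/\partial s(s,t)$ and $\partial F/\partial s(s_0,t_0)$ and then projects onto the tangent line, obtaining the constant $m\cos(\pi/4)$ with $m=\min\|\partial F/\partial s\|$, whereas you control the \emph{norm} of the difference directly and split the integral, obtaining $\|v\|/2$; both are equally valid linearizations of the same idea.
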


\begin{proof}
Let $M$ be the maximum of the length of the vector $\frac{\partial F}{\partial s}(s, t)$. Then for any $s_0\in[0;1]$, $s_1\in(s_0;1]$  and $t\in[0;1]$

$$
\ell(F([s_0;s_1]\times\{t\}))\le M\cdot(s_1-s_0).
$$

Thus

$$
\mathrm{dist}(F(s_0, t), F(s_1, t))\le M\cdot(s_1-s_0).
$$

Therefore for all $t\in[0;1]$ the map $F(\bullet, t)$ is $M$-Lipschitz.

Let $m$ be a minimum of the length of the vector $\frac{\partial F}{\partial s}(s, t)$. Since this vector is everywhere nonzero, $m>0.$

Let $s_0\in [0;1],$ $t_0\in[0;1].$ We choose such neighborhood $U$ of the parameter $s_0$ and such neighborhood $\mathcal I$ of the moment $t_0$ that for any $s\in U$ and any $t\in \mathcal I$

$$
\angle\left(\frac{\partial F}{\partial s}(s, t), \frac{\partial F}{\partial s}(s_0, t_0)\right) < \frac{\pi}4.
$$

Such neighborhoods exist since the velocity vector $\frac{\partial F}{\partial s}(s, t)$ is everywhere nonzero and depends on $s$ and $t$ continuously. Let $\mathrm{pr}$ be the orthogonal projection to the tangent line at the point $F(s_0, t_0)$ of the curve $F(\bullet, t_0).$ Then for any $s', s''\in U$ and any $t\in \mathcal I$ 

$$
\mathrm{dist}(F(s',t), F(s'', t)) \ge \mathrm{dist}\left(\mathrm{pr}\left(F(s',t)\right), \mathrm{pr}\left(F(s'', t)\right)\right) = \int\limits_{s'}^{s''} \frac{d}{ds}(\mathrm{pr}\circ F)(s,t) ds > \cos(\pi/4)\cdot |s'' - s'|\cdot m.
$$

Therefore for any $t\in\mathcal I$ the map $F(\bullet ,t)\big|_{U}$ is $\max(M, 1/(m\cos(\pi/4)))$-bi-Lipschitz. By Lemma~\ref{bi-Lipschitz-family} for $T = [0;1]$ we are done.
\end{proof}

\begin{lemm}\label{Lavrentiev-C1-isotopy}
Let $\{L_t\}_{t\in[0;1]}$ be an isotopy of $C^1$-smooth compact curves embedded in a Riemannian manifold. Suppose that the isotopy is continuous in the $C^1$-topology. Then for some numbers $C$ and $C'$ the isotopy $\{L_t\}_{t\in[0;1]}$ is an isotopy of $C$-bi-Lipschitz curves and for all $t\in[0;1]$ the curve $L_t$ is $C'$-Lavrentiev.
\end{lemm}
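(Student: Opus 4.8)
The plan is to reduce the statement to Lemma~\ref{Lavrentiev-C1-isotopy-nonclosed} by working in local charts, and then to upgrade the resulting local bi-Lipschitz bounds to a single uniform one by invoking Lemma~\ref{bi-Lipschitz-family}. First I would fix a parametrization. Since the curves $L_t$ are compact and embedded, each is homeomorphic either to a segment or to a circle; as they form an isotopy they are all of the same type, so let $X$ be $[0;1]$ or $\mathbb S^1$ accordingly and equip it with its intrinsic length metric $d_X$ (so that $d_X(a,b)$ is the length of the shorter arc of $X$ joining $a$ and $b$). Choosing parametrizations compatibly, the isotopy is encoded by a map $F:X\times[0;1]\to M$ with $F_t=F(\bullet,t)$ an embedding onto $L_t$. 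The hypothesis that the isotopy is continuous in the $C^1$-topology means precisely that $F$ and its tangential derivative $\partial F/\partial s$ are jointly continuous in $(s,t)$ and that $\partial F/\partial s$ is nowhere zero, the curves being $C^1$-regular. By Convention~\ref{Lavrentiev-metric-equivalence} I may fix an arbitrary smooth Riemannian metric $g$ on $M$; the image $K=F(X\times[0;1])$ is compact.

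Next I would verify the local hypothesis of Lemma~\ref{bi-Lipschitz-family}. Fixing $(x_0,t_0)$, I choose a coordinate chart $\psi$ around $F(x_0,t_0)$ and neighborhoods $U\ni x_0$ in $X$, $\mathcal I\ni t_0$ in $[0;1]$ so small that $F(U\times\mathcal I)$ lies in the chart. Identifying $U$ and $\mathcal I$ with segments, the composition $\psi\circ F$ satisfies the hypotheses of Lemma~\ref{Lavrentiev-C1-isotopy-nonclosed} (continuous nonvanishing $s$-derivative, $F_t$ injective), so $\psi\circ F_t\big|_U$ is $C_0$-bi-Lipschitz for all $t\in\mathcal I$ with respect to the Euclidean metric. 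Since on the closure of the chart both $g$ and the chart's Euclidean metric are comparable, and $d_X$ agrees on $U$ with arc length, the map $F_t\big|_U$ is uniformly bi-Lipschitz for $t\in\mathcal I$ with respect to $g$ and $d_X$, with a constant depending only on $(x_0,t_0)$. This is exactly the local input required by Lemma~\ref{bi-Lipschitz-family}.

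Then I apply Lemma~\ref{bi-Lipschitz-family} with $T=[0;1]$: each $F_t$ being an embedding and the local uniform bounds holding, there is a single constant $C$ with $F_t:(X,d_X)\to(M,g)$ being $C$-bi-Lipschitz for every $t$. This is the first assertion. For the Lavrentiev bound I argue as in Remark~\ref{bi-Lipschitz-curves}. Given $P=F_t(a)$ and $Q=F_t(b)$, let $\beta\subset X$ be the shorter arc joining $a$ and $b$, so $\ell_X(\beta)=d_X(a,b)$. Since $L_t\big|_P^Q$ is a \emph{shortest} subarc joining $P$ and $Q$, its length is at most that of the competing subarc $F_t(\beta)$, and the $C$-Lipschitz bound gives $\ell\bigl(L_t\big|_P^Q\bigr)\le\ell\bigl(F_t(\beta)\bigr)\le C\,d_X(a,b)$. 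The lower bi-Lipschitz bound gives $\mathrm{dist}(P,Q)\ge d_X(a,b)/C$, so $\ell\bigl(L_t\big|_P^Q\bigr)\le C^2\,\mathrm{dist}(P,Q)$, and $C'=C^2$ works for every $t$.

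I expect the genuinely delicate points to be bookkeeping rather than conceptual. The closed-curve case is the one to watch, and it is handled uniformly by using the intrinsic metric $d_X$ on $X$: this is what lets the shortest-subarc in the Lavrentiev condition be compared with the image of the shorter arc of $X$, and it also makes the final estimate immune to whether the shortest subarc of $L_t$ corresponds to the short or the long arc of $X$. The other point requiring care is that the local bi-Lipschitz constants be uniform in $t$ near each $t_0$, which is precisely the quantitative, family-version content supplied by Lemma~\ref{Lavrentiev-C1-isotopy-nonclosed} and then globalized by Lemma~\ref{bi-Lipschitz-family}. The passage from the manifold to a chart and from $g$ to the Euclidean metric is routine, given the comparability of Riemannian metrics on compact sets.
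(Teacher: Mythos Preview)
Your proposal is correct and follows essentially the same route as the paper: both arguments reduce to the local situation via Lemma~\ref{bi-Lipschitz-family}, pass to a Euclidean chart, and invoke Lemma~\ref{Lavrentiev-C1-isotopy-nonclosed}, with the Lavrentiev bound then read off from the uniform bi-Lipschitz bound as in Remark~\ref{bi-Lipschitz-curves}. Your write-up is simply more explicit than the paper's terse version, in particular in handling the circle case by working with the intrinsic metric $d_X$ and in spelling out $C'=C^2$.
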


\begin{proof}
The uniform Lavrentiev condition follows from the uniform bi-Lipschitz condition, so we prove the latter.

Since the curves are compact, by Lemma~\ref{bi-Lipschitz-family} it is sufficient to prove the statement locally both on the point on the curve and the parameter of the isotopy. So we can assume that the Riemannian manifold is the Euclidean space $\mathbb R^n$ and the curves are non-closed. So we are done by Lemma~\ref{Lavrentiev-C1-isotopy-nonclosed}.
\end{proof}

\subsection{Bypasses for Lavrentiev curves on surfaces}
Here we introduce bypasses for Lavrentiev curves on surfaces and associate with them isotopies of Lavrentiev curves. Such isotopies are Lipschitz and for any such isotopy there exists a number $C$ such that this isotopy is a continuous isotopy of $C$-bi-Lipschitz curves.

$\mathbb C$ denotes a complex plane with the standard Euclidean metric, $\mathbb D\subset\mathbb C$ is an interior of a unit disk centered at zero and $\overline{\mathbb D}$ is its closure, 
$\partial_{\pm}\mathbb D = \{z\in\partial\mathbb D:\ \pm\mathrm{Im} z\ge 0\}.$

\begin{defi}
Let $L$ be a Lavrentiev curve on a surface $S$. A map $\chi:\overline{\mathbb D}\to S$ is called {\it a bypass} for the curve $L$ if $\chi$ is bi-Lipschitz, $\chi(\overline{\mathbb D})\cap L = \chi(\partial_+\mathbb D)$ and $\left(L\setminus \chi(\partial_+\mathbb D)\right)\cup\chi(\partial_-\mathbb D)$ is a Lavrentiev curve. See Figure~\ref{bypass-figure}.

We say that the bypass $\chi$ {\it is attached along} the arc $\chi(\partial_+ \mathbb D).$
\end{defi}

\begin{figure}[h]
\center{\includegraphics{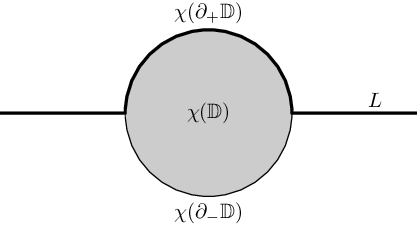}}
\caption{A bypass for the Lavrentiev curve}
\label{bypass-figure}
\end{figure}

Consider the isotopy 
$$
F_{+\mapsto-}:[-1;1]\times[0;1]\to\overline{\mathbb D},\quad
F_{+\mapsto-}(s,t) = \dfrac{s+(1-2t)\mathbbm i}{(1-2t)\mathbbm i s + 1}
$$
from the arc $\partial_+\mathbb D$ to the arc $\partial_-\mathbb D$ through subarcs of geometric (generalized) circles fixing the endpoints of the arcs. The isotopy $F_{+\mapsto-}$ is Lipschitz. It is the direct check that for any $t\in[0;1]$ the arc $F_{+\mapsto}([-1;1], t)$ is $\pi/2$-Lavrentiev and the map $F_{+\mapsto}(\bullet, t)$ is $\pi$-bi-Lipschitz.

\begin{defi}
Let $\chi$ be a bypass for a Lavrentiev curve $L$ on the surface $S$. By {\it an isotopy associated with the bypass $\chi$} we call the isotopy
$$
F: L\times[0;1]\to S,
\quad
F(p, t) = 
\left\{
\begin{aligned}
\chi\left( F_{+\mapsto-}(\chi^{-1}(p), t)\right),&\quad p\in \chi(\partial_+\mathbb D),\\
p,&\quad p\in L\setminus \chi(\partial_+\mathbb D).
\end{aligned}
\right.
$$
\end{defi}

It is clear that during this isotopy the attaching arc $\chi(\partial_+ \mathbb D)$ moves to the arc $\chi(\partial_- \mathbb D)$ while the other part of the curve $L$ stays fixed.

\begin{prop}\label{associated-isotopy-with-bypass-proposition}
The isotopy $F$ associated with the bypass is Lipschitz and there exists $C$ such that for all $t\in[0;1]$ the map $F(\bullet,t)$ is $C$-bi-Lipschitz.
\end{prop}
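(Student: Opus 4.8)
The plan is to prove the two assertions in turn: first that $F$ is jointly Lipschitz, and then that the maps $F(\bullet,t)$ are uniformly bi-Lipschitz. For the Lipschitz claim, observe that on the moving part $\chi(\partial_+\mathbb D)\times[0;1]$ the map $F$ is the composition of the bi-Lipschitz chart $\chi$, the jointly Lipschitz model isotopy $F_{+\mapsto-}$, and the bi-Lipschitz parametrization of $\chi(\partial_+\mathbb D)$, hence it is Lipschitz there; on the complementary part $\overline{L\setminus\chi(\partial_+\mathbb D)}\times[0;1]$ it is the identity in the space variable and constant in $t$. A direct check gives $F_{+\mapsto-}(\pm1,t)=\pm1$ for all $t$, so the two formulas agree on the overlap $\{\chi(\pm1)\}\times[0;1]$, and a standard length-space gluing argument (using that for a Lavrentiev curve the arclength metric is comparable to the induced one, so $L\times[0;1]$ is a compact length space) promotes the two local Lipschitz bounds to a global one.

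For the uniform bi-Lipschitz claim I would apply Lemma~\ref{bi-Lipschitz-family} to $F\colon L\times[0;1]\to S$ (localizing to a compact subarc carrying the bypass, off which $F$ is the identity): it suffices to produce, for each $(p_0,t_0)$, a neighbourhood on which $F(\bullet,t)$ is uniformly bi-Lipschitz. Away from the endpoints this is immediate, since near an interior point of the fixed part $F(\bullet,t)$ is the identity, while near an interior point of $\chi(\partial_+\mathbb D)$ it is $\chi\circ F_{+\mapsto-}(\bullet,t)\circ\chi^{-1}$, a composition of uniformly bi-Lipschitz maps (the family $F_{+\mapsto-}(\bullet,t)$ being $\pi$-bi-Lipschitz for every $t$). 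The only serious point is a junction $P=\chi(1)$ (and symmetrically $\chi(-1)$), where a neighbourhood in $L$ consists of a subarc of the moving part, which during the isotopy sweeps through $\chi(\overline{\mathbb D})$, together with a subarc of the fixed part, which lies outside $\chi(\overline{\mathbb D})$ except at $P$. This is the main obstacle.

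To handle the junction I would pass to a Euclidean model: extend $\chi$ to a bi-Lipschitz map of a neighbourhood of $\overline{\mathbb D}$ (a local bi-Lipschitz extension across the chord-arc boundary arc near $\pm1$, available by the same extension technology as Tukia's theorem) and pull the whole configuration back into the disk, where the moving arcs $\tilde M_t=F_{+\mapsto-}([-1;1],t)$ are genuine circular arcs through $\pm1$ and the pulled-back fixed part $\tilde B$ is a Lavrentiev arc issuing from $1$ into the exterior of $\overline{\mathbb D}$. Being exterior, every limiting direction of $\tilde B$ at $1$ lies in the closed right half-circle, whereas the tangent directions of the $\tilde M_t$ at $1$ fill the closed left half-circle, sweeping from $+\mathbbm i$ (at $t=0$) through $-1$ to $-\mathbbm i$ (at $t=1$); the two families can meet only at $\pm\mathbbm i$. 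But $\pm\mathbbm i$ are precisely the tangent directions of $\partial_+\mathbb D$ and $\partial_-\mathbb D$, so applying Lemma~\ref{Lavrentiev-semitangent} to the Lavrentiev curves $\tilde B\cup\partial_+\mathbb D$ and $\tilde B\cup\partial_-\mathbb D$ (the pullbacks of $L$ and of the bypassed curve $(L\setminus\chi(\partial_+\mathbb D))\cup\chi(\partial_-\mathbb D)$, the latter Lavrentiev by the definition of a bypass) bounds the minimal angle between $\tilde B$ and each of $\partial_\pm\mathbb D$ away from zero. Hence $\tilde B$ is uniformly angle-separated from every $\tilde M_t$ at $1$.

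With a uniform angle $\alpha>0$ in hand, Lemma~\ref{Lavrentiev-concatenation} shows that $\tilde B\cup\tilde M_t$ is uniformly Lavrentiev near $1$, giving $|\tilde p-\tilde q|\gtrsim|\tilde p-1|+|\tilde q-1|$ for $\tilde p\in\tilde B$, $\tilde q\in\tilde M_t$ with constants independent of $t$. A direct computation yields $|F_{+\mapsto-}(s,t)-1|\in[\tfrac1{\sqrt2}\,|s-1|,\ \sqrt2\,|s-1|]$, so the moving point $\tilde q=F_{+\mapsto-}(s,t)$ stays at a distance from $1$ comparable to that of its initial position $\tilde p'=F_{+\mapsto-}(s,0)$; combining this with the previous inequality and the triangle inequality gives the cross-term lower bound $|\tilde p-\tilde q|\gtrsim|\tilde p-\tilde p'|$, uniformly in $t$. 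Transporting back through the bi-Lipschitz extension and combining with the Lipschitz upper bound already obtained, $F(\bullet,t)$ is uniformly bi-Lipschitz near each junction, and Lemma~\ref{bi-Lipschitz-family} then produces the single constant $C$. I expect the entire difficulty to concentrate in the junction estimate, and more precisely in justifying the bi-Lipschitz extension of $\chi$ past its boundary so that the clean angle argument becomes available.
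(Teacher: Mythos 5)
Your overall architecture (Lipschitz by composition and gluing; uniform bi-Lipschitz via Lemma~\ref{bi-Lipschitz-family} with all the difficulty concentrated at the junctions $\chi(\pm1)$) matches the paper, and your off-junction estimates and the computation $|F_{+\mapsto-}(s,t)-1|\asymp|s-1|$ are correct. At the junction, however, you take a genuinely different route. The paper never extends $\chi$ and never discusses angles: it works in a Euclidean chart around $p=\chi(1)$, takes $q_1=F(p_1,t)$ inside $\chi(\overline{\mathbb D})$ and $q_2=p_2$ outside, observes that the straight segment $[q_1,q_2]$ must meet $\partial\chi(\overline{\mathbb D})=L_0\cup L_1$ at some point $q_3$, and then chains length estimates through $q_3$ using only the Lavrentiev constants of the three curves $L$, $(L\setminus L_0)\cup L_1$ and $F(L_0\times\{t\})$; this yields $\ell\bigl(F(L\times\{t\})\big|_{q_1}^{q_2}\bigr)\le C^2\,\mathrm{dist}(q_1,q_2)$, after which distances at time $t$ and time $0$ are compared via arc length. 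Your route instead pulls the whole configuration back to the round disk and runs an angle-separation argument: the exterior constraint puts the limiting directions of $\tilde B$ in the closed right half-circle, Lemma~\ref{Lavrentiev-semitangent} applied to $\tilde B\cup\partial_{\pm}\mathbb D$ pushes them a definite angle $\alpha_0$ away from $\pm\mathbbm i$, and since the circular arcs $\tilde M_t$ all have radius at least $1$ their chord directions near $1$ stay uniformly close to the closed left half-circle, giving a uniform angle and hence, via Lemma~\ref{Lavrentiev-concatenation}, a uniform Lavrentiev constant for $\tilde B\cup\tilde M_t$. This is a valid and rather transparent geometric picture, and it buys you an explicit quantitative angle; the paper's argument buys self-containedness and indifference to tangent behaviour.

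The one genuine gap is the ingredient you yourself flag: the bi-Lipschitz extension of $\chi$ from $\overline{\mathbb D}$ to a neighbourhood. None of the results quoted in the paper delivers it directly -- Tukia's Theorem A extends maps of $\partial\mathbb D$, not of $\overline{\mathbb D}$, and the extension it produces need not agree with $\chi$ on the interior, so without further work your moving arcs would not be genuine circular arcs in the model. The extension does exist: apply Tukia to $\chi\big|_{\partial\mathbb D}$ to get a global bi-Lipschitz $g$ and define $E=\chi$ on $\overline{\mathbb D}$, $E=g$ outside; the bi-Lipschitz lower bound for a pair of points separated by $\partial\mathbb D$ is then proved by noting that the segment joining their images crosses $\chi(\partial\mathbb D)$ and splitting at that crossing point -- which is exactly the paper's junction trick in disguise. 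So your detour is fillable, but it reimports the crossing-point argument as a lemma rather than avoiding it, and (when $\chi(\overline{\mathbb D})$ does not sit in a single Euclidean chart) it additionally requires a localization that the paper's direct argument never needs. I would either prove the extension lemma explicitly or replace the disk-model step by the paper's in-chart crossing-point estimate.
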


\begin{proof}
Let $L$ denote the curve, $\chi$ denote the bypass, $L_0$ be the attaching arc and $L_1=\chi(\partial_-\mathbb D).$

The isotopy $F$ is Lipschitz since it is the composition of Lipschitz maps. By Lemma~\ref{bi-Lipschitz-family} it is sufficient to check the $C$-bi-Lipschitz condition locally.

For any $t\in[0;1]$ the map $F(\bullet, t)\big|_{L\setminus L_0}$ is an identity, hence $1$-bi-Lipschitz.

Let $C_{\chi}$ be a number such that $\chi$ is $C_{\chi}$-bi-Lipschitz. Then for any $t\in[0;1]$ the map $F(\bullet, t)\big|_{L_0\setminus\partial L_0}$ is $C_{\chi}^2\cdot\pi$-bi-Lipschitz.

Let $p\in\partial L_0.$ We choose a Euclidean chart containing the point $p$ and consider a closed geometric disk $B$ which contains $p$ and does not contain the other point of $\partial L_0$. Let $U$ be a neighborhood of the point $p$ such that for any $t\in[0;1]$ for any two points $q_1, q_2\in F(L\times\{t\})\cap U$ the subarc $F(L\times\{t\})\big|_{q_1}^{q_2}$ lies inside $B$. There exists a number $C$ such that the curves $L$, $(L\setminus L_0)\cup L_1$ and $F(L_0\times\{t\})$ for any $t\in[0;1]$ are $C$-Lavrentiev.

\begin{figure}[h]
\center{\includegraphics{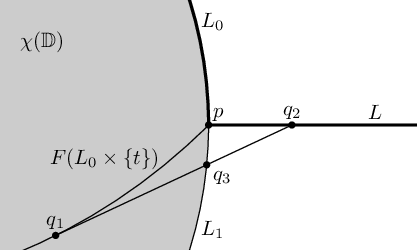}}
\caption{Checking the bi-Lipschitz condition for the isotopy associated with a bypass}
\label{bypass-isotopy-case2-figure}
\end{figure}

Let $\mathcal J$ be an open subarc of $L$ such that $p\in\mathcal J$ and $F(\mathcal J, t)\subset U$ for any $t\in[0;1].$ We will prove that the map $F(\bullet, t)\big|_{\mathcal J}$ is $\left(C^3\cdot C_{\chi}^2\cdot\pi\right)$-bi-Lipschitz for any $t.$

Let $p_1\in L_0\cap\mathcal J$ and $p_2\in \left(L\setminus L_0\right)\cap\mathcal J.$ Then $q_1 = F(p_1, t)\in U$ and $q_2 = p_2\in U.$ See Figure~\ref{bypass-isotopy-case2-figure}. Then

$$
\ln\left|\frac{\mathrm{dist}(q_1, q_2)}{\mathrm{dist}(p_1, p_2)}\right| \le 
\ln\left|\frac{\mathrm{dist}(q_1, q_2)}{\ell\left(L\big|_{p_1}^{p_2}\right)}\right| + \ln C. 
$$

Since $\ell\left(L\big|_{p_1}^{p_2}\right) = \ell\left(L\big|_{p_1}^{p}\right) + \ell\left(L\big|_{p}^{p_2}\right)$ and $\ln \left|\frac{\ell\left(L\big|_{p_1}^{p}\right)}{\ell\left(F(L\times\{t\})\big|_{q_1}^{p}\right)}\right| \le \ln \left(C_{\chi}^2\cdot \pi\right)$,

$$
\ln\left|\frac{\mathrm{dist}(q_1, q_2)}{\mathrm{dist}(p_1, p_2)}\right| \le 
\ln\left|\frac{\mathrm{dist}(q_1, q_2)}{\ell\left(F(L\times\{t\})\big|_{q_1}^{q_2}\right)}\right| + \ln \left(C\cdot C_{\chi}^2\cdot\pi\right). 
$$

Since $q_1$ lies inside the closed disk $\chi(\overline{\mathbb D})$, $q_2$ lies outside this disk, and $\partial\chi(\overline{\mathbb D}) = L_0\cup L_1$, there is a point $q_3\in L_i,$ where $i = 0$ or $i = 1$, lying on the straight line segment joining the points $q_1$ and $q_2.$ Also

\begin{multline*}
\ell\left(F(L\times\{t\})\big|_{q_1}^{q_2}\right) = \ell\left(F(L\times\{t\})\big|_{q_1}^{p}\right) + \ell\left(F(L\times\{t\})\big|_{p}^{q_2}\right)
\le C\cdot\mathrm{dist}(q_1, p) + \ell\left(L\big|_{p}^{q_2}\right) \le \\
\le C(\mathrm{dist}(q_1, q_3) + \mathrm{dist}(q_3, p)) + \ell\left(L\big|_{p}^{q_2}\right)=
C\cdot\mathrm{dist}(q_1, q_3) + C\cdot \mathrm{dist}(q_3, p) + \ell\left(L\big|_{p}^{q_2}\right)\le\\
\le C\cdot\mathrm{dist}(q_1, q_3) + C\cdot\ell\left(F(L\times \{i\})\big|_{q_3}^p\right) + C\cdot \ell\left(L\big|_{p}^{q_2}\right) = C\cdot\mathrm{dist}(q_1, q_3) + C\cdot\ell\left(F(L\times\{i\})\big|_{q_3}^{q_2}\right)\le\\
\le C\cdot\mathrm{dist}(q_1, q_3) + C^2\mathrm{dist}(q_3, q_2) \le  C^2\cdot\mathrm{dist}(q_1, q_3) + C^2\mathrm{dist}(q_3, q_2) = C^2\cdot \mathrm{dist}(q_1, q_2).
\end{multline*}

Therefore

$$\ln\left|\frac{\mathrm{dist}(q_1, q_2)}{\mathrm{dist}(p_1, p_2)}\right| \le \ln \left(C^3\cdot C_{\chi}^2\cdot\pi\right).$$

\end{proof}

\section{Legendrian curves and Legendrian isotopies}
In this section we introduce Legendrian Lavrentiev curves in dimension 3. We prove that a Lavrentiev curve is Legendrian if and only if the integral of the contact form on any its subarc is zero. This property closely resembles the definition of a Legendrian smooth curve. 

We introduce Legendrian isotopies of Legendrian Lavrentiev curves and prove that for any number $C$ any $C^0$-continuous isotopy of Legendrian $C$-Lavrentiev curves is Legendrian. 


We discuss the regular projection of Legendrian curves. We prove that any continuous isotopy of $C$-bi-Lipschitz curves on the plane lifts to a Legendrian isotopy.


\begin{defi}
A 2-plane distribution on a smooth 3-manifold is called {\it contact structure} if locally it is the kernel of a smooth differential 1-form $\alpha$ such that $\alpha\wedge d\alpha\neq0$ everywhere.

We denote by $\xi_p$ a plane of the contact structure $\xi$ at the point $p$.

Planes of contact structure are called {\it contact}. The 1-form $\alpha$ is also called {\it contact}.
\end{defi}

We fix the orientation of the contact manifold determined locally by the form $\alpha\wedge d\alpha.$ Clearly the orientation does not depend on the contact form representing the contact structure. Therefore the orientation is well defined.

\begin{defi}
\label{def-angle}
A locally Lavrentiev curve is called {\it Legendrian} if for any point $p$ on the curve and any (some) Euclidean coordinates of class $C^1$ in a neighborhood of $p$ it is true that for any $\varepsilon > 0$ there exists a smaller neighborhood $U\ni p$ such that for any two distinct points $p',p''\in U$ on the curve
\begin{equation}\angle(p''-p', \xi_p) < \varepsilon.
\label{angle-condition}
\end{equation}

A continuous isotopy of Legendrian curves $\{L_t\}_{t\in[0;1]}$ is called {\it Legendrian} if for any moment $t_0\in[0;1]$, for any point $p\in L_{t_0}$ and any (some) Euclidean coordinates of class $C^1$ in a neighborhood of $p$ it is true that for any $\varepsilon > 0$ there exist a smaller neighborhood $U\ni p$ and an interval $I\ni t_0$ such that for any moment $t\in I$ and any two distinct points $p',p''\in L_t\cap U$ the condition~(\ref{angle-condition}) is satisfied.
\end{defi}


\begin{exam}\label{piecewise-smooth-example}
A cusp-free piecewise smooth embedded curve, which is a union of Legendrian smooth arcs, is Legendrian. It follows from Proposition~\ref{leg-integral} and the fact that it is locally Lavrentiev (by Lemma~\ref{Lavrentiev-example}).
\end{exam}

\begin{exam}[Cusp]
The curve $(t^3,t^2,-\frac25 t^5)$ in $(\mathbb R^3, \ker\left(dz + xdy\right))$ is a union of two Legendrian arcs but is not Legendrian since it is not locally Lavrentiev by Lemma~\ref{Lavrentiev-semitangent}.
\end{exam}

\begin{exam}[Logarithmic spiral]\label{spiral-example}
The closure of a leaf of the contact foliation on the surface $z = x^2 + y^2$ with respect to the contact structure $\ker(dz+xdy-ydx)$ is a Legendrian curve. To see this, note that the orthogonal projection of this curve to the $xy$-plane is a logarithmic spiral. This follows from the fact that the surface and the contact structure are invariant under dilatations $(x,y,z)\mapsto (kx,ky,k^2z)$ where $k>0$ and rotations around the $z$-axis. Logarithmic spirals are Lavrentiev (see Example~\ref{Lavrentiev-example-2}). So the initial curve is Legendrian by Lemma~\ref{Lavrentiev-projection-lemma2}. The union of two such curves is also a Legendrian curve.
\end{exam}

\begin{exam}[Too slow spiraling]
The closure of a leaf of the contact foliation on the surface $z^{3/2} = x^2 + y^2$ with respect to the contact structure $\ker(dz+xdy-ydx)$ is not a Legendrian curve since it is not locally Lavrentiev.
\end{exam}

First, we prove that in Definition~\ref{def-angle} we can evenly write "any" or "some".

\begin{lemm}\label{def-angle-any-some}
Let $V$ and $W$ be open subsets of Euclidean space $\mathbb R^3$, $p\in V,$ $\xi_p\subset T_p\mathbb R^3$ be a 2-plane and $h:V\to W$ be a $C^1$-diffeomorphism. Then for any $\varepsilon > 0$ there exists $\delta > 0$ such that for any two distinct points $p',p''$ such that $\mathrm{dist}(p,p')<\delta,$ $\mathrm{dist}(p,p'')<\delta$ and $\angle(p''-p', \xi_p) < \delta$
$$\angle(h(p'')-h(p'), Dh(\xi_p)) < \varepsilon.$$
\end{lemm}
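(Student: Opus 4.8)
The plan is to reduce everything to a linear estimate for the isomorphism $A := Dh_p$, corrected by a remainder term that is controlled by the $C^1$ hypothesis. Since $h$ is a $C^1$-diffeomorphism, $A$ is a linear isomorphism, so the operator norms $\|A\|$ and $\|A^{-1}\|$ are finite and positive, and $A$ maps $\xi_p$ onto the plane $\Pi := Dh_p(\xi_p)$. First I would fix $\delta_0>0$ so small that the Euclidean ball $B(p,\delta_0)$ lies in $V$; then for $p',p''\in B(p,\delta_0)$ the whole segment $[p',p'']$ lies in $B(p,\delta_0)$ by convexity. Writing $v:=p''-p'$ and applying the fundamental theorem of calculus to $s\mapsto h(p'+sv)$ gives
\[
h(p'')-h(p') = \int_0^1 Dh_{p'+sv}(v)\,ds = Av + R,\qquad R:=\int_0^1\bigl(Dh_{p'+sv}-A\bigr)(v)\,ds.
\]
By uniform continuity of $Dh$ on the compact ball $\overline{B(p,\delta_0)}$, for any prescribed $\eta>0$ one may shrink $\delta_0$ so that $\|Dh_q-A\|<\eta$ for all $q\in B(p,\delta_0)$, whence $|R|\le\eta|v|$.

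The core of the argument is then the purely linear observation that an isomorphism distorts small angles into small angles. I would decompose $v=v_\parallel+v_\perp$ orthogonally with $v_\parallel\in\xi_p$ and $v_\perp\perp\xi_p$, so that the hypothesis $\angle(v,\xi_p)<\delta$ reads $|v_\perp|=|v|\sin\theta$, $|v_\parallel|=|v|\cos\theta$ with $\theta<\delta$. Since $Av_\parallel\in\Pi$, the distance from $h(p'')-h(p')=Av_\parallel+Av_\perp+R$ to the plane $\Pi$ is at most $|Av_\perp|+|R|\le\|A\|\,|v_\perp|+\eta|v|$, while its norm is at least $|Av_\parallel|-|Av_\perp|-|R|\ge \frac{1}{\|A^{-1}\|}|v_\parallel|-\|A\|\,|v_\perp|-\eta|v|$, using $|Aw|\ge|w|/\|A^{-1}\|$. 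Dividing, I obtain
\[
\sin\angle\bigl(h(p'')-h(p'),\,\Pi\bigr)\ \le\ \frac{\|A\|\sin\theta+\eta}{\tfrac{1}{\|A^{-1}\|}\cos\theta-\|A\|\sin\theta-\eta}.
\]

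To finish, I would note that as $\delta\to0$ we may simultaneously force $\theta<\delta\to0$ and (by shrinking $\delta_0$) $\eta\to0$; the numerator then tends to $0$ and the denominator to $\frac{1}{\|A^{-1}\|}>0$, so the right-hand side tends to $0$. Hence, given $\varepsilon$, choosing $\delta$ small enough (together with the corresponding $\eta$) makes the bound smaller than $\sin\varepsilon$, which yields $\angle(h(p'')-h(p'),Dh_p(\xi_p))<\varepsilon$, as required.

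The main obstacle I anticipate is coupling the two independent error sources under a single $\delta$: the nonlinear remainder $R$ must be kept small by the $C^1$-continuity of $Dh$ on the ball, while the angle distortion by the linear part $A$ is governed by $\|A\|$ and $\|A^{-1}\|$ (this is exactly what makes the statement independent of the chart, justifying the ``any''/``some'' interchange in Definition~\ref{def-angle}). The delicate point is verifying that the denominator stays bounded away from zero throughout, which holds because $\cos\theta\to1$ while $\|A\|\sin\theta+\eta\to0$; everything else is routine once the fundamental-theorem-of-calculus splitting and the orthogonal decomposition of $v$ are in place.
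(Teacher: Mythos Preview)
Your proof is correct and follows essentially the same route as the paper: both linearize via the fundamental theorem of calculus to write $h(p'')-h(p') = (Dh_p)(p''-p') + o(1)\cdot|p''-p'|$ using the $C^1$ hypothesis, and then handle the linear part. The only difference is cosmetic: the paper dispatches the linear case in one sentence (``any affine map acts continuously on the set of directions''), whereas you spell out that continuity explicitly via the operator norms $\|A\|$ and $\|A^{-1}\|$ and an orthogonal decomposition of $v$.
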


\begin{proof}
If $h$ is an affine map then the statement is obvious since any affine map acts continuously on the set of directions.

In the general case by continuity of $Dh$ 
\begin{multline*}
h(p'') - h(p') = \int\limits_0^1 \frac{d}{dt}\left(h((1-t)p'+tp'')\right)dt = \\ = \int\limits_0^1 \left(D_{(1-t)p'+tp''}h \right)(p''-p') dt = \left(D_p h\right) (p''-p') + o(1)\cdot||p''-p'||
\end{multline*}
and hence
$$
\angle(h(p'') - h(p'), Dh(\xi_p)) = \angle(\left(D_p h\right)(p'' - p'), Dh(\xi_p)) + o(1),
$$
where $o(1)\to0$ uniformly as $\delta\to0.$ So we reduced the problem to the affine case.
\end{proof}

%
%

\begin{prop}
\label{leg-integral}
Let $\alpha$ be a contact form defined in some neighborhood of a locally Lavrentiev curve $L$. Then $L$ is Legendrian if and only if the Riemann--Stieltjes integral of the contact form on any compact subarc of $L$ is zero.
\end{prop}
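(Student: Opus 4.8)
The plan is to work locally with a bi-Lipschitz arc-length parametrization and reduce everything to a statement about the tangent vector, which exists almost everywhere. Both the angle condition and the vanishing of the integral are local: the integral over a subarc is the sum of the integrals over the pieces into which an interior point splits it, so the integral vanishes on every compact subarc if and only if it vanishes on all sufficiently short ones. Hence I would fix an arbitrary point $p_0\in L$, choose Euclidean $C^1$-coordinates near it (Lemma~\ref{def-angle-any-some} guarantees the angle condition is insensitive to this choice), and take a compact subarc $\gamma\colon[0;\ell]\to\mathbb R^3$ through $p_0$ parametrized by arc length. By Lemma~\ref{compact-Lavrentiev} and Remark~\ref{bi-Lipschitz-curves} this $\gamma$ is $C$-bi-Lipschitz for some $C$, hence Lipschitz and differentiable almost everywhere with $|\gamma'(s)|=1$ a.e. Writing $\alpha=\sum_i a_i\,dx_i$ with continuous coefficients, each $\gamma_i$ is absolutely continuous, so the Riemann--Stieltjes integral over a subarc is the Lebesgue integral
\[
\int_{\gamma\big|_{\gamma(s')}^{\gamma(s'')}}\alpha=\int_{s'}^{s''}\alpha_{\gamma(s)}(\gamma'(s))\,ds .
\]
Consequently the integral vanishes on every subarc precisely when the bounded measurable function $s\mapsto\alpha_{\gamma(s)}(\gamma'(s))$ is zero for almost every $s$. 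It thus remains to show that this almost-everywhere tangency is equivalent to the angle condition of Definition~\ref{def-angle}.

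For the implication \emph{Legendrian $\Rightarrow$ tangency}, let $s_0$ be a point of differentiability of $\gamma$ and set $p=\gamma(s_0)$. Given $\varepsilon>0$, pick the neighborhood $U\ni p$ from the Legendrian condition; for $h$ small the distinct points $p'=\gamma(s_0)$ and $p''=\gamma(s_0+h)$ lie in $U$, so $\angle(\gamma(s_0+h)-\gamma(s_0),\xi_p)<\varepsilon$. Dividing by $h$ and letting $h\to0$, the chord directions converge to the unit vector $\gamma'(s_0)$, whence $\angle(\gamma'(s_0),\xi_p)\le\varepsilon$; as $\varepsilon$ is arbitrary, $\gamma'(s_0)\in\xi_p=\ker\alpha_p$, i.e. $\alpha_{\gamma(s_0)}(\gamma'(s_0))=0$. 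This holds for almost every $s_0$.

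The converse, \emph{tangency $\Rightarrow$ Legendrian}, is the main point and is where the Lavrentiev hypothesis is genuinely used. Fix $p=\gamma(s_0)$ and, for nearby points $p'=\gamma(s')$, $p''=\gamma(s'')$ on the curve, apply the fundamental theorem of calculus together with the splitting $\alpha_p(\gamma'(s))=\bigl(\alpha_p-\alpha_{\gamma(s)}\bigr)(\gamma'(s))+\alpha_{\gamma(s)}(\gamma'(s))$, whose last term is zero a.e. by assumption, to obtain
\[
|\alpha_p(p''-p')|=\left|\int_{s'}^{s''}\bigl(\alpha_p-\alpha_{\gamma(s)}\bigr)(\gamma'(s))\,ds\right|\le \eta(\delta)\,|s''-s'|=\eta(\delta)\,\ell\!\left(\gamma\big|_{p'}^{p''}\right),
\]
where $\eta(\delta)$ is a modulus of continuity bounding $\|\alpha_p-\alpha_q\|$ for $q$ within distance $\delta$ of $p$. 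Now the $C$-Lavrentiev inequality $\ell(\gamma|_{p'}^{p''})\le C\,|p''-p'|$ converts arc length into chord length, yielding $|\alpha_p(p''-p')|\le C\,\eta(\delta)\,|p''-p'|$. Since a contact form is nowhere zero, $\|\alpha_p\|>0$ and
\[
\sin\angle(p''-p',\xi_p)=\frac{|\alpha_p(p''-p')|}{\|\alpha_p\|\,|p''-p'|}\le\frac{C\,\eta(\delta)}{\|\alpha_p\|},
\]
which drops below any prescribed $\varepsilon$ once the neighborhood (hence $\delta$) is small; this is exactly the angle condition at $p$.

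The single real obstacle is this converse estimate. The naive bound on $|\alpha_p(p''-p')|$ is proportional to the \emph{arc length} of the subarc, which for a merely rectifiable curve need not be comparable to the chord $|p''-p'|$, so without a chord-arc bound the implication simply fails; it is the Lavrentiev inequality that closes the gap. A minor point, also settled by the same bound, is ensuring the ball of radius $\delta$ controlling $\eta$ actually contains the whole subarc $\gamma|_{p'}^{p''}$: taking $U=B(p,\rho)$, the subarc has length at most $2C\rho$ and hence diameter at most $2C\rho$, so it stays within $B(p,(1+2C)\rho)$ and one sets $\delta=(1+2C)\rho$. The freedom to change $C^1$-coordinate charts is absorbed once and for all into Lemma~\ref{def-angle-any-some}.
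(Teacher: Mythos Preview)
Your proof is correct, but it takes a somewhat different route from the paper's. The paper argues both directions directly at the level of Riemann--Stieltjes partial sums: for the forward implication it bounds each term $\alpha_{\gamma(s_j')}(\gamma(s_j)-\gamma(s_{j-1}))$ using the angle condition on the chord, and for the converse it telescopes $\alpha_p(p''-p')$ through a fine partition and compares with the (zero) integral, never invoking the a.e.\ derivative. You instead pass immediately to the Lebesgue integral $\int\alpha_{\gamma(s)}(\gamma'(s))\,ds$ via Rademacher/absolute continuity and reduce everything to the statement ``$\gamma'(s)\in\xi_{\gamma(s)}$ almost everywhere'', which is precisely what the paper records afterward as Corollary~\ref{tangent-to-contact-plane-almost-everywhere}. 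So you have effectively proved the proposition and that corollary in one stroke. The converse estimate is essentially the same in both arguments---continuity of $\alpha$ plus the Lavrentiev chord--arc bound to turn an arc-length estimate into a chord estimate---just written in integral rather than sum form. The paper's version is slightly more elementary (no measure theory); yours is a bit cleaner conceptually and makes the role of the a.e.\ tangent explicit from the start.
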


\begin{proof}
Let $L$ be Legendrian. Let $\gamma$ be a compact subarc of $L$ and let us prove that the integral of $\alpha$ on $\gamma$ equals zero.

We can assume that $\gamma$ is contained in some Euclidean chart. Indeed, it is true for sufficiently small subarcs of $\gamma$, so we can use compactness of $\gamma$ and finite additivity of integral.

Let $\gamma:[a;b]\to M$ be a parametrization of the arc $\gamma.$ By Definition~\ref{def-angle} for any $\varepsilon > 0$ there exists $\delta>0$ such that if $s_0, s_1, s'\in[a;b]$, $|s_0 - s'|<\delta,$ $|s_1 - s'| < \delta$ and $s_0\neq s_1$ then $\angle(\gamma(s_1)-\gamma(s_0), \xi_{\gamma(s')}) < \varepsilon.$ 

Let $a=s_0 < s_1 < \ldots < s_n=b$ be a subdivision of the interval $[a;b]$ such that $|s_i-s_{i-1}|<\delta$ for any $i = 1,\dots, n.$ Then

\begin{multline*}
\left|\sum\limits_{j=1}^n \alpha_{\gamma(s_j')} \left(\gamma(s_j) - \gamma(s_{j-1}) \right)\right| \le \sum\limits_{j=1}^n ||\alpha_{\gamma(s_j')}|| \cdot ||\gamma(s_j) - \gamma(s_{j-1}|| \sin\angle(\gamma(s_j) - \gamma(s_{j-1}), \xi_{\gamma(s_j')}) < \\ < ||\alpha|| \ell(\gamma) \sin\varepsilon,
\end{multline*}

where $s_j'\in [s_{j-1};s_j]$, $\alpha_p = \alpha\big|_{T_pM}$ and $||\alpha|| = \sup\limits_{p\in\gamma} ||\alpha_p||.$
Therefore the integral sum tends to zero as $\delta\to0.$

Now let the integral on any compact subarc of $L$ be zero. Let us prove that the curve is Legendrian.

Let $p\in L$ and suppose that $p$ is contained in some Euclidean chart. We can also assume that the curve $L$ is compact and is contained in this chart. Since $L$ is locally Lavrentiev, it is $C$-Lavrentiev for some $C$ with respect to Euclidean metric.

Let $\varepsilon > 0.$ Let $B_{\delta}(p)$ be a ball of radius $\delta$ centered at $p$ such that for any two points $p',p''\in B_{\delta}(p)$

\begin{equation}
\label{alpha-range-bound}
||\alpha_{p''}-\alpha_{p'}||<\varepsilon.
\end{equation}

Let $U$ be an open ball centered at $p$ of radius $\delta / (C+1).$ Then for any two points $p',p''\in L\cap U$
$$
L\big|_{p'}^{p''}\subset B_{\delta}(p)
$$
because if there exists a point $q\in L\big|_{p'}^{p''}\setminus B_{\delta}(p)$ then

\begin{multline*}
2(\delta - \delta / (C+1) ) < \mathrm{dist}(p', q) + \mathrm{dist}(q, p'') \le \ell(L\big|_{p'}^{p''}) \le C\mathrm{dist}(p', p'') < C\cdot 2\delta/(C+1),
\end{multline*}
a contradiction.

Let $p',p''\in L\cap U.$ Then

\begin{equation}
\label{angle-equation}
\sin \angle(p''-p', \xi_p) = \frac{|\alpha_p(p''-p')|}{||\alpha_p||\cdot||p''-p'||}.
\end{equation}

Let us give an upper bound for an expression above the fraction line. Let $\gamma:[a;b]\to M$ be a parametrization of a curve $L\big|_{p'}^{p''}$ and let $a=s_0 < s_1 < \ldots < s_n=b$ be a subdivision of the interval $[a;b]$ such that 

\begin{equation}
\label{integral-sums}
\left|\sum\limits_{j=1}^n \alpha_{\gamma(s'_j)}(\gamma(s_j)-\gamma(s_{j-1})) - \int\limits_{\gamma}\alpha\right|<\varepsilon ||p''-p'||
\end{equation}
for any $s'_j\in[s_{j-1};s_j].$

Then

\begin{multline*}
\left|\alpha_p(p''-p')\right| = \left|\sum\limits_{j=1}^n \alpha_p(\gamma(s_j) - \gamma(s_{j-1}))\right| = \\
= \left|\sum\limits_{j=1}^n \alpha_{\gamma(s'_j)}(\gamma(s_j) - \gamma(s_{j-1})) + \sum\limits_{j=1}^n (\alpha_p - \alpha_{\gamma(s'_j)})(\gamma(s_j) - \gamma(s_{j-1}))  \right| \stackrel{(\ref{alpha-range-bound}),(\ref{integral-sums})}{\le} \\
\stackrel{(\ref{alpha-range-bound}),(\ref{integral-sums})}{\le} \left|\int\limits_{\gamma}\alpha\right| + \varepsilon||p''-p'|| + \varepsilon \ell(L_{p'}^{p''})\le \varepsilon(1 +  C) ||p''-p'||.
\end{multline*}

So by~(\ref{angle-equation})
$$
\sin \angle( p''-p', \xi_p) \le \frac{\varepsilon(1 +  C)}{||\alpha_p||}.
$$ 

This means that the curve $L$ is Legendrian.
\end{proof}

The natural parametrization of a Lavrentiev curve is a bi-Lipschitz map, hence in Euclidean chart it is given by a triple of Lipschitz functions. By Lebesgue theorem (on functions of bounded variation) every Lipschitz function on the line is almost everywhere differentiable and it is equal to the integral of its derivative which for Lipschitz functions belongs to the class $L^{\infty}(\mathbb R)$. Hence by Radon--Nikodym theorem $\int\limits_{\gamma} \alpha = \int \alpha(\dot\gamma) dt$, where the Riemann--Stiltjes integral is on the left side and the Lebesgue integral is on the right. See for example~\cite{ShilGur} for details. So we obtain the following.

\begin{coro}\label{tangent-to-contact-plane-almost-everywhere}
A locally Lavrentiev curve is Legendrian if and only if almost everywhere it has a derivative which belongs to the contact plane.
\end{coro}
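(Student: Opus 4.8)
The plan is to combine Proposition~\ref{leg-integral} with the measure-theoretic identity stated in the paragraph just above the corollary, which reduces the claim to an elementary fact about Lebesgue integrals. By Proposition~\ref{leg-integral}, being Legendrian is equivalent to the vanishing of the Riemann--Stieltjes integral $\int_\gamma\alpha$ on every compact subarc $\gamma$ of $L$. Parametrizing $L$ by arc length in a Euclidean chart, the cited Lebesgue theorem (equivalently, Rademacher's theorem) gives that the bi-Lipschitz parametrization $\gamma$ is differentiable almost everywhere with $\dot\gamma\in L^{\infty}$, and the Radon--Nikodym identity converts the Riemann--Stieltjes integral into the Lebesgue integral $\int_\gamma\alpha=\int\alpha_{\gamma(t)}(\dot\gamma(t))\,dt$. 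It therefore suffices to show that this integral vanishes over every subinterval if and only if $\dot\gamma\in\xi$ almost everywhere.

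First I would treat the easy direction. If $\dot\gamma(t)\in\xi_{\gamma(t)}=\ker\alpha_{\gamma(t)}$ for almost every $t$, then the integrand $\alpha_{\gamma(t)}(\dot\gamma(t))$ vanishes almost everywhere, so its integral over every subinterval is zero. By the identity above this means $\int_\gamma\alpha=0$ on every compact subarc, and Proposition~\ref{leg-integral} concludes that $L$ is Legendrian.

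For the converse I would argue that the locally integrable function $g(t)=\alpha_{\gamma(t)}(\dot\gamma(t))$ has vanishing integral over every subinterval, since each compact subarc corresponds to such an interval and its integral is zero by Proposition~\ref{leg-integral}. A function in $L^1_{\mathrm{loc}}$ whose integral over every interval vanishes must be zero almost everywhere, for instance by applying the Lebesgue differentiation theorem to its indefinite integral. Hence $\alpha_{\gamma(t)}(\dot\gamma(t))=0$, that is $\dot\gamma(t)\in\ker\alpha_{\gamma(t)}=\xi_{\gamma(t)}$, for almost every $t$.

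The only points requiring care---and the closest thing to an obstacle---are measure-theoretic bookkeeping rather than genuine difficulty: one must note that ``almost everywhere'' is taken with respect to the intrinsic arc-length parametrization, which is well defined up to a bi-Lipschitz reparametrization and hence up to null sets; that the statement is local, so covering $L$ by finitely many charts via compactness of each subarc causes no trouble; and that the a.e.\ existence of $\dot\gamma$ together with the equality $\int_\gamma\alpha=\int\alpha(\dot\gamma)\,dt$ is exactly the content already recorded in the preceding paragraph. Since that paragraph supplies precisely these analytic facts, the corollary becomes a direct translation of Proposition~\ref{leg-integral} and requires no further input.
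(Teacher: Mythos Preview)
Your argument is correct and follows essentially the same route as the paper: the paragraph preceding the corollary supplies exactly the analytic input you invoke (a.e.\ differentiability of the bi-Lipschitz parametrization and the identity $\int_\gamma\alpha=\int\alpha(\dot\gamma)\,dt$), and you have simply made explicit the two directions that the paper leaves to the reader. The only addition is your use of the Lebesgue differentiation theorem to pass from vanishing on every subinterval to vanishing a.e., which is the natural way to complete the converse direction and is implicit in the paper's ``so we obtain the following.''
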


\begin{prop}
\label{leg-C-isotopy}
Any continuous isotopy of Legendrian $C$-Lavrentiev curves is Legendrian.
\end{prop}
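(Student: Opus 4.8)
The plan is to reduce the statement about a continuous family of curves to the already established pointwise criterion for a single curve, namely Definition~\ref{def-angle} together with the angle estimate machinery of Proposition~\ref{leg-integral}. The key observation is that the uniform $C$-Lavrentiev hypothesis plays exactly the role that the single-curve $C$-Lavrentiev bound played in the second half of the proof of Proposition~\ref{leg-integral}: it controls the arclength of a subarc joining two nearby points in terms of their distance, with a constant independent of $t$. So the task is to promote that proof to hold uniformly in the isotopy parameter.

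First I would fix a moment $t_0\in[0;1]$, a point $p\in L_{t_0}$, and a $C^1$ Euclidean chart around $p$; by Definition~\ref{def-angle} (the isotopy version) I must produce a neighborhood $U\ni p$ and an interval $I\ni t_0$ so that for all $t\in I$ and all distinct $p',p''\in L_t\cap U$ the angle condition~(\ref{angle-condition}) holds. The natural approach is to transport the scalar argument from the proof of Proposition~\ref{leg-integral}. Choose a ball $B_\delta(p)$ small enough that the contact form varies by less than $\varepsilon$ on it, as in~(\ref{alpha-range-bound}); then, using the $C$-Lavrentiev bound with the same radius computation $\delta/(C+1)$, any subarc $L_t\big|_{p'}^{p''}$ between two points of $L_t\cap U$ stays inside $B_\delta(p)$. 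The crucial point is that this containment argument uses only the constant $C$, which is uniform across the whole family by hypothesis, so the resulting $U$ and the resulting angle bound do not degrade with $t$.

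The main subtlety, and where I expect the real work to lie, is continuity in $t$ of the two ingredients that are not automatically uniform: that the curve $L_t$ meets $U$ at all for $t$ near $t_0$ (so that the statement is non-vacuous and $p$ is genuinely approximated), and that the contact-form integral $\int_{L_t|_{p'}^{p''}}\alpha$ really vanishes for each $t$, not just at $t_0$. The vanishing of the integral is immediate because each $L_t$ is by assumption a Legendrian curve, so Proposition~\ref{leg-integral} applies to it directly; thus I never need to re-prove that the integral is zero, only to re-run the angle estimate~(\ref{angle-equation}) with the uniform constants. The remaining continuity issue—ensuring $L_t\cap U\neq\varnothing$ and that points persist near $p$ as $t$ varies—is handled by the $C^0$-continuity of the isotopy, which lets me shrink $I$ around $t_0$.

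Concretely, I would combine the estimate
$$
\left|\alpha_p(p''-p')\right|\le \varepsilon(1+C)\,\|p''-p'\|,
$$
derived exactly as in Proposition~\ref{leg-integral} but now valid for every $t\in I$ with the same $C$, together with~(\ref{angle-equation}) to conclude
$$
\sin\angle(p''-p',\xi_p)\le \frac{\varepsilon(1+C)}{\|\alpha_p\|}
$$
uniformly for $p',p''\in L_t\cap U$ and $t\in I$. Since $\varepsilon$ was arbitrary and $1+C$ is fixed, this yields the angle condition of the isotopy version of Definition~\ref{def-angle}, completing the proof. The punchline is that no new analytic input is needed beyond Proposition~\ref{leg-integral}; the entire content is the recognition that the single uniform constant $C$ makes the earlier single-curve estimate apply verbatim to the whole family.
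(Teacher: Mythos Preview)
Your proposal is correct and follows essentially the same approach as the paper: the paper's proof is a two-sentence reference back to the ``if'' half of Proposition~\ref{leg-integral}, observing that the angle bound there depends only on $\alpha$ and the Lavrentiev constant $C$, and you have simply unpacked that reference. One small over-complication: you need not worry about whether $L_t\cap U\neq\varnothing$ or about shrinking $I$ by continuity, since the angle condition is vacuous when the intersection has fewer than two points and the resulting $U$ works for all $t\in[0;1]$ simultaneously; the only step the paper makes explicit that you gloss over is the passage from the Riemannian $C$-Lavrentiev constant to a uniform Euclidean constant $\widetilde C$ in the chosen chart.
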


\begin{proof}
We can assume that the curves are compact and non-closed and lie in some fixed Euclidean chart. Since all the curves are $C$-Lavrentiev with respect to some Riemannian metric on the manifold, they all are $\widetilde C$-Lavrentiev with respect to Euclidean metric. The rest of the proof is similar to the proof of "if" part of the proof of Proposition~\ref{leg-integral} since the bound on the angle depends only on $\alpha$ and $C.$
\end{proof}

\subsection{Regular projection}
\label{regular-projection-section}
Let $dz + \mathrm{pr}^*\beta$ be the contact form on $S\times\mathbb R$, where $S$ is a smooth surface, $\mathrm{pr}$ is the projection $S\times\mathbb R\to S$, $z$ is the projection $S\times\mathbb R\to\mathbb R$, $\beta$ is a smooth differential 1-form on $S$. We call the map $\mathrm{pr}$ the {\it regular projection}. 

For all curves in this subsection we require that their regular projection is injective. As usual we equip $S$ with some Riemannian metric, $\mathbb R$ with the Euclidean metric and $S\times\mathbb R$ with the (tensor) product metric.

\begin{lemm}\label{z-uniqueness}
Let two Legendrian curves have a common point and their regular projections coincide. Then these Legendrian curves coincide.
\end{lemm}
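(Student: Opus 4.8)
The plan is to show that a Legendrian curve in $S \times \mathbb{R}$ is completely determined by its regular projection together with any single value of the $z$-coordinate. The key point is that the Legendrian condition forces the $z$-coordinate to be recovered by integrating the form $\mathrm{pr}^*\beta$ along the projected curve. Let $L$ and $L'$ be the two Legendrian curves, and let $P$ be their common point. Fix any other point $Q$ lying on both projections (by hypothesis $\mathrm{pr}(L) = \mathrm{pr}(L')$, and since the regular projection is injective on each curve, points of $L$ and $L'$ are in bijection with points of the common projected curve). I want to show that the two lifts of the projected arc from $\mathrm{pr}(P)$ to $\mathrm{pr}(Q)$ agree in their $z$-coordinate, hence $L$ and $L'$ share the point over $Q$ as well.

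First I would invoke Proposition~\ref{leg-integral}: since $L$ is Legendrian, the Riemann--Stieltjes integral of the contact form $dz + \mathrm{pr}^*\beta$ vanishes on every compact subarc of $L$. Writing $\gamma$ for a parametrization of the subarc $L\big|_P^Q$ and splitting the integral, this gives
$$
0 = \int_{\gamma} \bigl(dz + \mathrm{pr}^*\beta\bigr) = \bigl(z(Q) - z(P)\bigr) + \int_{\mathrm{pr}\circ\gamma} \beta,
$$
where the term $\int_\gamma dz$ telescopes to the difference of endpoint heights because $z \circ \gamma$ is a Lipschitz (hence absolutely continuous) function and the Riemann--Stieltjes integral of its differential along the path equals that difference. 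Thus
$$
z(Q) - z(P) = -\int_{\mathrm{pr}\circ\gamma} \beta.
$$
The right-hand side depends only on the projected curve $\mathrm{pr}\circ\gamma$ and on $\beta$, both of which are common to $L$ and $L'$. The same computation applies verbatim to $L'$, yielding the identical value $z(Q) - z(P)$ for its lift. Since the two curves already agree at $P$ (so the two copies of $z(P)$ coincide), I conclude $z_L(Q) = z_{L'}(Q)$ for every $Q$ on the common projection, and therefore $L = L'$.

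The main subtlety I expect is not conceptual but a matter of bookkeeping: one must ensure that $L\big|_P^Q$ and $L'\big|_P^Q$ project to the \emph{same} oriented subarc of the common projected curve, so that $\int_{\mathrm{pr}\circ\gamma}\beta$ is genuinely the same quantity for both. This follows from injectivity of the regular projection on each curve, which makes the lift of any projected subarc unique once an endpoint's height is fixed. A second point to handle carefully is the reduction to subarcs contained in a single chart and the telescoping of $\int dz$; this is precisely the Riemann--Stieltjes-to-Lebesgue identification already used in the discussion preceding Corollary~\ref{tangent-to-contact-plane-almost-everywhere}, so it can be cited rather than reproved. With these two observations the equality of heights is immediate and the lemma follows.
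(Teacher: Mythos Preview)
Your proposal is correct and takes essentially the same approach as the paper: both arguments use Proposition~\ref{leg-integral} to obtain the formula $z(Q)-z(P)=-\int_{\mathrm{pr}(\gamma|_P^Q)}\beta$, observe that the right-hand side depends only on the common projected arc, and conclude that the $z$-coordinates (hence the curves) coincide. The paper's version is simply terser, stating the integral formula directly rather than deriving it by splitting $\int(dz+\mathrm{pr}^*\beta)$.
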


\begin{proof}
Denote the curves by $\gamma_1$ and $\gamma_2$ and let $p\in\gamma_1\cap\gamma_2,$ $q_1\in\gamma_1,$ $q_2\in\gamma_2$ and $\mathrm{pr} (q_1) = \mathrm{pr} (q_2).$

Since the curves are rectifiable then their regular projections are also rectifiable and therefore we can integrate the 1-form $\beta$ on these projections.

By proposition~\ref{leg-integral}

$$
z(q_i) - z(p) = -\int\limits_{\mathrm{pr}\left(\gamma_i\big|_p^{q_i}\right)}\beta,
$$
for $i = 1, 2.$ Since the right side of the equality does not depend on $i$, $z(q_1) = z(q_2).$ Hence the curves coincide.

\end{proof}

\begin{lemm}
\label{Lavrentiev-projection-lemma}
The regular projection of a Legendrian curve is locally Lavrentiev.
\end{lemm}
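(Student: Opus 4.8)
The plan is to show that the regular projection $\mathrm{pr}$ does not collapse distances too much along a Legendrian curve, so that the Lavrentiev (chord-arc) inequality survives the projection. Since the statement is local, I would fix a point $p$ on the Legendrian curve $L$ and work in a fixed Euclidean chart where the contact form reads $dz + \mathrm{pr}^*\beta$ with $\beta$ smooth and bounded on a compact neighborhood. By Corollary~\ref{tangent-to-contact-plane-almost-everywhere}, $L$ is $C$-Lavrentiev on some small neighborhood for a suitable $C$, and the projection $\mathrm{pr}$ is clearly $1$-Lipschitz (it only forgets the $z$-coordinate), so lengths and distances can only decrease under $\mathrm{pr}$. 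Thus the upper bound $\ell(\mathrm{pr}(L\big|_P^Q)) \le \ell(L\big|_P^Q) \le C\cdot\mathrm{dist}(P,Q)$ is immediate; the real content is a lower bound on $\mathrm{dist}(\mathrm{pr}(P),\mathrm{pr}(Q))$ in terms of $\mathrm{dist}(P,Q)$.

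The key point is to control the vertical coordinate difference $z(Q)-z(P)$ by the horizontal displacement. First I would use the Legendrian condition in its integral form (Proposition~\ref{leg-integral}): exactly as in Lemma~\ref{z-uniqueness},
\[
z(Q) - z(P) = -\int\limits_{\mathrm{pr}\left(L\big|_P^Q\right)}\beta.
\]
Bounding this Riemann--Stieltjes integral by $\|\beta\|_\infty \cdot \ell\!\left(\mathrm{pr}(L\big|_P^Q)\right) \le \|\beta\|_\infty \cdot \ell(L\big|_P^Q) \le \|\beta\|_\infty\, C\,\mathrm{dist}(P,Q)$ gives a crude estimate, but to get a genuine lower bound on the horizontal distance I want to compare the vertical drop against the horizontal length of the projected arc. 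The cleaner route is to shrink the neighborhood so that, by the angle condition~(\ref{angle-condition}) in Definition~\ref{def-angle}, every chord $Q-P$ makes an arbitrarily small angle with the contact plane $\xi_p$; since $\xi_p$ is transverse to the $z$-direction (because $\alpha\wedge d\alpha\ne0$ forbids $\xi_p$ from containing the vertical line when $\beta$ is bounded), the chord is nearly horizontal, and therefore $|z(Q)-z(P)| \le \tfrac12\,\mathrm{dist}(\mathrm{pr}(P),\mathrm{pr}(Q))$ on a small enough neighborhood. This yields
\[
\mathrm{dist}(P,Q) \le \mathrm{dist}(\mathrm{pr}(P),\mathrm{pr}(Q)) + |z(Q)-z(P)| \le \tfrac{3}{2}\,\mathrm{dist}(\mathrm{pr}(P),\mathrm{pr}(Q)),
\]
so the projection is bi-Lipschitz on chords, and combining with the length bound above gives
\[
\ell\!\left(\mathrm{pr}(L\big|_P^Q)\right) \le C\,\mathrm{dist}(P,Q) \le \tfrac{3}{2}\,C\,\mathrm{dist}(\mathrm{pr}(P),\mathrm{pr}(Q)),
\]
which is precisely the Lavrentiev condition for the projected curve on a neighborhood of $\mathrm{pr}(p)$.

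I expect the main obstacle to be the transversality step: making rigorous and uniform the claim that the contact plane $\xi_p$ stays uniformly bounded away from containing the vertical direction, so that a small angle between the chord and $\xi_p$ forces a correspondingly small vertical-to-horizontal ratio. For the specific form $dz+\mathrm{pr}^*\beta$ this is transparent, since $\partial_z$ is never tangent to $\xi$ (the form evaluates to $1$ on it), but one must quantify this uniformly on a compact neighborhood and feed it into the $\varepsilon$ in~(\ref{angle-condition}). The remaining care is bookkeeping: the injectivity of $\mathrm{pr}$ on $L$ is assumed throughout the subsection, so $\mathrm{pr}(L)$ is genuinely an embedded curve, and local Lavrentievness is all that is asserted, so no global constant is needed and the $\varepsilon$-$\delta$ shrinking of neighborhoods around each point suffices.
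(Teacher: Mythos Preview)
Your proposal is correct and follows essentially the same idea as the paper: show that chords of the Legendrian curve stay uniformly bounded away from the vertical direction, so that $\mathrm{pr}$ is bi-Lipschitz on chords and the Lavrentiev inequality transfers. The paper phrases this by setting $\varphi=\inf_{p'\ne p''\in\gamma}\angle(p''-p',\partial/\partial z)$ on a compact subarc and proving $\varphi>0$ via a compactness/contradiction argument (the diagonal limit case uses the Legendrian angle condition, the off-diagonal case uses injectivity of $\mathrm{pr}$); you instead invoke condition~(\ref{angle-condition}) directly to get $\angle(Q-P,\xi_p)<\varepsilon$ and then use that $\partial_z$ is transverse to $\xi_p$ (since $\alpha(\partial_z)=1$, not because of $\alpha\wedge d\alpha\ne0$ as you wrote) to convert this into a bound on the vertical-to-horizontal ratio. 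These are two packagings of the same geometric fact, and your purely local version is adequate since only local Lavrentievness is claimed. Two cosmetic points: the citation of Corollary~\ref{tangent-to-contact-plane-almost-everywhere} for ``$L$ is $C$-Lavrentiev locally'' is misplaced (that is just part of the definition of a Legendrian Lavrentiev curve), and the constant $\tfrac12$ in $|z(Q)-z(P)|\le\tfrac12\,\mathrm{dist}(\mathrm{pr}(P),\mathrm{pr}(Q))$ should be replaced by a generic constant depending on $\|\beta_p\|$ and the chosen $\varepsilon$.
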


\begin{proof}
Let $p$ be a point on the regular projection. It is sufficient to prove that there exists an open neighborhood $U\subset S$ of the point $p$ such that the intersection of $U$ with the regular projection of the Legendrian curve is a locally Lavrentiev curve. We can choose such a neighborhood $U$ that this intersection is a curve and $U$ is diffeomorphic to an open subset of the Euclidean plane. Let $\gamma$ be a compact subarc of the Legendrian curve whose regular projection lies inside $U$. It is sufficient to prove that $\mathrm{pr}(\gamma)$ is Lavrentiev with respect to the Euclidean metric (see Convention~\ref{Lavrentiev-metric-equivalence}).

For some $C$ the arc $\gamma$ is $C$-Lavrentiev with respect to the standard Euclidean metric in $U\times\mathbb R.$

Let

$$\varphi = \inf\limits_{p'\neq p''\in\gamma}\angle(p''-p',\partial/\partial z).$$

Let us prove that $\varphi > 0.$ Suppose the contrary. Then by compactness there are sequences $\{p'_i\}_{i\in\mathbb N}$, $\{p''_i\}_{i\in\mathbb N}$ such that $p'_i, p''_i\in \gamma$ for any $i\in\mathbb N$ and  $p'_i\to p',$ $p''_i\to p''$ and $\angle(p''_i-p'_i,\partial/\partial z)\to 0$ as $i\to+\infty.$ If $p' = p''$ we get a contradiction with the fact that $\gamma$ is Legendrian since $\partial/\partial z$ is transverse to the contact structure. If $p'\neq p''$ we get a contradiction with the injectivity of the regular projection.

Let $p'$ and $p''$ be two distinct points on the curve $\gamma.$ Then

$$
|\mathrm{pr}_{xy}p''-\mathrm{pr}_{xy}p'| \ge |p'' - p'|\sin\varphi \ge \ell\left(\gamma\big|_{p'}^{p''}\right)\sin\varphi/C \ge \ell\left(\mathrm{pr}_{xy}\left(\gamma\big|_{p'}^{p''}\right)\right)\sin\varphi/C.
$$

So the regular projection of $\gamma$ is $(C/\sin\varphi)$-Lavrentiev with respect to the standard Euclidean metric in the $xy$-plane.
\end{proof}

\begin{lemm}\label{Lavrentiev-projection-lemma3}
Let the regular projection of a compact arc $\gamma$ be $C$-Lavrentiev. Suppose that for any two points $p,q\in\gamma$
$$
z(q) - z(p) = -\int\limits_{\mathrm{pr}\left(\gamma\big|_{p}^{q}\right)}\beta.
$$
Then
\begin{enumerate}
\item For any two points $p,q\in\gamma$
\begin{equation}
\label{projection-distance-bound}
\mathrm{dist}(p,q) \le \sqrt{1+\left(\sup\limits_{p'\in\gamma} ||\beta_{p'}||\cdot C\right)^2}\cdot\mathrm{dist}(\mathrm{pr}(p), \mathrm{pr}(q)).
\end{equation}

\item For any two points $p,q\in\gamma$
\begin{equation}
\label{projection-length-bound}
\ell\left(\gamma\big|_{p}^{q}\right)\le \sqrt{1+\left(\sup\limits_{p'\in\gamma} ||\beta_{p'}||\cdot C\right)^2}\cdot \ell\left(\mathrm{pr}\left(\gamma\big|_{p}^{q}\right)\right).
\end{equation}

\item $\gamma$ is $\sqrt{1+\left(\sup\limits_{p'\in\gamma} ||\beta_{p'}||\cdot C\right)^2}\cdot C$-Lavrentiev.
\end{enumerate}
\end{lemm}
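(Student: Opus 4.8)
The plan is to exploit the product structure of the metric on $S\times\mathbb R$. Two elementary facts about Riemannian products will do the work: first, the projection $\mathrm{pr}$ is $1$-Lipschitz, i.e.\ $\mathrm{dist}(\mathrm{pr}(p),\mathrm{pr}(q))\le\mathrm{dist}(p,q)$; second, one has the Pythagorean upper bound $\mathrm{dist}(p,q)^2\le\mathrm{dist}(\mathrm{pr}(p),\mathrm{pr}(q))^2+|z(p)-z(q)|^2$, obtained by joining $p$ and $q$ with the product of a minimizing geodesic in $S$ and the straight segment in the $\mathbb R$-factor, whose speed is the constant $\sqrt{\mathrm{dist}(\mathrm{pr}(p),\mathrm{pr}(q))^2+|z(p)-z(q)|^2}$. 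Writing $K=\sup_{p'\in\gamma}\|\beta_{p'}\|$ and $B=\sqrt{1+(KC)^2}$, the heart of the matter is the estimate on the vertical displacement. Using the hypothesis together with the standard bound of a Riemann--Stieltjes integral by the supremum norm of the integrand times the length of the path,
$$
|z(q)-z(p)|=\left|\int_{\mathrm{pr}\left(\gamma\big|_p^q\right)}\beta\right|\le K\cdot\ell\left(\mathrm{pr}\left(\gamma\big|_p^q\right)\right)\le KC\cdot\mathrm{dist}(\mathrm{pr}(p),\mathrm{pr}(q)),
$$
where the final step is the $C$-Lavrentiev condition applied to the projection. Here I would stress that $\mathrm{pr}\left(\gamma\big|_p^q\right)$ is the (unique) subarc of $\mathrm{pr}(\gamma)$ joining $\mathrm{pr}(p)$ and $\mathrm{pr}(q)$, since the regular projection is injective on the curves of this subsection and $\gamma$ is an arc, so that the $C$-Lavrentiev inequality for $\mathrm{pr}(\gamma)$ genuinely applies to it.

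For part (1) I would feed the displacement estimate into the Pythagorean upper bound:
$$
\mathrm{dist}(p,q)^2\le\mathrm{dist}(\mathrm{pr}(p),\mathrm{pr}(q))^2+(KC)^2\,\mathrm{dist}(\mathrm{pr}(p),\mathrm{pr}(q))^2=B^2\,\mathrm{dist}(\mathrm{pr}(p),\mathrm{pr}(q))^2,
$$
and take square roots. It is precisely the Pythagorean bound, rather than the cruder triangle inequality $\mathrm{dist}(p,q)\le\mathrm{dist}(\mathrm{pr}(p),\mathrm{pr}(q))+|z(q)-z(p)|$, that produces the sharp constant $B=\sqrt{1+(KC)^2}$ instead of $1+KC$.

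For part (2) I would pass from pairs of points to subarcs. Given any partition of $\gamma\big|_p^q$ by consecutive points $P_0,\dots,P_m$, applying part (1) to each consecutive pair gives $\sum_i\mathrm{dist}(P_{i-1},P_i)\le B\sum_i\mathrm{dist}(\mathrm{pr}(P_{i-1}),\mathrm{pr}(P_i))\le B\,\ell\left(\mathrm{pr}\left(\gamma\big|_p^q\right)\right)$, the last inequality holding because the projected points form a partition of $\mathrm{pr}\left(\gamma\big|_p^q\right)$; since $B$ is independent of the partition, taking the supremum over partitions yields $\ell\left(\gamma\big|_p^q\right)\le B\,\ell\left(\mathrm{pr}\left(\gamma\big|_p^q\right)\right)$. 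Finally, for part (3) I would chain part (2), the $C$-Lavrentiev condition for the projection, and the $1$-Lipschitz property of $\mathrm{pr}$: $\ell\left(\gamma\big|_p^q\right)\le B\,\ell\left(\mathrm{pr}\left(\gamma\big|_p^q\right)\right)\le BC\,\mathrm{dist}(\mathrm{pr}(p),\mathrm{pr}(q))\le BC\,\mathrm{dist}(p,q)$.

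The only genuinely delicate point is the transfer of the $C$-Lavrentiev bound from $\mathrm{pr}(\gamma)$ to the subarc $\mathrm{pr}\left(\gamma\big|_p^q\right)$, which rests on the injectivity of the regular projection; everything else reduces to the two metric facts about Riemannian products and to bookkeeping. I therefore expect no serious obstacle beyond keeping the chains of inequalities straight.
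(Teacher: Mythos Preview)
Your proposal is correct and follows essentially the same route as the paper: bound $|z(q)-z(p)|$ via the integral and the $C$-Lavrentiev property of the projection, combine with the Pythagorean relation for the product metric to get (1), then pass to lengths by partitions for (2), and chain (2) with the $C$-Lavrentiev bound and the $1$-Lipschitz projection for (3). The only cosmetic difference is that the paper uses the Pythagorean \emph{equality} $\mathrm{dist}(p,q)=\sqrt{|z(q)-z(p)|^2+\mathrm{dist}(\mathrm{pr}(p),\mathrm{pr}(q))^2}$ for the product metric, whereas you argue the inequality; either suffices.
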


\begin{proof}
Let $p,q\in\gamma.$
Then 

$$|z(q)-z(p)| = \left|\phantom{.}\int\limits_{\mathrm{pr}\left(\gamma\big|_{p}^{q}\right)}\beta\phantom{.}\right| \le \sup\limits_{p'\in\gamma} ||\beta_{p'}|| \cdot \ell\left(\mathrm{pr}\left(\gamma\big|_{p}^{q}\right)\right)\le \sup\limits_{p'\in\gamma} ||\beta_{p'}||\cdot C \cdot\mathrm{dist}(\mathrm{pr}(p), \mathrm{pr} (q)).$$

Put $C_1 = \sup\limits_{p'\in\gamma} ||\beta_{p'}||\cdot C.$

Then
$$
\mathrm{dist}(p,q) = \sqrt{|z(q)-z(p)|^2 + \left(\mathrm{dist}(\mathrm{pr}(p), \mathrm{pr}(q))\right)^2}\le \sqrt{1+C_1^2}\cdot\mathrm{dist}(\mathrm{pr}(p),\mathrm{pr}(q)).
$$

Thus the inequality~(\ref{projection-distance-bound}) is proved.
Since the inequality~(\ref{projection-distance-bound}) is fulfilled for any pair of points on the curve $\gamma$, a similar inequality holds for the sums in the definition of the length (Definition~\ref{Lavrentiev-definition}), thus the inequality~(\ref{projection-length-bound}) also holds.

Let us continue the inequality~(\ref{projection-length-bound}) using the fact that $\mathrm{pr}(\gamma)$ is $C$-Lavrentiev:

$$
\ell\left(\gamma\big|_{p}^{q}\right)\le \sqrt{1+C_1^2}\cdot \ell\left(\mathrm{pr}\left(\gamma\big|_{p}^{q}\right)\right)\le \sqrt{1+C_1^2}\cdot C\cdot \mathrm{dist}(\mathrm{pr}(p), \mathrm{pr}(q))\le \sqrt{1+C_1^2} \cdot C \cdot \mathrm{dist}(p,q).
$$

Hence $\gamma$ is $\sqrt{1+C_1^2}\cdot C$-Lavrentiev.

\end{proof}

\begin{lemm}\label{Lavrentiev-projection-lemma4}
Let $\gamma$ be a compact Legendrian curve. Then its regular projection is Lavrentiev and the projection map is bi-Lipschitz.
\end{lemm}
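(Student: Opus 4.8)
The plan is to assemble both claims from results already established in this subsection; the single substantive observation is that the Legendrian hypothesis supplies, through Proposition~\ref{leg-integral}, precisely the integral identity needed to invoke Lemma~\ref{Lavrentiev-projection-lemma3}. First I would show that $\mathrm{pr}(\gamma)$ is Lavrentiev. By Lemma~\ref{Lavrentiev-projection-lemma} the regular projection of a Legendrian curve is locally Lavrentiev. Since $\gamma$ is compact, $\mathrm{pr}(\gamma)$ is compact as a continuous image, and since the regular projection is injective (the standing assumption of this subsection) it is an embedded curve. Hence by Lemma~\ref{compact-Lavrentiev} it is Lavrentiev, and I fix a constant $C$ with $\mathrm{pr}(\gamma)$ being $C$-Lavrentiev.

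Next I would verify the hypothesis of Lemma~\ref{Lavrentiev-projection-lemma3}. Because $\gamma$ is Legendrian, Proposition~\ref{leg-integral} gives $\int_{\gamma|_{p}^{q}}(dz+\mathrm{pr}^*\beta)=0$ for any two points $p,q\in\gamma$, which rearranges to
$$
z(q)-z(p)=-\int\limits_{\mathrm{pr}\left(\gamma\big|_{p}^{q}\right)}\beta,
$$
exactly the condition required there. I would then read off the bi-Lipschitz bound: part (1) of Lemma~\ref{Lavrentiev-projection-lemma3} yields
$$
\mathrm{dist}(p,q)\le\sqrt{1+\left(\sup\limits_{p'\in\gamma}||\beta_{p'}||\cdot C\right)^2}\cdot\mathrm{dist}(\mathrm{pr}(p),\mathrm{pr}(q)),
$$
while $\mathrm{pr}$ is $1$-Lipschitz for the product metric, so $\mathrm{dist}(\mathrm{pr}(p),\mathrm{pr}(q))\le\mathrm{dist}(p,q)$. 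Combining the two inequalities shows that $\mathrm{pr}\big|_{\gamma}$ is $C'$-bi-Lipschitz with $C'=\sqrt{1+\left(\sup_{p'\in\gamma}||\beta_{p'}||\cdot C\right)^2}$.

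Since every step is a direct appeal to an earlier result, I do not expect any step to pose a genuine obstacle; the delicate point is conceptual rather than technical, namely recognizing that the Legendrian property is exactly what converts the Lavrentiev estimate on the projection into a two-sided Lipschitz bound on the projection map itself. The only thing demanding care is confirming that Lemma~\ref{Lavrentiev-projection-lemma3} applies globally: its data --- the form $\beta$ on $S$, the coordinate $z$, and the product metric --- are defined on all of $S\times\mathbb R$, so compactness of $\gamma$ makes $\sup_{p'\in\gamma}||\beta_{p'}||$ finite and no passage to a single chart is required.
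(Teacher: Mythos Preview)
Your proof is correct and follows essentially the same route as the paper's own argument: apply Lemma~\ref{Lavrentiev-projection-lemma} and Lemma~\ref{compact-Lavrentiev} to get that $\mathrm{pr}(\gamma)$ is Lavrentiev, then invoke Lemma~\ref{Lavrentiev-projection-lemma3}(1) for the bi-Lipschitz bound. You are simply more explicit than the paper in spelling out, via Proposition~\ref{leg-integral}, why the integral hypothesis of Lemma~\ref{Lavrentiev-projection-lemma3} holds, and in noting that the forward $1$-Lipschitz bound for $\mathrm{pr}$ comes from the product metric.
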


\begin{proof}
Since $\gamma$ is Legendrian, its regular projection $\mathrm{pr}(\gamma)$ is locally Lavrentiev by Lemma~\ref{Lavrentiev-projection-lemma} thus it is Lavrentiev by Lemma~\ref{compact-Lavrentiev} since it is compact. Therefore by Lemma~\ref{Lavrentiev-projection-lemma3} (1) the projection map is bi-Lipschitz.
\end{proof}

\begin{lemm}\label{Lavrentiev-projection-lemma2}
Let the regular projection of an embedded curve $\gamma$ be locally Lavrentiev and for any two points $p,q\in\gamma$
$$
z(q) - z(p) = -\int\limits_{\mathrm{pr}\left(\gamma\big|_{p}^{q}\right)}\beta.
$$

Then $\gamma$ is Legendrian.
\end{lemm}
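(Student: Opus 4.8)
The plan is to first promote the hypothesis from the projection to the curve itself---showing that $\gamma$ is locally Lavrentiev---and then to invoke the integral criterion of Proposition~\ref{leg-integral}.

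For the first step I would fix a point $p\in\gamma$. Since $\mathrm{pr}(\gamma)$ is locally Lavrentiev, there is a neighborhood of $\mathrm{pr}(p)$ in $S$ on which $\mathrm{pr}(\gamma)$ is $C$-Lavrentiev for some constant $C$. Choosing a compact subarc $\gamma_0\ni p$ of $\gamma$ whose projection lies in this neighborhood, the projection $\mathrm{pr}(\gamma_0)$ is again $C$-Lavrentiev, being a subarc of a $C$-Lavrentiev curve. The hypothesized relation restricts to $\gamma_0$ (for a short enough arc there is no ambiguity in the shortest subarc $\gamma_0\big|_{p'}^{q}$, so the integral identity is simply inherited), and therefore Lemma~\ref{Lavrentiev-projection-lemma3}(3) applies and shows that $\gamma_0$ is Lavrentiev. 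Since $p$ was arbitrary, $\gamma$ is locally Lavrentiev, and Proposition~\ref{leg-integral} becomes applicable to it.

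For the second step I would write the contact form as $\alpha=dz+\mathrm{pr}^*\beta$ and evaluate its Riemann--Stieltjes integral over an arbitrary compact subarc $\gamma\big|_{p}^{q}$. Splitting the integral gives $\int_{\gamma|_p^q}dz=z(q)-z(p)$ by telescoping, while $\int_{\gamma|_p^q}\mathrm{pr}^*\beta=\int_{\mathrm{pr}(\gamma|_p^q)}\beta$ by naturality of the pullback along $\mathrm{pr}$ (legitimate because the projection is rectifiable, so both integrals make sense). The hypothesis forces these two terms to cancel, so $\int_{\gamma|_p^q}\alpha=0$. Having established both that $\gamma$ is locally Lavrentiev and that the contact form integrates to zero over every compact subarc, I would conclude via Proposition~\ref{leg-integral} that $\gamma$ is Legendrian.

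The only genuinely non-formal step is the first one: the hypothesis controls only the planar projection, and promoting the Lavrentiev property to the three-dimensional curve $\gamma$ rests essentially on the a priori bound of Lemma~\ref{Lavrentiev-projection-lemma3}, whose content is precisely that the integral relation bounds the $z$-oscillation by the length of the projection. The remaining bookkeeping---inheritance of the $C$-Lavrentiev constant by subarcs and the correct restriction of the shortest-subarc identity---is routine.
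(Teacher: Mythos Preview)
Your proof is correct and follows essentially the same route as the paper: reduce to Proposition~\ref{leg-integral} by first showing $\gamma$ is locally Lavrentiev via Lemma~\ref{Lavrentiev-projection-lemma3}(3), then observe that the integral hypothesis forces $\int\alpha=0$ on every compact subarc. The paper is slightly terser---it passes directly to a compact subarc and leaves the vanishing of $\int\alpha$ implicit---but the argument is the same.
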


\begin{proof}
By proposition~\ref{leg-integral} it suffices to prove that $\gamma$ is locally Lavrentiev. We can assume that $\gamma$ is a compact arc. Since $\mathrm{pr}( \gamma)$ is locally Lavrentiev and compact, it is $C$-Lavrentiev for some $C$. By Lemma~\ref{Lavrentiev-projection-lemma3}~(3) the curve $\gamma$ is Lavrentiev.
\end{proof}

\begin{lemm}\label{Lavrentiev-projection-isotopy-lemma}
Let $C$ be a number and $\{\gamma_t\}_{t\in[0;1]}$ be a continuous isotopy of Legendrian curves whose regular projections are $C$-Lavrentiev. Then $\{\gamma_t\}_{t\in[0;1]}$ is a Legendrian isotopy.
\end{lemm}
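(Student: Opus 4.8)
The plan is to reduce the statement to Proposition~\ref{leg-C-isotopy}, which says that a continuous isotopy whose members are \emph{uniformly} Lavrentiev (all $C'$-Lavrentiev for one common constant $C'$) and Legendrian is automatically a Legendrian isotopy. Since the curves $\gamma_t$ are given as Legendrian, the only thing I need to supply is a single constant $C'$ for which every $\gamma_t$, regarded as a curve in $S\times\mathbb R$, is $C'$-Lavrentiev; the hypothesis only controls the projections, so the work is to transfer the uniform Lavrentiev bound from $\mathrm{pr}(\gamma_t)$ up to $\gamma_t$ itself.

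First I would record that each $\gamma_t$ meets the hypotheses of Lemma~\ref{Lavrentiev-projection-lemma3}. For the contact form $\alpha = dz + \mathrm{pr}^*\beta$, Proposition~\ref{leg-integral} gives $\int_{\gamma_t|_p^q}\alpha = 0$ for all $p,q\in\gamma_t$, and splitting off the $dz$-part this is exactly the identity $z(q)-z(p) = -\int_{\mathrm{pr}(\gamma_t|_p^q)}\beta$ required there. Combined with the assumption that $\mathrm{pr}(\gamma_t)$ is $C$-Lavrentiev, part (3) of that lemma shows that $\gamma_t$ is $\sqrt{1+\left(\sup\limits_{p'\in\gamma_t}\|\beta_{p'}\|\cdot C\right)^2}\cdot C$-Lavrentiev.

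The one delicate point is the uniformity of this constant in $t$, since it involves $\sup_{p'\in\gamma_t}\|\beta_{p'}\|$. Being a Legendrian isotopy is a local condition in the pair $(t_0,p)$ of Definition~\ref{def-angle}, so, exactly as in the proof of Proposition~\ref{leg-C-isotopy}, I would fix $t_0$ and $p\in\gamma_{t_0}$ and pass to a compact neighborhood $W\subset S\times\mathbb R$ of $p$ together with a short interval $I\ni t_0$. By continuity of the isotopy the relevant piece of each $\gamma_t$ for $t\in I$ stays inside $W$; choosing $W$ small enough it is an honest compact Legendrian subarc whose projection is again $C$-Lavrentiev (subarcs inherit the Lavrentiev constant), and on the compact set $W$ the smooth form $\beta$ is bounded, say by $B$. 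The previous step then produces the single constant $C' = \sqrt{1+(BC)^2}\cdot C$, valid for all $t\in I$.

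Finally I would apply Proposition~\ref{leg-C-isotopy} to this localized isotopy of uniformly $C'$-Lavrentiev Legendrian arcs to obtain the angle estimate of Definition~\ref{def-angle} near $(t_0,p)$; since $(t_0,p)$ was arbitrary, $\{\gamma_t\}$ is a Legendrian isotopy. I expect the main obstacle to be precisely this localization bookkeeping — arranging $W$ and $I$ so that, for every $t\in I$, the portion of $\gamma_t$ inside $W$ is a single compact subarc with $C$-Lavrentiev projection, so that Lemma~\ref{Lavrentiev-projection-lemma3} applies with an unchanged $C$ and a uniform bound $B$, yielding a constant $C'$ that does not depend on $t$.
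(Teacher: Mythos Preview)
Your proposal is correct and follows essentially the same route as the paper: lift the uniform $C$-Lavrentiev bound on the projections to a uniform Lavrentiev bound on the $\gamma_t$ via Lemma~\ref{Lavrentiev-projection-lemma3}(3), then invoke Proposition~\ref{leg-C-isotopy}. The paper handles the uniformity of $\sup_{p'\in\gamma_t}\|\beta_{p'}\|$ more directly than you do: after reducing to compact arcs, the union $\bigcup_t\gamma_t$ is compact (continuous image of $\gamma_0\times[0;1]$), so one may simply take the global supremum $\sup_{\exists t:\,p'\in\gamma_t}\|\beta_{p'}\|$ once and for all, avoiding your localization in $(t_0,p)$ and the bookkeeping you flagged as the main obstacle.
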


\begin{proof}
We can assume that for all $t\in[0;1]$ the curve $\gamma_t$ is a compact arc.
Then for all $t\in[0;1]$ the curve $\gamma_t$ is $\sqrt{1+\left(\sup\limits_{\exists t:\ p'\in\gamma_t} ||\beta_{p'}||\cdot C\right)^2}\cdot C$-Lavrentiev by Lemma~\ref{Lavrentiev-projection-lemma3}~(3).

By Proposition~\ref{leg-C-isotopy} the isotopy $\{\gamma_t\}_{t\in[0;1]}$ is Legendrian.
\end{proof}

Now we utilize some results from Whitney's book~\cite{Whi}. Suppose that $S = \mathbb R^2$ for a moment. Let $L$ be a compact Lavrentiev curve in $\mathbb R^2$, $f_1,f_2:L\to\mathbb R^2$ be two $C$-Lipschitz maps. Lavrentiev compact curves are flat chains (see Section V.3 for the definition). Smooth bounded differential 1-forms with bounded differential are flat cochains (see Section V.4 and Theorem 7A in Chapter IX). So the inequality from Theorem 13A in Chapter X gives us
\begin{equation}
\label{integral-continuity-bound2}
\left|\phantom{,}\int\limits_{f_1(L)}\beta - \int\limits_{f_2(L)}\beta\phantom,\right| \le 
\sup\limits_{p\in L}\mathrm{dist}\left(f_1(p), f_2(p)\right)\cdot\left(C\cdot\ell(L) + 2\right)
\cdot \sup\limits_{q\in V} \{ ||\beta_q||, ||d\beta_q||\},
\end{equation}
where $V$ is an open subset of $\mathbb R^2$ such that for any $p\in L$ the straight-line segment joining the points $f_1(p)$ and $f_2(p)$ lies in $V.$ So we have the following corollary.

\begin{coro}\label{integral-continuity-corollary}
Let $L$ be a compact Lavrentiev curve on a smooth surface $S$, $p\in L$ and $\beta$ be a differential 1-form on $S$. Let $F_0:L\times[0;1]\to S$ be an isotopy. If there exists a real number $C$ such that the map $F_0(\bullet, t)$ is $C$-bi-Lipschitz for any $t\in[0;1]$, the function 
$$(q, t) \mapsto \int\limits_{F_0\left(L\big|_p^q\times\{t\}\right)}\beta$$ is continuous. If the map $F_0$ is Lipschitz, this function is Lipschitz.
\end{coro}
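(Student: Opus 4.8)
The plan is to control separately the two ways in which the quantity
$$
G(q,t):=\int\limits_{F_0\left(L\big|_p^q\times\{t\}\right)}\beta
$$
can change---moving the endpoint $q$ with $t$ fixed, and moving the isotopy parameter $t$ with $q$ fixed---and then to recombine the estimates through the triangle inequality
$$
|G(q,t)-G(q_0,t_0)|\le |G(q,t)-G(q_0,t)| + |G(q_0,t)-G(q_0,t_0)|.
$$
Because the first bound will be Lipschitz in $q$ uniformly in $t$, this splitting upgrades the two one-variable estimates to a joint estimate at once.

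For the endpoint variation I would use additivity of the integral. Writing $L\big|_p^q=L\big|_p^{q_0}\cup L\big|_{q_0}^q$ up to orientation, and using that $F_0(\bullet,t)$ is an embedding, we get $G(q,t)-G(q_0,t)=\pm\int_{F_0(L|_{q_0}^q\times\{t\})}\beta$, which is at most $\sup\|\beta\|$ times the length of the image arc. Since $F_0(\bullet,t)$ is $C$-Lipschitz and $L$ is, say, $C_L$-Lavrentiev, that length is bounded by $C\cdot\ell(L\big|_{q_0}^q)\le C\,C_L\,\mathrm{dist}(q_0,q)$ (Definition~\ref{Lavrentiev-definition}), where $\sup\|\beta\|$ is taken over the compact region swept by the isotopy and is therefore finite. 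This yields $|G(q,t)-G(q_0,t)|\le K\,\mathrm{dist}(q_0,q)$ with $K$ independent of $t$; it uses only the $C$-bi-Lipschitz hypothesis and is valid verbatim in both the continuity and the Lipschitz statements.

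For the parameter variation I would fix the domain arc $L\big|_p^{q_0}$ and compare the two restrictions $f_1=F_0(\bullet,t_0)$ and $f_2=F_0(\bullet,t)$, both of which are $C$-Lipschitz. Here I apply the flat-cochain estimate~(\ref{integral-continuity-bound2}), whose right-hand side contains the factor $\sup_{p'\in L}\mathrm{dist}\left(F_0(p',t_0),F_0(p',t)\right)$. For the continuity statement this supremum tends to $0$ as $t\to t_0$ by uniform continuity of $F_0$ on the compact set $L\times[0;1]$; for the Lipschitz statement it is at most $\mathrm{Lip}(F_0)\cdot|t-t_0|$. In either case $|G(q_0,t)-G(q_0,t_0)|$ is controlled, and combining with the uniform endpoint bound gives joint continuity, respectively a joint Lipschitz bound on $L\times[0;1]$.

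The main obstacle is that~(\ref{integral-continuity-bound2}) was derived for maps into $\mathbb R^2$, whereas here the target is a general surface $S$; in particular the hypothesis that the straight-line segment from $f_1(p')$ to $f_2(p')$ lie in a chart $V$ has no intrinsic meaning on $S$. I would circumvent this by localization: the compact arc $F_0\left(L\big|_p^{q_0}\times\{t_0\}\right)$ is covered by finitely many coordinate charts, so I subdivide $L\big|_p^{q_0}$ into finitely many subarcs whose images stay in a single chart for all $t$ sufficiently close to $t_0$ (possible by continuity and compactness), apply~(\ref{integral-continuity-bound2}) to each piece inside its chart, and sum the resulting estimates by additivity of the integral. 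This reduces the surface case to the planar bound already in hand, after which the proof is the bookkeeping of the two estimates above.
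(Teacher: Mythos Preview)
Your proposal is correct and follows essentially the same approach as the paper: split the difference into an endpoint variation (bounded by $\sup\|\beta\|$ times arc length, uniformly in $t$) and a time variation (handled via the flat estimate~(\ref{integral-continuity-bound2}) after covering the image curve by finitely many Euclidean charts and subdividing the domain arc accordingly). The paper's proof performs the same two-step decomposition and the same chart-localization to reduce to the planar bound, so there is no substantive difference.
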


\begin{proof}
We choose some Riemannian metric $g$ on $S$. We can assume that $S$ is compact.

Let us fix the moment $t_1\in[0;1].$ We cover the curve $F_0(L\times\{t_1\})$ by a finite number of open subsets of $S$ whose closures are compact and diffeomorphic to the unit disk. We equip each element of the covering by Euclidean metric. Let $C_g$ be such number that Euclidean metric in any element of the covering is $C_g$-comparable with the metric $g$. For $q_1,q_2\in L$ and $t_2\in[0;1]$
$$
\int\limits_{F_0\left(L\big|_p^{q_2}\times\{t_2\}\right)}\beta - \int\limits_{F_0\left(L\big|_p^{q_1}\times\{t_1\}\right)}\beta = \int\limits_{F_0\left(L\big|_{q_1}^{q_2}\times\{t_1\}\right)}\beta + \left(\int\limits_{F_0\left(L\big|_p^{q_2}\times\{t_2\}\right)}\beta - \int\limits_{F_0\left(L\big|_p^{q_2}\times\{t_1\}\right)}\beta \right).
$$

The first summand on the right side is not greater than $C \cdot\ell\left(L\big|_{q_1}^{q_2}\right) ||\beta||.$ We choose a neighborhood $\mathcal I$ of $t_1$ and a subdivision of $L$ into subarcs such that for any subarc there is an element of the covering which contains the image of this subarc under the map $F_0(\bullet, t)$ for any $t\in\mathcal I.$ Suppose that $t_2\in\mathcal I.$ Then we apply inequality~(\ref{integral-continuity-bound2}) for each subarc of the subdivision and obtain that the second summand is not greater than 
$$\left(C_g\right)^3 \cdot\sup\limits_{q\in L}\mathrm{dist}(F(q, t_1), F(q, t_2))\cdot(C\cdot \left(C_g\right)^2\cdot\ell(L) + 2n)\cdot\max\{||\beta||, ||d\beta||\}$$
where $n$ is a number of arcs in the subdivision. So the right side of the equality is small if $q_1$ and $q_2$ are close to each other and $t_2$ is sufficiently close to $t_1.$

The proof of the fact that the integral is Lipschitz if $F$ is Lipschitz is similar.
\end{proof}

\begin{prop}\label{bi-Lipschitz-isotopy-lifting}
Let $\gamma$ be a compact curve in $S\times\mathbb R$ and $F:\gamma\times[0;1]\to S\times \mathbb R$ be a map. Let $F_0:\mathrm{pr}(\gamma)\times[0;1]\to S$ be a map such that $\mathrm{pr}\circ F = F_0\circ\mathrm{pr}.$ Let $p\in\gamma$ and $C$ be a number such that
\begin{enumerate}
\item $F(\bullet, 0) = \mathrm{id}_{\gamma}.$
\item For any $t\in[0;1]$ the image $F(\gamma, t)$ is a Legendrian curve.
\item For any $t\in[0;1]$ the map $F_0(\bullet, t)$ is $C$-bi-Lipschitz.
\item The map $F_0$ is continuous.
\item The function $z\circ F(p, \bullet)$ is continuous.
\end{enumerate}
Then $F$ is a Legendrian isotopy and there exists a real number $C'$ such that for any $t\in[0;1]$ the map $F(\bullet, t)$ is $C'$-bi-Lipschitz.
\end{prop}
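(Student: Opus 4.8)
The plan is to recover everything from the principle that, for a Legendrian curve in $S\times\mathbb R$, the $z$-coordinate is determined by the projection through the relation $z(q)-z(p)=-\int_{\mathrm{pr}(\gamma\big|_p^q)}\beta$ of Proposition~\ref{leg-integral}. First I would collect the reductions. Putting $t=0$ in hypotheses (1) and (2) shows that $\gamma=F(\gamma,0)$ is a compact Legendrian curve, so by Lemma~\ref{Lavrentiev-projection-lemma4} the projection $L:=\mathrm{pr}(\gamma)$ is $C_0$-Lavrentiev for some $C_0$ and $\mathrm{pr}\big|_\gamma$ is bi-Lipschitz. Each projected curve $\mathrm{pr}(F(\gamma,t))=F_0(L,t)$ is the image of a $C_0$-Lavrentiev curve under the $C$-bi-Lipschitz map $F_0(\bullet,t)$, hence $C^2C_0$-Lavrentiev uniformly in $t$; and since $F_0$ is continuous with $L$ compact, the set $K:=F_0(L\times[0;1])$ is compact, so $B:=\sup_{q\in K}||\beta_q||<\infty$. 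The relation $\mathrm{pr}\circ F=F_0\circ\mathrm{pr}$ together with injectivity of $F_0(\bullet,t)$ and of $\mathrm{pr}\big|_\gamma$ shows that each $F(\bullet,t)$ is injective.

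The hard part will be the continuity of $F$. Its projection component equals $F_0\circ\mathrm{pr}$, continuous by hypothesis (4), so only $z\circ F$ must be controlled. Because $F(\gamma,t)$ is Legendrian, Proposition~\ref{leg-integral} (exactly as used in Lemma~\ref{z-uniqueness}) gives
$$z(F(q,t))=z(F(p,t))-\int_{F_0\left(L\big|_{\mathrm{pr}(p)}^{\mathrm{pr}(q)}\times\{t\}\right)}\beta.$$
The first term is continuous in $t$ by hypothesis (5), and the integral term, viewed as a function of $(\mathrm{pr}(q),t)$, is precisely the function proved continuous in Corollary~\ref{integral-continuity-corollary}, whose hypotheses are met since $F_0(\bullet,t)$ is uniformly $C$-bi-Lipschitz; precomposing with the homeomorphism $\mathrm{pr}\big|_\gamma$ makes it continuous in $(q,t)$. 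Hence $z\circ F$, and therefore $F$, is continuous. (If $\gamma$ is closed the two shortest subarcs yield equal $\beta$-integrals because $\oint_L\beta=0$, so the displayed formula is unambiguous; otherwise one simply reduces to the case of a compact arc.)

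Once $F$ is continuous, each $F(\bullet,t)$ is a continuous injection of the compact curve $\gamma$, hence an embedding onto the Legendrian curve $F(\gamma,t)$. Thus $\{F(\gamma,t)\}_{t\in[0;1]}$ is a continuous isotopy of Legendrian curves whose regular projections are uniformly $C^2C_0$-Lavrentiev, and Lemma~\ref{Lavrentiev-projection-isotopy-lemma} yields at once that it is a Legendrian isotopy.

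Finally I would produce the uniform bi-Lipschitz constant via the factorization $F(\bullet,t)=\left(\mathrm{pr}\big|_{F(\gamma,t)}\right)^{-1}\circ F_0(\bullet,t)\circ\mathrm{pr}\big|_\gamma$. For the upper bound, inequality~(\ref{projection-distance-bound}) of Lemma~\ref{Lavrentiev-projection-lemma3}, applied to $F(\gamma,t)$ with Lavrentiev constant $C^2C_0$ and bound $B$, gives $\mathrm{dist}(F(q_1,t),F(q_2,t))\le\sqrt{1+(BC^2C_0)^2}\cdot\mathrm{dist}(F_0(\mathrm{pr}(q_1),t),F_0(\mathrm{pr}(q_2),t))$, after which $C$-Lipschitzness of $F_0(\bullet,t)$ and $\mathrm{dist}(\mathrm{pr}(q_1),\mathrm{pr}(q_2))\le\mathrm{dist}(q_1,q_2)$ close the estimate. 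For the lower bound, $\mathrm{pr}$ is $1$-Lipschitz, so $\mathrm{dist}(F(q_1,t),F(q_2,t))\ge\mathrm{dist}(F_0(\mathrm{pr}(q_1),t),F_0(\mathrm{pr}(q_2),t))\ge C^{-1}\mathrm{dist}(\mathrm{pr}(q_1),\mathrm{pr}(q_2))$, which is bounded below by a fixed multiple of $\mathrm{dist}(q_1,q_2)$ since $\mathrm{pr}\big|_\gamma$ is bi-Lipschitz. All constants being independent of $t$, this gives a single $C'$; alternatively, with these local estimates in hand, Lemma~\ref{bi-Lipschitz-family} applied with $T=[0;1]$ delivers $C'$ directly. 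The one genuine obstacle is the continuity of the moving integral in the second paragraph, which is exactly why Corollary~\ref{integral-continuity-corollary} was established in advance.
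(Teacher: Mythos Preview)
Your proof is correct and follows essentially the same route as the paper: establish continuity of $z\circ F$ via the integral formula and Corollary~\ref{integral-continuity-corollary}, invoke Lemma~\ref{Lavrentiev-projection-isotopy-lemma} for the Legendrian isotopy, and obtain the uniform bi-Lipschitz bound from the factorization through $\mathrm{pr}$ using Lemma~\ref{Lavrentiev-projection-lemma3}. You supply somewhat more explicit constants and a few extra sanity checks (injectivity, the closed-curve ambiguity), but the skeleton is identical.
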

\begin{proof}
To prove that $F$ is a Legendrian isotopy we check the assumptions of Lemma~\ref{Lavrentiev-projection-isotopy-lemma}.
First we prove that $F$ is continuous.

Since the map $\mathrm{pr}$ is continuous and the map $F_0$ is continuous by condition (4), the map $\mathrm{pr}\circ F = F_0\circ\mathrm{pr}$ is continuous. Therefore it is sufficient to prove that $z\circ F$ is continuous.

Let $q\in\gamma$. Then

$$
z(F(q, t)) = z(F(p, t)) -\int\limits_{F_0(\mathrm{pr}(\gamma)\times \{t\})\big|_p^q}\beta.
$$

By condition (5) the function $z(F(p,t))$ is continuous on $t$. Since $\gamma = F(\gamma, 0)$, by condition (2) the curve $\gamma$ is Legendrian. By Lemma~\ref{Lavrentiev-projection-lemma4} the curve $\mathrm{pr}(\gamma)$ is Lavrentiev. So the integral is continuous by Corollary~\ref{integral-continuity-corollary}.

The curve $\mathrm{pr}(\gamma)$ is Lavrentiev and by condition (3) all maps $F_0(\bullet,t)$ are bi-Lipschitz with a common constant, therefore all curves $\mathrm{pr}(F(\gamma\times\{t\}))$ are Lavrentiev with a common constant. By Lemma~\ref{Lavrentiev-projection-isotopy-lemma} $F$ is a Legendrian isotopy.

Now we prove that the maps $F(\bullet, t)$ are bi-Lipschitz with a common constant. By Lemma~\ref{Lavrentiev-projection-lemma3} all maps $\mathrm{pr}\big|_{F(\gamma\times\{t\})}$ are bi-Lipschitz maps with a common constant. Therefore the maps $F(\bullet, t)$ are bi-Lipschitz with a common constant by condition (3) as the compositions of such maps.
\end{proof}

\begin{lemm}\label{Lipschitz-isotopy-lifting}
Let $\gamma$ be a compact curve in $S\times\mathbb R$ and $F:\gamma\times[0;1]\to S\times \mathbb R$ be a Legendrian isotopy with $F(\bullet, 0) = \mathrm{id}_{\gamma}$.  Let $F_0:\mathrm{pr}(\gamma)\times[0;1]\to S$ be a map such that $\mathrm{pr}\circ F = F_0\circ\mathrm{pr}.$
Let $p\in\gamma$ be a point such that the function $z\circ F(p, \bullet)$ is Lipschitz. Suppose that $F_0$ is Lipschitz. Then $F$ is Lipschitz.
\end{lemm}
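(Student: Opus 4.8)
The plan is to mirror the proof of Proposition~\ref{bi-Lipschitz-isotopy-lifting}, this time keeping track of Lipschitz constants rather than mere continuity. Since $S\times\mathbb R$ carries the product metric, a map into it is Lipschitz if and only if its two components $\mathrm{pr}\circ F$ and $z\circ F$ are Lipschitz, so it suffices to treat these separately. The first component is immediate: by hypothesis $\mathrm{pr}\circ F = F_0\circ\mathrm{pr}$, the projection $\mathrm{pr}$ is $1$-Lipschitz, and $F_0$ is Lipschitz, so $\mathrm{pr}\circ F$ is Lipschitz as a composition of Lipschitz maps. Hence everything reduces to showing that $z\circ F$ is Lipschitz on $\gamma\times[0;1]$.

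For the vertical component I would use the same formula as in the proof of Proposition~\ref{bi-Lipschitz-isotopy-lifting}. Since $F$ is a Legendrian isotopy, each slice $F(\gamma,t)$ is a Legendrian curve, so Proposition~\ref{leg-integral} (applied exactly as in Lemma~\ref{z-uniqueness}) gives
\[
z(F(q,t)) = z(F(p,t)) - \int\limits_{F_0(\mathrm{pr}(\gamma)\times\{t\})\big|_p^q}\beta
\]
for every $q\in\gamma$ and $t\in[0;1]$. The first term $z(F(p,\bullet))$ is Lipschitz in $t$ by hypothesis and does not depend on $q$, so it is a Lipschitz function of the pair $(q,t)$. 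It therefore remains to show that the integral term is a Lipschitz function of $(q,t)$.

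This is precisely the content of the Lipschitz half of Corollary~\ref{integral-continuity-corollary}, and checking its hypotheses is the only real work. Because $F(\bullet,0)=\mathrm{id}_\gamma$, the curve $\gamma=F(\gamma,0)$ is Legendrian, so $\mathrm{pr}(\gamma)$ is a compact Lavrentiev curve by Lemma~\ref{Lavrentiev-projection-lemma4}. Taking $L=\mathrm{pr}(\gamma)$ with base point $\mathrm{pr}(p)$, the map $F_0$ is an isotopy of $L$ (each $F_0(\bullet,t)=\mathrm{pr}\circ F(\bullet,t)\circ(\mathrm{pr}|_\gamma)^{-1}$ is an embedding, a composition of embeddings by the injectivity of the regular projection and Lemma~\ref{Lavrentiev-projection-lemma4}) and is Lipschitz by assumption. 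Corollary~\ref{integral-continuity-corollary} then yields that $(q,t)\mapsto\int_{F_0(\mathrm{pr}(\gamma)|_p^q\times\{t\})}\beta$ is Lipschitz; this is the same curve as in the displayed formula by the injectivity just noted. Consequently $z\circ F$ is Lipschitz as a difference of two Lipschitz functions, and $F$ is Lipschitz.

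The main obstacle is not in this lemma itself but is deferred to Corollary~\ref{integral-continuity-corollary}, whose Lipschitz estimate rests on Whitney's flat-chain inequality~(\ref{integral-continuity-bound2}); granting that, the present argument is essentially bookkeeping. The one point to watch is that the estimate must hold uniformly in the space variable and jointly in $(q,t)$, not merely for each fixed $t$, which is exactly why the joint-Lipschitz form of Corollary~\ref{integral-continuity-corollary}, rather than only its continuity statement, is what is invoked here.
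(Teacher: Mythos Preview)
Your proof is correct and follows essentially the same route as the paper: split $F$ into its $\mathrm{pr}$ and $z$ components, dispatch the first as a composition of Lipschitz maps, write $z(F(q,t))$ via the integral formula for Legendrian curves, and invoke the Lipschitz half of Corollary~\ref{integral-continuity-corollary} for the integral term. The paper's write-up is terser (it cites Lemma~\ref{Lavrentiev-projection-lemma4} for the bi-Lipschitz property of $\mathrm{pr}|_\gamma$ rather than using that $\mathrm{pr}$ is $1$-Lipschitz, which also works), but the structure and the key invocation of Corollary~\ref{integral-continuity-corollary} are identical.
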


\begin{proof}
Since $\gamma = F(\gamma\times\{0\})$ is Legendrian, the map $\mathrm{pr}:\gamma\to\mathrm{pr}(\gamma)$ is bi-Lipschitz by Lemma~\ref{Lavrentiev-projection-lemma4}. So the map $\mathrm{pr}\circ F = F_0\circ\mathrm{pr}$ is Lipschitz. We see that it is sufficient to prove that $z\circ F$ is Lipschitz.

Let $q\in\gamma$. Then

$$
z(F(q, t)) = z(F(p, t)) -\int\limits_{F_0(\mathrm{pr}(\gamma)\times \{t\})\big|_p^q}\beta.
$$

We have that $z(F(p, t))$ is Lipschitz. The integral is Lipschitz by Corollary~\ref{integral-continuity-corollary}.
\end{proof}

\subsection{Pushing the regular projection smoothly to correct the integral}
\label{correct-integral-section}
On the way of constructing Legendrian isotopies we will deal with the following situation. Suppose we have a Legendrian Lavrentiev curve and we want to Legendrian isotope its small subarc. Suppose we have constructed an isotopy of the regular projection of this arc fixing the rest of the regular projection of the curve. Suppose that this isotopy satisfies all restrictions on the isotopy $F_0$ in Proposition~\ref{bi-Lipschitz-isotopy-lifting}. Then by this proposition we can lift this isotopy to a Legendrian isotopy of space curves. But if we want the complement of the subarc on the Legendrian curve to stay fixed during the isotopy, the integral of the 1-form $\beta$ on the regular projection of the subarc must be constant. In this subsection we show how to smoothly correct the given isotopy of plane curves to make the integral constant preserving nice properties of the isotopy.


\begin{defi}\label{integral-correction-square-definition}
Let $L$ be an oriented compact embedded curve in the smooth surface $S$ with a differential 1-form $\beta$ such that $d\beta\neq0$. Let $F:L\times[0;1]\to S$ be a continuous isotopy, $C$ be a real number such that $F(\bullet, t)$ is $C$-bi-Lipschitz for any $t\in[0;1]$. We denote $F(L\times\{t\})$ by $L_t$ for any $t\in[0;1]$. Let $R$ be 
the image of the square $\{(u,v):\ |u|\le1,\ |v|\le1\}$ under a smooth embedding to the surface $S$ such that for any $t\in[0;1]$

\begin{enumerate}
\item The intersection $R\cap L_t$ is a subarc of $L_t.$
\item $\partial R\cap L_t = \partial(R\cap L_t).$
\item $L_t$ does not intersect any side $u = \pm1$ of the square and intersects both sides $v=\pm1.$
\item Let $\gamma^{\pm}_t$ be the oriented subarc of $\partial R$ with the same beginning and the same end as the arc $R\cap L_t$ which contains the side $u=\pm1$. Then 
$$
\int\limits_{L_t\setminus(R\cap L_t)\cup \gamma^+_t}\beta > \int\limits_{L}\beta > \int\limits_{L_t\setminus(R\cap L_t)\cup \gamma^-_t}\beta.
$$
\end{enumerate}

Then $R$ is called an {\it integral correction square} for the isotopy $F$. See Figure~\ref{integral-correction-square-figure}.
\end{defi}

\begin{figure}[ht]
\center{\includegraphics{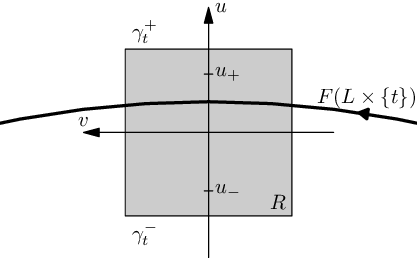}}
\caption{An integral correction square, the coordinate system $(u,v)$ is positive with respect to $d\beta$}
\label{integral-correction-square-figure}
\end{figure}

\begin{defi}
Let $R$ be an integral correction square for the isotopy $F:L\times[0;1]\to S$. A continuous map $\Phi:S\times[0;1]\to S$ is called an isotopy {\it correcting $F$ supported} in $R$ if 
\begin{enumerate}
\item $\Phi(\bullet, 0) = \mathrm{id}_S.$
\item $\Phi(p, t) = (p,t)$ for any $t\in[0;1]$ and for any $p\in S\setminus R.$
\item $$\int\limits_{\Phi(F(L\times\{t\}), t)}\beta = \int\limits_L\beta.$$
\item There exists a real number $C$ such that $\Phi(\bullet, t)$ is $C$-bi-Lipschitz diffeomorphism for any $t\in[0;1].$
\end{enumerate}
\end{defi}

The following proposition asserts that for any integral correction square there exists a correcting isotopy supported in this square, and if the isotopy to be corrected is Lipschitz then there exists a Lipschitz correcting isotopy. Also it asserts that the constructed isotopy depends only on $F(L\times\{t\})\cap R$ and $\int\limits_{F(L\times\{t\})}\beta$ not depending on the precise form of the isotopy $F$ outside $R$.

\begin{prop}
\label{correcting-isotopy-existence}
Let $R = \{(u,v):\ |u|\le1,\ |v|\le1\}$ be a square on the smooth surface $S$ with a differential 1-form $\beta$ such that $d\beta\neq0$ everywhere, and $G:[0;1]\times[0;1]\to R$ be a map. Then there exists a map $\Psi:S\times\mathbb R\times[0;1]\to S$ such that if $R$ is an integral correction square for the isotopy $F:L\times[0;1]\to S$ and $F(L\times\{t\})\cap R = G([0;1]\times\{t\})$ for any $t\in[0;1]$ then $\Phi$, where $\Phi(p, t):= \Psi(p, \int\limits_{F(L\times\{t\})}\beta - \int\limits_L\beta, t)$, is an isotopy correcting $F$ supported in $R$ and, moreover, if $F$ is Lipschitz then $\Phi$ is Lipschitz.
\end{prop}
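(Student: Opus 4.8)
The plan is to correct the integral by an ambient isotopy supported in $R$ that pushes the subarc $R\cap L_t$ across the square in the $u$-direction; by Stokes' theorem the resulting change of $\int\beta$ is the $d\beta$-area of the swept region, which has a definite sign because $d\beta\neq0$, so the integral can be steered monotonically to the prescribed value $\int_L\beta$. Concretely, I would first normalise the coordinates on $R$ so that $d\beta=\rho\,du\wedge dv$ with $\rho>0$ (this is the orientation fixed in Figure~\ref{integral-correction-square-figure}; replace $u$ by $-u$ otherwise). Choose a smooth $\phi$ on $S$ with $\phi>0$ on the interior of $R$ and $\phi\equiv0$ elsewhere, put $X=\phi\,\partial_u$, and let $g^s$ be the time-$s$ flow of $X$. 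Each $g^s$ is a diffeomorphism of $S$ equal to the identity outside $R$, with $g^0=\mathrm{id}_S$, and $\{g^s\}_{s\in\mathbb R}$ is smooth in $s$.

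The key step is monotonicity. For a curve with $L_t\cap R=G([0;1]\times\{t\})$ set
\[
J(s,t)=\int_{g^s(L_t\cap R)}\beta-\int_{L_t\cap R}\beta,
\]
which depends only on $G$ and not on $F$ outside $R$, and note that $\int_{g^s(L_t)}\beta=\int_{L_t}\beta+J(s,t)$ since $g^s$ fixes $L_t\setminus R$. I would compute $\partial J/\partial s=\int_{g^s(L_t\cap R)}\iota_X\,d\beta$ (the boundary terms vanish because $X=0$ at the endpoints on $v=\pm1$); this is the $d\beta$-area swept between consecutive positions of the subarc, and since $\rho>0$ and the subarc is pushed strictly in the $+u$ direction it is positive, so $J(\bullet,t)$ is strictly increasing. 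Its limits as $s\to\pm\infty$ are $\int_{\gamma^\pm_t}\beta-\int_{L_t\cap R}\beta$, i.e. exactly the quantities appearing in condition (4) of Definition~\ref{integral-correction-square-definition}, and that condition says precisely that $\int_L\beta-\int_{L_t}\beta$ lies strictly between them. Writing $d(t)=\int_{L_t}\beta-\int_L\beta$, the intermediate value theorem then gives a unique $s(d,t)$ with $J(s(d,t),t)=-d$, for which $\int_{g^{s(d,t)}(L_t)}\beta=\int_L\beta$. I set $\Psi(p,d,t)=g^{s(d,t)}(p)$ and $\Phi(p,t)=\Psi\bigl(p,\int_{F(L\times\{t\})}\beta-\int_L\beta,t\bigr)$; because $J$ depends only on $G$, this $\Psi$ serves simultaneously every $F$ compatible with $G$.

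It remains to verify the four conditions of a correcting isotopy together with the regularity. Conditions (2) and (3) are immediate, since $g^s$ is supported in $R$ and $s(d,t)$ was chosen to make the integral equal $\int_L\beta$; condition (1) holds because the isotopy starts at $L$, so $d(0)=0$, whence $s(0,0)=0$ and $\Phi(\bullet,0)=g^0=\mathrm{id}_S$. For continuity I would use $\partial J/\partial s>0$ and the implicit function theorem to get that $s(d,t)$ is continuous in $(d,t)$, combined with Corollary~\ref{integral-continuity-corollary}, which makes $t\mapsto\int_{F(L\times\{t\})}\beta$ continuous; hence $\Phi$ is continuous. Since $[0;1]$ is compact and, by condition (4) of Definition~\ref{integral-correction-square-definition}, $-d(t)$ stays in a compact subinterval of the open bracketing interval, the values $s(d(t),t)$ lie in a compact set $[-S_0,S_0]$; on this range $\{g^s\}$ is a smooth family of diffeomorphisms, hence uniformly bi-Lipschitz by Lemma~\ref{bi-Lipschitz-family}, which gives condition (4). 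If $F$ is Lipschitz, then Corollary~\ref{integral-continuity-corollary} makes $t\mapsto\int_{F(L\times\{t\})}\beta$ Lipschitz, the implicit function theorem makes $s(d,t)$ Lipschitz, and $\Phi$ is a composition of Lipschitz maps, hence Lipschitz.

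The main obstacle I expect is the monotonicity step: making the sign of $\partial J/\partial s$ rigorous requires that the swept region be genuinely traversed with positive $d\beta$-density, which is transparent when $R\cap L_t$ is a graph over the $v$-axis but needs care in general, either through a preliminary reduction to the graph case or through a direct oriented swept-chain computation. The second delicate point is the bookkeeping that makes the constant in condition (4) uniform in $t$, i.e. keeping $s(d(t),t)$ bounded away from $\pm\infty$; this is exactly what the strict inequalities in condition (4) of Definition~\ref{integral-correction-square-definition} are designed to provide.
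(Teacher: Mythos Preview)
Your overall plan---push the subarc across $R$ by a horizontal flow, solve for the correcting time via monotonicity and the implicit function theorem, and deduce regularity from Corollary~\ref{integral-continuity-corollary}---is exactly the paper's strategy, and the observations that $J$ depends only on $G$ and that the strict inequalities in Definition~\ref{integral-correction-square-definition}(4) give the required bracketing are both correct. The gap is precisely the one you flagged, and it is real: with a single field $X=\phi\,\partial_u$ vanishing on all of $\partial R$, the quantity $\partial_s J=\int_{g^s(L_t\cap R)}\phi\rho\,dv$ need not be positive. Since the curve $L_t\cap R$ may zig--zag in $v$, this line integral can be made negative (e.g.\ take $\phi\rho$ much larger on a portion where $v$ decreases than on the portions where $v$ increases; a direct computation with $\phi=(1-u^2)(1-v^2)$ and an S-shaped curve already gives a negative value). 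The swept--chain rewriting produces the same integrand with a factor $\gamma_v'$, so it does not help; and you cannot reduce to the graph case because the curves $G(\cdot,t)$ are prescribed. Without monotonicity you lose uniqueness of $s$, and with it the implicit--function argument for continuity and for the Lipschitz regularity of $\Phi$.

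The paper's cure is to pass to the area formulation: writing $U$ for the region between $L_t\cap R$ and $\gamma^-_t$, Stokes gives $J(s,t)=\int_{g^s(U)}d\beta-\int_U d\beta$, so $\partial_s J=\int_{g^s(U)}(\operatorname{div}_{d\beta}X)\,d\beta$. A single bump $\phi$ vanishing on $\partial R$ forces $\partial_u(\rho\phi)$ to change sign, so $\operatorname{div} X$ cannot have one sign on all of~$R$. Instead one fixes $u_-<u_+$ with all curves $G(\cdot,t)$ in $\{u_-<u<u_+\}$ and builds \emph{two} flows (Lemma~\ref{contracting-flow-on-disc}): $\phi_+^t$ with $\operatorname{div}<0$ on $\{u>u_-\}$ and $\phi_-^t$ with $\operatorname{div}<0$ on $\{u<u_+\}$, each used only for nonnegative time and selected according to the sign of $d(t)$. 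The relevant region $g^s(U)$ then stays in the half where the divergence has a definite sign, yielding strict monotonicity. After this modification your implicit--function and Lipschitz bookkeeping go through essentially as written.
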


\begin{lemm}\label{contracting-flow-on-disc}
Let $\Omega$ be an area form on the square $R=\{(u,v):\ |u|\le 1,\ |v|\le1\}.$ Let $u_+, u_-$ be real numbers such that $-1 < u_- < u_+ < 1.$ Then there exist smooth flows of diffeomorphisms $\{\phi_+^t:R\to R\}_{t\in [0;+\infty)}$ and $\{\phi_-^t:R\to R\}_{t\in [0;+\infty)}$ such that
\begin{enumerate}
\item $\phi_{\pm}^0 = \mathrm{id}_R.$
\item For any $t\in[0;+\infty]$ and for any $n,m\in\mathbb Z_{\ge0}$ $\left.\dfrac{\partial^{n+m}}{\partial u^n\partial v^m}\phi_{\pm}^t\right|_{\partial R} = \left.\dfrac{\partial^{n+m}}{\partial u^n\partial v^m} \mathrm{id}\right|_{\partial R}.$
\item There exists a constant $C_1$ such that if $u>u_-$ (respectively, $u < u_+$) at each point of the open set $U\subset R$ then for any positive real numbers $s$ and $t$ such that $s<t$
$$0 < \mathrm{Area}(\phi_{+}^s(U)) - \mathrm{Area}(\phi_{+}^t(U)) \le C_1\cdot \mathrm{Area}(U)\cdot(t-s)$$
(respectively, 
$$\left.0 < \mathrm{Area}(\phi_{-}^s(U)) - \mathrm{Area}(\phi_{-}^t(U)) \le C_1\cdot \mathrm{Area}(U)\cdot(t-s)\right).$$
\item
$$\mathrm{Area}(\phi_+^t(\{(u,v)\in R:\ u > u_-\})) \to 0 \quad \text{ as } t\to+\infty$$
and
$$\mathrm{Area}(\phi_-^t(\{(u,v)\in R:\ u < u_+\})) \to 0 \quad \text{ as } t\to+\infty.$$
\item For any $T>0$ there exists a constant $C_2$ such that if $U$ is an open subset of $R$ such that 
$$\{(u,v)\in R:\ u > u_-\} \supset U \supset \{(u,v)\in R:\ u > u_+\}$$ 
(respectively, 
$$\left.\{(u,v)\in R:\ u < u_-\} \subset U \subset \{(u,v)\in R:\ u < u_+\}\right)$$
then for any $s\in [0;T]$ and $t\in[s;T]$
$$C_2\cdot(t-s) \le  \mathrm{Area}(\phi_{+}^s(U)) - \mathrm{Area}(\phi_{+}^t(U))$$
(respectively, 
$$\left.C_2\cdot(t-s) \le  \mathrm{Area}(\phi_{-}^s(U)) - \mathrm{Area}(\phi_{-}^t(U))\right).$$
\end{enumerate}
\end{lemm}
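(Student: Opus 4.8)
The plan is to realize each flow as the flow of an explicit autonomous vector field of the form $X_\pm = a_\pm(u,v)\,\partial_u$, chosen so that its divergence with respect to $\Omega$ has a definite sign on the required half of the square while $X_\pm$ vanishes to infinite order along $\partial R$. Write $\Omega = \rho\,du\wedge dv$ with $\rho>0$ smooth on $R$, and abbreviate $\{u>u_-\}$ for $\{(u,v)\in R:\ u>u_-\}$, etc. For $\phi_+$ I would first fix smooth functions $p,q:[-1;1]\to\mathbb R$ that vanish to infinite order at $\pm1$, are strictly positive on $(-1;1)$, and with $p$ in addition strictly decreasing on $[u_-;1]$ (so $p'<0$ on $(u_-;1)$ and $q>0$ on $(-1;1)$). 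Then set
$$
X_+ = \frac{p(u)\,q(v)}{\rho(u,v)}\,\partial_u.
$$
Since $p$ and $q$ vanish to infinite order at the endpoints and $\rho>0$ is smooth, $X_+$ is smooth and vanishes to infinite order on $\partial R$; its flow preserves each line $v=\mathrm{const}$, keeps $|u|\le1$ (because $X_+=0$ on $u=\pm1$), and is therefore complete and a diffeomorphism $\phi_+^t:R\to R$ for every $t$, giving item (1). Item (2) is the standard fact that the flow of a field vanishing to infinite order on a hypersurface is infinitely tangent to the identity there, proved by induction on the order of differentiation of the relation $\partial_t\phi_+^t = X_+\circ\phi_+^t$.

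Next I would record the two computations that drive everything. Because $\Omega$ is a top form, $\mathcal L_{X_+}\Omega = d\,\iota_{X_+}\Omega$, and a direct calculation gives $\mathcal L_{X_+}\Omega = p'(u)q(v)\,du\wedge dv$, so the $\Omega$-divergence of $X_+$ equals $p'(u)q(v)/\rho$, which is strictly negative on $\{u>u_-\}$. Consequently, for any open $U$,
$$
\frac{d}{dt}\,\mathrm{Area}(\phi_+^t(U)) = \int_{\phi_+^t(U)}\mathcal L_{X_+}\Omega = \int_{\phi_+^t(U)} p'(u)q(v)\,du\,dv.
$$
Since $X_+$ points in the positive $u$-direction, the region $\{u>u_-\}$ is mapped into itself by $\phi_+^t$, so whenever $U\subset\{u>u_-\}$ the integrand above is negative on $\phi_+^t(U)$; integrating in $t$ yields the strict decrease in item (3), and bounding $|p'q|$ above together with $\mathrm{Area}(\phi_+^t(U))\le\mathrm{Area}(U)$ (comparing Euclidean and $\Omega$-area through $\rho$) yields the upper estimate with $C_1=\sup|p'q|/\min\rho$. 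For item (4) I would observe that along each line $v=\mathrm{const}$ with $|v|<1$ the orbit solves $\dot u = p(u)q(v)/\rho>0$ and hence increases to the only fixed point $u=1$; the length of each horizontal slice of $\phi_+^t(\{u>u_-\})$ thus tends to $0$, and dominated convergence gives $\mathrm{Area}(\phi_+^t(\{u>u_-\}))\to0$.

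The lower bound in item (5) is the part I expect to require the most care. For $U\supset\{u>u_+\}$ and $\tau\in[0;T]$ one has $\phi_+^\tau(U)\supset\phi_+^\tau(\{u>u_+\})$, and the latter is again a region of the form $\{u>\lambda(\tau,v)\}$ for a function $\lambda$ increasing in $\tau$ and continuous in $v$. A compactness argument over $\tau\in[0;T]$ and $v$ in a closed subinterval $[-v_0;v_0]\subset(-1;1)$ then produces a fixed rectangle $\{u^*<u<u^{**},\ |v|\le v_0\}$ with $u_+<u^*<u^{**}<1$ that is contained in $\phi_+^\tau(U)$ for all such $\tau$. On this compact rectangle $-p'(u)q(v)$ is bounded below by a positive constant, so the master formula for the derivative of the area gives $\mathrm{Area}(\phi_+^s(U))-\mathrm{Area}(\phi_+^t(U))\ge C_2(t-s)$. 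Finally, $\phi_-$ is obtained by the mirror construction, replacing $p$ by a function strictly increasing on $[-1;u_+]$ and reversing the sign of the field, so that the divergence is negative on $\{u<u_+\}$ and the orbits run toward $u=-1$; all five items then follow verbatim. The genuinely delicate points are carrying the estimates through the variable density $\rho$, so that the conclusions concern $\Omega$-area rather than Euclidean area, and the uniform-in-$[0;T]$ lower bound of item (5), both handled by the compactness arguments indicated above.
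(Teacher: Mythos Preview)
Your proof is correct and follows essentially the same strategy as the paper: construct the flow of an autonomous horizontal vector field $X_+ = a_+(u,v)\partial_u$ chosen so that $\rho\,a_+$ is a separable function $p(u)q(v)$ with $p$ strictly decreasing on $[u_-,1]$, which forces $\mathcal L_{X_+}\Omega = p'(u)q(v)\,du\wedge dv$ to be negative on $\{u>u_-\}$. The paper makes the identical choice, only with explicit functions $p(u)=\exp(1/(u-1))$ and $q(v)=\exp(1/(v^2-1))$ on $\{u\ge u_-\}$ (and an arbitrary smooth extension elsewhere), while you leave $p,q$ abstract; the master formula for $\tfrac{d}{dt}\mathrm{Area}(\phi_+^t(U))$ and the verifications of items (1)--(4) are the same in both.

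The one place your argument is slightly more elaborate than necessary is item~(5). Rather than extracting a compact sub-rectangle on which $-p'q$ is bounded below, the paper simply takes $C_2 = -\int_{\phi_+^T(\{u>u_+\})}\mathrm{div}\,X_+\cdot\Omega$: since the flow moves points to the right, $\phi_+^\tau(\{u>u_+\})$ is decreasing in $\tau$, so $\phi_+^T(\{u>u_+\})\subset\phi_+^\tau(U)$ for every $\tau\in[0,T]$, and because the divergence is negative throughout $\{u>u_-\}$ the integral over the larger set dominates. This avoids the compactness detour, though your version is of course also valid.
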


\begin{proof}
We will prove the existence of the flow $\phi^t_+$. For $\phi^t_-$ the proof is similar.

It is sufficient to construct a vector field $X$ on the square $R$ such that

\begin{enumerate}
\item The vector field $X$ and all partial derivatives of any order of $X$ are zero on $\partial R$.
\item $X = g(u,v)\dfrac{\partial}{\partial u}$ with $g(u,v)>0$ on the set $\{(u,v)\in R\setminus\partial R:\ u\ge u_-\}$.
\item The divergence of $X$ is negative on the set $\{(u,v)\in R\setminus\partial R:\ u\ge u_-\}$.
\end{enumerate}

If we construct the vector field $X$, its flow $\phi^t_+$ will be sought-for. Indeed, properties (1) and (2) are trivially satisfied. For the other properties we can use Newton--Leibniz formula and that 
$$\dfrac{d}{dt} Area (\phi_+^t (U)) = \int\limits_U \dfrac{d}{dt}\left(\left(\phi_+^t\right)^*\Omega\right) = \int\limits_{\phi_+^t(U)} L_X\Omega = \int\limits_{\phi_+^t(U)} \mathrm{div} X\cdot\Omega.$$

So we can take $C_1 = \max\limits_R \mathrm{div}X$ and $C_2 = \int\limits_{\phi_+^T(\{(u,v)\in R:\ u>u_+\})} \mathrm{div}X\cdot \Omega.$

Let $\Omega = f(u,v) du\wedge dv.$ We can assume that $f>0.$ Then

$$
L_X \Omega = di_X\Omega= d(fg dv) = \frac{\partial(fg)}{\partial u}\ du\wedge dv.
$$

There exists a smooth function $g$ on $R$ such that

$$g(u,v) = \dfrac1f exp\left(\dfrac1{u-1}\right) exp\left(\dfrac1{v^2-1}\right),\quad \text{ if } u\ge u_- \text{ and } (u,v)\notin\partial R,
$$
and $\left.\dfrac{\partial^{n+m}g}{\partial u^n\partial v^m}\right|_{\partial R} = 0$ for any $n,m\in\mathbb Z_{\ge0}.$

So $\dfrac{\partial(fg)}{\partial u} < 0$ if $u\ge u_-$ and $(u,v)\notin\partial R$,  and the vector field $X = g(u,v)\dfrac{\partial}{\partial u}$ is sought-for.
\end{proof}

\begin{proof}[Proof of Proposition~\ref{correcting-isotopy-existence}]
Let $u_-, u_+$ be real numbers such that
$F(L\times\{t\})\cap R \subset \{(u,v):\ u_-<u<u_+\}$ for any $t\in[0;1].$

First, we apply Lemma~\ref{contracting-flow-on-disc} to the square $R$, numbers $u_-$ and $u_+$ and the 2-form $\Omega=d\beta.$ Let $\phi^t_+$ and $\phi^t_-$ be the constructed flows. 
We extend the flows $\phi^t_+$ and $\phi^t_-$ by setting them to the identity outside the square $R$. Let us prove that there exist the unique functions $f_+, f_-:[0;1]\to[0;+\infty)$ such that for any $t\in[0;1]$ $f_+(t)\cdot f_-(t) = 0$ and
$$\int\limits_{ \left(\phi^{f_+(t)}_+\circ \phi^{f_-(t)}_-\right)(F(L\times \{t\}))}\beta = \int\limits_L\beta.$$

We set $f_+(t) = 0$ if 
$$\int\limits_{ F(L\times \{t\})}\beta \ge \int\limits_L\beta$$
and we set $f_-(t) = 0$ if 
$$\int\limits_{ F(L\times \{t\})}\beta \le \int\limits_L\beta.$$

So $f_+(t)\cdot f_-(t) = 0$ for any $t\in[0;1].$

Suppose that $\int\limits_{ F(L\times \{t_0\})}\beta > \int\limits_L\beta.$ 
Let us prove that there exists the unique real number $f_-(t_0)$ such that
\begin{equation}\label{definition-of-f-}
\int\limits_{\phi_-^{f_-(t_0)}(F(L\times\{t_0\}))}\beta = \int\limits_L\beta.
\end{equation}

Let $U$ be the open subset of $R$ such that $$\partial U = \left(F(L\times\{t_0\})\cap R\right)\cup\gamma^-_{t_0}.$$  

Let $t_-$ be some non-negative real number. Then by Stokes theorem (see \cite[Chapter IX, Theorems 5A and 7A]{Whi})

\begin{multline*}
\int\limits_{\phi_-^{t_-}(F(L\times\{t_0\}))}\beta = \int\limits_{\phi_-^{t_-}(F(L\times\{t_0\}))\cap R}\beta + \int\limits_{\phi_-^{t_-}(F(L\times\{t_0\}))\setminus R}\beta = \\
= \left(\int\limits_{\phi_-^{t_-}(U)} d\beta + \int\limits_{\gamma^-_{t_0}}\beta\right) + \int\limits_{F(L_0\times\{t_0\})\setminus R}\beta = 
\int\limits_{\phi_-^{t_-}(U)} d\beta + \int\limits_{(F(L\times\{t_0\})\setminus R)\cup\gamma^-_{t_0}}\beta.
\end{multline*}

Thus by~(\ref{definition-of-f-}) the number $f_-(t_0)$ is the solution of the following equation on $t_-$:
\begin{equation}\label{equation-on-t-}
\int\limits_{\phi_-^{t_-}(U)} d\beta + \int\limits_{(F(L\times\{t_0\})\setminus R)\cup\gamma^-_{t_0}}\beta = \int\limits_L\beta.
\end{equation}

By the third property of $\phi_-^t$ the first summand on the left side of the last equation depends on $t_-$ monotonically and continuously. By the fourth property it tends to zero as $t_-\to +\infty$. So to define $f_-(t_0)$ it is sufficient to prove that

$$
\int\limits_{(F(L\times\{t_0\})\setminus R)\cup\gamma^-_t}\beta \le \int\limits_L\beta \le \int\limits_{F(L\times\{t_0\})}\beta.
$$ 

The first inequality is true by the definition of an integral correction square. 
The second inequality is already assumed.

For $f_+(t)$ the argument is similar.

Now let us prove that the function $f_-$ is continuous. For the function $f_+$ the argument is similar. By the definition the number $f_-(t)$ is equal to the maximum of zero and the solution of the following equation on $t_-$:
\begin{equation}\label{equation-on-t-2}
\int\limits_{\phi_-^{t_-}(F(L\times\{t_0\}))}\beta = \int\limits_L\beta.
\end{equation}

The left side of this equation is a continuous function on $t_0$ and $t_-$ by Corollary~\ref{integral-continuity-corollary}. It is also decreasing on $t_-.$ Thus the implicit function $t_-(t_0)$ is continuous.

Now we set $\Phi(p, t) = \left(\phi^{f_+(t)}_+\circ \phi^{f_-(t)}_-\right)(p)$ for any $p\in S$ and any $t\in[0;1].$ $\Phi$ is continuous as the composition of continuous maps.

Since the functions $f_+(t)$ and $f_-(t)$ are continuous, they are bounded. Let $T = \max\limits_{t\in[0;1]}\{f_+(t), f_-(t)\}.$ Since the flows $\phi_+^t$ and $\phi_-^t$ are smooth, the diffeomorphisms $\phi_+^t$ and $\phi_-^t$ are bi-Lipschitz with a common constant for $t\in[0;T].$ So there exists a real number $C$ such that $\Phi(\bullet, t)$ is $C$-bi-Lipschitz for any $t\in[0;1].$

So by construction $\Phi$ is a correcting isotopy supported in the square $R$.

Now suppose that $F$ is Lipschitz.
Let us prove that the functions $f_+, f_-$ are also Lipschitz. By the third and the fifth properties of $\phi_-^t$ in Lemma~\ref{contracting-flow-on-disc} there exists $C$ such that the left side of~(\ref{equation-on-t-}) is $C$-bi-Lipschitz on $t_-$ for any $t_0\in[0;1].$ Since $F$ is Lipschitz and the flow $\phi_-^t$ is smooth, by Corollary~\ref{integral-continuity-corollary} the left side of~(\ref{equation-on-t-2}) is Lipschitz on $(t_0, t_-).$ Note that the left sides of~(\ref{equation-on-t-}) and~(\ref{equation-on-t-2}) are equal functions. Thus the implicit function $t_-(t_0)$ is Lipschitz.

Since $f_-(t_0) = \max (0, t_-(t_0))$, $f_-$ is also Lipschitz. For $f_+$ the argument is similar. Therefore $\Phi$ is Lipschitz as the composition of Lipschitz functions.

Finally, we define $\Psi$:
$$\Psi(\bullet, I, t) =
\left\{
\begin{aligned}
\phi_+^{t_+},&\quad I\le0,\\
\phi_-^{t_-},&\quad I\ge0,
\end{aligned}
\right.$$
where $\int\limits_{\phi_{\pm}^{t_{\pm}}(F(L\times\{t\}))\cap R}\beta - \int\limits_{F(L\times\{t\})\cap R}\beta = -I.$ If there is no such $t_{\pm}$ set $\Psi(\bullet, I, t) = \mathrm{id}_S.$ It is clear that $\Psi$ depends only on $R$ and $G$ and that $\Phi(\bullet, t) = \Psi(\bullet, \int\limits_{F(L\times\{t\})}\beta - \int\limits_L\beta, t).$
\end{proof}

\section{Smoothing Lavrentiev curves on surfaces}
\label{Lavrentiev-smoothing-section}
In this section we smooth Lavrentiev curves on surfaces.

Almost all work is already done by Tukia. He proved the so-called bi-Lipschitz version of the Sch\"onflies theorem for a circle on the plane. 

\begin{theo}[\cite{Tuk}, Theorem A; \cite{Tuk2}]
For any number $C$ there exists a number $C'$ such that any $C$-bi-Lipschitz map from $\partial\mathbb D$ to $\mathbb C$ can be extended to a $C'$-bi-Lipschitz self-homeomorphism of $\mathbb C$ that is either piecewise linear or smooth outside $\partial\mathbb D$.
\end{theo}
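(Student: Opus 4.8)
The plan is to use that a $C$-bi-Lipschitz map $f$ of the circle is, in particular, a quasisymmetric embedding whose quasisymmetry data depend only on $C$, and that its image $\Gamma = f(\partial\mathbb D)$ is a chord-arc (Lavrentiev) Jordan curve; the extension is then assembled from a quasiconformal ``straightening'' of $\Gamma$ together with an explicit geometric patching, with every distortion constant tracked in terms of $C$ alone.

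First I would split the two-sided problem into two one-sided ones. By the Jordan curve theorem $\Gamma$ bounds a bounded and an unbounded complementary domain; I extend $f\big|_{\partial\mathbb D}$ separately to a homeomorphism of $\overline{\mathbb D}$ onto the closure of the inside of $\Gamma$ and to a homeomorphism of $\mathbb C\setminus\mathbb D$ onto the closure of the outside, and then glue the two maps along $\partial\mathbb D$. The glued map is automatically a homeomorphism of $\mathbb C$, and its global bi-Lipschitz constant is controlled by the two one-sided constants once one knows that the inner and outer Euclidean metrics near $\Gamma$ are comparable -- precisely the content of the chord-arc condition, which forbids the curve from folding back on itself.

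For one side the key analytic input is a quasiconformal extension: the boundary map, being $\eta$-quasisymmetric with $\eta = \eta(C)$, extends by the Beurling--Ahlfors construction to a $K(C)$-quasiconformal self-map of the disk. Since quasiconformality is weaker than the bi-Lipschitz property, I would upgrade it using the uniform regularity of $\Gamma$. Cover the domain by Whitney squares; at the scale of each square the chord-arc curve is a uniformly nearly straight segment, so one can fix a triangulation adapted to this decomposition and replace the quasiconformal map by the piecewise-affine map with the same vertex values. On each triangle the bi-Lipschitz constant of an affine map is bounded once the triangle has angles bounded away from $0$ and $\pi$ and comparable edge lengths, both guaranteed uniformly by the chord-arc hypothesis; injectivity across adjacent triangles and a single global bound are then checked combinatorially, layer by layer of the Whitney decomposition.

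The hard part, and the real substance of Tukia's argument, is obtaining a final constant $C'$ depending on $C$ only and not on the individual curve: producing a local bi-Lipschitz extension is easy, but gluing infinitely many of them along $\Gamma$ while preserving injectivity and a single uniform bound forces one to use the bounded-turning (Ahlfors three-point) property of chord-arc curves at every scale and to estimate carefully how distortion accumulates through the Whitney layers. Once a $C'$-bi-Lipschitz extension is in hand, the concluding refinement -- arranging it to be piecewise linear or smooth off $\partial\mathbb D$ -- is routine: being $C'$-bi-Lipschitz is an open condition, so a sufficiently fine piecewise-linear or mollified approximation that is left untouched on $\partial\mathbb D$ remains bi-Lipschitz with a slightly larger constant.
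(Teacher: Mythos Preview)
The paper does not prove this theorem at all: it is quoted verbatim as an external result of Tukia, with references to \cite{Tuk} and \cite{Tuk2}, and is used only as a black box (through Corollary~\ref{Lavrentiev-bilipschitz}). So there is no proof in the paper to compare your proposal against.

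Regarding your sketch on its own merits, the overall architecture --- reduce to one-sided extensions, exploit the chord-arc property, build the extension combinatorially over a Whitney-type decomposition, then approximate to get PL or smooth --- is in the right spirit, but two steps are not correctly formulated. First, the Beurling--Ahlfors extension produces a quasiconformal \emph{self}-map of the disk from a quasisymmetric \emph{self}-map of the circle; here $f$ maps $\partial\mathbb D$ to an arbitrary curve $\Gamma$, so you must first compose with a Riemann map $\varphi:\mathbb D\to\Omega$ to get a circle self-map $\varphi^{-1}\circ f$, and then the bi-Lipschitz question becomes whether $\varphi$ itself has controlled distortion --- which it does not in general, even for chord-arc $\Gamma$. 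Second, the assertion that ``at the scale of each square the chord-arc curve is a uniformly nearly straight segment'' is false: chord-arc curves need not flatten at small scales (the logarithmic spirals of Example~\ref{Lavrentiev-example-2} in this very paper are chord-arc and oscillate at every scale). What the chord-arc condition gives you is bounded turning, not asymptotic straightness, and Tukia's actual construction has to work with that weaker geometric control. The genuine content of his proof is precisely the step you label ``the hard part'' and then do not carry out.
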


Of course one can choose one of the two alternatives, piecewise linear or smooth.
In~\cite{DanPrat} the strengthening of this theorem is proved where some explicit function $C'(C)$ is given. 

Since the natural parametrization of a Lavrentiev curve is a bi-Lipschitz map, we have an immediate consequence of Tukia's theorem.

\begin{coro}\label{Lavrentiev-bilipschitz}
For any number $C$ there exists a number $C'$ such that any closed $C$-Lavrentiev curve $L\subset \mathbb C$ is the image of the circle $\partial \mathbb D$ under the composition of a $C'$-bi-Lipschitz self-homeomorphism of $\mathbb C$ which is either piecewise linear or smooth outside $\partial\mathbb D$ and a homothety with a coefficient $\ell(L).$
\end{coro}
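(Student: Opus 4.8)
The plan is to absorb the size of $L$ into the homothety and apply Tukia's theorem to a rescaled, unit-length copy of the curve, whose bi-Lipschitz parametrization will have a constant depending only on $C$. First I would set $\lambda = \ell(L)$ and $L' = \lambda^{-1}L$, the image of $L$ under $z\mapsto\lambda^{-1}z$. The Lavrentiev condition is scale invariant, since both sides of $\ell(L\big|_P^Q)\le C\cdot\mathrm{dist}(P,Q)$ are multiplied by $\lambda^{-1}$ under the homothety; hence $L'$ is again a closed $C$-Lavrentiev curve, now of length $\ell(L') = 1$. Writing $h(z) = \lambda z$ for the homothety with coefficient $\lambda=\ell(L)$, it then suffices to produce a $C'$-bi-Lipschitz self-homeomorphism $g$ of $\mathbb C$, piecewise linear or smooth outside $\partial\mathbb D$, with $g(\partial\mathbb D) = L'$; indeed $L = h(g(\partial\mathbb D)) = (h\circ g)(\partial\mathbb D)$ is then the asserted composition. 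Pulling the scaling out in this way is precisely what keeps the remaining constant $C'$ uniform in $C$.

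Next I would construct a bi-Lipschitz map $F\colon\partial\mathbb D\to L'$ to feed into Tukia's theorem. Parametrize $\partial\mathbb D$ by $s\mapsto e^{2\pi\mathbbm i s}$ with $s\in\mathbb R/\mathbb Z$, let $\gamma\colon\mathbb R/\mathbb Z\to\mathbb C$ be the natural (arc-length) parametrization of $L'$, and set $F(e^{2\pi\mathbbm i s}) = \gamma(s)$. By Remark~\ref{bi-Lipschitz-curves} the map $\gamma$ is $C$-bi-Lipschitz, where the circle $\mathbb R/\mathbb Z$ carries its shorter-arc metric. The reparametrization $e^{2\pi\mathbbm i s}\mapsto s$ only compares the chordal (Euclidean) metric on $\partial\mathbb D$ with that shorter-arc metric, and on any circle these differ by a universal factor: a chord subtending a central angle $\phi\le\pi$ has length $2\sin(\phi/2)$, so the chord-to-arc ratio lies in $[2/\pi, 1]$. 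Combining the two estimates shows that $F$ is $C''$-bi-Lipschitz with $C''$ depending only on $C$ (one gets $C'' = 2\pi C$), and its image is exactly $L'$.

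Finally I would apply Tukia's theorem stated above to the $C''$-bi-Lipschitz embedding $F\colon\partial\mathbb D\to\mathbb C$: it produces a number $C'$, depending only on $C''$ and hence only on $C$, together with a $C'$-bi-Lipschitz self-homeomorphism $g$ of $\mathbb C$, piecewise linear or smooth outside $\partial\mathbb D$, that extends $F$. Then $g(\partial\mathbb D) = F(\partial\mathbb D) = L'$, and composing with the homothety $h$ yields $L = (h\circ g)(\partial\mathbb D)$, as required. There is no genuine obstacle here, as the statement is advertised to be an immediate consequence of Tukia's theorem; the only point needing care is the elementary bookkeeping of the second paragraph --- namely that separating off the homothety keeps the bi-Lipschitz constant of $F$, and hence the output constant $C'$, dependent on $C$ alone rather than on $\ell(L)$.
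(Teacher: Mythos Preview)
Your proposal is correct and follows exactly the approach the paper has in mind: the paper states the corollary as an immediate consequence of Tukia's theorem via the observation that the natural parametrization of a Lavrentiev curve is bi-Lipschitz, and you have simply written out the routine details of that deduction (rescaling to unit length, bounding the bi-Lipschitz constant of the arc-length parametrization by a universal multiple of $C$, and then invoking Tukia). There is nothing to add.
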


So to smooth a closed Lavrentiev curve on the plane by an isotopy we can conjugate an isotopy of $\partial \mathbb D$ given by the formula $(z,t)\mapsto (1-t/2)z$ by a map provided by Tukia's theorem. At the moment $t>0$ the curve is therefore either piecewise linear or smooth depending on which variant of Tukia's theorem we use.

A little more work is needed to smooth a Lavrentiev curve on a surface, especially when the curve is not coorientable.
We will smooth curves arc by arc, fixing one part of the curve while smoothing another part. It will be done, first, by the isotopies associated with bypasses provided by Tukia's theorem, and, second, by smoothing the obtained curve at the endpoints of the attaching arc of the bypass.

In Section~\ref{Legendrian-smoothing-section} we will lift the isotopy constructed in the present section to a Legendrian isotopy. So, first, in the present section we should preserve the integral of the form $\beta$ (see Subsection~\ref{regular-projection-section} on the regular projection) on the whole curve. This is easily satisfied by slightly deforming the curve by a smooth ambient isotopy (Proposition~\ref{correcting-isotopy-existence}). Second, since the Legendrian isotopy must be continuous, the integral of the form $\beta$ on any subarc of the curve must change continuously. This is automatically satisfied because the isotopy which we are going to construct is an isotopy of $C$-bi-Lipschitz curves with common $C$ (see Proposition~\ref{bi-Lipschitz-isotopy-lifting}). Third, the overall change of the integral of the form $\beta$ on any arc must be small to guarantee that the resulting Legendrian isotopy will be small. This fact will be needed in the proof of Lemmas~\ref{smooth-isotopy-lemma2} and~\ref{smooth-isotopy-lemma3}. Because of this issue either we can only apply the isotopy associated with the bypass on the short period of time (Lemma~\ref{first-smoothing-lemma}) and in this case a bad behavior of the curve preserves at the endpoints of the attaching arc or we isotope the attaching arc to the other half of the boundary of the bypass but in this case the bypass must be small (Lemma~\ref{second-smoothing-lemma}).

So the main result of this section is the following.

\begin{prop}\label{Lavrentiev-smoothing}
Let $\beta$ be a differential 1-form on the smooth surface $S$ without boundary. Let $L$ be a compact Lavrentiev curve on $S$. Let $d\beta\neq 0$ everywhere and $\varepsilon > 0.$ Let $\gamma_0\subset L$ be a compact subarc, $\partial \gamma_0 \cap \partial L = \varnothing$, $V$ be an open subset of $S$, $V\supset\gamma_0$. Then there exist a number $C$ and an isotopy $F:L\times[0;1]\to S$ such that
\begin{enumerate}
\item\label{LScond1} $F(\bullet, 0) = \mathrm{id}_L.$
\item\label{LScond2} $F(\bullet,t)$ is $C$-bi-Lipschitz for any $t\in[0;1].$
\item\label{LScond3} $F$ is Lipschitz.
\item\label{LScond4} For any subarc $L'\subset L$ and any $t\in[0;1]$ it is true that $\left|\int\limits_{F(L'\times\{t\})}\beta - \int\limits_{L'}\beta\right|<\varepsilon.$
\item\label{LScond5} $\int\limits_{F(L\times\{t\})}\beta$ is independent of $t$.
\item\label{LScond6} $F(p, t) = p$ for any $p\in L\setminus V$ and any $t\in[0;1].$
\item\label{LScond7} $F((L\cap V)\times[0;1])\subset V.$
\item\label{LScond8} The arc $F(\gamma_0\times\{1\})$ is contained in the interior of a smooth subarc of $F(L\times\{1\}).$
\item\label{LScond9} If the subarc $\gamma$ of the curve $L$ is smooth then the arc $F(\gamma\times\{1\})$ is smooth.
\end{enumerate}
\end{prop}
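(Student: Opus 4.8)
The plan is to reduce the global smoothing to a finite sequence of \emph{local} smoothings, each supported in a small disk inside $V$, and then to restore the total integral with one correcting ambient isotopy. Since $\gamma_0$ is compact, I would first cover it by finitely many open sets $W_1,\dots,W_N\subset V$, each carrying a bi-Lipschitz coordinate chart identifying $W_i$ with an open subset of $\mathbb C$; by Convention~\ref{Lavrentiev-metric-equivalence} the Lavrentiev and bi-Lipschitz notions are insensitive to the chart, and because all moves are local no global coorientation of $L$ is needed. Working chart by chart and using Tukia's theorem (Corollary~\ref{Lavrentiev-bilipschitz}), I would realise each short subarc of $L$ lying in a chart as the attaching arc $\chi(\partial_+\mathbb D)$ of a bypass $\chi$ that is bi-Lipschitz and smooth away from $\partial\mathbb D$. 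The associated bypass isotopy of Proposition~\ref{associated-isotopy-with-bypass-proposition} then pushes this arc across $\overline{\mathbb D}$. Because the intermediate arcs $\chi(F_{+\mapsto-}([-1;1],t))$ lie in the region where $\chi$ is smooth except at the two endpoints $\chi(\pm1)$, the moving arc is smooth in its interior for every $t>0$; this supplies conditions~(\ref{LScond8}) and~(\ref{LScond9}) on the interior of the subarc, while Proposition~\ref{associated-isotopy-with-bypass-proposition} gives the Lipschitz property~(\ref{LScond3}) and a uniform bi-Lipschitz bound~(\ref{LScond2}) for this local step.

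Next I would deal with the endpoints of the attaching arcs, where a bypass leaves a possible corner. Arranging the subarcs of consecutive charts to overlap, I would perform a second family of small bypasses (or local bi-Lipschitz straightenings) centred at each such junction to smooth the remaining corners; taking these small keeps them inside $V$, which yields~(\ref{LScond6}) and~(\ref{LScond7}), and after finitely many operations the whole of $\gamma_0$ lies in the interior of a single smooth subarc of the final curve. The two kinds of local moves are concatenated in time into one isotopy: condition~(\ref{LScond1}) holds by construction, and Lemma~\ref{bi-Lipschitz-family} upgrades the finitely many local uniform bi-Lipschitz constants to a single global $C$, giving~(\ref{LScond2}). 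Smoothness of already-smooth subarcs~(\ref{LScond9}) is preserved because every building block (the Möbius bypass isotopy and the ambient correction below) is smooth on the smooth part of the curve.

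The $\beta$-integral conditions are the delicate point and dictate the choice of scales. Each bypass moves a subarc across a region of nonzero $d\beta$, so by Stokes the integral of $\beta$ over that subarc changes by the $d\beta$-area swept; to keep the cumulative change over \emph{every} subarc below $\varepsilon$ (condition~(\ref{LScond4})), I would take each bypass disk small enough that its swept $d\beta$-area is a small fraction of the total budget, using only finitely many of them. Continuity of the subarc integrals in $t$ is automatic from the uniform bi-Lipschitz bound via Corollary~\ref{integral-continuity-corollary}. To enforce~(\ref{LScond5}) I would fix a small integral correction square $R$ in $V$ placed on a smooth portion of the curve and apply Proposition~\ref{correcting-isotopy-existence}: the resulting correcting ambient isotopy $\Phi$ is a smooth, uniformly bi-Lipschitz self-map (Lipschitz when $F$ is) supported in $R$ that pins $\int_{F(L\times\{t\})}\beta$ to $\int_L\beta$ for all $t$, while its small support keeps its contribution to~(\ref{LScond4}) within budget and does not disturb~(\ref{LScond6})--(\ref{LScond9}).

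I expect the main obstacle to be simultaneously honouring the full-smoothing requirement~(\ref{LScond8}) and the smallness requirement~(\ref{LScond4}): pushing an attaching arc all the way to $\chi(\partial_-\mathbb D)$ sweeps a definite $d\beta$-area, so either the bypasses must be taken small (forcing many of them, with careful corner management between charts) or the bypass isotopy must be run only for a short time (leaving corners at the endpoints that a subsequent small move must remove). Reconciling this trade-off with the corner-smoothing at the junctions, and checking that the correction square can always be placed on a smooth arc inside $V$ without reintroducing corners or breaking the support condition~(\ref{LScond6}), is where the real work lies.
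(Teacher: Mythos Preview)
Your proposal identifies the right toolkit (Tukia's theorem via Corollary~\ref{Lavrentiev-bilipschitz}, the bypass isotopies of Proposition~\ref{associated-isotopy-with-bypass-proposition}, the integral correction of Proposition~\ref{correcting-isotopy-existence}) and, crucially, you have spotted the essential tension between conditions~(\ref{LScond4}) and~(\ref{LScond8}): full sweeping of a bypass costs a definite $d\beta$--area, so either the bypass must be small or it must be run for short time. This matches the paper's discussion exactly.

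Where you differ from the paper is in how you resolve this tension. You opt for \emph{many small bypasses} covering $\gamma_0$, each run for full time, followed by corner management at many junctions and a single correction square at the end. The paper instead uses \emph{one large bypass} (Lemma~\ref{first-smoothing-lemma}) attached along a slightly larger arc $L_0\supset\gamma_0$, run only for a short time $\delta$ so that the integral drift stays below $\varepsilon$; this already makes the interior of $\gamma_0$ piecewise smooth. Then it applies \emph{two} small bypasses (Lemma~\ref{second-smoothing-lemma}) at the two endpoints of $L_0$, run for full time, and finally does a standard smoothing of the finitely many remaining corners. Correction is performed after \emph{each} bypass step, with the first correction square chosen to contain the large bypass and the later ones chosen disjoint from the small bypasses. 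The paper's route is more economical: only three bypasses and hence only two junctions to repair, so the bi-Lipschitz constant in condition~(\ref{LScond2}) is controlled by a product of a fixed finite number of factors.

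Your many-small-bypasses route should also work, but there is a point you have not addressed that deserves care: when you concatenate $N$ bypass isotopies whose supports overlap at the junctions, the bi-Lipschitz constant of $F(\bullet,t)$ is a priori a product of the individual constants, so you must argue that it does not blow up with $N$ (for instance by ensuring the supports are pairwise disjoint except at single points, or by showing the constants are essentially $1$ away from a controlled neighbourhood). Likewise, your single small correction square must have enough $d\beta$--capacity to absorb the accumulated drift while itself contributing less than $\varepsilon$ to condition~(\ref{LScond4}); this forces a two-scale choice (bypass area $\ll$ correction capacity $\ll \varepsilon$) that you should make explicit. The paper sidesteps both issues by keeping the number of pieces fixed at three and correcting after each step.
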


\begin{proof}
Let $L_0\subset L$ be a compact subarc such that the curve $L$ has the tangents at the endpoints of $L_0$, $\gamma_0\subset L_0$, $\partial L_0\cap \partial L=\varnothing$ and $L_0\subset V.$ Such a curve exists since any Lavrentiev curve is differentiable almost everywhere by the discussion before Corollary~\ref{tangent-to-contact-plane-almost-everywhere}.

\begin{lemm}\label{first-smoothing-lemma}
Let $L$ be a compact Lavrentiev curve on a smooth surface without boundary, $L_0$ be a compact subarc of the curve $L$ which has one-sided tangents at the endpoints. Then there exists a bypass $\chi_0$ for the curve $L$ attached along the arc $L_0$ such that $\chi_0\big|_{\mathbb D}$ is piecewise smooth.
\end{lemm}

\begin{proof}
Since $L$ is Lavrentiev and $L_0$ has one-sided tangents at the endpoints, by Lemma~\ref{Lavrentiev-semitangent} the minimal angles between the arc $L_0$ and the curve $L\setminus(L_0\setminus\partial L_0)$ at their common endpoints are greater than zero. Let $\alpha_0$ be a minimum over these two angles.

Since the curve $L$ is embedded, the endpoints of $L_0$ can be joined by a smooth arc $L_1$ such that
\begin{enumerate}
\item The angles between the curves $L_0$ and $L_1$ at their common endpoints equal $\alpha_0/2$.
\item There exists a closed disk $D\subset S$ such that $\partial D = L_0\cup L_1$ and $D\cap L = L_0.$
\end{enumerate}

Since the arc $L_1$ is smooth, by Lemma~\ref{Lavrentiev-example} it is Lavrentiev. So the curve $\partial D$ is a union of the Lavrentiev arcs $L_0$ and $L_1$. These arcs has tangents at their common endpoints and the angles between them is greater than zero. Therefore by Lemma~\ref{Lavrentiev-concatenation} the curve $\partial D$ is locally Lavrentiev thus since it is compact, by Lemma~\ref{compact-Lavrentiev} it is Lavrentiev. Then by Corollary~\ref{Lavrentiev-bilipschitz} there exists a bi-Lipschitz homeomorphism $\chi_0:\overline{\mathbb D}\to D$ which is piecewise smooth in $\mathbb D.$

If we substitute $\chi_0$ with a composition of $\chi_0$ with some M\"obius transformation of $\mathbb CP^1$ which preserves $\mathbb D,$ we can also assume that $\chi_0(\partial_+ D) = L_0.$ Since any such M\"obius transformation is bi-Lipschitz on $\overline{\mathbb D}$, $\chi_0$ will stay bi-Lipschitz.

Similarly the curve $(L\setminus L_0)\cup L_1$ is Lavrentiev. Thus $\chi_0$ is a bypass.
\end{proof}

Let $F_0$ be the isotopy associated with a bypass $\chi_0$ given by Lemma~\ref{first-smoothing-lemma} applied to the subarc $L_0\subset L$ and an open subsurface of $V$ which contains $L_0$ and intersects $L$ by an arc. Conditions~(\ref{LScond2}) and~(\ref{LScond3}) of the present proposition are satisfied by the isotopy $F_0$ by Proposition~\ref{associated-isotopy-with-bypass-proposition}. Conditions~(\ref{LScond1}),~(\ref{LScond6}) and~(\ref{LScond7}) are satisfied by construction.

For any $C>1$ during any isotopy of $C$-bi-Lipschitz curves the integral of any smooth differential 1-form on any subarc of the curve changes continuously by Corollary~\ref{integral-continuity-corollary}. So we can fix a sufficiently small $\delta > 0$ such that the isotopy $F_0\big|_{L\times[0;\delta]}$ satisfies condition~(\ref{LScond4}).

We have some advance in satisfying condition~(\ref{LScond8}): the curve $F_0(\gamma_0\times\{\delta\})$ is piecewise smooth.

Now we address condition~(\ref{LScond5}).
First, we orient the surface $S$ using the 2-form $d\beta$ which is nonzero everywhere by the assumption of the present proposition. Second, we note that the isotopy $F_0$ always pushes the subarc $L_0$ to one side of the curve. So by Stokes theorem (see \cite[Chapter IX, Theorems 5A and 7A]{Whi}) the integral $\int\limits_{F(L\times\{t\})} \beta$ is a monotone function on $t$. We will smoothly push the curve to the opposite side to make this integral constant.

Choose two points $A,B\in L\cap V$ such that $L$ has tangents at $A$ and $B$ and some subarc of $L$ connecting $A$ and $B$ contains the subarc $L_0$. It is easy to construct a square $R_0 = \{(u,v):\ |u|\le1,\ |v|\le1\}$ smoothly embedded in the open set $V$ such that $R_0$ contains the bypass $\chi_0(\overline{\mathbb D})$ in its interior and $\partial R_0$ intersects $L$ in two points $A$ and $B$ lying on the opposite sides $v=\pm1$ of the square. It is clear that $R_0$ is an integral correction square (see Definition~\ref{integral-correction-square-definition}) for the isotopy $F_0$. Let $\Phi_0$ be an isotopy of $S$ supported in $R_0$ correcting the isotopy $F_0$. Since $F_0$ is Lipschitz, by Proposition~\ref{correcting-isotopy-existence} $\Phi_0$ is also Lipschitz.

So the isotopy $(p,t) \mapsto \Phi_0(F_0(p , t), t)$, where $p\in L$ and $t\in[0;\delta]$, satisfies conditions~(\ref{LScond1}), (\ref{LScond2}), (\ref{LScond3}), (\ref{LScond6}) and (\ref{LScond7}) since these conditions were satisfied by the isotopy $F_0$, and $\Phi_0$ preserves them by the definition. If we choose $\delta$ sufficiently small, it also satisfies condition (\ref{LScond4}). Condition~(\ref{LScond5}) is satisfied by construction. We preserved our advance in satisfying the condition~(\ref{LScond8}): the curve  $\Phi_0(F_0(\gamma_0\times\{\delta\}),\delta)$ is piecewise smooth since $\Phi_0(\bullet,\delta)$ is smooth.

Let $\partial L_0 = \{P, Q\}.$

\begin{lemm}\label{second-smoothing-lemma}
For any real number $C$ there exists a real number $C'$ such that if $L$ is a $C$-Lavrentiev curve on the surface $S$ with a Riemannian metric $g$ and $P\in L\setminus\partial L$, then there exists a bypass $\chi$ for $L$ such that
\begin{enumerate}
\item $P\in L_0\setminus \partial L_0$ where $L_0$ is the attaching arc of the bypass.
\item $\chi(\partial_- \mathbb D)$ is piecewise smooth.
\item $\chi$ is $C'$-bi-Lipschitz with respect to the Riemannian metric $g / \left(\ell(L_0)\right)^2.$
\end{enumerate}
\end{lemm}

\begin{proof}
We will construct a bypass in a small neighborhood $V$ of $P.$ Taking $V$ sufficiently small, we can assume that $g$ is Euclidean in $V$. For any $r$ let $B(r)$ denote the ball centered at $P$ of radius $r.$ 

Let $\gamma_1$ and $\gamma_2$ be two compact subarcs of $L$ such that $\gamma_1\cap \gamma_2 = \{P\},$ $\gamma_1\cup \gamma_2\subset L\cap V$ and for any two points $p_1\in \gamma_1,$ $p_2\in\gamma_2$ the subarc $L\big|_{p_1}^{p_2}$ contains $P$. We take $r_0$ such that $\gamma_1\cup\gamma_2\supset L\cap B(r_0)$ and $\partial(\gamma_1\cup\gamma_2)\cap B(r_0)=\varnothing.$ Let $O\in B(r_0/2)$ be a point such that $\mathrm{dist}(O, \gamma_1) = \mathrm{dist}(O, \gamma_2)\neq0.$ Let $Q_1\in\gamma_1$ and $Q_2\in\gamma_2$ be two points such that $\mathrm{dist}(O, Q_1) = \mathrm{dist}(O, \gamma_1) = \mathrm{dist}(O, \gamma_2) = \mathrm{dist}(O, Q_2).$ Let $D$ be a closed geometric disk centered at $O$ of radius $\mathrm{dist}(O, Q_1).$ It is clear that $\mathrm{dist}(O, Q_1)\le\mathrm{dist}(O, P)\le r_0/2$ and thus $D\subset B(r_0).$ Then $L\cap D = (\gamma_1\cup\gamma_2)\cap D$ and the interior of $D$ does not contain any point of $L$.

\begin{figure}[ht]
\center{\includegraphics{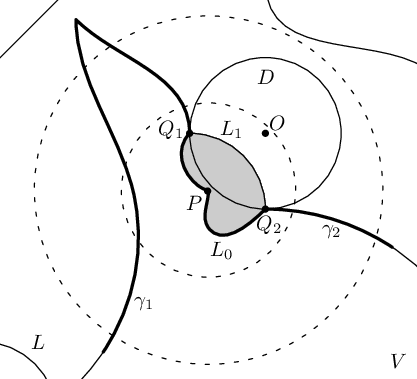}}
\caption{constructing a bypass in Case 1}
\label{second-smoothing-picture}
\end{figure}

\smallskip
\noindent{\bf Case 1.} $Q_1\neq P$ and $Q_2\neq P.$

Let $L_1$ be an arc of a (generalized) circle which is perpendicular to the circle $\partial D$ such that $\partial L_1 = \{Q_1, Q_2\}$ and $L_1\subset D.$ Let $L_0 = L\big|_{Q_1}^{Q_2}.$ See Figure~\ref{second-smoothing-picture}. The curve $L_0$ is $C$-Lavrentiev. The curve $L_1$ is $\pi/2$-Lavrentiev. Let us prove that the curve $L_0\cup L_1$ is Lavrentiev with the constant depending only on $C$.

Let $A\in L_0,$ $B\in L_1$ and $A,B\notin L_0\cap L_1.$ Suppose that $\mathrm{dist}(B, Q_1) \le \mathrm{dist}(B, Q_2).$ The other case is similar. Then

\begin{multline*}
\ell((L_0\cup L_1)\big|_A^B)\le
\ell(L_0\big|^{Q_1}_A) + \ell(L_1\big|^B_{Q_1}) \le C\cdot\mathrm{dist}(A, Q_1) + \pi/2\cdot\mathrm{dist}(Q_1,B) \le \\
\le C\cdot\mathrm{dist}(A,B) + (C + \pi/2)\cdot\mathrm{dist}(Q_1, B).
\end{multline*}

It is clear that there exists a universal constant $k$ such that $\mathrm{dist}(Q_1, B)\le k\cdot \mathrm{dist}(B, \partial D)$ (in fact, $k = \sqrt2$ works). Let us continue the above chain of inequalities:

$$
\ell((L_0\cup L_1)\big|_A^B) \le C\cdot\mathrm{dist}(A,B) + (C+\pi/2)k\cdot \mathrm{dist}(B, \partial D) < (C + (C+\pi/2)k)\cdot\mathrm{dist}(A,B).
$$

So the curve $L_0\cup L_1$ is $(C + (C+\pi/2)k)$-Lavrentiev.

Denote $\ell(L_0)$ by $l$. Note that $1\le\ell(L_0\cup L_1)/l\le 1 + \pi/2.$

If $X$ is a subset of $\mathbb C$ and $z\in\mathbb C$ denote by $z\cdot X$ the set $\{z\cdot w:\ w\in X\}.$

By Corollary~\ref{Lavrentiev-bilipschitz} there exists a $C_0$-bi-Lipschitz map $\chi_0:l\cdot\overline{\mathbb D}\to V$ such that $\chi_0(l\cdot\partial\mathbb D) = L_0\cup L_1$ and $C_0$ depends only on $C$. Let us prove that $\chi_0(l\cdot\mathbb D)\cap (L\cup L_1) = \varnothing.$ First, $\gamma_1\cup\gamma_2\setminus L_0$ is the union of two arcs. Second, each of these arcs emanates from $\partial L_0$ outside  $\chi_0(l\cdot\mathbb D)$ because $\gamma_1$ and $\gamma_2$ are compact arcs and $\partial(\gamma_1\cup\gamma_2)\cap B(r_0)=\varnothing.$ The rest follows from the fact that $\chi_0(l\cdot\mathbb D)\subset B(r_0)$ and $L\cap B(r_0) = (\gamma_1\cup\gamma_2)\cap B(r_0)$.

Now we consider the M\"obius map $\chi_1$ which maps $l\cdot\mathbb D$ to itself and the arc $\chi_0^{-1}(L_0)$ with its middle point to the arc $l\cdot\partial_+\mathbb D$ with its middle point respectively. There exists a real number $C_1$ which depends only on the ratio $\ell(\chi_0^{-1}(L_0))/\ell(l\cdot\partial\mathbb D)$ such that $\chi_1$ is $C_1$-bi-Lipschitz. But $\ell(\chi_0^{-1}(L_0))/\ell(l\cdot\partial\mathbb D)\ge (\ell(L_0)/C_0)/(2\pi \ell(L_0)) = 1/2\pi C_0.$ Hence $C_1$ depends only on $C$.

Let $\chi:\overline{\mathbb D}\to V$ be the map such that $\chi(z) = \chi_0\circ\chi_1^{-1}(lz).$ By construction $\chi$ is $C'$-bi-Lipschitz map with respect to the Euclidean metric on $V$ divided by $l^2$ where $C'$ depends only on $C$, $\chi(\overline{\mathbb D})\cap (L\cup L_1) = L_0\cup L_1$ and $\chi(\partial_+\mathbb D) = L_0.$ To check that $\chi$ is a bypass it remains only to prove that the curve $(L\setminus L_0)\cup L_1$ is Lavrentiev. It is true by Corollary~\ref{Lavrentiev-concatenation2} since the minimal angle of this curve at the points $\partial L_0$ is nonzero.

\smallskip
\noindent{\bf Case 2.} $P\in\partial D$.

Let $(r,\varphi)$ be polar coordinates in $V$ such that $r$ is the distance to the point $P$. We can apply a 2-bi-Lipschitz homeomorphism $\phi$ of $B(r_0)$ such that $\phi:(r,\varphi)\mapsto (r, \varphi'(\varphi))$, where the functions $\varphi'(\varphi)$ and $\varphi(\varphi')$ are smooth, and the inner angle of $\phi(D)$ at $P$ is greater than $\pi.$

The connected component of $\phi(L\cap B(r_0))$ containing $P$ divides $B(r_0)$ into two parts. One part contains $\phi(D).$ In the other part we can construct a disk $D'$ in a similar way as $D$. The disk $D'$ satisfies the assumption of the first case, so we can construct a bypass with desired properties. Then we compose this bypass with a map $\phi^{-1}$ and obtain a sought-for bypass for the curve $L$.
\end{proof}

Now we take a point $P\in\partial L_0$ and apply the last lemma to the point $\Phi_0(P, \delta)$ on the curve $L' = \Phi_0(F_0(L\times\{\delta\}),\delta).$ We get a bypass $\chi_1$ in a neighborhood of $\Phi_0(P, \delta)$ which can be chosen to be arbitrarily small. So we assume this neighborhood to lie inside $V$. Also we can assume the attaching arc of the bypass $\chi_1$ to be arbitrarily small. By construction of $\chi_1$ all arcs $\chi_1(F_{+\mapsto-}(\partial_+\mathbb D, t))$ have comparable lengths such that the constant of comparing depends only on $C$. So these arcs have the arbitrarily small length for all $t\in[0;1].$ Let $F_1$ be the isotopy associated with the bypass $\chi_1$. 

There exists an integral correction square $R_1\subset V$ for the isotopy $F_1$ such that $R_1\cap \chi_1(\overline{\mathbb D}) = \varnothing.$ Since $L'\cap R_1$ stays fixed during the isotopy $F_1$, by Proposition~\ref{correcting-isotopy-existence} there exists a correcting isotopy $\Phi_1$ which essentially depends only on the function $\int\limits_{F_1(L'\times\{t\})}\beta$. So again by choosing the bypass $\chi_1$ to be sufficiently small we can achieve that the range of the integral of the form $\beta$ on any subarc of $L'$ during the isotopy $(p,t) \mapsto \Phi_1(F_1(p, t), t)$ is arbitrarily small. So condition~(\ref{LScond4}) is satisfied by the concatenation of the isotopies $(p,t) \mapsto \Phi_0(F_0(p, t), t),$ where $p\in L$ and $t\in[0;\delta]$, and $(p,t) \mapsto \Phi_1(F_1(p, t), t),$ where $p\in \Phi_0(F_0(L,\delta), \delta)$ and $t\in[0;1].$

In a similar way we take a point $Q\in\partial L_0\setminus\{P\}$ and by Lemma~\ref{second-smoothing-lemma} we construct a bypass $\chi_2$ in a neighborhood of the point $\Phi_1(\Phi_0(Q,\delta),1)$, correct the associated isotopy $F_2$ by an isotopy $\Phi_2.$ The concatenation of the three constructed isotopies satisfies conditions~(\ref{LScond1})--(\ref{LScond7}). And moreover, if a subarc of the curve $L$ was smooth then its image after the isotopy is a piecewise smooth arc. So the only thing that remains to prove is that we can smooth the corners of a piecewise smooth curve.

Smoothing the corners of a piecewise smooth curve is pretty standard so we only sketch the argument. Let $p$ be a corner of the curve $L$ and $(r,\varphi)$ be polar coordinates in some neighborhood of $p$, where $r = \mathrm{dist}(p,\bullet)$. Since the curve $L$ is Lavrentiev and hence cusp-free, it is possible to construct an isotopy of $S$ trivial outside the neighborhood of $p$ of the form $(r,\varphi) \mapsto (r, \varphi'(r, \varphi, t)),$ where $\varphi'(r, \varphi, t)$ is smooth, $\partial\varphi'/\partial\varphi\neq0$ and $\varphi'(r,\varphi, t) = \varphi'(\varphi, t)$ if $r<r_0$ for some positive $r_0$, from the curve $L$ to a curve which is straight near $p$ such that at any moment of time the curve $L$ is mapped to a piecewise smooth curve. By construction the isotopy is Lipschitz and there exists $C$ such that at any moment of time of this isotopy a $C$-bi-Lipschitz homeomorphism is applied. Again if the neighborhood was chosen sufficiently small this isotopy can be corrected with help of an integral correction square not intersecting the neighborhood so that we obtain an isotopy satisfying condition~(\ref{LScond4}). After eliminating all the corners we obtain an isotopy satisfying all the conditions of the present proposition.
\end{proof}

\begin{rema}
All constructed smooth curves in the last proposition are in fact $C^{\infty}$-smooth.
If $L$ is a $C^1$-smooth curve, we can make it $C^{\infty}$-smooth by an isotopy satisfying all conditions of the last proposition in the alternative way. By the standard argument there is an arbitrarily small $C^1$-smooth isotopy of $L$ making the curve $C^{\infty}$-smooth. Such isotopy trivially satisfies all conditions except~(\ref{LScond2}) and~(\ref{LScond5}). It satisfies condition~(\ref{LScond2}) by Lemma~\ref{Lavrentiev-C1-isotopy}. For condition~(\ref{LScond5}) we again can correct this isotopy preserving condition~(\ref{LScond4}).  
\end{rema}

\subsection{Regular tubular neighborhoods}

One of the consequences of Tukia's theorem is that a Lavrentiev curve on the plane has a bi-Lipschitz bi-collar that is smooth outside the curve. In this subsection we note that this immediately provides a similar bi-collar for a Lavrentiev curve on a surface if the curve is coorientable. We will need this in the following section in the proof of Proposition~\ref{r-invariant-regular-neighbourhood-existence}.

Let $L$ be a closed embedded curve on a surface $S$. Let us remind that a collar of $L$ is a continuous embedding $L\times[0;1)\to S$, where $p\times\{0\}\mapsto p$ for any $p\in L.$ We call two collars equivalent if they can be included in a finite sequence of collars in which for any two adjacent collars one collar is a subset of the other. A {\it coorientation} of the curve $L$ is an equivalence class of collars. If there exists at least one collar, the curve is called {\it coorientable}.

It is known that any closed embedded curve on a surface has a tubular neighborhood which is homeomorphic either to an annulus or to a M\"obius strip (to see this one can, for example, first construct a collar locally on the orientable covering of the surface using Sch\"onflies theorem and then apply the fact from \cite[Theorem V.4.C]{Bing} that a subset of a metric space is collared if it is locally collared). In the first case the curve is coorientable while in the second case it is not.

\begin{defi}
\label{closed-regular-tubular-neighbourhood-definition}
Let $L$ be a coorientable closed Lavrentiev curve on a smooth surface $S$. A subset $N\subset S$ is called {\it closed regular tubular neighborhood} of the curve $L$ if there exists a map $f: \mathbb S^1\times[-1;1] \to N$ such that
\begin{enumerate}
\item $f$ is a bi-Lipschitz homeomorphism.
\item $f(\mathbb S^1\times\{0\}) = L$.
\item $f: \mathbb S^1\times ([-1;0) \cup (0;1]) \to N\setminus L$ is a diffeomorphism.
\end{enumerate}
\end{defi}

Let us prove that such neighborhood exists.

Since the curve is coorientable, it has a tubular neighborhood which is homeomorphic to an annulus. So we can assume that the surface $S$ is homeomorphic to an annulus. And we can also assume that this annulus is embedded in the plane. Tukia's theorem provides us a closed regular tubular neighborhood of the curve $L$ lying in the plane, and if we take a sufficiently small such neighborhood it lies inside the annulus hence in $S$.

\section{Regular neighborhoods of Legendrian curves}
\label{regular-neighborhood-section}
In this section we construct a regular projection in some neighborhood of any Legendrian curve. This allows one to apply results of Subsection~\ref{regular-projection-section} to any contact manifolds.

\begin{defi}
A vector field on a contact manifold is called {\it contact} if the flow of the vector field preserves the contact structure.

We call an unordered pair of two opposite vectors a {\it line element}.

A smooth field of line elements is called {\it contact} if the restriction of the field to any simply connected open subset $U$  has the form $\{v,-v\}$, where $v$ is a contact vector field.
\end{defi}

For a field of line elements $l$ let $l_p$ denote the line element at the point $p.$
Let $A$ be a subset of the manifold $M$. Let $\overline B_{rl}(A)$ (respectively, $B_{rl}(A)$) denote the set of vectors $\{\lambda v:\ v\in l_p,\ p\in A,\ |\lambda|\le r\}$ (respectively, $\{\lambda v:\ v\in l_p,\ p\in A,\ |\lambda|< r\}$).
If $A$ is compact and $A\cap\partial M = \varnothing$, the flow of the field $l$ is defined on $B_{rl}(A)$ for some $r>0$. We denote this flow by $\exp_l:B_{rl}(A) \to M$.

\begin{defi}\label{regular-neighbourhood-definition}
A neighborhood $U$ of the Legendrian curve $L$ is called {\it regular} if there exists a positive number $r\le+\infty$, a smooth surface $S\subset U,$ the map $\mathrm{pr}:U \to S$ and a contact field of line elements $l$ such that
\begin{enumerate}
\item $l$ is transverse to the contact structure.
\item\label{regular-neighborhood-condition2} $\exp_l: B_{rl}(S) \to U$ is a diffeomorphism.
\item $\mathrm{pr}$ is the projection along the trajectories of $l$.
\item\label{regular-neighborhood-condition4} $\mathrm{pr}\big|_L$ is injective.
\end{enumerate}

The quadruple $(U, S, l, r)$ will be also called regular neighborhood.

The projection $\mathrm{pr}$ is called {\it regular}.
\end{defi}

We note that from condition~(\ref{regular-neighborhood-condition2}) it follows that the field $l$ is transverse to the surface $S$. 

\begin{rema}\label{regular-neighbourhood-contact-form}
Suppose that the contact structure is coorientable along the curve $L$ and $(U,S,l,r)$ is a regular neighborhood of the curve $L$. Since $L$ and $\mathrm{pr}(L)$ are homotopic to each other, the contact structure is also coorientable on the curve $\mathrm{pr}(L).$ Since the field $l$ is transverse to the contact structure, $l$ has the form $\{v, -v\}$, where $v$ is a vector field in some neighborhood of $\mathrm{pr}(L)$ in $U$. Since $l$ is transverse to $S$, $v$ provide a coorientation for some neighborhood $S'\subset S$ of the curve $\mathrm{pr}(L)$. Thus $S'$ is oriented and $B_{rl}(S')$ can be identified with $S'\times(-r;r)$ where $T_q M \ni z\cdot v  \leftrightarrow (q, z).$ Since the field $l$ is contact and it is transverse to the contact structure, there exists a differential 1-form $\beta$ on $S'$ such that the contact structure on $U' = \exp_l(S'\times(-r;r))$ is the kernel of the 1-form
$
dz + \mathrm{pr}^*\beta.
$
\end{rema}

\begin{prop}\label{regular-neighbourhood-existence}
Any compact Legendrian curve lying in the interior of the manifold has a regular neighborhood.
\end{prop}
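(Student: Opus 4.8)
The plan is to produce the quadruple $(U,S,l,r)$ in two stages: first construct a contact field of line elements $l$ transverse to $\xi$ on a neighborhood $W$ of $L$, and then find a smooth transversal surface $S$ together with a small flow time $r$ so that $\exp_l\colon B_{rl}(S)\to U$ is a diffeomorphism onto a neighborhood $U\supset L$. Conditions (1)--(3) of Definition~\ref{regular-neighbourhood-definition} will hold by construction; the whole point will be condition~(\ref{regular-neighborhood-condition4}), the injectivity of $\mathrm{pr}\big|_L$, and I would arrange it by making the neighborhood thin.

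For the line field, if $\xi$ is coorientable on $W$ I would simply take a global contact form $\alpha$ and set $l=\{R_\alpha,-R_\alpha\}$ for the Reeb field $R_\alpha$: its flow preserves $\alpha$, hence $\xi$, so $l$ is contact; it is transverse to $\xi$ because $\alpha(R_\alpha)=1$; and replacing $\alpha$ by $-\alpha$ replaces $R_\alpha$ by $-R_\alpha$, so the line element does not depend on the coorientation. If $\xi$ is not coorientable along $L$, I would pass to the connected double cover $\pi\colon \hat W\to W$ on which $\hat\xi=\pi^*\xi$ is coorientable, with deck transformation $\tau$ reversing the coorientation. Averaging a contact form produces a contact form $\hat\alpha$ with $\tau^*\hat\alpha=-\hat\alpha$, and a short computation then gives $\tau_*R_{\hat\alpha}=-R_{\hat\alpha}$, so the line element $\{R_{\hat\alpha},-R_{\hat\alpha}\}$ is $\tau$-invariant and descends to the required contact line field $l$ on $W$. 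Note that $l$ may then be non-orientable around $L$, which matches the M\"obius-type tubular neighborhoods occurring in the non-coorientable case.

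Since $L$ is Legendrian, its tangent directions lie in $\xi$ (Corollary~\ref{tangent-to-contact-plane-almost-everywhere}) and, more strongly, all short secants of $L$ make arbitrarily small angles with $\xi$ by Definition~\ref{def-angle}; as $l$ is transverse to $\xi$, the line field $l$ is transverse to $L$. Consequently $L$ admits a flow-box neighborhood for $l$: I would cover $L$ by finitely many boxes in which $l$ is the fiber direction and $L$ is a graph over the local transversal, and patch these graphs by a partition of unity into a single smooth surface $S$ transverse to $l$ that is $C^0$-close to $L$ along the $l$-direction. For $r$ small, $\exp_l$ is an immersion on $B_{rl}(S)$ by transversality and is injective because the trajectory segments have length $O(r)$ and neither $S$ nor $L$ has a short return; choosing $S$ within flow time $r/2$ of $L$ guarantees $L\subset U=\exp_l(B_{rl}(S))$. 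This yields conditions (1)--(3).

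The \textbf{main obstacle} is condition~(\ref{regular-neighborhood-condition4}), and it is exactly where thinness is used. The naive projection of a Legendrian curve (its Lagrangian projection in the Darboux model) typically has double points, but these come from pairs of points of $L$ that are far apart in the ambient manifold, separated by a definite \emph{enclosed area}, so a thin enough neighborhood places them on different $l$-trajectories. Concretely, suppose $p,q\in L$ satisfy $\mathrm{pr}(p)=\mathrm{pr}(q)$; then $p$ and $q$ lie on one $l$-trajectory of length $O(r)$, so $\mathrm{dist}(p,q)=O(r)$. Taking $r$ below the minimal return distance of the embedded curve $L$ forces $p$ and $q$ to be close along $L$, whence the secant direction $p-q$ makes an arbitrarily small angle with $\xi$ by the Legendrian condition; but this secant is also nearly tangent to the $l$-trajectory, hence nearly parallel to $l$. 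Since $l$ meets $\xi$ at a definite positive angle on the compact set $\overline U$, a direction cannot be simultaneously close to $\xi$ and close to $l$ unless it is degenerate, that is, unless $p=q$. Hence $\mathrm{pr}\big|_L$ is injective and $(U,S,l,r)$ is a regular neighborhood.
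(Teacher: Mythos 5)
Your overall strategy coincides with the paper's: construct a contact line field transverse to $\xi$ (Reeb field in the coorientable case, descent from the coorientation double cover otherwise, exactly as in Lemma~\ref{Reeb-line-element-existence}), then build a transversal surface and shrink the flow time, with injectivity of $\mathrm{pr}\big|_L$ obtained from the Legendrian angle condition versus transversality of $l$ to $\xi$ (this is the content of Lemma~\ref{transverse-collar2} and the final step of Lemma~\ref{regular-neighbourhood-existence-lemma}, and your version of it is sound).

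The genuine gap is the sentence ``patch these graphs by a partition of unity into a single smooth surface $S$ transverse to $l$.'' To average the local transversals by a partition of unity you need the local graphs to be functions over a common base, i.e.\ you need the space of $l$-trajectories of the union of your flow boxes to be a (Hausdorff) surface over which all local sections are defined compatibly. This can fail: the paper exhibits two explicit configurations of flow boxes for $Z=\partial/\partial z$ in $\mathbb R^3$ (Examples~\ref{non-Hausdorff-example} and~\ref{non-Hausdorff-example2}) in which the space of trajectories of the union is non-Hausdorff, so that two local transversals meet the same trajectory at points far apart along the flow and no consistent averaging is possible. Avoiding this is the actual content of the proof of Lemma~\ref{surface-of-trajectories}: the boxes and local transversals $S_j$ must be chosen small and ``almost horizontal'' and subject to the combinatorial conditions I--VI of Claim~\ref{Sj-definition} (short connecting trajectories between consecutive transversals, disjointness of the projections of $S_{j-1}$ and $S_{j+1}$, every consecutive triple inside one chart), after which one proves in Claims~\ref{atlas}--\ref{last-claim} that the identification relation is an equivalence with classes of size at most two and that the quotient is a Hausdorff surface; only then does the partition-of-unity gluing make sense. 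As written, your construction of $S$ skips the central difficulty of the proposition, and the subsequent claims about $\exp_l\colon B_{rl}(S)\to U$ being a diffeomorphism depend on it.
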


On any contact manifold there exists a contact field of line elements which is transverse to the contact structure (Lemma~\ref{Reeb-line-element-existence}).

Therefore Proposition~\ref{regular-neighbourhood-existence} is a corollary of the following lemma.

\begin{lemm}\label{regular-neighbourhood-existence-lemma}
Let $L$ be a compact Legendrian curve lying in the interior of the contact manifold $(M, \xi).$ Let $l$ be a contact field of line elements transverse to $\xi$. Then the field $l$ is the contact field of some regular neighborhood $U$ of the curve $L.$
\end{lemm}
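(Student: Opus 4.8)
The plan is to construct a smooth surface $S$ transverse to $l$ that follows $L$ closely, together with a sufficiently small $r>0$, so that $\exp_l\colon B_{rl}(S)\to U$ is a diffeomorphism onto a neighborhood $U$ of $L$ while the projection along $l$ stays injective on $L$. The construction is local-to-global: first I would set up a convenient local model around each point of $L$, then patch the local transversals into one embedded surface, and finally use compactness of $L$ to upgrade the local injectivity statements to global ones by shrinking $r$.

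First I would fix the local model. Since $l$ is a contact line field transverse to $\xi$, in a simply connected chart it has the form $\{v,-v\}$ with $v$ a contact vector field, and its flow preserves $\xi$. Using the flow-box theorem for $v$ I obtain coordinates $(x,y,z)$ on a neighborhood $W$ of a given point $p\in L$ in which $l=\{\pm\partial_z\}$ and the surfaces $\{z=\mathrm{const}\}$ are the flowed copies of a local transversal. Because the flow of $v$ preserves $\xi$, the contact structure is invariant under $z$-translation, so it can be written as $\xi=\ker(dz+a(x,y)\,dx+b(x,y)\,dy)$, exactly the model of Subsection~\ref{regular-projection-section} with regular projection $\mathrm{pr}(x,y,z)=(x,y)$. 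In this chart the Legendrian condition forces the chords of $L$ near $p$ to be almost tangent to $\xi_p$ by Definition~\ref{def-angle}, hence to make a definite angle with $l_p=\partial_z$; consequently there is $\rho>0$ for which every $l$-trajectory meets $L\cap B_\rho(p)$ at most once and the $(x,y)$-projection is injective on $L\cap B_\rho(p)$. By compactness of $L$ I may take a single such $\rho$ uniformly.

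Next I would build the surface. Covering $L$ by finitely many of the flow boxes above, in each box $L$ is a graph over an injective piece of its local transversal, so each local transversal is a surface through a neighborhood of $L$ that is transverse to $l$ and over which $L$ is a section. Using a partition of unity subordinate to the cover I interpolate these local transversals -- graphically in the flow-box coordinates -- into a single smooth compact band $S$ (with boundary) that contains a core curve close to $L$, is everywhere transverse to $l$, and over which $L$ remains a graph via the flow of $l$; here I keep the interpolation $C^0$-small so that transversality and the section property survive. Passing to a double cover on which $l$ lifts to a genuine vector field when the relevant neighborhood is non-coorientable (cf.\ Lemma~\ref{Lavrentiev-covering}) lets me treat $l$ as an honest vector field during this step and then descend.

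Finally I would verify the two global conditions by a compactness argument combined with shrinking $r$. Transversality of $l$ to $S$ makes $\exp_l$ an immersion near the zero section, and if $\exp_l$ failed to be injective on $B_{rl}(S)$ for every $r$ there would be two points of the compact surface $S$ lying on a common short $l$-trajectory, contradicting this transversality in the limit; hence $\exp_l\colon B_{rl}(S)\to U$ is a diffeomorphism onto a neighborhood $U$ of $L$ for small $r$, giving conditions~(2) and~(3) of Definition~\ref{regular-neighbourhood-definition}, while~(1) is the hypothesis. The same compactness, together with the uniform local injectivity from the first step and the thinness of the tube, rules out two distinct points of $L$ sharing an $l$-trajectory inside $U$, so $\mathrm{pr}\big|_L$ is injective, which is~(4). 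I expect the main obstacle to be precisely this global step: assembling the local transversals into one embedded surface $S$ for which the merely Lavrentiev, possibly nowhere-differentiable curve $L$ is a genuine graph, and simultaneously guaranteeing that neither $\exp_l$ nor $\mathrm{pr}\big|_L$ identifies far-apart points -- this is where compactness of $L$ and the freedom to shrink $r$ do the real work.
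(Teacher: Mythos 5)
Your overall strategy coincides with the paper's: the flow-box local model together with Definition~\ref{def-angle} giving uniform local injectivity of the projection is the content of Lemma~\ref{transverse-collar2}, the patching of local transversals by a partition of unity is how Lemma~\ref{surface-of-trajectories2} is proved, and your final compactness arguments for the injectivity of $\exp_l$ on the tube and of $\mathrm{pr}\big|_L$ are essentially the ones in the paper (a local homeomorphism injective on a compact set is an embedding of a neighborhood, and two points of $L$ over a common point of $S$ would violate the injectivity of $\exp_l$ on $B_{\varepsilon l}(L)$).

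The genuine gap is in the step you yourself flag as the main obstacle: assembling the local transversals into one embedded surface. Keeping the interpolation $C^0$-small is not the right safeguard, because the failure mode is not a loss of transversality but the interpolation failing to be well defined at all. Two flow boxes $U_i$, $U_j$ of your cover may overlap so that a single $l$-trajectory meets the transversals $D_i$ and $D_j$ in points that are a definite distance apart along the trajectory; equivalently, the space of $l$-trajectories of $U_i\cup U_j$ can be non-Hausdorff. Then over a point of the common base the two ``graphs'' sit at heights differing by a fixed amount, and their partition-of-unity average need not be a section over $L$, need not be embedded, and the resulting projection along $l$ is ambiguous. The paper exhibits exactly this in Examples~\ref{non-Hausdorff-example} and~\ref{non-Hausdorff-example2} and spends the bulk of the proof of Lemma~\ref{surface-of-trajectories} excluding it: the transversal disks are chosen small and ``almost horizontal'' (the range of the flow coordinate on each disk is at most $\delta/8$), every point of $L$ in a chart lies within a short trajectory segment of that chart's disk (condition~\ref{atlas6} of Claim~\ref{atlas}), consecutive transversals may meet a common trajectory only along segments of length less than $\tfrac34\delta$, and non-adjacent transversals must have disjoint projections (conditions~\ref{Sj-V} and~\ref{Sj-IV} of Claim~\ref{Sj-definition}). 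Only with these quantitative controls does the quotient by the ``joined by a short trajectory'' relation become a Hausdorff surface on which the partition-of-unity interpolation yields an embedded transversal staying within flow-distance $\delta$ of $L$ --- which is also what lets your last step reduce the injectivity of $\mathrm{pr}\big|_L$ to the already established injectivity of $\exp_l$ on $B_{\varepsilon l}(L)$. Without this analysis the construction of $S$ is incomplete.
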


\begin{proof}
Since $L$ is Legendrian, by Lemma~\ref{transverse-collar2} there exists $\varepsilon>0$, such that $\exp_l:\overline B_{\varepsilon l}(L) \to M$ is an embedding. 

Thus by Lemma~\ref{surface-of-trajectories2} there exist a smooth embedded surface $S\subset M$ which is transverse to the field $l$ and a neighborhood $U$ of the curve $L$ such that the map $\exp_l:B_{\varepsilon l}(S)\to U$ is a diffeomorphism.

It remains to prove that $\mathrm{pr}\big|_L$ is injective. Suppose the contrary, that for some point $q\in S$ there are two points $p_1, p_2\in L$ such that the points in each pair $q, p_1$ and $q, p_2$ are joined by a trajectory having the length less the $\varepsilon.$ This contradicts the fact that $\exp_l: B_{\varepsilon l}(L) \to M$ is an embedding.
\end{proof}

Now we move to the proof of the lemmas used above.

First, we prove lemmas~\ref{transverse-collar2} and~\ref{surface-of-trajectories2} in a particular case, when the field $l$ is a pair of vector fields (lemmas~\ref{transverse-collar} and~\ref{surface-of-trajectories}). Essentially the same proof works in the general case and we will indicate necessary changes.

Let $Z$ be a vector field on a manifold $M$.
Let $\Phi$ denote the flow of the vector field $Z$. For a subset $A\subset M$ and a pair $a\le b$ of real numbers we write $\Phi:A\times[a;b]\to M$ for the map such that $\Phi:(p,z)\mapsto \exp_{\{Z,-Z\}}(z\cdot Z_p)$ where $Z_p$ is a vector $Z$ at the point $p$.

By the length of a trajectory $\Phi(\{p\}\times [a;b])$ we call the number $|a-b|$.

\begin{lemm}
\label{transverse-collar}
Let $Z$ be a smooth vector field on the manifold $M$ transverse to the contact structure, $L$ be a compact Legendrian curve in the interior of $M$. Then there exists a positive real number $\varepsilon$ such that the map $\Phi:L\times[-\varepsilon;\varepsilon]\to M$ is defined and is injective.
\end{lemm}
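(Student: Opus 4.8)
The plan is to get existence of the short-time flow from a routine compactness argument, and then to prove injectivity by contradiction, playing off the transversality of $Z$ to the contact structure against the Legendrian condition on $L$. First I would note that since $L$ is compact and lies in the interior of $M$, the flow $\Phi$ of the smooth field $Z$ is defined on $L\times[-\varepsilon_0;\varepsilon_0]$ for some $\varepsilon_0>0$; this is the standard fact that a smooth flow exists for a uniform short time on a compact set away from the boundary. Moreover, as $Z$ is transverse to $\xi$ it is nowhere zero, so after shrinking $\varepsilon_0$ and using compactness of $L$ I may assume that for every $p\in L$ the map $s\mapsto\Phi(p,s)$ is injective on $[-\varepsilon_0;\varepsilon_0]$ (each short trajectory lies in a single flow box and is embedded there). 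In particular a failure of injectivity of $\Phi$ can only come from pairs of \emph{distinct} points of $L$.

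Suppose, for contradiction, that no $\varepsilon$ works. Taking $\varepsilon=1/k\le\varepsilon_0$, for each $k$ there are points $p_{1,k}\neq p_{2,k}\in L$ and times $z_{1,k},z_{2,k}\in[-1/k;1/k]$ with $\Phi(p_{1,k},z_{1,k})=\Phi(p_{2,k},z_{2,k})$, the points being distinct by the previous paragraph. Applying $\Phi(\bullet,-z_{2,k})$ to both sides and using the flow property gives $p_{2,k}=\Phi(p_{1,k},w_k)$ with $w_k=z_{1,k}-z_{2,k}$; here $w_k\neq0$, since otherwise $p_{1,k}=p_{2,k}$. As $|w_k|\le2/k\to0$, compactness of $L$ lets me pass to a subsequence with $p_{1,k}\to p$, whence $p_{2,k}=\Phi(p_{1,k},w_k)\to p$ for the same limit $p$.

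I then work in Euclidean coordinates of class $C^1$ near $p$, which I may use in the Legendrian angle condition by Lemma~\ref{def-angle-any-some}. On the one hand,
$$
p_{2,k}-p_{1,k}=\Phi(p_{1,k},w_k)-\Phi(p_{1,k},0)=\int_0^{w_k}Z_{\Phi(p_{1,k},s)}\,ds,
$$
so continuity of $Z$ together with $Z_p\neq0$ gives $\tfrac{1}{w_k}(p_{2,k}-p_{1,k})\to Z_p$; hence the direction of the chord $p_{2,k}-p_{1,k}$ converges to that of $Z_p$ and $\angle(p_{2,k}-p_{1,k},\xi_p)\to\angle(Z_p,\xi_p)$. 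On the other hand, $L$ is Legendrian, so by Definition~\ref{def-angle} the angle $\angle(p_{2,k}-p_{1,k},\xi_p)\to0$ as the distinct points $p_{1,k},p_{2,k}\in L$ tend to $p$. Therefore $\angle(Z_p,\xi_p)=0$, contradicting the transversality of $Z$ to $\xi$. This contradiction yields an $\varepsilon>0$ for which $\Phi\big|_{L\times[-\varepsilon;\varepsilon]}$ is injective, and existence of the map was already established.

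The crux is this final confrontation of two competing limits of the chord direction: the flow forces it toward $Z_p$ while the Legendrian property forces it into the plane $\xi_p$. The care needed is precisely in guaranteeing that the time gap $w_k$ is nonzero yet tends to $0$, so that the normalized chord genuinely converges to the direction of $Z_p$ rather than degenerating.
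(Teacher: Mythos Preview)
Your proof is correct. The paper takes a more constructive route: at each $p\in L$ it fixes Euclidean coordinates with $Z=\partial/\partial z$ and $\xi_p$ equal to the $xy$-plane, uses the angle condition (with $\varepsilon<\pi/2$) to find a neighborhood $W_p$ on which the orthogonal projection to $\xi_p$ is injective on $L$, places a flow-box $\Phi(D_p\times(-\varepsilon_p;\varepsilon_p))\subset W_p$, observes that any short trajectory through a point of $L$ in the half-box meets $L$ only once, and then covers $L$ by finitely many half-boxes to extract a uniform $\varepsilon=\min_i\varepsilon_{p_i}/4$. Your sequence/contradiction argument is more direct and isolates the essential tension---the chord $p_{2,k}-p_{1,k}$ is simultaneously forced toward the direction of $Z_p$ by the flow and into the plane $\xi_p$ by the Legendrian condition---without building the local chart apparatus. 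The constructive version gives an explicit $\varepsilon$, but since the subsequent construction of regular neighborhoods rebuilds its own atlas from scratch, your shortcut costs nothing downstream.
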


\begin{proof}
For any point $p\in L$ we choose a chart $V_p$ with coordinates $(x,y,z)$ such that $\partial /\partial z = Z$, $p = (0,0,0)$ and the coordinate plane $Oxy$ coincides with $\xi_p.$

By the definition of Legendrian curve (Definition~\ref{def-angle}) there exists a neighborhood $W_p\subset V_p$ such that the orthogonal projection to the plane $\xi_p = Oxy$ is injective on $L\cap W_p.$ Let $\varepsilon_p$ be a positive real number and $D_p$ be an open disk in the plane $Oxy$ centered at $p$ such that $\Phi(D_p\times (-\varepsilon_p;\varepsilon_p))\subset W_p.$ We note that for any point $q\in L\cap \Phi(D_p\times (-\varepsilon_p/2;\varepsilon_p/2))$ the trajectory $\Phi(\{q\}\times[-\varepsilon_p/2;\varepsilon_p/2])$ is defined and embedded in $\Phi(D_p\times (-\varepsilon_p;\varepsilon_p))$, and it intersects $L$ at the unique point $q$.

Since $L\subset\bigcup\limits_{p\in L} \Phi(D_p\times (-\varepsilon_p/2;\varepsilon_p/2))$ and the curve $L$ is compact, there exist a finite number of points $\{p_i\}\subset L$ such that $L\subset\bigcup\limits_{i} \Phi(D_{p_i}\times (-\varepsilon_{p_i}/2;\varepsilon_{p_i}/2))$.

Let $\varepsilon = \min\limits_i \varepsilon_{p_i}/4$. Then $\Phi:L\times[-\varepsilon;\varepsilon]\to M$ is an embedding.
\end{proof}

\begin{lemm}
\label{surface-of-trajectories}
Let $Z$ be a smooth vector field on the 3-manifold $M$ which is nonzero everywhere, $\varepsilon > 0$ and $C$ be a compact curve in $M$, such that the map $\Phi: C\times [-\varepsilon;\varepsilon]\to M\setminus\partial M$ is defined and injective. Then there exists a neighborhood $U$ of $C$ and a smooth surface $S\subset U$, such that
$\Phi: S\times (-\varepsilon;\varepsilon)\to U$ is a diffeomorphism.
\end{lemm}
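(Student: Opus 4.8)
The plan is to straighten $Z$ locally, patch the local transversals into one global smooth surface $S$ that is transverse to $Z$ and hugs $C$, and then cut $S$ down so that the flow box over it becomes injective, the injectivity being forced by the hypothesis on $C\times[-\varepsilon;\varepsilon]$. First I would invoke the flow box (rectification) theorem: around each $p\in C$ there are coordinates $(x,y,z_p)$ with $Z=\partial/\partial z_p$ and $p$ lying on the transversal disk $\{z_p=0\}$. By compactness of $C$ I fix a finite subcover $\{V_i\}$ with time coordinates $z_i$ (so $Zz_i=1$), chosen fine enough that on each overlap $V_i\cap V_j$ the function $z_i-z_j$, which is $Z$-invariant since $Z(z_i-z_j)=0$, is uniformly small. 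With a partition of unity $\{\rho_i\}$ subordinate to $\{V_i\}$ I set $\tau=\sum_i\rho_i z_i$; using $\sum_i Z\rho_i=0$ one gets $Z\tau=1+\sum_i(Z\rho_i)(z_i-z_{i_0})$ near any point (with $i_0$ any index active there), which is positive once the overlaps are small enough. Hence $0$ is a regular value, $S:=\tau^{-1}(0)$ is a smooth surface transverse to $Z$, and since each $\{z_i=0\}$ passes through a point of $C$ the value $\tau$ is small along $C$; as $\tau$ strictly increases along orbits, every $Z$-orbit through a point $c\in C$ meets $S$ transversally exactly once, at a flow-time $\theta_c$ that is uniformly as small as we like when the cover is fine.

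Next, $\Phi\colon S\times(-\varepsilon;\varepsilon)\to M$ is a local diffeomorphism, because its differential carries $TS$ isomorphically onto $TS$ and $\partial/\partial t$ to $Z$, and these span $TM$ by transversality. It remains to make it injective after shrinking. Here I use the hypothesis quantitatively: if $c\neq c'$ lie on a common $Z$-orbit at flow-time distance $d$ with $0<|d|\le 2\varepsilon$, then picking $t,t'\in[-\varepsilon;\varepsilon]$ with $t-t'=d$ gives $\Phi(c,t)=\Phi(c',t')$ with $(c,t)\neq(c',t')$, contradicting injectivity on $C\times[-\varepsilon;\varepsilon]$. Thus distinct points of $C$ on a common orbit are strictly more than $2\varepsilon$ apart, and a compactness argument (a limit of such pairs at distance exactly $2\varepsilon$ would again contradict injectivity) shows this separation is bounded away from $2\varepsilon$ by some gap $\eta_0>0$. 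Taking the cover fine enough that all crossing times satisfy $|\theta_c|<\eta_0/2$, and letting $S$ be an open neighbourhood inside $\tau^{-1}(0)$ of the compact crossing set $\pi(C)=\{\sigma_c:c\in C\}$, the two flow segments $\Phi(\sigma_c\times(-\varepsilon;\varepsilon))$ and $\Phi(\sigma_{c'}\times(-\varepsilon;\varepsilon))$ stay disjoint whenever $c\neq c'$.

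I would then prove injectivity of $\Phi$ on this $S$ by contradiction, in the style of Lemma~\ref{bi-Lipschitz-family}. A collision $\Phi(s_n,t_n)=\Phi(s'_n,t'_n)$ with $(s_n,t_n)\neq(s'_n,t'_n)$ surviving every shrinking of $S$ toward $\pi(C)$ yields, after passing to convergent subsequences, points $\sigma=\sigma_c$, $\sigma'=\sigma_{c'}\in\pi(C)$ and times $t_\infty,t'_\infty\in[-\varepsilon;\varepsilon]$ with $\Phi(\sigma,t_\infty)=\Phi(\sigma',t'_\infty)$; translating by the small crossing times gives $\Phi(c,t_\infty-\theta_c)=\Phi(c',t'_\infty-\theta_{c'})$. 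Since the image of $C\times[-\varepsilon;\varepsilon]$ is compact in the interior $M\setminus\partial M$, the flow extends and stays injective on a slightly larger closed interval, so the case $c\neq c'$ contradicts injectivity over $C$ (the two points sit on a common orbit at distance at most about $2\varepsilon$), while the case $c=c'$ forces $t_\infty=t'_\infty$, i.e. the two limits coincide, contradicting the local injectivity from the flow box. Hence $\Phi$ is injective on $S\times(-\varepsilon;\varepsilon)$.

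Finally, $U:=\Phi(S\times(-\varepsilon;\varepsilon))$ is open since $\Phi$ is a local diffeomorphism, it contains $C$ because the flow box covers each $c=\Phi(\sigma_c,-\theta_c)$ with $|\theta_c|<\varepsilon$, and it contains $S=\Phi(S\times\{0\})$; the map $\Phi\colon S\times(-\varepsilon;\varepsilon)\to U$ is a bijective local diffeomorphism, hence a diffeomorphism. The same argument applies verbatim to a contact field of line elements after passing to the orientation double cover where the field is a genuine vector field, which is how I would deduce Lemma~\ref{surface-of-trajectories2}. I expect the main obstacle to be precisely the injectivity step: passing from injectivity of the one-parameter family over $C$ to injectivity of the two-parameter flow box over the thickening $S$, which is exactly where the closed-interval hypothesis (the strict ``$>2\varepsilon$'' orbit separation) is indispensable.
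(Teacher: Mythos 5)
Your endgame (paragraphs two and three) is essentially the paper's: extend injectivity of $\Phi$ to $C\times[-\varepsilon-\delta_1;\varepsilon+\delta_1]$ by compactness, note that $\Phi$ is a local diffeomorphism on $S\times(-\varepsilon;\varepsilon)$, reduce a hypothetical collision to a violation of injectivity over $C$ using the smallness of the crossing times, and finish with the fact that a local homeomorphism injective on a compact set is injective on a neighborhood. The genuine gap is in the first paragraph, in the construction of $S$ itself, exactly at the clause ``$Z\tau=1+\sum_i(Z\rho_i)(z_i-z_{i_0})$ \dots is positive once the overlaps are small enough.'' Two things are unjustified there. First, the smallness of $z_i-z_j$ on overlaps (and of $\tau$ along $C$) is not a consequence of ``each disk passes through a point of $C$'': since $C$ is merely a topological curve, $C\cap V_i$ can a priori climb the full height of the flow box, so $z_i$ need not be small on $C\cap V_i$. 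Ruling this out requires the injectivity hypothesis on $C\times[-\varepsilon;\varepsilon]$ (the trajectory through $p\in C$ meets $C$ only at $p$, so shrinking $D_i$ transversally forces $C\cap V_i$ into a thin slab) --- this is condition (6) of the paper's Claim~\ref{atlas}, and you never invoke the hypothesis at this stage. Second, and more seriously, ``overlaps small'' does not control $Z\rho_i$: refining the cover to shrink $|z_i-z_j|$ makes the partition functions steeper and increases their number, and the bound $\bigl|\sum_i(Z\rho_i)(z_i-z_{i_0})\bigr|\le N\cdot\max_i\sup|Z\rho_i|\cdot\max_{i,j}|z_i-z_j|$ does not tend to zero under refinement (for a Lavrentiev curve the discrepancy scales like the disk radius $r$ while the number of charts scales like $1/r$). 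Closing the estimate requires fixing the flow-box height once and for all, shrinking only transversally, choosing product-type bumps whose flow-direction derivative is bounded independently of the transverse size, and bounding the local multiplicity of the cover; none of this appears, and the circularity (the required smallness of $|z_i-z_j|$ depends on quantities determined by the cover being refined) is not acknowledged. Examples~\ref{non-Hausdorff-example} and~\ref{non-Hausdorff-example2} in the paper are precisely warnings that naive patching of local transversals fails.

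It is worth seeing why the paper's route avoids this issue entirely. Instead of a global time function, it glues the local transversals $S_j$ into an abstract surface $X$ (Claim~\ref{Sj-definition} and the following claims, where the real work is proving $X$ Hausdorff) and maps $X$ into $M$ by taking, over a fixed disk $D_i$, the convex combination $\sum_k f_kg_k$ of the graph functions of the $S_k$ (Claim~\ref{chi-immersion}); any graph over $D_i$ is automatically transverse to $Z=\partial/\partial z$, so no estimate on derivatives of the partition of unity is ever needed. Your approach trades the Hausdorffness verification for a quantitative transversality estimate; the latter can probably be repaired along the lines above, but as written the key step is not established.
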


We give a plan of the proof. Note that the space of connected components of trajectories of the vector field $Z$ in a sought-for neighborhood $U$ is homeomorphic to a surface.
For any point it is easy to construct a neighborhood having the space of trajectories homeomorphic to a disk and each trajectory homeomorphic to an interval. We will construct a neighborhood $U$ as a subset of a finite union of such neighborhoods which cover the curve. A difficulty is that even the union of two such neighborhoods may have the space of trajectories being non-Hausdorff. We give an example.

\begin{exam}\label{non-Hausdorff-example}
Let $M = \mathbb R^3$ with coordinates $(x,y,z)$, $Z = \partial/\partial z,$ $U_1 = \{(x,y,z):\ |z-y|<1,\ 0<y<2\},$ $U_2 = \{(x,y,z):\ |z+y|<1,\ 0<y<2\}.$ Then the trajectories $\{(x,y,z):\ x=0,\ y = 1,\ 0<z<2\}$ and $\{(x,y,z):\ x=0,\ y = 1,\ -2<z<0\}$ have no non-intersecting neighborhoods in the space of all trajectories of the vector field $Z$ in $U_1\cup U_2$, see Figure~\ref{non-Hausdorff} on the left.
\end{exam}

\begin{figure}
\center{\includegraphics{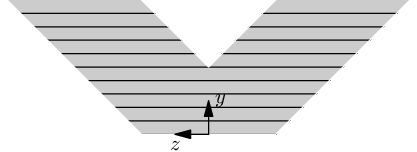}\hspace{1cm} \includegraphics{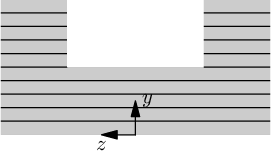}}
\caption{The space of trajectories is not Hausdorff}
\label{non-Hausdorff}
\end{figure}

We can see that in this example both neighborhoods are products $U_i = \Phi(D_i\times [-1;1])$ for some disks $D_i$ where $i = 1,2.$ Somewhere the disks are joined by short trajectories of the vector field $Z$ and somewhere they are joined by long trajectories. While taking a union of such neighborhoods we will forbid this in condition~\ref{Sj-V} in Claim~\ref{Sj-definition}. We will require that trajectories joining the curve with the disks are short. This is Condition~\ref{Sj-II}. This guarantees that trajectories joining the disks which intersect the curve are short. To guarantee that trajectories, which are close to the curve but do not intersect the curve, joining the disks are short, we require all disks to be "almost horizontal". This is the condition~\ref{Sj-IIIb}. That means that in all charts that we fix in Claim~\ref{atlas} the range of the coordinate $z$ on any disk is small. We achieve the "almost horizontality" by requiring that all disks are small.

\begin{exam}\label{non-Hausdorff-example2}
Let $M = \mathbb R^3$ with coordinates $(x,y,z)$, $Z = \partial/\partial z,$ $U_0 = \{(x,y,z): 0 < y < 1,\  -2 < z < 2\},$ $U_1 = \{(x,y,z):\ 0 < y < 2,\ 1<z<2\},$ $U_2 = \{(x,y,z):\ 0 < y < 2,\ -2<z<-1\}.$ Then the trajectories $\{(x,y,z):\ x=0,\ y = 1,\ 1<z<2\}$ and $\{(x,y,z):\ x=0,\ y = 1,\ -2<z<-1\}$ have no non-intersecting neighborhoods in the space of all trajectories of the vector field $Z$ in $U_0\cup U_1\cup U_2$, see Figure~\ref{non-Hausdorff} on the right.
\end{exam}

This is another example of non-Hausdorff space of trajectories in the union of neighborhoods of the form $U_i = \Phi(D_i\times (-r_i, r_i)).$ In this example we see that some (actually any) trajectory in $U_0$ contains a trajectory of $U_1$ and a trajectory of $U_2.$ We forbid this type of situation in condition~\ref{Sj-IV} in Claim~\ref{Sj-definition}. 

In claims~\ref{atlas}--\ref{last-claim} we follow the assumptions and the notations of Lemma~\ref{surface-of-trajectories}. 
We will also assume the manifold $M$ to be compact and that all constructed objects lie in the interior of the manifold. We fix some Riemannian metric on $M$.

Since $\Phi:C\times[-\varepsilon;\varepsilon]\to M\setminus\partial M$ is defined and injective and $C$ is compact, there exists $\delta_1>0$ such that the map $\Phi:C\times[-\varepsilon-\delta_1;\varepsilon+\delta_1]\to M\setminus\partial M$ is defined and injective. We set

\begin{equation}
\label{delta-definition}
\delta = \min(\delta_1, \varepsilon/2).
\end{equation}

\begin{clai}
\label{atlas}
There exists a finite set $\{U_i, D_i, \mathrm{pr}_i, z_i\}_i$ where $U_i$ is an open subset of $M$, $D_i$ is a smooth embedded disk in $U_i$, $\mathrm{pr}_i$ is a smooth map $U_i\to D_i,$ $z_i$ is a smooth uniformly continuous function $U_i\to (-\varepsilon/2; \varepsilon/2)$ such that
\begin{enumerate}[label=\arabic*), ref=\arabic*)]
\item\label{atlas2} $\Phi\circ\left(\mathrm{pr}_i\times z_i\right) = \mathrm{id}_{U_i}$ and $\left(\mathrm{pr}_i\times z_i\right)\circ\Phi\big|_{D_i\times(-\varepsilon/2; \varepsilon/2)} = \mathrm{id}_{D_i\times(-\varepsilon/2; \varepsilon/2)}$ for any $i$;
\item\label{atlas3} for any $i$ and $j$ the relation $\mathrm{pr}_j\circ\mathrm{pr}_i^{-1}:\mathrm{pr}_i (U_i\cap U_j) \to \mathrm{pr}_j (U_i\cap U_j)$ is a map and, moreover, it is a diffeomorphism;
\item $C \subset \bigcup\limits_i U_i$; \label{atlas4}
\item\label{atlas5} $\mathrm{pr}_i: C\cap U_i \to D_i$ is injective for any $i$;
\item\label{atlas6} if $p\in C\cap U_i$, $p\in\Phi(D_i\times(-\delta/8;\delta/8)).$
\end{enumerate}
\end{clai}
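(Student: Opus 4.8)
The plan is to cover $C$ by finitely many flow boxes of height $\varepsilon$ for the field $Z$ and to read off the data $(U_i, D_i, \mathrm{pr}_i, z_i)$ from them. For a fixed $p\in C$ the trajectory arc $\Phi(\{p\}\times[-\varepsilon/2;\varepsilon/2])$ is embedded, since $\Phi$ is injective on $C\times[-\varepsilon-\delta_1;\varepsilon+\delta_1]$ and $[-\varepsilon/2;\varepsilon/2]$ lies in this interval. Around an embedded trajectory arc one has a flow box of the same height: choosing a small smooth disk $D_p\ni p$ transverse to $Z$, the map $\Phi\colon D_p\times(-\varepsilon/2;\varepsilon/2)\to M$ is an immersion (transversality of $D_p$) and is injective once $D_p$ is small enough, since otherwise one would obtain in the limit two coincident points on the trajectory through $p$ at distinct times, or a point of $D_p$ flowing back to $D_p$ in arbitrarily small time, both impossible. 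Hence it is a diffeomorphism onto an open set $U_p$, and I let $\mathrm{pr}_p$ and $z_p$ be the projection to $D_p$ along trajectories and the flow-time. Condition~\ref{atlas2} is then the defining property of this flow box, $\mathrm{pr}_p$ and $z_p$ are smooth because the flow is smooth, and $z_p$ is smooth with bounded derivative on a relatively compact $U_p$, hence uniformly continuous. Compactness of $C$ yields a finite subfamily $\{p_i\}$ with $C\subset\bigcup_i U_{p_i}$, which is condition~\ref{atlas4}.

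Conditions~\ref{atlas6} and~\ref{atlas5} are obtained by shrinking the disks and using the injectivity on $C\times[-\varepsilon-\delta_1;\varepsilon+\delta_1]$. For condition~\ref{atlas6} I argue by contradiction: if for arbitrarily small $D_i\ni p_i$ there were a point $\Phi(d,t)\in C\cap U_i$ with $d\in D_i$ and $|t|\ge\delta/8$, then passing to the limit $d\to p_i$, $t\to t_0$ with $|t_0|\ge\delta/8$ gives a point $\Phi(p_i,t_0)\in C$ (as $C$ is closed) lying on the trajectory through $p_i$ at the nonzero time $t_0\in[-\varepsilon/2;\varepsilon/2]$; writing this point as $\Phi(c,0)$ with $c\in C$ contradicts injectivity. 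So for $D_i$ small every point of $C\cap U_i$ lies in $\Phi(D_i\times(-\delta/8;\delta/8))$. Condition~\ref{atlas5} then follows: two points of $C\cap U_i$ on one trajectory are $\Phi(d,t_1),\Phi(d,t_2)$ with $|t_1|,|t_2|<\delta/8$, so $|t_1-t_2|<\delta/4\le\varepsilon/8<\varepsilon+\delta_1$; writing them as $\Phi(c_1,0)$ and $\Phi(c_2,0)$ gives $\Phi(c_1,t_2-t_1)=\Phi(c_2,0)$, and injectivity forces $t_1=t_2$.

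The main obstacle is condition~\ref{atlas3}, that each transition $\mathrm{pr}_j\circ\mathrm{pr}_i^{-1}$ be a single valued diffeomorphism. Since all charts are flow boxes of the same field $Z$, this transition is the holonomy map obtained by flowing along $Z$ to the common transversal; where it is single valued it is automatically smooth with smooth inverse $\mathrm{pr}_i\circ\mathrm{pr}_j^{-1}$, so the only real point is single-valuedness. The relation fails to be a map exactly when a trajectory arc $\mathrm{pr}_i^{-1}(d)$ leaves $U_j$ and re-enters it, landing on two different fibers of $\mathrm{pr}_j$; this is the non-Hausdorff phenomenon of Examples~\ref{non-Hausdorff-example} and~\ref{non-Hausdorff-example2}. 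I expect to rule it out by smallness: taking the disks $D_i$ small makes every $U_i$ a thin tube, and the trajectories through its points stay $C^0$-close, over the time $[-\varepsilon/2;\varepsilon/2]$, to the embedded and hence non-recurrent trajectory through $p_i$, so such a trajectory meets each thin tube $U_j$ in a connected set. Then $\mathrm{pr}_i^{-1}(d)\cap U_j$ is an intersection of two connected sub-arcs of one trajectory, hence connected, so it lies on a single fiber of $\mathrm{pr}_j$ and $\mathrm{pr}_j\circ\mathrm{pr}_i^{-1}$ is a well defined diffeomorphism. Making this ``no recurrence into a thin tube within flow-time $\varepsilon$'' precise and uniform over the finitely many charts, via a compactness argument on the embedded image $\Phi(C\times[-\varepsilon-\delta_1;\varepsilon+\delta_1])$, is the step that needs the most care.
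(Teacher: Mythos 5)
Your construction of the flow boxes and your arguments for conditions 1), 3), 4), 5) are essentially the paper's (the paper packages the ``long flow box'' that you prove by hand as the choice of a chart $V_p$ containing the whole arc $\Phi(\{p\}\times[-\varepsilon;\varepsilon])$ in which $Z=\partial/\partial z$). The gap is exactly where you flag it, in condition 2), and the mechanism you propose does not close it. The dangerous configuration is a trajectory meeting the transversal $D_j$ at two points $e_0\neq e_1$ with $e_1=\Phi(e_0,\rho)$ and $\varepsilon\le|\rho|<2\varepsilon$: returns with $|\rho|<\varepsilon$ are already excluded by injectivity of $\Phi$ on $D_j\times(-\varepsilon/2;\varepsilon/2)$, and only returns with $|\rho|<2\varepsilon$ can produce two points of $U_i\cap U_j$ on one $\mathrm{pr}_i$-fiber with different $\mathrm{pr}_j$-images. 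Your justification --- that the trajectory arcs through $U_i$ are $C^0$-close to the \emph{embedded} central arc through $p_i$ and therefore meet each thin tube $U_j$ in a connected set --- rests on the false implication ``embedded $\Rightarrow$ meets a thin tube connectedly'': embeddedness prevents an arc from returning to a \emph{point}, not from crossing a two-dimensional transversal disk twice within time $2\varepsilon$. Moreover, as formulated your condition couples the pair $(D_i,D_j)$: to kill double hits of $D_j$ by trajectories through $D_i$ you must shrink $D_i$ after $D_j$, and every shrinking threatens the covering condition 3), so the quantifiers do not obviously close up.

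The repair is to give each box a margin depending on that box alone, and this is what the paper does: it chooses $D_p$ so that the \emph{double-height} box $\Phi(D_p\times(-\varepsilon;\varepsilon))$ lies in a single straightening chart $V_p$, hence is an injectively embedded product, and takes $U_p=\Phi(D_p\times(-\varepsilon/2;\varepsilon/2))$ to be only the inner half. Then for $q_0,q_1$ on one $\mathrm{pr}_i$-fiber the arcs $\Phi(\{q_k\}\times(-\varepsilon/2;\varepsilon/2))$ lie in both double-height boxes; in $V_{p_i}$-coordinates they are overlapping vertical segments, while $\mathrm{pr}_j(q_0)\neq\mathrm{pr}_j(q_1)$ would make them disjoint vertical segments in $V_{p_j}$-coordinates --- a contradiction, with no interaction between the shrinkings of different disks. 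Equivalently, in your language: strengthen the per-box requirement from injectivity of $\Phi$ on $D_j\times(-\varepsilon/2;\varepsilon/2)$ to injectivity on $D_j\times(-\varepsilon;\varepsilon)$, i.e.\ no return to $D_j$ within time $2\varepsilon$. This is achievable by shrinking $D_j$ alone, using the injectivity of $\Phi$ on $C\times[-\varepsilon-\delta_1;\varepsilon+\delta_1]$ to exclude periodic orbits through $p_j$ of period at most $2\varepsilon$ and transversality to exclude short returns, and it is imposed before the finite subcover is extracted, so condition 3) is never endangered.
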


\begin{proof}
For any $p\in C$ we choose a chart $V_p$ containing the trajectory $\Phi(\{p\}\times[-\varepsilon;\varepsilon])$ with coordinates $(x,y,z)$ such that $Z = \partial/\partial z.$ There exists a disk $D_p$ in the plane $z = z(p)$ centered at $p$ such that $\Phi(D_p\times (-\varepsilon;\varepsilon))\subset V_p.$ Since $\Phi: \{p\}\times[-\varepsilon;\varepsilon]\to M$ is an embedding, the disk $D_p$ can be chosen sufficiently small such that $|z(q)-z(p)|<\delta/8$ for any point $q\in C\cap \Phi(D_p \times(-\varepsilon;\varepsilon)).$ Let $U_p = \Phi(D_p \times(-\varepsilon/2;\varepsilon/2)).$ We can also choose $D_p$ sufficiently small so that $\overline U_p\subset V_p$ and thus $z$ is uniformly continuous in $U_p$.

Let $\mathrm{pr}_p$ denote the projection of $U_p$ to $D_p$ along the $z$-axis. The map $\mathrm{pr}_p\big|_{C\cap U_p}$ is injective since in the other case some points $q_1, q_2\in C\cap U_p$ with $\mathrm{pr}_p (q_1) = \mathrm{pr}_p (q_2)$ are joined by a trajectory of the field $z$ having the length less than $\varepsilon$ which contradicts the fact that $\Phi:\left(C\cap U_p\right)\times[-\varepsilon;\varepsilon]\to M$ is injective.


Let $p,p'\in C$. Let us prove that $\mathrm{pr}_{p'}\circ\mathrm{pr}_p^{-1} : \mathrm{pr}_p (U_p\cap U_{p'}) \to \mathrm{pr}_{p'} (U_p\cap U_{p'})$ is a map. Suppose the contrary. Let $q_0, q_1\in U_p\cap U_{p'}$, $\mathrm{pr}_p (q_0) = \mathrm{pr}_p (q_1)$ but $\mathrm{pr}_{p'} (q_0)\neq \mathrm{pr}_{p'} (q_1).$ The intervals $\Phi(\{q_0\}\times (-\varepsilon/2;\varepsilon/2))$ and $\Phi(\{q_1\}\times (-\varepsilon/2;\varepsilon/2))$ are embedded in $\Phi(D_p\times(-\varepsilon;\varepsilon))$ by construction, the former equality means that they intersect each other but the latter inequality means that they do not intersect each other. A contradiction.

Let us prove that the map $\mathrm{pr}_{p'}\circ\mathrm{pr}_p^{-1} : \mathrm{pr}_p (U_p\cap U_{p'}) \to \mathrm{pr}_{p'} (U_p\cap U_{p'})$ is smooth. Let $(x', y', z')$ be the coordinates in the chart $V_{p'}$. We have shown that the functions $x(x',y'),$ $y(x',y')$ are defined. Since they are the transition functions between two smooth charts $V_p$ and $V_{p'}$, they are smooth.

The composition of maps $\mathrm{pr}_{p'}\circ\mathrm{pr}_p^{-1} : \mathrm{pr}_p (U_p\cap U_{p'}) \to \mathrm{pr}_{p'} (U_p\cap U_{p'})$ and $\mathrm{pr}_{p}\circ\mathrm{pr}_{p'}^{-1} : \mathrm{pr}_{p'} (U_{p}\cap U_{p'}) \to \mathrm{pr}_{p} (U_{p}\cap U_{p'})$ is the identity, and these maps are smooth, thus they are diffeomorphisms.

Since $C$ is compact, it is covered by a finite number of the constructed charts $U_{p_i}.$ We set $U_i = U_{p_i},$ $D_i = D_{p_i}.$ If $(x,y,z)$ are the coordinates in the chart $V_{p_i}$, we set $z_i = z - z(p_i).$
\end{proof}

We fix such $d_0>0$ that for any $i$

\begin{equation}\label{zi-range-bound}
\mathrm{dist}(p,q) < d_0 \implies |z_i(p)-z_i(q)| < \delta/8
\end{equation}
if $z_i$ is defined at the points $p$ and $q.$ Such $d_0$ exists since the functions $z_i$ are uniformly continuous.

Let 
\begin{equation}\label{d1-definition}
d_1 = \inf\limits_{p\in C} \max\limits_i\mathrm{dist}(p, M \setminus U_i).
\end{equation}

Since $C$ is compact and is contained in $\bigcup\limits_i U_i$, $d_1>0.$

\begin{clai}
\label{Sj-definition}
There exists a finite set $\{S_j\}_j$ of smooth embedded surfaces without boundary indexed by $j\in\{0,1,\dots,N-1\}$ if the curve $C$ is homeomorphic to the segment and indexed by $j\in\mathbb Z_N$ if the curve $C$ is homeomorphic to the circle, such that
\begin{enumerate}[label = \Roman*., ref = \Roman*]
\item\label{Sj-I} $S_j$ is transverse to the vector field $Z$ for any $j$.
\item\label{Sj-II} $C\subset\bigcup\limits_j\Phi(S_j\times(-\delta/4;\delta/4))$.
\item\label{Sj-III} If $S_j\subset U_i$
\begin{enumerate}[ref = \Roman{enumi}(\alph{enumii})]
\item\label{Sj-IIIa} the map $\Phi:S_j\times[-\frac34\delta;\frac34\delta]\to U_i$ is defined and injective,
\item\label{Sj-IIIb} the range of $z_i$ on $S_j$ is less than $\delta/8$,
\item\label{Sj-IIIc} the map $\mathrm{pr}_i:S_j\to D_i$ is injective and open (the image of any open subset is open).
\end{enumerate}
\item\label{Sj-IV} If $S_{j-1} \cup S_{j+1}\subset U_i,$ then $\mathrm{pr}_i (S_{j-1})\cap\mathrm{pr}_i (S_{j+1})=\varnothing.$
\item\label{Sj-V} If $S_j\cup S_{j+1}\subset U_i,$ $P_0\in S_j,$ $P_1\in S_{j+1}$ and $\mathrm{pr}_i (P_0) = \mathrm{pr}_i (P_1),$ then $|z_i(P_0) - z_i(P_1)|<\frac34\delta.$
\item\label{curve-b} Any triple of the surfaces $S_{j-1},$ $S_{j}$ and $S_{j+1}$ lie inside some $U_i$.
\end{enumerate}
\end{clai}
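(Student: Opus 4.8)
The plan is to realise each $S_j$ as a thin, flat transverse slab sitting over a short sub-arc of $C$, so that the covering condition and the separation condition are governed by the same mechanism, namely the injectivity of the flow on $C\times[-\varepsilon;\varepsilon]$. First I would fix a parametrisation of $C$ (over an interval if $C$ is an arc, over a circle if $C$ is closed) and choose a finite subdivision of the parameter into subintervals that overlap consecutively but are disjoint at distance two; let $C_0,C_1,\dots$ be the resulting sub-arcs, indexed as in the statement. Using the uniform continuity of the parametrisation and of the functions $z_i$ from~\eqref{zi-range-bound}, together with~\eqref{d1-definition}, I would make the subdivision fine enough that each $C_j$ has diameter below $d_0$ and that every three consecutive sub-arcs together have diameter below $d_1$. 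The first requirement makes the oscillation of every $z_i$ on each $C_j$ less than $\delta/8$; the second places each consecutive triple inside a common chart $U_i$ of Claim~\ref{atlas}, which will give~\ref{curve-b}.

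For each $j$ I would pick a chart $U_i$ containing $C_j$, project $C_j$ by $\mathrm{pr}_i$, and let $S_j$ be the piece of a single level set $\{z_i=\mathrm{const}\}$ lying over a small $d$-neighbourhood $V_j\subset D_i$ of the compact set $\mathrm{pr}_i(C_j)$, the constant height being the value of $z_i$ at a point of $C_j$. Such an $S_j$ is transverse to $Z=\partial/\partial z_i$, giving~\ref{Sj-I}, and it is a graph over $V_j$, so $\mathrm{pr}_i\big|_{S_j}$ is injective and open, giving~\ref{Sj-IIIc}. Each point of $C_j$ projects into $V_j$ and differs in height from $S_j$ by at most the oscillation of $z_i$ on $C_j$, hence by less than $\delta/8<\delta/4$; since the $C_j$ cover $C$, this is exactly~\ref{Sj-II}. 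Because $S_j$ meets $C$, its central point has $|z_i|<\delta/8$ by~\ref{atlas6}, and its own $z_i$-oscillation is below $\delta/8$, so $|z_i|<\delta/4$ on all of $S_j$; flowing by $\tfrac34\delta$ then keeps $|z_i|<\delta\le\varepsilon/2$ by~\eqref{delta-definition}, so the flow stays inside the injective region $\Phi(D_i\times(-\varepsilon/2;\varepsilon/2))=U_i$, which is~\ref{Sj-IIIa}, and for adjacent slices $|z_i(P_0)-z_i(P_1)|<\delta/2<\tfrac34\delta$, which is~\ref{Sj-V}. Taking $d$ small also keeps $\mathrm{diam}\,S_j<d_0$, so the oscillation of every $z_i$ on $S_j$ stays below $\delta/8$, giving~\ref{Sj-IIIb} in each chart that contains $S_j$.

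The decisive point is~\ref{Sj-IV}. Here I would invoke the injectivity of $\Phi$ on $C\times[-\varepsilon-\delta_1;\varepsilon+\delta_1]$ recorded before~\eqref{delta-definition} together with the distance-two disjointness of the subdivision: $C_{j-1}$ and $C_{j+1}$ are disjoint compact subsets of $C$. If a trajectory inside a common chart $U_i$ met both $\mathrm{pr}_i(C_{j-1})$ and $\mathrm{pr}_i(C_{j+1})$, it would join a point of $C_{j-1}$ to a point of $C_{j+1}$ through a flow interval of length less than $\varepsilon$ (the $z_i$-extent of $U_i$), contradicting injectivity, as these are distinct points of $C$. Thus $\mathrm{pr}_i(C_{j-1})$ and $\mathrm{pr}_i(C_{j+1})$ are disjoint compact sets, hence separated by a positive distance. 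The subdivision being fixed and finite, I would finally choose a single $d>0$ below half the minimum of all these separations (over next-to-adjacent pairs and all common charts) and below the bound needed above; then $\mathrm{pr}_i(S_{j-1})=V_{j-1}$ and $\mathrm{pr}_i(S_{j+1})=V_{j+1}$ are disjoint, which is~\ref{Sj-IV}.

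I expect~\ref{Sj-IV} to be the only genuine obstacle; it is precisely the pathology of Example~\ref{non-Hausdorff-example2}. The essential trick is to spread each slice over a neighbourhood of a whole sub-arc rather than over a disk centred at an isolated point: for a general curve $C$, which need not be transverse to $Z$ nor even rectifiable, a sub-arc may oscillate or nearly double back in projection, and point-centred disks would force the covering condition~\ref{Sj-II} and the separation condition~\ref{Sj-IV} into conflict. With sub-arc neighbourhoods both conditions are controlled by the same finite family of positive separations coming from injectivity alone, and fixing the subdivision first (from uniform continuity, $d_0$ and $d_1$) and only then choosing $d$ avoids any circularity.
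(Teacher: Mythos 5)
Your proposal is correct and follows essentially the same route as the paper: the $S_j$ are pieces of level sets of $z_i$ sitting over neighbourhoods in $D_i$ of the projections of the subarcs of a fine subdivision of $C$, with the diameter bounds $d_0$ and $d_1$ delivering~\ref{Sj-IIIb} and~\ref{curve-b}, and condition~\ref{Sj-IV} secured by shrinking the base neighbourhoods below the positive separation of the projections of non-adjacent subarcs, which comes from the injectivity of the flow on $C\times[-\varepsilon;\varepsilon]$. The only difference is the order of choices (you fix the subdivision first and then pick one global $d$; the paper builds point-centred neighbourhoods, extracts a Lebesgue-type subdivision, and performs the shrinking for~\ref{Sj-IV} as a final step), which does not change the substance of the argument.
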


\begin{proof}
By condition~\ref{atlas4} in Claim~\ref{atlas} for any $p\in C$ we can choose a chart $U_i$ which contains $p$. Let $W_p$ be an open subset of $D_i$ such that $\mathrm{pr}_i(p)\in W_p$, $\mathrm{diam}(\Phi(W_p\times\{z_i(p\}))< \min(d_0, d_1/3)$ and $\mathrm{diam}(C\cap\Phi(W_p\times(-\varepsilon/2;\varepsilon/2))<d_1/3.$

Let us check conditions~\ref{Sj-I} and~\ref{Sj-III} for the surfaces $\Phi(W_p\times\{z_i(p)\}).$ Clearly any such surface is transverse to $Z$. Suppose that $\Phi(W_p\times\{z_i(p)\})\subset U_{i'}.$ 

We begin with condition~\ref{Sj-IIIb}. Since $\mathrm{diam} (\Phi(W_p\times\{z_i(p\})) < d_0$, the range of $z_{i'}$ on this surface is less than $\delta/8$ by equation~(\ref{zi-range-bound}).

Then we consider condition~\ref{Sj-IIIa}. Since $p$ lies in our surface, $p\in U_{i'}$. Since $p\in C\cap U_{i'}$, by~\ref{atlas6} in Claim~\ref{atlas} $\Phi(\{p\}\times [-\varepsilon/2 + \delta/8; \varepsilon/2-\delta/8])\subset U_{i'}.$ We already proved that the range of $z_{i'}$ on the surface is less than $\delta/8.$ Again since $p$ lies in the surface, $\Phi(\{P\}\times [-\varepsilon/2 + \delta/4; \varepsilon/2-\delta/4])\subset U_{i'}$ for any point $P$ in the surface. We obtained what we needed because $\delta\le\varepsilon/2.$

Condition~\ref{Sj-IIIc} follows from condition~\ref{atlas3} in Claim~\ref{atlas} because $W_p\subset D_i.$

Now we consider an open cover $C = \bigcup\limits_{p\in C} C\cap \Phi(W_p\times(-\varepsilon/2;\varepsilon/2)).$ Suppose $\{\gamma_j\}_j$ is a subdivision of the curve $C$ into compact subarcs $\gamma_j$ indexed by $j\in\{0,1,\dots, N-1\}$ if $C$ is homeomorphic to a segment and indexed by $j\in\mathbb Z_N$ if $C$ is homeomorphic to a circle such that any two subarcs have a common endpoint if and only if their indices differ by one and each subarc is contained in some element of the cover. Such subdivision exists because $C$ is compact. Let $p_j\in C$ denote such point that $\Phi(W_{p_j}\times(-\varepsilon/2;\varepsilon/2))\supset \gamma_j.$ Let $i(j)$ be such number that $W_{p_j}\subset D_{i(j)}.$ Let $S_j$ denote the surface $\Phi(W_{p_j}\times\{z_{i(j)}(p_j)\}).$

Let us check condition~\ref{Sj-II} for the set of the surfaces $\{S_j\}_j.$ By construction $\gamma_j\subset \Phi(W_{p_j}\times(-\varepsilon/2;\varepsilon/2)) \subset U_{i(j)}.$ By condition~\ref{atlas6} in Claim~\ref{atlas} the range of $z_i$ on $C\cap U_{i(j)}$ is less than $\delta/4.$ Therefore, since $p_j\in C\cap U_{i(j)},$ $|z_{i(j)}(p)-z_{i(j)}(p_j)|<\delta/4$ for any $p\in\gamma_j.$ Thus $\gamma_j \subset \Phi(S_j\times(-\delta/4;\delta/4)).$ So we are done because $C = \bigcup\limits_j \gamma_j.$

Let us check condition~\ref{Sj-V}. Let $q$ denote the common endpoint of the arcs $\gamma_j$ and $\gamma_{j+1}.$ The segment $\Phi(\{q\}\times(-\delta/4;\delta/4))$ intersects the surfaces $S_j$ and $S_{j+1}$ in some points $Q_0$ and $Q_1$ respectively by discussion above. Such points are unique by~\ref{Sj-IIIa}. And by~\ref{Sj-IIIa} also, $q\in U_i.$ We have $|z_i(Q_0)-z_i(q)|<\delta/4$ and $|z_i(Q_1)-z_i(q)|<\delta/4.$ Thus $|z_i(Q_0) - z_i(Q_1)| < \delta/2.$ By~\ref{Sj-IIIb}, $|z_i(P_0) - z_i(Q_0)| < \delta/8$ and $|z_i(P_1) - z_i(Q_1)| < \delta/8$. Thus $|z_i(P_0)-z_i(P_1)|<\frac34\delta.$

Let us check condition~\ref{curve-b}. By the definition of $d_1$ (equation~(\ref{d1-definition})) there exists $i$ such that $\mathrm{dist}(p_j, M\setminus U_i)\ge d_1.$ We have $S_j\subset U_i$ because $p_j\in S_j$ and $\mathrm{diam}(S_j) < d_1/3 < d_1$.  Let $q_0$ be the common endpoint of the arcs $\gamma_{j-1}$ and $\gamma_j$. Since $\gamma_j\subset C\cap \Phi(W_{p_j}\times (-\varepsilon/2;\varepsilon/2))$ and $\mathrm{diam}(C\cap \Phi(W_{p_j}\times (-\varepsilon/2;\varepsilon/2))) < d_1/3,$ $\mathrm{dist}(p_j, q_0) < d_1/3.$ Similarly, $\mathrm{dist}(q_0, p_{j-1})< d_1/3.$ So $\mathrm{dist}(p_j, p_{j-1})<\frac23 d_1.$ Since $p_{j-1}\in S_{j-1}$ and $\mathrm{diam}(S_{j-1}) < d_1/3,$ $\max\limits_{P\in S_{j-1}} \mathrm{dist}(p_j, P) < d_1.$ Thus $S_{j-1} \subset U_i.$ For the surface $S_{j+1}$ the proof is similar.

To fulfill condition~\ref{Sj-IV} we need to make the neighborhoods $W_j$ (and the surfaces $S_j$ respectively) smaller. We will not change the points $p_j$ and the subarcs $\gamma_j.$ We will do this in such a way that it will remain to hold that $\gamma_j\subset \Phi(W_{p_j}\times (-\varepsilon/2; \varepsilon/2)).$ It is clear that the other conditions remain to hold in this case.

Suppose $S_{j-1}\cup S_{j+1}\subset U_i.$ Recall that 
 $\gamma_{j\pm1}\subset\Phi(S_{j\pm1}\times (-\delta/4;\delta/4))$ and by~\ref{Sj-IIIa} we have $\Phi(S_{j\pm1}\times (-\frac34\delta;\frac34\delta))\subset U_i$. Thus $\gamma_{j\pm1}\subset U_i.$ By condition~\ref{atlas5} in Claim~\ref{atlas} the projection $\mathrm{pr}_i:C\cap U_i\to D_i$ is injective. Therefore $\mathrm{pr}_i (\gamma_{j-1})$ and $\mathrm{pr}_i (\gamma_{j+1})$ are embedded disjoint compact arcs in $D_i.$ Let $N_{-1}$ and $N_1$ be their non-intersecting neighborhoods.
Let us prove that $\mathrm{pr}_{i(j\pm1)}\gamma_{j\pm 1}\subset S_{j\pm1}\cap \mathrm{pr}_i^{-1} (N_{\pm1}).$
  By condition~\ref{atlas6} in Claim~\ref{atlas} $\mathrm{pr}_i (\gamma_{j\pm1})\subset\Phi(\gamma_{j\pm1}\times (-\delta/8;\delta/8))$ and $\mathrm{pr}_{i(j\pm1)} (\gamma_{j\pm1})\subset\Phi(\gamma_{j\pm1}\times (-\delta/8;\delta/8))$. Thus 
   each point of the curve $\mathrm{pr}_i(\gamma_{j\pm1})$ is joined with a point on the curve $\mathrm{pr}_{i(j\pm1)}(\gamma_{j\pm1})$ by a trajectory of the field $Z$ having the length less than $\delta/4.$ Since $\mathrm{pr}_i(\gamma_{j\pm1})\subset D_i$, $\delta/4 < \varepsilon/2$ and $U_i = \Phi(D_i\times(-\varepsilon/2;\varepsilon/2)),$ this trajectory lies inside $U_i$. Thus $\mathrm{pr}_{i(j\pm1)}\gamma_{j\pm 1}\subset \mathrm{pr}_i^{-1} (N_{\pm1}).$ 
   It is clear that $S_{j\pm1}\cap \mathrm{pr}_i^{-1}(N_{\pm1})$ is open in $S_{j\pm 1}$ and contains $\mathrm{pr}_{i(j\pm1)}(\gamma_{j\pm1}).$ So we substitute $S_{j\pm1}$ by $S_{j\pm1}\cap \mathrm{pr}_i^{-1}(N_{\pm1})$. We obtain $\mathrm{pr}_i(S_{j-1})\cap\mathrm{pr}_i(S_{j+1})=\varnothing.$ If we perform this procedure for each such pair $i$ and $j$, we obtain a sought-for set $\{S_j\}_j.$
\end{proof}

Let $X$ be a topological space $\bigsqcup\limits_j S_j /\sim$ where $S_k\ni P\sim Q\in S_l$ if $k=l$ and $P=Q$ or $|k-l|=1$ and the points $P$ and $Q$ are joined by a trajectory of $Z$ having the length less than $\frac34\delta.$

Let us prove that the relation $\sim$ is an equivalence. It is obviously reflexive and symmetric. 

Suppose $P_0\sim P_1\sim P_2$ and $P_0\in S_j,$ $P_1\in S_{j+1}$ and $P_2\in S_{l}$ where $l = j$ or $l = j+2.$ By~\ref{curve-b} and~\ref{Sj-IIIa} there exists $U_i$ such that $S_j\cup S_{j+1}\cup S_{l}\subset U_i$ and the intervals $\Phi(\{P_k\}\times (-\frac34\delta;\frac34\delta))$ lie in $U_i$ for $k = 0, 1, 2.$ Since $P_0\sim P_1\sim P_2$, the union of these three intervals is an interval lying in $U_i.$ Hence $\mathrm{pr}_i(P_0)=\mathrm{pr}_i(P_1)=\mathrm{pr}_i(P_2)$ which contradicts~\ref{Sj-IV} if $l = j+2$. If $l = j,$ it follows that $P_0 = P_2$ by~\ref{Sj-IIIc}. So the relation $\sim$ is trivially transitive. We also proved the following.

\begin{clai}\label{sim-one-two}
Any equivalence class of the relation $\sim$ consists of one or two elements.
\end{clai}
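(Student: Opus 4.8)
The plan is to read off the claim directly from the transitivity computation just performed, which already examined every way three elements can be chained by $\sim$; this is why the statement is phrased as something we have "also proved." Since $\sim$ is now known to be an equivalence relation, the class of a point $P_1$ is exactly $\{Q:\ Q\sim P_1\}$, and because $\sim$ is transitive it coincides with its own transitive closure, so each such $Q$ is related to $P_1$ by the primitive relation. Hence it suffices to show that $P_1$ cannot be related by $\sim$ to two distinct points other than itself.

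First I would record that if $P_1\in S_m$ and $Q\sim P_1$ with $Q\neq P_1$, then $Q\in S_{m-1}$ or $Q\in S_{m+1}$: the definition of $\sim$ relates two points of the same surface only when they coincide, and otherwise forces the indices to differ by exactly one. Now suppose some class had three distinct elements; then $P_1$ would be $\sim$-related to two distinct points $Q,Q'$, both different from $P_1$, producing a chain $Q\sim P_1\sim Q'$. I would then split into the same two cases as in the transitivity argument, with $P_1$ as the middle element. If $Q$ and $Q'$ lie in the two different neighbours $S_{m-1}$ and $S_{m+1}$, choose by~\ref{curve-b} and~\ref{Sj-IIIa} a chart $U_i\supset S_{m-1}\cup S_m\cup S_{m+1}$ in which the relevant trajectories are defined; since $Q$ and $Q'$ lie on the $Z$-trajectory through $P_1$ we get $\mathrm{pr}_i(Q)=\mathrm{pr}_i(P_1)=\mathrm{pr}_i(Q')$, contradicting condition~\ref{Sj-IV}, which says $\mathrm{pr}_i(S_{m-1})\cap\mathrm{pr}_i(S_{m+1})=\varnothing$. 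If instead $Q$ and $Q'$ lie in the same neighbour, say $S_{m+1}\subset U_i$, the same computation gives $\mathrm{pr}_i(Q)=\mathrm{pr}_i(Q')$, and the injectivity of $\mathrm{pr}_i$ on $S_{m+1}$ from~\ref{Sj-IIIc} forces $Q=Q'$, contradicting distinctness. In every case we reach a contradiction, so no class has more than two elements, while each class trivially contains its base point.

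There is no genuine obstacle to overcome: the entire content is already packaged in the transitivity argument above, and the claim is essentially its contrapositive reformulation. The only points that need care are the bookkeeping of which neighbouring surface each of the two extra points occupies, and invoking the correct hypotheses in each case — condition~\ref{Sj-IIIc} to collapse two points sitting in one surface, and condition~\ref{Sj-IV} to exclude the configuration where the extra points straddle $P_1$ on opposite sides.
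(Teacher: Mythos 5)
Your proof is correct and follows essentially the same route as the paper, which obtains the claim as a direct byproduct of the transitivity check: a chain of two primitive relations through a middle point forces, via the common chart from conditions~\ref{curve-b} and~\ref{Sj-IIIa}, equal $\mathrm{pr}_i$-images, which condition~\ref{Sj-IV} forbids when the outer points straddle the middle surface and which condition~\ref{Sj-IIIc} collapses to equality when they lie in the same surface. Your contrapositive packaging (no class can contain three distinct elements) is just a rephrasing of the paper's argument that in $P_0\sim P_1\sim P_2$ either a contradiction arises or $P_0=P_2$.
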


\begin{clai}
\label{sim-equivalence}
Let $P\in S_k,$ $Q\in S_l,$ $S_k\subset U_i$, $S_l\subset U_i.$ Then $P\sim Q$ if and only if $k=l$ and $P=Q$ or $|k-l|=1$ and $\mathrm{pr}_i (P) = \mathrm{pr}_i (Q).$
\end{clai}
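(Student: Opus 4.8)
The plan is to prove both implications entirely inside the single chart $U_i$ that contains both $S_k$ and $S_l$, reducing everything to one observation: since $\mathrm{pr}_i$ is by construction the projection along $Z$, it is constant on every trajectory segment of $Z$ that lies inside $U_i$. The two range conditions \ref{Sj-IIIa} and \ref{Sj-V} from Claim~\ref{Sj-definition} then supply exactly the containment and the length bound that the two directions need.

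For the forward implication I would first dispose of the case $k=l$: here the definition of $\sim$ forces $P=Q$, which is the first alternative of the statement. So assume $P\sim Q$ with $|k-l|=1$. By the definition of $\sim$ there is a trajectory segment of $Z$ of length less than $\frac34\delta$ joining $P$ and $Q$, i.e.\ a number $s$ with $|s|<\frac34\delta$ and $\Phi(P,s)=Q$. For every $\tau$ lying between $0$ and $s$ we have $|\tau|<\frac34\delta$, so, since $P\in S_k\subset U_i$, property~\ref{Sj-IIIa} guarantees that $\Phi(P,\tau)\in\Phi(S_k\times[-\frac34\delta;\frac34\delta])\subset U_i$; thus the entire connecting segment stays in $U_i$. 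As $\mathrm{pr}_i$ is constant along trajectories of $Z$ contained in $U_i$, it follows that $\mathrm{pr}_i(P)=\mathrm{pr}_i(Q)$, which is the second alternative.

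For the reverse implication the case $k=l,\ P=Q$ gives $P\sim Q$ by reflexivity of $\sim$. Otherwise $|k-l|=1$ and $\mathrm{pr}_i(P)=\mathrm{pr}_i(Q)=:d\in D_i$; after relabelling I may assume $l=k+1$, so that condition~\ref{Sj-V}, applied to $S_k,S_{k+1}\subset U_i$, yields $|z_i(P)-z_i(Q)|<\frac34\delta$. By property~\ref{atlas2} of Claim~\ref{atlas} we have $P=\Phi(d,z_i(P))$ and $Q=\Phi(d,z_i(Q))$, so $P$ and $Q$ lie on the single trajectory of $Z$ through $d$, and the subsegment joining them has length $|z_i(P)-z_i(Q)|<\frac34\delta$. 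Hence $P$ and $Q$ are joined by a trajectory of $Z$ shorter than $\frac34\delta$, which is precisely the relation $P\sim Q$.

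The only point that requires care is the forward direction, where one must make sure the connecting trajectory does not leave $U_i$ and then re-enter along a different vertical fibre, which would destroy the constancy of $\mathrm{pr}_i$; this is exactly what property~\ref{Sj-IIIa} excludes, since it forces $\Phi(\{P\}\times[-\frac34\delta;\frac34\delta])$ to remain inside $U_i$. Once this containment is secured both directions are routine, the length estimate in the reverse direction being nothing more than a direct reading of condition~\ref{Sj-V}.
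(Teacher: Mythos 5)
Your proof is correct and follows essentially the same route as the paper's: the forward direction uses condition III(a) to keep the connecting trajectory inside $U_i$ so that $\mathrm{pr}_i$ is constant along it, and the reverse direction reads off condition V to bound the trajectory length by $\frac34\delta$. You merely spell out in more detail (via property 1) of Claim 5.4 and the explicit parameter $\tau$) what the paper compresses into the observation that the two intervals $\Phi(\{P\}\times(-\frac34\delta;\frac34\delta))$ and $\Phi(\{Q\}\times(-\frac34\delta;\frac34\delta))$ lie in $U_i$ and intersect.
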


\begin{proof}
Let $k = j$ and $l = j + 1.$

By~\ref{Sj-IIIa} the intervals $\Phi(\{P\}\times (-\frac34\delta;\frac34\delta))$ and $\Phi(\{Q\}\times (-\frac34\delta;\frac34\delta))$ lie in $U_i$.

If $P\sim Q$, these intervals intersect each other thus $\mathrm{pr}_i(P) = \mathrm{pr}_i(Q).$

If $\mathrm{pr}_i(P) = \mathrm{pr}_i(Q),$ then by condition ~\ref{Sj-V} we have $P\sim Q.$
\end{proof}

Let $\pi:  \bigsqcup\limits_j S_j \to X$ be the quotient map.

\begin{clai}
\label{Sj-is-open}
The map $\pi: S_j\to X$ is injective and open for any $j$.
\end{clai}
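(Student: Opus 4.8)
The plan is to treat injectivity and openness separately, leaning on the two structural claims already established about the relation $\sim$. For injectivity, I would argue that two distinct points $P,Q\in S_j$ can never be identified by $\pi$. Indeed, by Claim~\ref{sim-one-two} every equivalence class of $\sim$ has at most two elements, and a two-element class $\{P,Q\}$ necessarily has its members in surfaces whose indices differ by one. Since $P$ and $Q$ both lie in the single surface $S_j$, they can be $\sim$-related only if $P=Q$. Hence $\pi\big|_{S_j}$ is injective.

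For openness I would invoke the quotient topology: it suffices to show that for every open $V\subset S_j$ the saturation $\pi^{-1}(\pi(V))$ meets each $S_l$ in a set open in $S_l$, and I would split into three cases according to $|l-j|$. When $l=j$, the injectivity just proved gives $\pi^{-1}(\pi(V))\cap S_j=V$, which is open. When $|l-j|\ge 2$, no point of $S_l$ is $\sim$-related to any point of $S_j$ — again because classes have at most two elements and two-element classes join adjacent surfaces — so $\pi^{-1}(\pi(V))\cap S_l=\varnothing$, which is open.

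The substantive case is $l=j\pm1$, and this is where I expect the only real work. By condition~\ref{curve-b} there is a chart $U_i$ containing both $S_j$ and $S_l$, and Claim~\ref{sim-equivalence} then says that a point $Q\in S_l$ is $\sim$-related to some $P\in V$ precisely when $\mathrm{pr}_i(Q)\in\mathrm{pr}_i(V)$. Thus $\pi^{-1}(\pi(V))\cap S_l=\left(\mathrm{pr}_i\big|_{S_l}\right)^{-1}\left(\mathrm{pr}_i(V)\right)$. By condition~\ref{Sj-IIIc} the map $\mathrm{pr}_i\big|_{S_j}$ is open, so $\mathrm{pr}_i(V)$ is open in $D_i$; and $\mathrm{pr}_i\big|_{S_l}$ is continuous, being the restriction of the smooth map $\mathrm{pr}_i$ from Claim~\ref{atlas}. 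Therefore this preimage is open in $S_l$.

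Combining the three cases shows $\pi^{-1}(\pi(V))$ is open in $\bigsqcup_j S_j$, so $\pi(V)$ is open in $X$ and $\pi\big|_{S_j}$ is an open map. The main obstacle is entirely concentrated in the adjacent case, where one must convert the geometric, trajectory-based definition of $\sim$ into the projection equality supplied by Claim~\ref{sim-equivalence}, so that the openness of $\mathrm{pr}_i$ on $S_j$ can be combined with its continuity on $S_l$; the other two cases are formal consequences of the fact that $\sim$ relates only coinciding points or points of adjacent surfaces.
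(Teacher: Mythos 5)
Your proposal is correct and follows essentially the same route as the paper: injectivity from Claim~\ref{sim-one-two}, and openness by computing the saturation $\pi^{-1}(\pi(V))$, which by Claim~\ref{sim-equivalence} and condition~\ref{curve-b} meets the adjacent surfaces $S_{j\pm1}$ exactly in $\mathrm{pr}_i^{-1}(\mathrm{pr}_i(V))\cap S_{j\pm1}$, open by condition~\ref{Sj-IIIc}. Your write-up merely makes the three-way case split on $|l-j|$ explicit, which the paper leaves implicit in its displayed formula for $\pi^{-1}(\pi(W))$.
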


\begin{proof}
The injectivity follows from Claim~\ref{sim-one-two}.

Let $W$ be an open subset of $S_j.$ By~\ref{curve-b} there exists $U_i$ such that $S_{j-1}\cup S_{j}\cup S_{j+1}\subset U_i.$ By Claim~\ref{sim-one-two} and Claim~\ref{sim-equivalence}

$$\pi^{-1}(\pi (W)) = \mathrm{pr}_i^{-1}(\mathrm{pr}_i(W))\cap S_{j-1} \sqcup W \sqcup \mathrm{pr}_i^{-1}(\mathrm{pr}_i(W))\cap S_{j+1}.$$

Since $\mathrm{pr}_i (W)$ is open in $D_i$ by condition~\ref{Sj-IIIc} in Claim~\ref{Sj-definition}, the sets $\mathrm{pr}_i^{-1}(\mathrm{pr}_i(W))\cap S_{j\pm1}$ are open in $S_{j\pm1}.$ Therefore $\pi^{-1}(\pi(W))$ is open in $\bigsqcup\limits_j S_j$ and $\pi:S_j\to X$ is an open embedding.
\end{proof}

\begin{clai}
\label{Sj-cup-Sj+1}
Let $S_j\cup S_{j+1}\subset U_i.$ Then
the composition of relations $\pi(S_j \sqcup S_{j+1}) \stackrel{\pi^{-1}}\longrightarrow S_j \sqcup S_{j+1} \stackrel{\mathrm{pr}_i}\longrightarrow D_i$ is an injective open map.
\end{clai}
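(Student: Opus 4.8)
The plan is to first show that the displayed composite relation is in fact a well-defined single-valued map, then to verify injectivity, and finally openness, drawing on Claims~\ref{sim-one-two} and~\ref{sim-equivalence} together with condition~\ref{Sj-IIIc} in Claim~\ref{Sj-definition} and Claim~\ref{Sj-is-open}.

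First I would check single-valuedness. Fix $x\in\pi(S_j\sqcup S_{j+1})$. By Claim~\ref{sim-one-two} the set $\pi^{-1}(x)\cap(S_j\sqcup S_{j+1})$ has at most two elements, and if it has exactly two, say $P\in S_j$ and $Q\in S_{j+1}$, then $P\sim Q$. Since $S_j\cup S_{j+1}\subset U_i$, Claim~\ref{sim-equivalence} gives $\mathrm{pr}_i(P)=\mathrm{pr}_i(Q)$. Hence the image of $\pi^{-1}(x)\cap(S_j\sqcup S_{j+1})$ under $\mathrm{pr}_i$ is a single point of $D_i$, so the composite is a genuine map; call it $g$. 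In particular, on $\pi(S_j)$ we have $g=\mathrm{pr}_i\circ(\pi|_{S_j})^{-1}$, and likewise on $\pi(S_{j+1})$, where $\pi|_{S_j}:S_j\to\pi(S_j)$ and $\pi|_{S_{j+1}}:S_{j+1}\to\pi(S_{j+1})$ are open embeddings onto open subsets of $X$ by Claim~\ref{Sj-is-open}.

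Next I would prove injectivity. Suppose $g(x)=g(x')=d$ for $x,x'\in\pi(S_j\sqcup S_{j+1})$, and pick representatives $P\in\pi^{-1}(x)$ and $P'\in\pi^{-1}(x')$ in $S_j\sqcup S_{j+1}$ with $\mathrm{pr}_i(P)=\mathrm{pr}_i(P')=d$. If $P$ and $P'$ lie on the same surface (both on $S_j$, or both on $S_{j+1}$), then injectivity of $\mathrm{pr}_i$ on that surface, which is condition~\ref{Sj-IIIc}, forces $P=P'$ and hence $x=x'$. If instead $P\in S_j$ and $P'\in S_{j+1}$, then $\mathrm{pr}_i(P)=\mathrm{pr}_i(P')$ together with Claim~\ref{sim-equivalence} gives $P\sim P'$, so again $x=\pi(P)=\pi(P')=x'$. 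For openness, let $W$ be open in $\pi(S_j\sqcup S_{j+1})=\pi(S_j)\cup\pi(S_{j+1})$. As $\pi(S_j)$ and $\pi(S_{j+1})$ are open in $X$, the sets $W_j=W\cap\pi(S_j)$ and $W_{j+1}=W\cap\pi(S_{j+1})$ are open and $g(W)=g(W_j)\cup g(W_{j+1})$. On $\pi(S_j)$ we have $g=\mathrm{pr}_i\circ(\pi|_{S_j})^{-1}$ with $(\pi|_{S_j})^{-1}$ a homeomorphism and $\mathrm{pr}_i|_{S_j}$ open by condition~\ref{Sj-IIIc}, so $g(W_j)$ is open in $D_i$; the same applies to $g(W_{j+1})$. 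Hence $g(W)$ is open and $g$ is an open map.

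The only genuine subtlety — the step I expect to carry the weight of the argument — is single-valuedness: the composite is a priori only a relation, sending a point of $X$ to the (possibly two-element) $\mathrm{pr}_i$-image of its fiber, and it is exactly Claim~\ref{sim-equivalence}, applicable precisely because both surfaces sit in the common chart $U_i$, that collapses this image to one point. Once that is in place, injectivity and openness follow formally from the injectivity and openness of $\mathrm{pr}_i$ on each $S_j$ and from the open-embedding property of $\pi|_{S_j}$.
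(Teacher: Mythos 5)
Your proof is correct and follows essentially the same route as the paper's: well-definedness and injectivity come from Claim~\ref{sim-equivalence} together with the injectivity of $\pi$ and of $\mathrm{pr}_i$ on each surface (condition~\ref{Sj-IIIc}), and openness from the openness of $\pi\big|_{S_j}$ (Claim~\ref{Sj-is-open}) and of $\mathrm{pr}_i\big|_{S_j}$. You merely spell out the single-valuedness and case analysis that the paper leaves implicit.
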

\begin{proof}
From Claims~\ref{sim-equivalence} and~\ref{Sj-is-open} it follows that this composition is a well defined injective map. The map $\pi\big|_{S_j\sqcup S_{j+1}}$ is continuous by the definition and open by Claim~\ref{Sj-is-open}. By~\ref{Sj-IIIc} the map $S_j\sqcup S_{j+1} \stackrel{\mathrm{pr}_i}\longrightarrow D_i$ is continuous and open. The claim follows.
\end{proof}

\begin{clai}
The space $X$ is locally Euclidean, Hausdorff and second-countable.
\end{clai}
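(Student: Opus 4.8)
The plan is to obtain all three properties from the claims already proved, with only the Hausdorff property requiring genuine work. First I would dispatch the two easy properties. By Claim~\ref{Sj-is-open} the quotient map restricts to an open embedding on each $S_j$, so the sets $\pi(S_j)$ form a finite open cover of $X$, and each is homeomorphic to the smooth surface $S_j$. Since every $S_j$ is a $2$-manifold, each point of $X$ has a neighborhood homeomorphic to an open subset of the plane, so $X$ is locally Euclidean. Each $\pi(S_j)$ is also second-countable, being homeomorphic to a subset of a smooth surface; a space covered by finitely many second-countable open subspaces is second-countable (union the countable bases), so $X$ is second-countable.

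The heart of the matter is the Hausdorff property, which is precisely what conditions~\ref{Sj-IV},~\ref{Sj-V} and~\ref{curve-b} of Claim~\ref{Sj-definition} were arranged to guarantee, as Examples~\ref{non-Hausdorff-example} and~\ref{non-Hausdorff-example2} illustrate. I would prove it through the standard criterion: an open surjective quotient map $\pi:Y\to X$ has Hausdorff target if and only if the relation $R=\{(P,Q):\pi(P)=\pi(Q)\}$ is closed in $Y\times Y$. Here $\pi:\bigsqcup_j S_j\to X$ is open because it is open on each summand by Claim~\ref{Sj-is-open}, so everything reduces to showing that $R$ is closed.

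To verify this I would use that there are only finitely many surfaces, so $\bigl(\bigsqcup_j S_j\bigr)\times\bigl(\bigsqcup_j S_j\bigr)=\bigsqcup_{k,l}S_k\times S_l$ with each factor clopen, and it suffices to check that $R\cap(S_k\times S_l)$ is closed in $S_k\times S_l$ for every pair $(k,l)$. By Claim~\ref{sim-one-two} this intersection is empty unless $|k-l|\le1$. For $k=l$ it is the diagonal of $S_k\times S_k$, closed because $S_k$ is Hausdorff. For $l=k+1$, condition~\ref{curve-b} supplies a chart $U_i\supset S_k\cup S_{k+1}$, and Claim~\ref{sim-equivalence} identifies the intersection with $\{(P,Q):\mathrm{pr}_i(P)=\mathrm{pr}_i(Q)\}$, which is the preimage of the diagonal of $D_i\times D_i$ under the continuous map $(P,Q)\mapsto(\mathrm{pr}_i(P),\mathrm{pr}_i(Q))$, hence closed since $D_i$ is Hausdorff. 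Thus $R$ is a finite union of closed sets, and the criterion yields the Hausdorff property. The only real obstacle is to confirm that the adjacency relation defining $X$ genuinely coincides, on a common chart, with the transparent condition $\mathrm{pr}_i(P)=\mathrm{pr}_i(Q)$; but this is exactly the content of Claim~\ref{sim-equivalence}, so once it is invoked the closedness of $R$ — and with it the whole statement — follows formally from continuity of the maps $\mathrm{pr}_i$ and the Hausdorffness of the disks $D_i$.
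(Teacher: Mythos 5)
Your proof is correct, and the locally Euclidean and second-countable parts coincide with the paper's one-line arguments. For the Hausdorff property you take a genuinely different, though equally standard, route. The paper separates points directly: any two points of $X$ lie in a common open subset which is either a single chart $\pi(S_j)$, a disjoint union $\pi(S_k)\sqcup\pi(S_l)$ for non-adjacent indices, or a union $\pi(S_j)\cup\pi(S_{j+1})$ of adjacent charts, and the last of these is Hausdorff because Claim~\ref{Sj-cup-Sj+1} embeds it openly into the disk $D_i$. You instead invoke the criterion that the target of an open surjective quotient map is Hausdorff if and only if the relation $R=\{(P,Q):\ \pi(P)=\pi(Q)\}$ is closed, using Claim~\ref{Sj-is-open} to see that $\pi$ is open, and then verify closedness of $R$ piece by piece on each $S_k\times S_l$ via Claim~\ref{sim-equivalence}. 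Both arguments bottom out in the same fact --- that on a common chart $U_i$ (supplied by condition~\ref{curve-b}) the identification of adjacent sheets is the closed condition $\mathrm{pr}_i(P)=\mathrm{pr}_i(Q)$ --- so neither buys more than the other; your version bypasses Claim~\ref{Sj-cup-Sj+1} entirely, at the cost of importing the diagonal criterion. One cosmetic citation slip: the emptiness of $R\cap(S_k\times S_l)$ for $|k-l|\ge2$ follows from the definition of $\sim$ together with the transitivity of $\sim$ established just before Claim~\ref{sim-one-two} (which is what guarantees that $\pi(P)=\pi(Q)$ means $P\sim Q$ directly), rather than from Claim~\ref{sim-one-two} alone; the needed fact is nonetheless in place at this point of the paper.
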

\begin{proof}
The space $X$ is locally Euclidean, since it is the union of spaces $\pi(S_j)$ which are homeomorphic to an open subset of the plane by Claim~\ref{Sj-is-open}.

The space $X$ is second-countable, since it has a finite atlas $\{\pi(S_j)\}_j.$

Any two points of $X$ lie either in $\pi(S_j)$ for some $j$, or in $\pi(S_{j})\cup \pi(S_{j+1})$ for some $j$, or in $\pi(S_k)\sqcup\pi( S_l)$ for some $k,l.$ All these subsets are open in $X$ by Claim~\ref{Sj-is-open}. It is clear that the subsets $\pi(S_j)$ and $\pi(S_k)\sqcup \pi(S_l)$ are Hausdorff.
The space $\pi(S_{j})\cup \pi(S_{j+1})$ is Hausdorff because it is homeomorphic to an open subset of a disk by Claim~\ref{Sj-cup-Sj+1}. Therefore $X$ is Hausdorff.
\end{proof}

In other words, $X$ is a topological 2-dimensional manifold.

A smooth structure on $X$ is induced by the smooth structures on the surfaces $S_j$ by condition~\ref{atlas3} in Claim~\ref{atlas}.

Let $\{f_j\}_j$ be a partition of unity subordinate to the open cover $X = \bigcup\limits_j \pi(S_j).$ We define a map $\chi:X\to M$ as follows.
If $|\pi^{-1}(P)|=1$, we set $\chi(P) = P.$
Let $S_j\ni P\sim Q\in S_{j+1}.$ There exists the unique $h$ such that $|h|<\frac34\delta$ and $Q = \Phi(P,h).$ We set $\chi(\pi(P))=\chi(\pi(Q))=\Phi(P, f_{j+1}h).$

\begin{clai}\label{chi-immersion}
The map $\chi:X \to M$ is an immersion transverse to the vector field $Z$.
\end{clai}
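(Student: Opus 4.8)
The plan is to verify both conditions locally, working inside the flow-box charts $U_i$ of Claim~\ref{atlas}, where $Z$ and $\mathrm{pr}_i$ are in normal form. Fix $x_0\in X$; by condition~\ref{curve-b} I may choose $i,j$ with $x_0\in\pi(S_j)$ and $S_{j-1}\cup S_j\cup S_{j+1}\subset U_i$. Since $\pi|_{S_j}$ is an open embedding (Claim~\ref{Sj-is-open}) and $\mathrm{pr}_i|_{S_j}$ is one (condition~\ref{Sj-IIIc}), the function $w:=\mathrm{pr}_i\circ(\pi|_{S_j})^{-1}$ is a smooth chart on the neighbourhood $\pi(S_j)$ of $x_0$, compatible with the smooth structure of $X$ by~\ref{atlas3}. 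In the coordinates $(\mathrm{pr}_i,z_i)$ on $U_i$ (condition~\ref{atlas2}) one has $Z=\partial/\partial z_i$, the coordinate $\mathrm{pr}_i$ is constant along trajectories of $Z$, and the flow increases $z_i$ at unit speed.

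The main step is to read off $\chi$ in this chart. Write $\zeta_k(w):=z_i\big((\mathrm{pr}_i|_{S_k})^{-1}(w)\big)$ for the height of the sheet $S_k$ over $w$, defined where $w\in\mathrm{pr}_i(S_k)$. I would show that for $x\in\pi(S_j)$ lying over $w$,
$$
\mathrm{pr}_i(\chi(x)) = w,\qquad z_i(\chi(x)) = \sum_{k\in\{j-1,j,j+1\}} f_k(x)\,\zeta_k(w).
$$
The first equality holds because, by~\ref{Sj-IIIa}, every flow-time used to define $\chi$ has modulus below $\frac34\delta$, so the relevant arc of trajectory stays inside $U_i$ and $\mathrm{pr}_i$ is preserved; the second then follows from $z_i(\chi(x))=z_i(P)+(\text{flow time})$, where $P=(\pi|_{S_j})^{-1}(x)$. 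Checking the cases of the definition of $\chi$: if $x$ lies on $S_j$ alone then $f_j(x)=1$ and the sum is $\zeta_j(w)$; if $x\in\pi(S_j)\cap\pi(S_{j+1})$ with $Q=\Phi(P,h)\in S_{j+1}$ then $h=\zeta_{j+1}(w)-\zeta_j(w)$ and $\chi(x)=\Phi(P,f_{j+1}(x)h)$, giving $(1-f_{j+1}(x))\zeta_j(w)+f_{j+1}(x)\zeta_{j+1}(w)$, which equals the stated sum since $f_{j-1}(x)=0$ and $\sum_k f_k(x)=1$; the overlap with $S_{j-1}$ is symmetric, and Claim~\ref{sim-one-two} rules out simultaneous membership in both neighbours. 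Thus $\chi$ is the graph $w\mapsto(w,h(w))$ with $h(w)=\sum_k f_k(x(w))\zeta_k(w)$.

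It then remains to observe that $h$ is smooth: each product $f_k\zeta_k$ is smooth on $\pi(S_k)$, and since $\operatorname{supp}f_k$ is closed in $X$ and contained in the open set $\pi(S_k)$, its extension by zero is smooth across the whole chart. Consequently $\chi$ has the coordinate form $w\mapsto(w,h(w))$, whose differential sends $\partial_{w_a}\mapsto\partial_{w_a}+(\partial_{w_a}h)\,\partial_{z_i}$ and hence has full rank $2$, so $\chi$ is an immersion; and since the image of $d\chi$ is a graph over the $\mathrm{pr}_i$-directions it never contains the nonzero vector $Z=\partial/\partial z_i$, whence $d\chi(T_xX)\oplus\mathbb R\,Z_{\chi(x)}=T_{\chi(x)}M$ and $\chi$ is transverse to $Z$.

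The immersion and transversality conclusions are immediate once the graph description is available, so I expect the main obstacle to be establishing that description: recognising that the case distinction in the definition of $\chi$ is packaged by the single interpolation formula $h=\sum_k f_k\zeta_k$, and verifying its smoothness across the overlaps. The inputs that make this work are exactly the open-embedding statements~\ref{Sj-is-open} and~\ref{Sj-IIIc}, the trajectory-length bound~\ref{Sj-IIIa} (so that $\mathrm{pr}_i$ is preserved and the flow stays in $U_i$), and the one-or-two-element property of Claim~\ref{sim-one-two}.
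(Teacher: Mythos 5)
Your proof is correct and follows essentially the same route as the paper: in the flow-box coordinates of $U_i$ each sheet $S_k$ is the graph of a height function (your $\zeta_k$, the paper's $g_k$) over $\mathrm{pr}_i(S_k)$, and $\chi(\pi(S_j))$ is the graph of the partition-of-unity interpolation $\sum_{j-1\le k\le j+1} f_k\zeta_k$, which immediately gives immersion and transversality to $Z=\partial/\partial z_i$. You merely spell out in more detail the case analysis and the smooth extension-by-zero that the paper compresses into ``by construction.''
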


\begin{proof}
Let $S_{j-1}\cup S_j\cup S_{j+1}\subset U_i.$ Let $(x_i, y_i, z_i)$ be the coordinates in $U_i$ constructed in the proof of Claim~\ref{atlas}.

Let $k = j-1,$ $j$ or $j+1.$ Since the surface $S_k$ and the disk $D_i$ are transverse to the vector field $Z$ and the map $\mathrm{pr}_i: S_k \to D_i$ is injective, the surface $S_k$ is a graph of a smooth function $g_{k} (x_i, y_i)$, which is defined on $\mathrm{pr}_i (S_k)$.

By construction, the surface $\chi(\pi(S_j))$ is a graph of the smooth function $\sum\limits_{j-1\le k\le j+1} f_k g_k$ defined on $\mathrm{pr}_i (S_j).$ Therefore, $\chi:\pi(S_j) \to U_i$ is an embedding transverse to the vector field $Z$. Since $X = \bigcup\limits_j \pi(S_j),$ $\chi:X \to M$ is an immersion transverse to $Z$.
\end{proof}

Recall that for any point $p\in \gamma_j$ there exists a real number $h_j(p)\in (-\delta/4;\delta/4)$ such that $\Phi(p, h_j(p))\in S_j$. This number is unique by~\ref{Sj-IIIa} because any surface $S_j$ lies inside some $U_i.$ If $p\in \gamma_j\cap\gamma_{j+1}$, the points $\Phi(p, h_j(p))\in S_j$ and $\Phi(p, h_{j+1}(p))\in S_{j+1}$ coincide with each other in $X$ by the definition of $X$. Therefore, the set of correspondences $\{p\mapsto \Phi(p, h_j(p))\}_j$, determine a well defined map $\sigma: C\to X.$ It is continuous, since $h_j(p)$ is continuous.

\begin{clai}\label{last-claim}
For any point $p\in C$ the interval $\Phi(\{p\}\times (-\delta;\delta))$ contains the point $\left(\chi\circ\sigma\right) (p).$
\end{clai}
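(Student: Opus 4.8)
The plan is to track, for a fixed point $p\in C$, exactly how far $(\chi\circ\sigma)(p)$ is displaced from $p$ along the trajectory of $Z$ through $p$, measured in flow time, and to show that this displacement stays strictly below $\delta$. The crucial structural fact is that $\chi$ moves every point of $X$ only along a $Z$-trajectory: by its very definition $\chi(\pi(P))$ equals either $P$ or $\Phi(P, f_{j+1}h)$, both of which lie on the $Z$-trajectory through $P$. Combined with the flow identity $\Phi(\Phi(p,a),b)=\Phi(p,a+b)$, this lets me write $(\chi\circ\sigma)(p)=\Phi(p,\tau)$ for an explicit real number $\tau$ and then estimate $|\tau|$.

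First I would fix $p\in C$, choose $j$ with $p\in\gamma_j$, and set $P=\Phi(p,h_j(p))\in S_j$, so that $\sigma(p)=\pi(P)$ and $|h_j(p)|<\delta/4$; thus the displacement from $p$ to $P$ is already controlled by $\delta/4$, and it remains to bound the extra displacement contributed by $\chi$. If the class $\pi^{-1}(\sigma(p))$ is a singleton, then $\chi(\sigma(p))=P=\Phi(p,h_j(p))$ and we are done immediately, since $\delta/4<\delta$. If the class has two elements (the only other possibility, by Claim~\ref{sim-one-two}), say $P\sim Q$ with $Q\in S_{j\pm1}$, then by the definition of $\chi$ one representative is moved by a flow time of the form $f\cdot h$, where $Q=\Phi(P,h)$ (or $P=\Phi(Q,h)$) with $|h|<\tfrac34\delta$ and $f\in[0;1]$ a value of the partition of unity.

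Carrying out the two sub-cases $Q\in S_{j+1}$ and $Q\in S_{j-1}$ separately and composing flows, I obtain $\chi(\sigma(p))=\Phi(p,\tau)$ with $\tau=h_j(p)+f_{j+1}h$ in the first sub-case and $\tau=h_j(p)+(f_j-1)h$ in the second. In either case, using $0\le f\le1$ and hence $|fh|\le|h|<\tfrac34\delta$, we get $|\tau|\le|h_j(p)|+|h|<\delta/4+\tfrac34\delta=\delta$, which is precisely the required bound $(\chi\circ\sigma)(p)\in\Phi(\{p\}\times(-\delta;\delta))$. The only delicate point, and the main thing to get right, is the bookkeeping: one must relabel indices consistently so that the representative lying in the lower-indexed surface plays the role of ``$P$'' in the definition of $\chi$, and keep careful track of the sign of the flow time when the paired point sits in $S_{j-1}$ rather than $S_{j+1}$. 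Once the signs are handled, the estimate reduces in both directions to the same convex combination of $\delta/4$ and $\tfrac34\delta$, and the claim follows.
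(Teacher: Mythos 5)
Your proof is correct and follows essentially the same route as the paper's: both decompose the displacement of $(\chi\circ\sigma)(p)$ from $p$ along the $Z$-trajectory into the contribution of $\sigma$ (at most $\delta/4$) plus the contribution of $\chi$ (at most $\tfrac34\delta$, since the partition-of-unity coefficient lies in $[0;1]$), and conclude by the triangle inequality. The only cosmetic difference is that you measure the displacement by flow time and read the $\tfrac34\delta$ bound directly off the definition of the relation $\sim$, whereas the paper measures it via the coordinate $z_i$ in a chart containing $S_{j-1}\cup S_j\cup S_{j+1}$ and obtains the same bound from condition~\ref{Sj-V} of Claim~\ref{Sj-definition}.
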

\begin{proof}
The point $p$ lies in some $\gamma_j.$ Then there exists a point $P\in S_{j}$ which is joined with the point $p$ by a trajectory of the vector field $Z$ having the length less than $\delta/4.$

By condition~\ref{curve-b} there exists a number $i$ such that the chart $U_i$ contains the surfaces $S_{j-1}, S_j, S_{j+1}.$ Then $|z_i(P)-z_i(p)|<\delta/4.$

\smallskip
{\noindent\bf Case 1.} There exists a point $P_1\in S_{j+1}$ such that $\mathrm{pr}_i(p) = \mathrm{pr}_i(P_1).$

By condition~\ref{Sj-V} we have $|z_i(P_1)-z_i(P)|<\frac34\delta.$ Thus $|z_i(P_1) - z_i(p)|< \delta.$

Since the point $(\chi\circ\sigma)(p)$ lies on the segment of a trajectory of the vector field $Z$ with endpoints $P$ and $P_1$ and this segment is contained in the chart $U_i$, $|z_i ((\chi\circ\sigma)(p)) - z_i(p)| < \delta.$

\smallskip
{\noindent\bf Case 2.} There exists a point $P_1\in S_{j-1}$ such that $\mathrm{pr}_i(p) = \mathrm{pr}_i(P_1).$

This case is similar to the previous one.

\smallskip
{\noindent\bf Case 3.} $\mathrm{pr}_i (p) \notin \mathrm{pr}_i(S_{j-1}\cup S_{j+1}).$

In this case $(\chi\circ\sigma) (p)=P$ and $|z_i ((\chi\circ\sigma)(p)) - z_i(p)| < \delta/4.$

\end{proof}

\begin{proof}[Proof of Lemma~\ref{surface-of-trajectories}]
Recall that by condition~(\ref{delta-definition}) and the definition of $\delta_1$ for any point $p\in C$ the trajectory $\Phi(\{p\}\times[-\varepsilon-\delta;\varepsilon+\delta])$ is well defined and embedded.
By Claim~\ref{last-claim} the point $(\chi\circ\sigma)(p)$ lies on the interval $\Phi(\{p\}\times(-\delta;\delta))$ for any point $p\in C$. So the map $\Psi:(q,t)\mapsto \Phi(\chi(q),t)$ from $\sigma(C)\times [-\varepsilon;\varepsilon]$ to $M\setminus\partial M$ is well defined and injective. 

Thus the map $\Psi$ is well defined on $X_1\times [-\varepsilon;\varepsilon]$ where $X_1$ is some neighborhood of the curve $\sigma(C).$

Since the map $\chi$ is an immersion transverse to the vector field $Z$ (Claim~\ref{chi-immersion}), the map $\Psi$ is a local diffeomorphism. Since on the compact subset $\sigma(C)\times[-\varepsilon;\varepsilon]$ the map $\Psi$ is injective and the map $\Psi$ is a local homeomorphism, there exists a neighborhood of the subset $\sigma(C)\times[-\varepsilon;\varepsilon]$ such that the restriction of the map $\Psi$ on this neighborhood is an open embedding. Therefore, there exists an open subset $X_2\subset X_1$ which contains the curve $\sigma(C)$ such that $\Psi: X_2\times (-\varepsilon;\varepsilon)\to M$ is an open embedding.

Hence $S = \chi(X_2)$ is a sought-for surface and $U = \Phi(S\times(-\varepsilon;\varepsilon))$ is a regular neighborhood.
\end{proof}

\begin{lemm}
\label{transverse-collar2}
Let $l$ be a smooth field of line elements on the manifold $M$ transverse to the contact structure and $L$ be a compact Legendrian curve in $M\setminus\partial M$.Then there exists $\varepsilon>0$ such that the map $\exp_{l}:\overline B_{\varepsilon l}(L)\to M$ is well defined and injective.
\end{lemm}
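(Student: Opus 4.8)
The plan is to imitate the proof of Lemma~\ref{transverse-collar} almost verbatim, the single structural change being that the globally defined flow $\Phi$ of $Z$ must be replaced by the map $\exp_l$, which is only assembled out of locally chosen vector fields representing $l$. Since a field of line elements carries no orientation, I would phrase the whole argument, and in particular the final injectivity step, in terms of the unoriented trajectory segments (the leaves of the one-dimensional foliation tangent to $l$), which are perfectly well defined.

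First I would localize. As $l$ is a smooth nonvanishing field of line elements, every point has a simply connected neighborhood on which $l=\{v,-v\}$ for a smooth vector field $v$; straightening this $v$ by a flow box and then adjusting the transverse coordinates, I obtain for each $p\in L$ a chart $V_p$ with coordinates $(x,y,z)$ in which $\partial/\partial z$ is the local representative of $l$, $p=(0,0,0)$, and $Oxy$ coincides with $\xi_p$ at $p$. On $V_p$ the map $\exp_l$, restricted to vectors $\lambda\,\partial/\partial z$ based in $V_p$, equals $(q,\lambda)\mapsto\Phi_p(q,\lambda)$, where $\Phi_p$ is the flow of $\partial/\partial z$; in particular the trajectories of $l$ inside $V_p$ are the vertical segments $\{x=\mathrm{const},\,y=\mathrm{const}\}$. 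Because $Oxy=\xi_p$, Definition~\ref{def-angle} (applied with $\varepsilon<\pi/2$) yields a smaller neighborhood $W_p\subset V_p$ on which $p''-p'$ is never vertical for distinct $p',p''\in L\cap W_p$, i.e. the orthogonal projection to $Oxy$ is injective on $L\cap W_p$.

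Next I would run the covering argument of Lemma~\ref{transverse-collar} unchanged. Choosing $\varepsilon_p>0$ and a disk $D_p\subset Oxy$ with $\Phi_p(D_p\times(-\varepsilon_p;\varepsilon_p))\subset W_p$, the vertical trajectory segment of half length $\varepsilon_p/2$ through any $q\in L\cap\Phi_p(D_p\times(-\varepsilon_p/2;\varepsilon_p/2))$ is embedded in $\Phi_p(D_p\times(-\varepsilon_p;\varepsilon_p))$ and meets $L$ only at $q$, precisely because it is vertical and the projection is injective on $L\cap W_p$. Compactness yields a finite subcover $L\subset\bigcup_i\Phi_{p_i}(D_{p_i}\times(-\varepsilon_{p_i}/2;\varepsilon_{p_i}/2))$, and I set $\varepsilon=\min_i\varepsilon_{p_i}/4$. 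With this $\varepsilon$ the map $\exp_l$ is defined on $\overline B_{\varepsilon l}(L)$, since $\varepsilon$ lies below every local flow-existence time.

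For injectivity, suppose $\exp_l(w_1)=\exp_l(w_2)$ with $w_j$ based at $q_j\in L$ and with flow-parameter at most $\varepsilon$ in absolute value. Pick an index $i$ with $q_1\in\Phi_{p_i}(D_{p_i}\times(-\varepsilon_{p_i}/2;\varepsilon_{p_i}/2))$. The factor $1/4$ in the choice of $\varepsilon$ forces both the common image $\exp_l(w_1)$ and the base point $q_2=\exp_l(-w_2)$ to lie in $\Phi_{p_i}(D_{p_i}\times(-\varepsilon_{p_i};\varepsilon_{p_i}))\subset W_{p_i}$, so that $q_1$ and $q_2$ both belong to $L\cap W_{p_i}$ and have the same vertical projection; injectivity of that projection gives $q_1=q_2$, and embeddedness of the vertical trajectory then gives $w_1=w_2$. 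The only point that needs care is exactly this last bookkeeping — ensuring the two base points always land in a common chart and within the range where the local flow is both defined and injective — which, just as in Lemma~\ref{transverse-collar}, is guaranteed by the quarter-size choice of $\varepsilon$; everything else is a literal transcription with $\Phi$ replaced by $\exp_l$.
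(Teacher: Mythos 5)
Your proposal is correct and follows essentially the same route as the paper, which proves Lemma~\ref{transverse-collar2} by declaring it a local repetition of the proof of Lemma~\ref{transverse-collar} with $V_p$ chosen simply connected so that $l=\{Z,-Z\}$ for a local vector field $Z$ and with $A\times[-\delta;\delta]$ replaced by $\overline B_{\delta l}(A)$. Your careful spelling-out of the quarter-size bookkeeping in the injectivity step is exactly the argument the paper leaves implicit.
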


\begin{lemm}
\label{surface-of-trajectories2}
Let $l$ be a smooth everywhere nonzero field of line elements on the manifold $M$, $\varepsilon$ be a positive real number and $C$ be a compact curve in $M$ such that the map $\exp_{l}: \overline B_{\varepsilon l}(C)\to M\setminus\partial M$ is well defined and injective. Then there exists a neighborhood $U$ of $C$ and a smooth surface $S\subset U$ without boundary such that $\exp_{l}: B_{\varepsilon l}(S)\to U$ is a diffeomorphism.
\end{lemm}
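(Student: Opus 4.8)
The plan is to repeat the proof of Lemma~\ref{surface-of-trajectories}, reading the line field $l$ in place of the vector field $Z$ and the map $\exp_l$ in place of the flow $\Phi$, after checking that every object used in that proof is insensitive to the local choice of a generator of $l$. First I would dispose of the easy case. If $l$ is orientable on some neighborhood of $C$ then there it has the form $\{Z,-Z\}$ for a globally defined smooth nonvanishing vector field $Z$, the set $\overline B_{\varepsilon l}(C)$ is exactly $\{tZ_p:\ p\in C,\ |t|\le\varepsilon\}$, and $\exp_l(tZ_p)=\Phi(p,t)$; so the hypothesis says precisely that $\Phi\colon C\times[-\varepsilon;\varepsilon]\to M\setminus\partial M$ is defined and injective, and Lemma~\ref{surface-of-trajectories} applies verbatim. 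The only genuinely new situation is when $l$ is non-orientable along $C$; since a real line bundle over a contractible space is trivial, this forces $C$ to be a circle, and here no global $Z$ exists.

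In that case I would carry out the construction of Claims~\ref{atlas}--\ref{last-claim} unchanged, with the following readings. The notion of length of a trajectory used there is the flow-time length, and for a line element $\{v,-v\}$ both the displacement $\exp_l(\lambda v)$ and the number $|\lambda|$ are well defined, since $\lambda v=(-\lambda)(-v)$. On each contractible chart $V_p$ the line field is spanned by a smooth nonvanishing vector field which I straighten, by the flow-box theorem, to $\partial/\partial z$, so that $l=\{\partial/\partial z,-\partial/\partial z\}$; this furnishes the disks $D_i$, the coordinate $z_i$ and the projection $\mathrm{pr}_i$ along $l$ exactly as in Claim~\ref{atlas}. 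The point is that all data subsequently used --- the projection $\mathrm{pr}_i$ along the trajectories, the transition maps $\mathrm{pr}_j\circ\mathrm{pr}_i^{-1}$, the differences $|z_i(\cdot)-z_i(\cdot)|$ appearing in conditions~\ref{Sj-IIIb} and~\ref{Sj-V}, the relation $\sim$ (defined through trajectories of length $<\tfrac34\delta$), and the transversality conditions --- never refer to a choice of generator and, where a coordinate $z_i$ enters with a sign, are confined to a single chart $U_i$ in which a straightening has been fixed. Consequently Claim~\ref{atlas}, Claim~\ref{Sj-definition}, and then Claims~\ref{sim-one-two}, \ref{sim-equivalence}, \ref{Sj-is-open}, \ref{Sj-cup-Sj+1} go through word for word, and the quotient $X$ is again a smooth surface.

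The one formula that must be reinterpreted is the definition of the immersion $\chi$. For $S_j\ni P\sim Q\in S_{j+1}$, I choose a generator $v\in l_P$ and the scalar $h$ with $|h|<\tfrac34\delta$ and $Q=\exp_l(h\,v)$, unique by~\ref{Sj-IIIa}, and set $\chi(\pi(P))=\chi(\pi(Q)):=\exp_l(f_{j+1}h\,v)$. This is independent of the sign of $v$, because replacing $v$ by $-v$ replaces $h$ by $-h$ and leaves $f_{j+1}h\,v$ unchanged; and it is consistent because on the overlap $\pi(S_j)\cap\pi(S_{j+1})$ the partition of unity satisfies $f_j+f_{j+1}=1$, so the symmetric prescription from $Q$ yields the identical point. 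With this reading Claim~\ref{chi-immersion} is unchanged: over $U_i$ each $S_k$ is the graph of a function $g_k$ and $\chi(\pi(S_j))$ is the graph of $\sum_{j-1\le k\le j+1}f_k g_k$, hence an embedding transverse to $l$; and Claim~\ref{last-claim} holds verbatim.

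Finally I would conclude as in the proof of Lemma~\ref{surface-of-trajectories}: the restriction of $\exp_l$ to $\overline B_{\varepsilon l}(\chi(\sigma(C)))$ is a local diffeomorphism, since $\chi$ is an immersion transverse to $l$, and it is injective on this compact set by Claim~\ref{last-claim} together with the injectivity hypothesis over $C$; hence it is an open embedding on a neighborhood. Shrinking to an open $X_2\supset\sigma(C)$ in $X$, I set $S=\chi(X_2)$, a smooth surface without boundary, and $U=\exp_l(B_{\varepsilon l}(S))$; then $\exp_l\colon B_{\varepsilon l}(S)\to U$ is a diffeomorphism. The main --- and essentially the only --- point requiring care is the bookkeeping of the middle two paragraphs: verifying that each step of the long construction for $Z$ was already blind to the choice of generator, so that it survives the passage to a line field that may fail to be orientable around the circle $C$.
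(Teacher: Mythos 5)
Your proposal is correct and follows essentially the same route as the paper: the paper also proves the lemma by rerunning the construction of Claims~\ref{atlas}--\ref{last-claim} verbatim, choosing the charts $V_p$ simply connected so that $l=\{Z,-Z\}$ locally and replacing $A\times[-\delta;\delta]$ by $\overline B_{\delta l}(A)$ throughout, on the grounds that every step except the final compactness argument is local. Your explicit verification that the definition of $\chi$ is independent of the choice of generator is a detail the paper leaves implicit, but it is the same argument.
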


The proof of these two lemmas is similar to the proof of Lemmas~\ref{transverse-collar} and~\ref{surface-of-trajectories} because all arguments were local except the fact in the end of the proof of Lemma~\ref{surface-of-trajectories} that if a local homeomorphism is injective on a compact subset then it is a homeomorphism of some neighborhood of this subset. We only indicate which changes should be made.

All sets of the form $A\times[-\delta;\delta]$ and $A\times(-\delta;\delta)$ we substitute by $\overline B_{\delta l}(A)$ and $B_{\delta l}(A)$ respectively.

In the proof of Claim~\ref{atlas} we should choose the neighborhoods $V_p$ to be simply connected. Then in any such neighborhood the field $l$ has the form $\{Z, -Z\}$ where $Z$ is a vector field and we can use $Z$ to construct the coordinates $(x,y,z).$

A subset of the chart $U_i$ of the form $\Phi(A\times\{z'_i\})$ we substitute by the set $\{(x_i,y_i,z_i)\in U_i:\ (x_i,y_i,z_i-z'_i)\in A\}.$

The rest remains unchanged.

\begin{lemm}\label{Reeb-line-element-existence}
On any contact manifold there exists a contact field of line elements transverse to the contact structure.
\end{lemm}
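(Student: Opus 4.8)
The plan is to use the Reeb vector field as the basic building block. Its flow preserves the contact form, so it is a contact vector field, and it is transverse to the contact structure because $\alpha(R_\alpha)=1\neq0$. Passing from the vector field to the line element $\{R_\alpha,-R_\alpha\}$ is precisely what is needed to cope with the fact that, when the contact structure is not coorientable, the contact form and hence its Reeb field are only defined up to sign.

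First I would treat the coorientable case, which is immediate. Choosing a global contact form $\alpha$ with $\xi=\ker\alpha$, let $R_\alpha$ be its Reeb field, determined by $\alpha(R_\alpha)=1$ and $\iota_{R_\alpha}d\alpha=0$. Its flow preserves $\alpha$, so $R_\alpha$ is contact, and $\alpha(R_\alpha)=1$ shows it is transverse to $\xi$; thus $\{R_\alpha,-R_\alpha\}$ is the desired field. In the general case I would pass to the coorientation double cover $\pi\colon\widetilde M\to M$, the double cover on which $\widetilde\xi:=\pi^*\xi$ is coorientable, and let $\tau$ be the nontrivial deck transformation, a free involution with $\pi\circ\tau=\pi$. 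Starting from any contact form $\widetilde\alpha_0$ for $\widetilde\xi$ one has $\tau^*\widetilde\alpha_0=-f\widetilde\alpha_0$ with $f>0$, since $\tau$ preserves $\widetilde\xi$ but reverses the coorientation. Replacing it by $\widetilde\alpha=\widetilde\alpha_0-\tau^*\widetilde\alpha_0=(1+f)\widetilde\alpha_0$, which is again a contact form for $\widetilde\xi$, yields the exact anti-invariance $\tau^*\widetilde\alpha=-\widetilde\alpha$ (using $\tau^2=\mathrm{id}$).

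The key computation is then that $\tau_*R_{\widetilde\alpha}=-R_{\widetilde\alpha}$. Indeed, from $\tau^*\widetilde\alpha=-\widetilde\alpha$ one gets $\widetilde\alpha(\tau_*R_{\widetilde\alpha})=-1$, and from $\tau^*d\widetilde\alpha=-d\widetilde\alpha$ one gets $\iota_{\tau_*R_{\widetilde\alpha}}d\widetilde\alpha=0$; these are exactly the defining equations of the Reeb field of $-\widetilde\alpha$, which is $-R_{\widetilde\alpha}$. Consequently the line element field $\{R_{\widetilde\alpha},-R_{\widetilde\alpha}\}$ on $\widetilde M$ is $\tau$-invariant and descends to a smooth field of line elements $l$ on $M$. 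I would finish by checking the two required properties of $l$: it is transverse to $\xi$ because $\pi$ is a local diffeomorphism carrying $\widetilde\xi$ onto $\xi$ and $R_{\widetilde\alpha}$ is transverse to $\widetilde\xi$; and it is contact because over a simply connected $U\subset M$ the cover splits as $\pi^{-1}(U)=U_+\sqcup U_-$ with $\pi|_{U_+}$ a contactomorphism onto $(U,\xi)$, so that $v:=\pi_*\bigl(R_{\widetilde\alpha}|_{U_+}\bigr)$ is a contact vector field on $U$ with $l|_U=\{v,-v\}$.

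The main obstacle is the non-coorientable case, and the crucial realization is that the sign freedom built into a line element matches exactly the sign ambiguity of the contact form. This is also why the normalization must be made precise: a positive conformal factor in $\tau^*\widetilde\alpha$ would spoil the identity $\tau_*R_{\widetilde\alpha}=-R_{\widetilde\alpha}$, since the Reeb field of a rescaled form acquires a component tangent to $\widetilde\xi$; arranging $\tau^*\widetilde\alpha=-\widetilde\alpha$ exactly, rather than merely up to a positive factor, is what makes the line element field descend.
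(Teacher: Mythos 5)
Your proof is correct and follows essentially the same route as the paper: Reeb field in the coorientable case, and otherwise passing to the coorientation double cover, anti-symmetrizing a contact form via $\widetilde\alpha_0-\tau^*\widetilde\alpha_0$, and pushing forward the resulting Reeb field as a line element. Your write-up is in fact slightly more explicit than the paper's, since you spell out the verification $\tau_*R_{\widetilde\alpha}=-R_{\widetilde\alpha}$ and why exact (rather than conformal) anti-invariance is needed, which the paper leaves implicit.
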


\begin{proof}
Let $(M,\xi)$ denote the contact manifold, $w_1(\xi)$ denote the first Stiefel--Whitney class of the contact structure $\xi.$

If $w_1(\xi)=0,$ the contact structure is coorientable thus there exists a Reeb vector field $v$ (see \cite[Sections 1.1 and 2.3]{Gei}), and $\{v, -v\}$ is a sought-for line element.

Suppose $w_1(\xi)\neq0$. Let $p:\widetilde M\to M$ denote the 2-fold covering corresponding to the subgroup $\ker(w_1(\xi):\pi_1(M)\to\mathbb Z_2)$ and $h:\widetilde M \to \widetilde M$ be the diffeomorphism which acts as a transposition on any fiber of the covering.

Then the contact structure $p_*^{-1}(\xi)$ is coorientable. Thus it is the kernel of a differential 1-form $\alpha$ on $\widetilde M.$ We note that $h$ changes the coorientation of the contact structure. Therefore $\ker(\alpha-h^*\alpha) = p_*^{-1}(\xi).$ We note that $h^*(\alpha-h^*(\alpha)) = -(\alpha-h^*(\alpha)).$ Thus if $v$ is the Reeb vector field of the contact form $\alpha-h^*(\alpha),$ $p_* v$ is a sought-for contact field of line elements.
\end{proof}

\subsection{Case $r=+\infty$}

In this subsection we prove for closed Legendrian curves the existence of a regular neighborhood with $r=+\infty$ (Proposition~\ref{r-invariant-regular-neighbourhood-existence}).

\begin{defi}\label{regular-neighbourhood-core}
Let $U$ be a regular neighborhood of the closed Legendrian curve $L$. Let $l$ denote the field of line elements in $U.$ A compact set $K\subset U$ is called a {\it core of the regular neighborhood} $U$ if $L\subset K\setminus\partial K$ and there exists a smooth compact surface $S_0\subset U$ such that
\begin{enumerate}
\item\label{core-P1} $S_0$ is transverse to $l$ and $\mathrm{pr}\big|_{S_0}$ is injective.
\item $K = \exp_l (\overline B_{r_0 l}(S_0))$ for some $r_0>0$.
\item\label{core-P3} $S_0$ is homeomorphic to an annulus if the contact structure is coorientable on $L$, and $S_0$ is homeomorphic to a M\"obius band otherwise.
\item $\partial S_0$ is a smooth link transverse to the contact structure.
\end{enumerate}

The triple $(K, S_0, r_0)$ will also be called the core of the regular neighborhood.
\end{defi}

We note that the interior of a core of a regular neighborhood is itself a regular neighborhood.

\begin{lemm}\label{transverse-push-off}
Any regular neighborhood of a closed Legendrian curve has a core.
\end{lemm}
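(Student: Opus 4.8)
The plan is to realise the core as a thin, slightly tilted smooth tubular neighbourhood of the regular projection $\mathrm{pr}(L)$ sitting inside $U$, the tilt being chosen so that the boundary becomes transverse to $\xi$. First I would record the relevant objects: by Lemma~\ref{Lavrentiev-projection-lemma4} the projection $\mathrm{pr}(L)$ is a compact Lavrentiev curve in $S$, and by condition~\ref{regular-neighborhood-condition4} it is injective, so $\mathrm{pr}(L)$ is a closed embedded curve. To pin down the dichotomy of condition~\ref{core-P3}, note that $l$ is transverse to $\xi$, so $l|_L\cong TM|_L/\xi|_L$ and $\xi$ is coorientable on $L$ iff $l|_L$, equivalently $l|_{\mathrm{pr}(L)}$ (transporting along the flow), is orientable. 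Splitting $T_qM=T_q\mathrm{pr}(L)\oplus\nu_q\oplus l_q$ for $q\in\mathrm{pr}(L)$, where $\nu$ is the normal line of $\mathrm{pr}(L)$ in $S$, and using that $M$ and $\mathrm{pr}(L)$ are orientable, gives $w_1(\nu)=w_1(l|_{\mathrm{pr}(L)})$. Hence $\mathrm{pr}(L)$ is two-sided in $S$ precisely when $\xi$ is coorientable on $L$, so the (topologically essentially unique) smooth regular neighbourhood of $\mathrm{pr}(L)$ in the smooth surface $S$, which exists by the standard argument recalled before Definition~\ref{closed-regular-tubular-neighbourhood-definition}, is an annulus in the coorientable case and a M\"obius band otherwise.

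In the coorientable case, Remark~\ref{regular-neighbourhood-contact-form} furnishes a neighbourhood $S'\supset\mathrm{pr}(L)$ and a $1$-form $\beta$ with contact form $dz+\mathrm{pr}^*\beta$, and the contact condition $\alpha\wedge d\alpha=dz\wedge\mathrm{pr}^*d\beta\neq0$ forces $d\beta$ to be a nonvanishing area form on $S'$. I would take a thin smooth annular neighbourhood $N\subset S'$ of $\mathrm{pr}(L)$ with boundary circles $\gamma_\pm$. Since $L$ is closed Legendrian, $\int_L\alpha=0$ and $\oint_L dz=0$, so $\int_{\mathrm{pr}(L)}\beta=0$; by Stokes in the flat-chain form already used in the proof of Proposition~\ref{correcting-isotopy-existence}, $\int_{\gamma_\pm}\beta=\int_{A_\pm}d\beta\neq0$, where $A_\pm$ is the subannulus between $\mathrm{pr}(L)$ and $\gamma_\pm$. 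I then let $S_0$ be the graph $\{\exp_l(g(q)v_q):q\in N\}$ of a smooth $g:N\to(-r;r)$ chosen on each $\gamma_\pm$ so that $g'+\beta(\dot\gamma_\pm)$ is a nonzero constant (solvable precisely because $\int_{\gamma_\pm}\beta\neq0$ makes the defining equation for $g'$ have the correct mean), and small in the interior. Then $(dz+\mathrm{pr}^*\beta)$ applied to the tangent of $\partial S_0$ is this nonzero constant, so $\partial S_0$ is transverse to $\xi$, while $S_0$ is a smooth compact surface transverse to $l$ with $\mathrm{pr}|_{S_0}$ injective. Taking $N$ thin makes $\int_{\gamma_\pm}\beta$, hence $g$, arbitrarily small; I finally set $K=\exp_l(\overline B_{r_0l}(S_0))$ and pick $r_0$ with $\sup_{p\in L}|z(p)-g(\mathrm{pr}(p))|<r_0$ and $\sup_N|g|+r_0<r$, which is possible for $g$ small and yields $K\subset U$ and $L\subset K\setminus\partial K$.

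The main obstacle is the non-coorientable case, where I would pass to the connected double cover $\hat U\to U'$ along $L$ on which $\xi$ becomes coorientable, with deck involution $\tau$ reversing the coorientation of $l$ (so $z\circ\tau=-z$ and $v\mapsto-v$); here $\hat L$ is a single closed Legendrian curve and $\widehat{\mathrm{pr}(L)}$ is Lavrentiev by Lemma~\ref{Lavrentiev-covering}. I would run the previous construction $\tau$-equivariantly: choose a $\tau$-invariant thin annulus $\hat N$ (so $\tau$ swaps $\hat\gamma_+$ and $\hat\gamma_-$) and a function $\hat g$ with $\hat g\circ\tau=-\hat g$, obtained by fixing $\hat g$ on $\hat\gamma_+$ as above and setting $\hat g|_{\hat\gamma_-}=-\hat g\circ\tau$; since $\tau$ is a contactomorphism, transversality on $\hat\gamma_+$ gives it on $\hat\gamma_-$. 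The invariant graph $\hat S_0$ then descends to a M\"obius band $S_0$ with a single smooth boundary circle transverse to $\xi$, and $r_0$ is chosen as before. The delicate points I expect to spend care on are checking that the sign conventions $z\circ\tau=-z$, $v\mapsto-v$ make $\hat S_0$ genuinely $\tau$-invariant with a smooth quotient, and that the tubular neighbourhood of the merely Lavrentiev curve $\mathrm{pr}(L)$ can indeed be taken smooth with smooth boundary inside $S$.
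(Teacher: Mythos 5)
Your construction follows essentially the same route as the paper's proof. In the coorientable case the paper also takes a closed regular tubular neighbourhood $f:\mathbb S^1\times[-1;1]\to S$ of the Lavrentiev curve $\mathrm{pr}(L)$ (provided by Tukia's theorem, which also answers your worry about smooth boundary circles: the curves $f(\mathbb S^1\times\{t\})$ are smooth for $t\neq0$), defines on each push-off circle the lift $z(s,t)=z_0-\int_{f([0;s]\times\{t\})}\beta+\frac{s}{2\pi}\int_{f(\mathbb S^1\times\{t\})}\beta$ --- exactly your solution of $g'+\beta(\dot\gamma_{\pm})=c$ with $c$ equal to the mean of $\beta$ over $\gamma_{\pm}$ --- uses Stokes for flat chains to see that $c\neq0$, and fills in with a smooth graph; the M\"obius case is likewise handled by an equivariant construction on a double cover. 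Your $w_1$ bookkeeping identifying coorientability of $\xi$ on $L$ with two-sidedness of $\mathrm{pr}(L)$ in $S$ is correct and is implicit in the paper.

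The one step that does not work as written is the claim that taking $N$ thin makes ``$\int_{\gamma_{\pm}}\beta$, hence $g$, arbitrarily small,'' together with keeping $g$ small in the interior. Thinning $N$ only makes the \emph{mean} $c$ of $g'+\beta(\dot\gamma_{\pm})$ small; $g$ itself is an antiderivative of $c-\beta(\dot\gamma_{\pm})$, and as $\gamma_{\pm}\to\mathrm{pr}(L)$ its boundary values converge, up to the integration constant, to the transported $z$-coordinate of $L$, whose oscillation is a fixed quantity that need not be small. Consequently your two requirements $\sup_{p\in L}|z(p)-g(\mathrm{pr}(p))|<r_0$ and $\sup_N|g|+r_0<r$ need not be simultaneously satisfiable: with $g\approx0$ near $\mathrm{pr}(L)$ the first forces $r_0>\sup_{L}|z|$, while $\sup_N|g|$ is at least about half the oscillation of $z|_L$, so you would need roughly $2\sup_L|z|<r$, which can fail. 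The repair is exactly what the paper does: choose the integration constants so that $g|_{\gamma_{\pm}}$ is uniformly close to the $z$-coordinate of $L$ (not to $0$) and interpolate in the interior so that $|g(f(s,0))-z(s,0)|<r_0$ with $r_0=r-\sup_N|g|>0$. With that correction the rest of your argument, including the equivariant M\"obius-band case, goes through.
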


\begin{proof} Let $(U,S,l,r)$ denote the regular neighborhood, and $L$ denote the Legendrian curve. We consider two cases.

\noindent{\bf Case 1.} The contact structure is coorientable on the curve $L.$

By Remark~\ref{regular-neighbourhood-contact-form} we can take a smaller surface $S'\subset S$ such that $(U' = \exp_l(B_{rl}(S')), S', l, r)$ is a regular neighborhood of $L$ and the contact structure on $U'$ is the kernel of a 1-form $dz+\mathrm{pr}^*\beta$, where $z:U'\to\mathbb R$ is a function, $z\big|_{S'}\equiv0$, $\partial z/\partial l = \pm1$ and $\beta$ is a differential 1-form on $S'.$ We orient the surface $S'$ by the form $d\beta.$ It is sufficient to construct a core for the regular neighborhood $U'$ because $U'\subset U.$ So we can assume $S' = S$ and $U' = U.$

The curve $\mathrm{pr}(L)$ is Lavrentiev by Lemma~\ref{Lavrentiev-projection-lemma}. Let $f:\mathbb S^1\times[-1;1]\to S$ be a closed regular tubular neighborhood of the curve $\mathrm{pr}(L)$ (see Definition~\ref{closed-regular-tubular-neighbourhood-definition}).

We orient the boundary of $f(\mathbb S^1\times[0;1])$ in concert with the orientation of the interior. We orient the circle $\mathbb S^1$ in such a way that the map $f(\bullet, 0)$ preserves the orientation. We choose a positive coordinate $s\in[0;2\pi]$ on the circle $\mathbb S^1.$
We set $$z(s, t) = z_0 - \int\limits_{f([0;s]\times\{t\})}\beta + \frac{s}{2\pi}\int\limits_{f([0;2\pi]\times\{t\})}\beta,$$
where $z_0$ is the $z$-coordinate of the point on the curve $L$ whose projection is $f(0,0).$

Since $L$ is Legendrian, the integral of $\beta$ on any subarc of $L$ is zero (Proposition~\ref{leg-integral}). Therefore $z(s, 0)$ is the $z$-coordinate of the point on the curve $L$ whose projection is $f(s,0).$

Since $f$ is bi-Lipschitz, $z(s,t)$ is a continuous function by Corollary~\ref{integral-continuity-corollary}. So there exists a positive real number $t_0$ such that $|z(s, t)| < r$ for any $s\in[0;2\pi]$ and $t\in[0;t_0].$

Let $L_+ = \{p\in U:\ \exists s\in[0;2\pi]\ \mathrm{pr}(p) = f(s,t_0),\ z(p) = z(s,t_0)\}.$ The set $L_+$ is a closed curve because $z(0,t_0) = z(2\pi, t_0) = z_0$ by definition. The curve $f(\mathbb S^1\times\{ t_0\}) = \mathrm{pr}(L_+)$ is smooth because $f$ is a regular neighborhood of a Legendrian curve. Therefore $L_+$ is smooth. We take a derivative $\partial/\partial s$ of each part of the equation in the definition of $z(s,t)$ and obtain the following:

\begin{equation}\label{L+-is-transverse}
\frac{\partial z(s,t_0)}{\partial s} + \beta\left(\frac{\partial f(s, t_0)}{\partial s}\right) = \frac1{2\pi}\int\limits_{f(\mathbb S^1\times\{t_0\})}\beta.
\end{equation}

By Stokes theorem (see \cite[Chapter IX, Theorems 5A and 7A]{Whi})

\begin{equation}\label{Stokes-for-L+}
0<\int\limits_{f(\mathbb S^1\times [0;t_0])}d\beta = \int\limits_{\mathrm{pr}(L)}\beta - \int\limits_{f(\mathbb S^1\times\{t_0\})}  \beta = - \int\limits_{f(\mathbb S^1\times\{t_0\})}\beta.
\end{equation}

From equations~(\ref{L+-is-transverse}) and~(\ref{Stokes-for-L+}) it follows that $L_+$ is negatively transverse, i.e. the contact form $dz+\mathrm{pr}^*\beta$ is negative on the velocity vector of $L_+.$

Similarly we can construct a positively transverse curve $L_-$ with $\mathrm{pr}(L_-) = f(\mathbb S^1\times\{t_1\})$ for some $t_1<0$. A sought-for surface $S_0$ can be constructed as a graph of a smooth function $\widetilde z(p)$ on the annulus cobounded by $\mathrm{pr}(L_+)$ and $\mathrm{pr}(L_-)$. We must ensure that $\partial S_0 = L_+\cup L_-$ and $L\subset \exp_l(\overline B_{r_0l}(S_0))\subset U$ for some $r_0$. So we have to ensure $\widetilde z(f(s, t_0)) = z(s, t_0),$ $\widetilde z(f(s, t_1)) = z(s, t_1)$ and $|\widetilde z(f(s, 0)) - z(s, 0)|<r_0$ where $r_0 = r - |\max \widetilde z|.$ It is easy to construct such smooth function $\widetilde z.$

\smallskip

\noindent{\bf Case 2.} The contact structure is not coorientable on $L.$

In this case we can pass to the double covering $\pi:\widetilde S \to S$ corresponding to the kernel of the homomorphism $w_1:\pi_1(S)\to\mathbb Z_2$ where $w_1$ is the restriction of the first Stiefel-Whitney class of the contact structure to $S$. By Lemma~\ref{Lavrentiev-covering} the inverse image $\pi^{-1}(\mathrm{pr}(L))$ under the covering of the Lavrentiev curve $\mathrm{pr}(L)$ is also a Lavrentiev curve. Then there exists a closed regular tubular neighborhood $f:\mathbb S^1\times [-1;1] \to \widetilde S$ of $\pi^{-1}(\mathrm{pr}(L)).$ Similarly to Case 1 we can use the map $\pi\circ f\big|_{\mathbb S^1\times[0;1]}$ to construct a sought-for M\"obius band.
\end{proof}

\begin{lemm}\label{core-orientations}
Let $(K, S_0, r_0)$ be a core of the regular neighborhood $U$ with a contact field of line elements $l$ of the Legendrian curve $L$. If $p\in\partial S_0$, $v_1\in l$ and $v_1\in T_p U$, a vector $v_2\in T_p \partial S_0$ and the vector $v_1$ direct to the same side of the contact plane, $v_3\in T_p S_0$ is a vector directed inward the surface, then the basis $v_1,v_2,v_3$ is positive.
\end{lemm}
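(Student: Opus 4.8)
The plan is to reduce the statement to a local computation in the coordinates of Remark~\ref{regular-neighbourhood-contact-form} and then match the resulting sign against the transversality of $\partial S_0$ that was fixed in the construction of the core in Lemma~\ref{transverse-push-off}. First I would treat the coorientable case. Near $p$ I write the contact form as $\alpha = dz + \mathrm{pr}^*\beta$, so that $l = \mathbb{R}\,\partial/\partial z$ is the Reeb line field, I orient the base surface $S$ by $d\beta$, and I recall that $\alpha\wedge d\alpha$ is then a positive multiple of $dx\wedge dy\wedge dz$ for oriented coordinates $(x,y)$ on $S$. Replacing $v_1$ by $-v_1$ flips $v_2$ as well, by the ``same side'' condition, leaving the orientation of the triple $(v_1,v_2,v_3)$ unchanged, so I may assume $\alpha(v_1) > 0$; the hypothesis then reads $\alpha(v_2) > 0$. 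Since $\partial/\partial z$ is the Reeb field we have $\iota_{v_1}d\alpha = 0$, and the standard expansion of $\alpha\wedge d\alpha$ collapses to
\[ (\alpha\wedge d\alpha)(v_1,v_2,v_3) = \alpha(v_1)\, d\alpha(v_2,v_3) = \alpha(v_1)\, d\beta(\mathrm{pr}_* v_2, \mathrm{pr}_* v_3). \]
As $\alpha(v_1) > 0$, it remains to prove that $(\mathrm{pr}_* v_2, \mathrm{pr}_* v_3)$ is a positive basis of $T_{\mathrm{pr}(p)}S$ with respect to $d\beta$.

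Now $\mathrm{pr}_* v_3$ is the inward normal of the region $\mathrm{pr}(S_0)$ along its boundary $\mathrm{pr}(\partial S_0)$, so $(\mathrm{pr}_* v_2, \mathrm{pr}_* v_3)$ is positive exactly when $\mathrm{pr}_* v_2$ points along the boundary orientation that the $d\beta$-orientation of $\mathrm{pr}(S_0)$ induces on $\mathrm{pr}(\partial S_0)$. The crux is therefore to check that the direction of $v_2$ singled out by $\alpha(v_2)>0$ is this induced boundary orientation, and I would read this off the construction in Lemma~\ref{transverse-push-off}. There $\partial S_0 = L_+\cup L_-$, the annulus $\mathrm{pr}(S_0) = f(\mathbb{S}^1\times[t_1;t_0])$ is oriented by $d\beta$ in agreement with the $ds\wedge dt$ orientation (this is precisely the sign recorded by Stokes' formula~(\ref{Stokes-for-L+})), and $\mathrm{pr}(L_+)$, $\mathrm{pr}(L_-)$ are the boundary circles at $t=t_0>0$ and $t=t_1<0$. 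Hence the induced boundary orientation is $-\partial_s$ on $\mathrm{pr}(L_+)$ and $+\partial_s$ on $\mathrm{pr}(L_-)$. On $L_+$ equation~(\ref{L+-is-transverse}) gives $\alpha(\partial_s)<0$ (negative transversality), so the $\alpha>0$ direction is $-\partial_s$; on $L_-$ positive transversality gives $\alpha(\partial_s)>0$, so the $\alpha>0$ direction is $+\partial_s$. In either case $\mathrm{pr}_* v_2$ points along the induced boundary orientation, whence $d\beta(\mathrm{pr}_* v_2, \mathrm{pr}_* v_3) > 0$ and the claim follows.

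For the non-coorientable case I would argue locally: the statement concerns only a single point $p\in\partial S_0$ and the germs of $S_0$ and $l$ there, so I restrict to a simply connected neighborhood (equivalently, lift through the double cover of Lemma~\ref{Lavrentiev-covering} used in Case~2 of Lemma~\ref{transverse-push-off}), where $l=\{v,-v\}$ and a local primitive $\alpha=dz+\mathrm{pr}^*\beta$ exist and the picture is identical to the $L_+$-end of the coorientable case; the computation above then applies verbatim. I expect the main obstacle to be the sign bookkeeping of the second paragraph: keeping the orientation of $S$ by $d\beta$, the induced boundary orientation, the chosen orientation of $\mathbb{S}^1$, and the inward direction $v_3$ mutually consistent, and verifying that the two transversality signs of $L_+$ and $L_-$ combine with the two positions of the boundary circles so that the outcome is positive on \emph{both} components rather than on only one.
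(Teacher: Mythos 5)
Your first half agrees with the paper's proof: reduce to the coorientable case by a local double cover, write $\alpha=dz+\mathrm{pr}^*\beta$, collapse $(\alpha\wedge d\alpha)(v_1,v_2,v_3)$ to $\alpha(v_1)\,d\beta(\mathrm{pr}_*v_2,\mathrm{pr}_*v_3)$, and thereby reduce the lemma to the claim that the $\alpha>0$ direction of each component of $\partial S_0$ is the boundary orientation induced by the $d\beta$-orientation of $S_0$. The gap is in how you establish that last claim. You read the transversality signs off the particular curves $L_\pm$ built in Lemma~\ref{transverse-push-off}, via equations~(\ref{L+-is-transverse}) and~(\ref{Stokes-for-L+}). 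But the lemma is stated for an \emph{arbitrary} core in the sense of Definition~\ref{regular-neighbourhood-core}, where the only hypothesis on $\partial S_0$ is that it is a smooth link transverse to the contact structure --- no sign of transversality is part of the definition. Cores that do not come from the construction of Lemma~\ref{transverse-push-off} do occur where this lemma is used: Proposition~\ref{r-invariant-regular-neighbourhood-existence} takes an arbitrary core as input, and the cores produced by Lemma~\ref{smooth-isotopy-lemma6} and in Lemma~\ref{smooth-isotopy-lemma3} (where $K_s=\exp_{l_s}(\overline B_{rl_s}(S))$ for a deformed field $l_s$ over a fixed $S$) are not of the constructed form. So your argument proves the conclusion only for a subclass of cores, not the stated lemma.

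The fix is essentially the computation you already perform, but anchored to the given data rather than to the constructed $L_\pm$: for any boundary component $\partial_+S_0$, let $A\subset S_0$ be the annulus it cobounds with $\mathrm{pr}(L)$ (here $\mathrm{pr}$ is the projection onto $S_0$, so $z\big|_{S_0}\equiv 0$ and $\alpha$ restricts to $\beta$ on $T S_0$). Stokes gives $\int_A d\beta=\int_{\partial_+S_0}\beta+\int_{\mathrm{pr}(L)}\beta=\int_{\partial_+S_0}\beta$, since $L$ is a closed Legendrian curve and hence $\int_{\mathrm{pr}(L)}\beta=0$ by Proposition~\ref{leg-integral}. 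The left side is positive for the $d\beta$-orientation of $A$, and transversality of $\partial_+S_0$ makes the integrand $\beta(\dot\gamma)$ of constant sign, so it is positive along the induced boundary orientation --- which is exactly the pointwise statement you needed, now for every core. This is the paper's argument, and it is the step your proposal is missing.
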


If the contact structure is coorientable, the field $l$ is a pair of contact vector fields in $U$. Suppose some coorientation is fixed. So one vector field in $l$ is positive and the other is negative. In this case this lemma says that the orientation of $S_0$ induced by the positive contact vector field agrees with the orientation of $\partial S_0$ induced by the coorientation of the contact structure.

\begin{proof}
The condition that we have to check is local. So we can assume that the contact structure is coorientable by passing to the double covering if it is needed. Similarly to Case 1 in Lemma~\ref{transverse-push-off} by Remark~\ref{regular-neighbourhood-contact-form} we can assume that the contact structure in $K$ is the kernel of the differential 1-form $dz+\mathrm{pr}^*\beta$ where $\beta$ is a differential 1-form on $S_0$ and $\mathrm{pr}$ is the projection to $S_0$ along the trajectories of the contact field.

Let $\partial_+ S_0$ be some component of $\partial S_0$. Let $A$ be an annulus in $S_0$ cobounded by $\partial_+ S_0$ and $\mathrm{pr}(L)$. By Stokes theorem for $A$
$$
\int\limits_A d\beta = \int\limits_{\partial_+ S_0}\beta + \int\limits_{\mathrm{pr}(L)}\beta = \int\limits_{\partial_+ S_0}\beta
$$
because $L$ is Legendrian. Since $\partial_+ S_0$ is transverse to the contact structure, the expression under the integral on the right side has a sign independent of a point on the curve. So the orientation of $\partial_+ S_0$ as a part of the boundary of $S_0$ agrees with the coorientation of the contact structure.
\end{proof}

\begin{lemm}\label{core-curve-lemma}
Let $(K, S_0, r_0)$ be a core of some regular neighborhood with contact field $l$ of the closed Legendrian curve $L$. Let $\mathrm{pr}$ be the projection from $K$ to $S_0$ along the trajectories of $l.$ Then the curve $\mathrm{pr}(L)$ is isotopic in $S_0$ to the core curve of $S_0.$
\end{lemm}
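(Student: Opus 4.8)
The plan is to show that $\mathrm{pr}(L)$ is an embedded simple closed curve lying in the interior of $S_0$ whose isotopy class is that of the core, treating the annulus and the M\"obius band cases separately. First I would record the basic features of $\mathrm{pr}(L)$. Since the interior of the core $K$ is itself a regular neighborhood of $L$ (as noted after Definition~\ref{regular-neighbourhood-core}), condition~(\ref{regular-neighborhood-condition4}) of Definition~\ref{regular-neighbourhood-definition} gives that $\mathrm{pr}\big|_L$ is injective; as $L$ is compact this makes $\mathrm{pr}(L)$ an embedded simple closed curve, and because $L\subset K\setminus\partial K=\exp_l(B_{r_0l}(\mathrm{int}\,S_0))$ it lies in $\mathrm{int}\,S_0$. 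Moreover $\exp_l$ identifies $\mathrm{int}\,K$ with the total space of the open normal line bundle of $S_0$, so $\mathrm{pr}\colon K\to S_0$ is a deformation retraction and $\mathrm{pr}(L)$ is freely homotopic to $L$ in $K$. It then suffices to identify the isotopy class of $\mathrm{pr}(L)$, using the standard facts that on an annulus an embedded simple closed curve is isotopic to the core exactly when it is essential, and that on a M\"obius band the core is the unique isotopy class of a one-sided simple closed curve. These hold for the topologically tame curve $\mathrm{pr}(L)$, which is locally Lavrentiev by Lemma~\ref{Lavrentiev-projection-lemma}.

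In the coorientable case $S_0$ is an annulus, and, arguing as in Remark~\ref{regular-neighbourhood-contact-form} with the coorientation supplied by $l$, there is a $1$-form $\beta$ defined on all of $S_0$ with $\xi=\ker(dz+\mathrm{pr}^*\beta)$ on $K$ and with $d\beta$ a nowhere-vanishing area form. Since $L$ is Legendrian, Proposition~\ref{leg-integral} gives $\int_{\mathrm{pr}(L)}\beta=\int_L(dz+\mathrm{pr}^*\beta)=0$, the integral of $dz$ around the closed curve being zero. If $\mathrm{pr}(L)$ were inessential it would bound a disk $D\subset S_0$, and Stokes' theorem would give $\int_D d\beta=\int_{\mathrm{pr}(L)}\beta=0$, contradicting that $d\beta$ is an area form. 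Hence $\mathrm{pr}(L)$ is essential and therefore isotopic to the core.

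In the non-coorientable case $S_0$ is a M\"obius band and I would instead show that $\mathrm{pr}(L)$ is one-sided. The field $l$ is the normal line field of $S_0$ in $K$, and the ambient contact manifold is oriented; hence for any simple closed curve $\gamma\subset S_0$ the splitting $TK\big|_\gamma=T\gamma\oplus\nu_{\gamma\subset S_0}\oplus l\big|_\gamma$ together with $w_1(TK)=0$ forces $w_1(\nu_{\gamma\subset S_0})=w_1(l\big|_\gamma)$. Thus $\gamma$ is one-sided in $S_0$ precisely when $l\big|_\gamma$, equivalently $\xi\big|_\gamma$, is non-coorientable. Applying this to $\gamma=\mathrm{pr}(L)$ and using that $\mathrm{pr}(L)$ is freely homotopic to $L$ in $K$, the non-coorientability of $\xi$ along $L$ yields non-coorientability along $\mathrm{pr}(L)$, so $\mathrm{pr}(L)$ is one-sided and hence isotopic to the core. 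Alternatively one can pass to the orientable double cover $\widetilde{S_0}$, lift $L$ to a Legendrian curve by Lemma~\ref{Lavrentiev-covering}, and run the Stokes argument there.

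The main obstacle is the non-coorientable case: beyond the disk-bounding class one must also exclude the two-sided boundary-parallel class on the M\"obius band, which the Stokes argument alone does not detect. The clean way around this is the Stiefel--Whitney computation above, whose validity rests on the ambient orientability of the contact manifold; once $\mathrm{pr}(L)$ is known to be one-sided, its isotopy class is forced to be the core.
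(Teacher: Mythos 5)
Your proof is correct and follows essentially the same route as the paper: the annulus case is the paper's argument with the Stokes/closedness step spelled out explicitly, and in the M\"obius case the paper likewise uses the (non)coorientability of $\xi$ along the projected curve, merely packaged as an enumeration of the three isotopy classes (disk-bounding, core, twice the core) rather than as your direct one-sidedness computation via $w_1$. The only difference of substance is that your $w_1$ argument disposes of the disk-bounding class in the M\"obius case for free, whereas the paper excludes it by the same closedness argument it uses for the annulus.
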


\begin{proof}
First we consider the case when $S_0$ is an annulus. The curve $\mathrm{pr}(L)$ is embedded. If it is not isotopic to the core curve, it bounds a disk which contradicts the fact that $L$ is closed.

Now suppose that $S_0$ is a M\"obius band. Since the curve $\mathrm{pr}(L)$ is embedded, it is isotopic to the core curve or to twice the core curve or it bounds a disk. The latter case is impossible since $L$ is closed. If it is isotopic to twice the core curve, the contact structure is coorientable on $\mathrm{pr}(L).$ Since $\mathrm{pr}(L)$ and $L$ are isotopic, the contact structure is coorientable on $L$ which contradicts property~(\ref{core-P3}) in Definition~\ref{regular-neighbourhood-core}.
\end{proof}

\begin{prop}\label{r-invariant-regular-neighbourhood-existence}
Let $(K, S_0, r_0)$ be a core of a regular neighborhood with contact field $l_0$ of the Legendrian curve $L$. Then the curve $L$ has a regular neighborhood $(U, S, l, +\infty)$ such that $U\supset K,$ $S\supset S_0$ and the field $l$ coincides with the field $l_0$ on $K$.
\end{prop}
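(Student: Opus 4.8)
The plan is to leave the contact field and its flow untouched on the core $K$, where the flow parameter only runs over $[-r_0;r_0]$, and to slow the flow down as it approaches the boundary of a slightly larger solid torus so that it reaches that boundary only after infinite time; this turns the finite flow into a complete one without disturbing $K$. First I would reduce to the coorientable case: if $\xi$ is not coorientable along $L$, pass to the double cover $\pi\colon\widetilde M\to M$ exactly as in Case~2 of the proof of Lemma~\ref{transverse-push-off} (using Lemma~\ref{Lavrentiev-covering}). The lift of $(K,S_0,r_0)$ is a core of a regular neighborhood of $\pi^{-1}(L)$ on which the pulled-back structure is coorientable and which is invariant under the deck transformation; a surface and a field constructed invariantly upstairs descend to the required data downstairs. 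So from now on assume $\xi$ is coorientable along $L$.

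By Remark~\ref{regular-neighbourhood-contact-form} applied to the interior of the core I would fix coordinates identifying a neighborhood of $K$ in $M$ with $\Sigma\times(-R;R)$, where $\Sigma\supset S_0$ is an open annulus and $R>r_0$, the contact structure is $\ker(dz+\mathrm{pr}^*\beta)$ with $d\beta\neq0$, the core is $S_0\times[-r_0;r_0]$, the field is $l_0=\{\pm\partial/\partial z\}$, and $\mathrm{pr}(L)$ is isotopic to the core circle of $\Sigma$ by Lemma~\ref{core-curve-lemma}. Here $\partial/\partial z$ is the Reeb field of $dz+\mathrm{pr}^*\beta$, so on the full product $\Sigma\times\mathbb R$ it is complete and exhibits $\Sigma\times\mathbb R$ as a regular neighborhood with $r=+\infty$. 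The only obstruction is that the genuine flow of $l_0$ in $M$ past the faces $z=\pm r_0$ need neither stay near $L$ nor remain injective, so the product cannot simply be continued with $l_0$ itself.

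To overcome this I would replace $l_0$ on $U\setminus K$ by the contact field $l$ whose $z$-component is everywhere positive, which equals $\partial/\partial z$ on a neighborhood of $K$, and which degenerates toward the boundary torus $\partial(\Sigma\times(-R;R))$: it is obtained from a function on $\Sigma$ that is identically $1$ near $S_0$ and tends to $0$ together with all its derivatives as one approaches $\partial\Sigma$, built from factors of the type $\exp\!\bigl(\tfrac{1}{t^{2}-1}\bigr)$ exactly as in Lemma~\ref{contracting-flow-on-disc}. Since this function is $\equiv1$ with vanishing differential near $S_0$, the resulting contact field coincides with $\partial/\partial z=l_0$ on $K$. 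Because its $z$-component stays positive, each trajectory meets the surface $S=\Sigma\times\{0\}\supset S_0$ exactly once; because the function and its derivatives vanish to infinite order at $\partial\Sigma$, a trajectory needs infinite flow-time to reach the boundary torus, so the flow is complete and its orbits are properly embedded lines spiralling toward $\partial(\Sigma\times(-R;R))$. This is the analogue, in the $z$-direction, of the contracting flows produced in Lemma~\ref{contracting-flow-on-disc}.

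It then remains to check the four conditions of Definition~\ref{regular-neighbourhood-definition} for $(U,S,l,+\infty)$ with $U=\exp_l(B_{\infty l}(S))$. Transversality to $\xi$ and contactness of $l$ hold by construction; $U\supset K$, $S\supset S_0$ and $l=l_0$ on $K$ hold because the modification is supported away from $K$; the map $\exp_l\colon B_{\infty l}(S)\to U$ is a diffeomorphism by the injectivity and completeness just established, by the same local-homeomorphism-plus-injectivity argument used at the end of the proof of Lemma~\ref{surface-of-trajectories}; and $\mathrm{pr}\big|_L$ is injective because $\mathrm{pr}\big|_{S_0}$ was and $L\subset K$. I expect the main obstacle to be precisely the claims of the previous paragraph: arranging the degenerating factor so that the modified flow is simultaneously contact, complete, injective with the global transversal $S$, and surjective onto a full neighborhood of $L$ — that is, verifying that the horizontal drift forced by the nonzero differential of the factor keeps the $z$-component positive and does not destroy the product structure $U\cong S\times\mathbb R$, while its infinite-order decay at $\partial\Sigma$ secures completeness.
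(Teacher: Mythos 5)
There is a genuine gap, and it sits exactly where you locate ``the main obstacle'': the construction you describe does not produce a complete flow. If the contact Hamiltonian is a function $H$ on $\Sigma$ alone, equal to $1$ near $S_0$, then on $S_0\times(-R;R)$ the associated contact vector field is exactly $\partial/\partial z$ (the horizontal components are $(-H_y,H_x)/D$ and vanish where $H$ is locally constant), so the trajectory through any point of $S_0$ runs straight up at unit speed and exits the chart through the face $\Sigma\times\{R\}$ at the finite time $R$. It does not spiral toward $\partial\Sigma$, and making $H$ decay to infinite order at $\partial\Sigma$ does nothing about the top and bottom faces. Consequently $\exp_l$ is not even defined on all of $B_{\infty l}(S)$, and condition~(\ref{regular-neighborhood-condition2}) of Definition~\ref{regular-neighbourhood-definition} with $r=+\infty$ fails. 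A secondary problem with confining the flow by degeneration at $\partial\Sigma$ is that the $z$-component of $X_H$ is $H+\tfrac{a}{D}\tfrac{\partial H}{\partial y}$, and for a bump-type cutoff $\partial H/\partial y$ is not $o(H)$; keeping this positive near $\partial\Sigma$ requires the sign information $a(x,1)<0$, $a(x,-1)>0$ supplied by Lemma~\ref{core-orientations} (i.e.\ by condition~(4) of Definition~\ref{regular-neighbourhood-core}), which your argument never invokes.

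The paper resolves this quite differently: the Hamiltonian is \emph{not} constant in $z$ --- it equals $z$ on $\partial K_1$, equals $\pm r_1$ on strips near the top and bottom faces, and is cut off to zero outside a neighborhood $V$ of $K$ in the ambient manifold $M$. Trajectories are allowed to escape the box $K_1$ through its top and bottom; completeness then comes for free from the compact support of $X_H$ in $M$ (the field slows to zero, so escaping orbits are defined for all time), and injectivity comes from checking that $X_H$ is outward-transverse along all of $\partial K_1$ --- on the side faces this is precisely where Lemma~\ref{core-orientations} enters --- so that no orbit ever re-enters. If you want to repair your proof you essentially have to reproduce this: either let the orbits leave the product chart and control them in $M$ with a compactly supported contact field, or build a Hamiltonian depending on $z$ whose orbits you can actually track; in both cases the transversality of $\partial S_0$ to $\xi$ and its orientation are indispensable inputs. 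Your reduction to the coorientable case and the final list of verifications are fine, but they rest on a vector field that has not been constructed.
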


\begin{proof}

We consider two cases.

\smallskip
\noindent{\bf Case 1.} $S_0$ is an annulus.

There exists a neighborhood $V$ of $K$ with positive coordinates $(x,y,z)$ such that
$\partial z/\partial l_0 = \pm1$ and $S_0 = \{(x,y,0):\ x\in\mathbb S^1,\ y\in[-1;1]\}.$ We can assume that the closure of $V$ is compact and is contained in the interior of the manifold.

Then the contact structure in $V$ is the kernel of the differential 1-form $dz + a(x,y)dx + b(x,y)dy$ where $\dfrac{\partial b}{\partial x} - \dfrac{\partial a}{\partial y} > 0.$

By condition~(4) in the definition of a core $a(x,\pm1)\neq0$ for any $x\in\mathbb S^1.$ Moreover, by Lemma~\ref{core-orientations} $a(x,1)<0$ and $a(x,-1)>0.$

Let $r_1, \delta$ be such real numbers that $r_1>r_0$, $\delta > 0$, $\{(x,y,z):\ |y|\le 1+\delta,\ |z|\le r_1 \}\subset V$ and $a(x,1+\delta)<0,$ $a(x,-1-\delta)>0$ for any $x\in\mathbb S^1.$ Let $K_1 = \{(x,y,z):\ |y|\le 1+\delta,\ |z|\le r_1\}.$ It is clear that $K$ is contained in the interior of $K_1.$

Let $H$ be a smooth function on the manifold such that (see the left side of Figure~\ref{function-H-and-contact-vector-field})
\begin{enumerate}
\item $H\equiv0$ outside $V$.
\item $H\big|_{K_1}$ is independent of $x.$
\item $H\big|_{\partial K_1} = z.$
\item $H(y, z) = r_1$ if $|y|\le 1+\delta/2$ and $r_0\le z\le r_1.$
\item $H(y, z) = -r_1$ if $|y|\le 1+\delta/2$ and $r_0\le -z\le r_1.$
\end{enumerate}

\begin{figure}[ht]
\center{\includegraphics{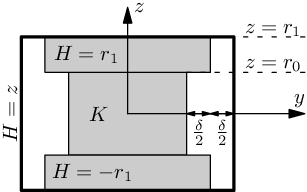} \quad \quad \includegraphics{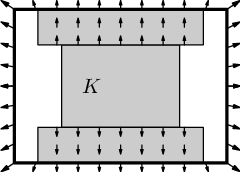}}
\caption{The contact Hamiltonian $H$ and the contact vector field $X_H$}
\label{function-H-and-contact-vector-field}
\end{figure}

Let $X_H$ be the contact vector field corresponding to the Hamiltonian $H$ (see~\cite[\S 2.3]{Gei}).

The direct calculation shows that in $K_1$

$$
X_H = \frac1{\dfrac{\partial b}{\partial x} - \dfrac{\partial a}{\partial y}}\left(\dfrac{\partial H}{\partial z}b - \dfrac{\partial H}{\partial y},  -\dfrac{\partial H}{\partial z}a, \dfrac{\partial H}{\partial y}a\right)   + (0,0,H).
$$

Let us check that $X_H$ is transverse to $\partial K_1$ (see the right side of Figure~\ref{function-H-and-contact-vector-field}, all arrows are made proportionally smaller).

If $z = r_1,$ $dz (X_H) = r_1 > 0.$

If $z = -r_1,$ $dz (X_H) = -r_1<0.$

If $y = \pm(1+\delta),$ $dy(X_H)=-\dfrac{a}{\dfrac{\partial b}{\partial x} - \dfrac{\partial a}{\partial y}}$ which is positive if $y=1+\delta$, and which is negative if $y=-1-\delta.$

Since $X_H$ is zero outside $V$ and the closure of $V$ is compact and is contained in the interior of the manifold, any trajectory of $X_H$ is defined for all moments of time. Let $\Phi_H:\overline V\times\mathbb R \to \overline V$ be the flow of of $X_H.$ Since the vector $X_H$ at any point of $\partial K_1$ is directed outside $K_1$, all trajectories of $X_H$, which intersect the boundary $\partial K_1$, are distinct and non-closed. 

Let
$$
U = \{\Phi_H((x,y,\pm r_1), t):\ x\in\mathbb S^1,\ |y|<1+\delta/2,\ t>0\}\sqcup \{(x,y,z):\ x\in\mathbb S^1,\ |y|<1+\delta/2, \ z\in(-r_1;r_1)\}.
$$

The first part of $U$ is the union of all trajectories of the vector field $X_H$ emanating from the points of the form $(x, y, \pm r_1),$ where $x\in\mathbb S^1$ and $|y|<1+\delta/2$. It is obvious that $U$ is open and contains $K$.

Let $l$ be the field of line elements in $U$ such that 
$$l = \{\partial/\partial z, -\partial/\partial z\}\text{ in } U\cap K_1\text{ and } l = \{X_H/r_1, -X_H/r_1\}\text{ in } U\setminus K_1.$$ It is obvious that $l$ is contact. On every trajectory of $l$ there are points at which the field is transverse to the contact structure. Thus, since $l$ is contact, $l$ is transverse to the contact structure everywhere.

We set $S = \{(x,y,0):\ x\in\mathbb S^1,\ |y|<1+\delta/2 \}.$ By construction, $(U, S, l, +\infty)$ is a regular neighborhood of $L$, $S\supset S_0$ and the field $l$ coincides with the field $l_0 = \{\partial/\partial z,-\partial/\partial z\}$ on $K$. 

\smallskip
\noindent{\bf Case 2.} $S_0$ is a M\"obius band.

We pass to the double covering corresponding to the first Stiefel--Whitney class of the contact structure. The inverse images under the covering of the Legendrian curve, the regular neighborhood and the core are respectively a Legendrian curve, its regular neighborhood and a core of the obtained regular neighborhood. Then it is sufficient to make construction in Case 1 invariant under the deck transformation $h$.

We can choose $V$ invariant by setting $V = W\cap h(W)$ where $W$ is a neighborhood of $K$. We choose coordinates $(x,y,z)$ in $V$ such that $h:(x,y,z)\mapsto (-x, -y, -z)$, where $x\in\mathbb S^1 = \partial\mathbb D\subset\mathbb C, y\in[-1;1], z\in[-r_0;r_0].$ Thus $K_1$ is invariant. We choose a contact form $\alpha = dz + a(x,y)dx + b(x,y)dy$ such that $h^*\alpha = -\alpha.$
We can additionally make $H = -H\circ h$ thus $h_*X_H = X_H$.
\end{proof}

\section{Smoothing Legendrian links}
\label{Legendrian-smoothing-section}

\begin{defi}\label{legendrian-link}
By a {\it link} we call a 1-dimensional compact topogical submanifold without boundary. The link is called {\it Legendrian (respectively, Lavrentiev)} if every its component is a Legendrian (respectively, Lavrentiev) curve. An isotopy of links is called {\it Legendrian} if the restriction of this isotopy on any component of the link is a Legendrian isotopy of Legendrian curves.
\end{defi}

\begin{prop}\label{smoothing-legendrian-link}
Any Legendrian link is Legendrian isotopic to a smooth link.
\end{prop}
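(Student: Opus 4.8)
The plan is to reduce, component by component, to the model situation of a Lavrentiev curve on a surface treated in Section~\ref{Lavrentiev-smoothing-section}, smooth its regular projection there, and then lift the resulting planar isotopy back to space. Since a Legendrian isotopy of a link is required only component-wise (Definition~\ref{legendrian-link}), I would fix one component, a closed Legendrian curve $L$. First I would produce a regular neighborhood of $L$ by Proposition~\ref{regular-neighbourhood-existence}, pass to a core by Lemma~\ref{transverse-push-off}, and enlarge it via Proposition~\ref{r-invariant-regular-neighbourhood-existence} to a regular neighborhood $(U,S,l,+\infty)$. Working with $r=+\infty$ is exactly what lets the $z$-coordinate of the curve move freely during the isotopy without the curve ever leaving the neighborhood. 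When the contact structure is coorientable along $L$, Remark~\ref{regular-neighbourhood-contact-form} identifies $U$ with $S\times\mathbb R$ carrying the contact form $dz+\mathrm{pr}^*\beta$, where $d\beta\neq0$ by the contact condition and $\mathrm{pr}\big|_L$ is injective.

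In this model $\mathrm{pr}(L)$ is a compact Lavrentiev curve on $S$ and $\mathrm{pr}\big|_L$ is bi-Lipschitz by Lemma~\ref{Lavrentiev-projection-lemma4}, so I would apply Proposition~\ref{Lavrentiev-smoothing} to smooth the projection. Since that proposition smooths only a single subarc $\gamma_0$ while fixing the part of the curve outside a chosen neighborhood (conditions~(\ref{LScond6}),~(\ref{LScond7})) and keeps already-smooth arcs smooth (condition~(\ref{LScond9})), I would cover $\mathrm{pr}(L)$ by finitely many such subarcs and apply the proposition in turn; each application leaves the previously smoothed portions untouched, so after finitely many steps the whole projection is smooth. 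Every one of these isotopies keeps $\int_{F_0(\mathrm{pr}(L)\times\{t\})}\beta$ constant (condition~(\ref{LScond5})) and is an isotopy of $C$-bi-Lipschitz curves (condition~(\ref{LScond2})) which is moreover Lipschitz (condition~(\ref{LScond3})).

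Next I would lift the planar isotopy $F_0$ to a Legendrian isotopy of space curves using Proposition~\ref{bi-Lipschitz-isotopy-lifting}. The lift is defined by prescribing the $z$-coordinate so that $z(q)-z(p)=-\int_{\mathrm{pr}\left(\gamma\big|_p^q\right)}\beta$, which forces every intermediate curve to be Legendrian by Lemma~\ref{Lavrentiev-projection-lemma2}. The one point to verify is that this prescription is single-valued on the closed loop, and this holds precisely because the integral of $\beta$ around the whole loop stays zero: it equals zero at $t=0$ since $L$ is closed and Legendrian (Proposition~\ref{leg-integral}), and condition~(\ref{LScond5}) keeps it zero throughout. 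Proposition~\ref{bi-Lipschitz-isotopy-lifting} then certifies that the lift is a genuine Legendrian isotopy with a uniform bi-Lipschitz bound. The terminal curve $F(L\times\{1\})$ is smooth: its projection is smooth and its $z$-coordinate is the integral of the smooth form $\beta$ along a smooth curve, so the curve itself is smooth.

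The remaining case, which I regard as the main additional obstacle, is when the contact structure is not coorientable along $L$. Then the core surface is a M\"obius band and $\beta$ is not globally defined, so I would pass to the double cover $\pi\colon\widetilde S\to S$ corresponding to $w_1(\xi)\big|_S$, on which $\pi^{-1}(\mathrm{pr}(L))$ is a single circle and the contact form becomes coorientable. The smoothing must be carried out equivariantly with respect to the deck transformation $h$ so that it descends to the quotient. I would arrange this by choosing each smoothing neighborhood $V$ disjoint from $h(V)$ and performing the construction of Proposition~\ref{Lavrentiev-smoothing} simultaneously on $V$ and on its $h$-image; the two supports do not interact, the combined isotopy commutes with $h$, and it therefore projects to the desired Legendrian isotopy downstairs. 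Assembling the isotopies over all components then completes the proof.
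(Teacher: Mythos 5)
Your argument is correct, but it takes a genuinely different route from the paper. The paper reduces Proposition~\ref{smoothing-legendrian-link} in one line to the relative, local statement Proposition~\ref{smoothing-Legendrian-arc}: for each compact subarc it picks a small \emph{orientable} surface $S_0$ inside the regular neighborhood of Proposition~\ref{regular-neighbourhood-existence}, smooths the projection of that single arc via Proposition~\ref{Lavrentiev-smoothing}, and lifts with \emph{both endpoints fixed}, using condition~(\ref{LScond5}) precisely to pin the second endpoint so the isotopy extends by the identity over the rest of the curve; iterating over finitely many arcs finishes the proof. Because everything is local, coorientability is automatic and neither cores, nor $r=+\infty$, nor double covers are needed. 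You instead globalize first (core, Proposition~\ref{r-invariant-regular-neighbourhood-existence}, the model $S\times\mathbb R$), smooth the entire projection, and perform a single global lift, replacing the endpoint-fixing role of condition~(\ref{LScond5}) by the monodromy check $\oint\beta=0$ and replacing the $z$-range control of condition~(\ref{LScond4}) by $r=+\infty$ --- both substitutions are legitimate, and your observation about why $r=+\infty$ makes condition~(\ref{LScond4}) dispensable here is accurate. The trade-offs: your route is conceptually cleaner for bare existence (one lift, one closing-up condition) but invokes heavier neighborhood machinery than necessary and does not directly yield the relative version that the paper reuses later (e.g.\ in Lemma~\ref{smooth-isotopy-lemma5}); it also forces you to treat non-coorientability separately via the double cover, which the paper never has to face. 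Your equivariant step does go through, but it is the one place needing more care than you give it: you should check that the $h$-conjugate of each smoothing-plus-correcting isotopy still preserves $\oint\widetilde\beta$ (this holds because the deck transformation satisfies $h^*\widetilde\beta=-\widetilde\beta$, so conjugation flips the sign of both the integral and its defect simultaneously) and that the Legendrian lift of an $h$-invariant projected curve, anchored at an $h$-compatible basepoint, is itself $h$-invariant so that it actually descends.
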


Since any compact Legendrian curve has a regular neighborhood (Proposition~\ref{regular-neighbourhood-existence}), Proposition~\ref{smoothing-legendrian-link} follows from Proposition~\ref{smoothing-Legendrian-arc}.

\begin{prop}\label{smoothing-Legendrian-arc}
Let $L$ be a compact Legendrian curve, $(U, S, l, r)$ be its regular neighborhood, $\gamma_0\subset L$ be its compact subarc, $\partial \gamma_0\cap \partial L = \varnothing,$ $V\subset U$ is open and $\gamma_0\subset V.$ Then there exist a number $C$ and an isotopy $F:L\times[0;1]\to U$ such that
\begin{enumerate}
\item $F(\bullet, 0) = \mathrm{id}_{L}.$
\item $F(\bullet, t)$ is $C$-bi-Lipschitz for any $t\in[0;1].$
\item $F$ is Lipschitz.
\item\label{leg-smoothing-P4} $(U,S,l,r)$ is a regular neighborhood of $F(L\times\{t\})$ for any $t\in[0;1].$
\item $F$ is Legendrian.
\item $F(p, t) = p$ for any $p\in L\setminus V$ and any $t\in[0;1].$
\item $F((L\cap V)\times[0;1])\subset V.$
\item\label{leg-smoothing-P8} The arc $F(\gamma_0\times\{1\})$ is contained in the interior of a smooth subarc of $F(L\times\{1\}).$
\item If the subarc $\gamma$ of $L$ is smooth then the arc $F(\gamma\times\{1\})$ is smooth.
\end{enumerate}
\end{prop}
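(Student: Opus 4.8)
The plan is to reduce the smoothing of the space curve $L$ to the smoothing of its regular projection $\mathrm{pr}(L)$ on the surface $S$, to carry out the latter by Proposition~\ref{Lavrentiev-smoothing}, and then to lift the resulting planar isotopy back to a Legendrian isotopy of $L$ by Proposition~\ref{bi-Lipschitz-isotopy-lifting} and Lemma~\ref{Lipschitz-isotopy-lifting}. First I would pass to the model of Remark~\ref{regular-neighbourhood-contact-form}. Assuming the contact structure is coorientable along $L$, shrink $S$ to a neighborhood $S'$ of $\mathrm{pr}(L)$ so that a neighborhood $U'$ of $L$ is identified with $S'\times(-r;r)$ carrying the contact form $dz+\mathrm{pr}^*\beta$, where $d\beta\neq0$ (this follows from $\alpha\wedge d\alpha\neq0$). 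By Lemma~\ref{Lavrentiev-projection-lemma4} the curve $\mathrm{pr}(L)$ is Lavrentiev and $\mathrm{pr}\big|_L$ is bi-Lipschitz. I would also shrink $V$ to an open set $V'\subset V\cap U'$ so that $L\cap V'$ is a single arc containing $\gamma_0$; then $L\setminus V'$ is one connected arc, a fact used below. The non-coorientable case is postponed to the end.

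Next I would apply Proposition~\ref{Lavrentiev-smoothing} to the Lavrentiev curve $\mathrm{pr}(L)$, the subarc $\mathrm{pr}(\gamma_0)$, the form $\beta$, the projection-relevant open subset $W\subset S'$ of $V'$, and a parameter $\varepsilon>0$ to be fixed. This produces an isotopy $F_0$ of $\mathrm{pr}(L)$ in $S$ which is Lipschitz, $C$-bi-Lipschitz at each time, supported in $W$, preserves the total integral $\int_{F_0(\mathrm{pr}(L)\times\{t\})}\beta$, changes every subarc integral by less than $\varepsilon$, and smooths $\mathrm{pr}(\gamma_0)$ while keeping smooth arcs smooth. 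I then lift $F_0$ to a map $F$ on $L$: fix a base point $p_0\in L\setminus V'$, set $z(F(p_0,t))=z(p_0)$, and for every other point define the lift by the Legendrian integral relation
\[
z(F(q,t)) \;=\; z(p_0) \;-\; \int_{F_0(\mathrm{pr}(L)\times\{t\})\big|_{\mathrm{pr}(p_0)}^{\mathrm{pr}(q)}}\beta .
\]
This is well defined because the total integral of $\beta$ over $\mathrm{pr}(L)$ is zero and is preserved by $F_0$, so the value does not depend on the connecting arc. Since $L$ is Legendrian, $F(\bullet,0)=\mathrm{id}_L$. By condition~(\ref{LScond4}) of Proposition~\ref{Lavrentiev-smoothing} the $z$-coordinate of the lift differs from the original by less than $\varepsilon$, so for $\varepsilon$ small enough $F$ maps into the slab $U'=S'\times(-r;r)$. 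Each curve $F(L\times\{t\})$ is Legendrian by Lemma~\ref{Lavrentiev-projection-lemma2}, its projection being Lavrentiev and the $z$-relation holding; hence Proposition~\ref{bi-Lipschitz-isotopy-lifting} applies and shows that $F$ is a Legendrian isotopy that is $C'$-bi-Lipschitz at each time, while Lemma~\ref{Lipschitz-isotopy-lifting} gives that $F$ is Lipschitz.

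It remains to check the nine conditions of the proposition. Conditions (1)–(3) and (5) are exactly the conclusions just obtained. For (4) the parts of Definition~\ref{regular-neighbourhood-definition} not involving the curve persist, and $\mathrm{pr}\big|_{F(L\times\{t\})}$ is injective because $F_0(\bullet,t)$ is an embedding. For (6), I would evaluate the lift of a point $p\in L\setminus V'$ by integrating along the connected arc of $L\setminus V'$ joining it to $p_0$, on which $F_0$ is the identity; this gives $F(p,t)=p$. Condition (7) follows from the support of $F_0$ together with the $z$-bound. Conditions (8) and (9) follow from the corresponding conclusions of Proposition~\ref{Lavrentiev-smoothing}, using that the lift of a smooth planar arc is smooth, since $z$ is then the integral of a smooth form along a smooth curve and hence a smooth function of the arc.

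I expect the genuinely delicate points to be two. The first is the smallness choice of $\varepsilon$ guaranteeing that the lift stays inside the slab $|z|<r$; this is the only place the quantitative control in Proposition~\ref{Lavrentiev-smoothing} is essential, and without it the lifted isotopy could leave the regular neighborhood. The second, and the main obstacle, is the non-coorientable case, where there is no global $\beta$ on the Möbius band $S$. I would handle it by passing to the orientation double cover $\pi$ associated with the first Stiefel–Whitney class of the contact structure, pulling back $L$ (a connected Legendrian curve by Lemma~\ref{Lavrentiev-covering}), the regular neighborhood and $V'$, performing the whole construction equivariantly with respect to the deck transformation so that the preimage $\pi^{-1}(\gamma_0)$ of two disjoint arcs is smoothed symmetrically, and then descending the isotopy, exactly as in Case~2 of Proposition~\ref{r-invariant-regular-neighbourhood-existence}. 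Everything else is an assembly of the earlier propositions.
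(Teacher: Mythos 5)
Your coorientable case follows essentially the paper's route: project, smooth the projection by Proposition~\ref{Lavrentiev-smoothing}, lift via Proposition~\ref{bi-Lipschitz-isotopy-lifting} and Lemma~\ref{Lipschitz-isotopy-lifting}, use the total-integral preservation (condition~(\ref{LScond5})) to keep everything outside the support fixed, and use the $\varepsilon$-bound (condition~(\ref{LScond4})) to keep the lift inside the slab $|z|<r$. Two small slips there: $L\setminus V'$ need not be connected when $L$ is a non-closed arc with $\gamma_0$ in its interior, so your ``one connected arc'' claim fails; you then need condition~(\ref{LScond5}) to conclude that the second component is fixed, exactly as the paper argues that the second endpoint $Q$ of the moving subarc stays put. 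Also condition~(7) needs $V'$ chosen with slack in the $z$-direction so that an $\varepsilon$-shift of $z$ stays inside $V$. Both are easily repaired.

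The genuine gap is the non-coorientable case. Your plan --- pass to the double cover associated with $w_1(\xi)$ and ``perform the whole construction equivariantly'' --- requires an equivariant version of Proposition~\ref{Lavrentiev-smoothing}, which is neither stated nor proved anywhere and is not automatic: its proof is assembled from Tukia bypasses, integral correction squares and the correcting flows of Lemma~\ref{contracting-flow-on-disc}, all of which involve non-canonical choices. Worse, the deck transformation $h$ satisfies $h^*\alpha=-\alpha$, hence reverses $\beta$ and the area form $d\beta$, swapping the roles of the two correcting flows $\phi_\pm$; one would have to choose the two correction squares over the two lifts of $\gamma_0$ disjointly and verify that the two corrections do not interfere. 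This is a real piece of work, not a remark. The paper sidesteps it entirely by localizing \emph{before} projecting: it picks a small surface $S_0\subset U$ transverse to $l$ whose saturation $U_0$ meets $L$ only in a compact subarc $L'\supset\gamma_0$, and $S_0$ is automatically orientable because it is a neighborhood of the arc $\mathrm{pr}(\gamma_0)$ (a disk), even when $S$ is a M\"obius band. The whole construction is then run for $L'$ relative to its endpoints, with $L\setminus L'$ fixed throughout; since the proposition only demands that $\gamma_0$ become smooth, this localization costs nothing. I would restructure your proof that way rather than attempt the equivariant smoothing.
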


\begin{proof}
There exist a smooth surface $S_0\subset U$ without boundary and a real number $r_0$ such that
\begin{enumerate}
\item $S_0$ is transverse to $l$.
\item $\mathrm{pr}:S_0\to S$ is injective.
\item $\gamma_0\subset U_0\subset V$ where $U_0=\exp_l (B_{r_0l}(S_0)).$
\item $S_0$ is orientable.
\end{enumerate}

Let $\mathrm{pr}_0:U_0 \to S_0$ be the projection along the trajectories of $l$. Since $S_0$ is orientable, by Remark~\ref{regular-neighbourhood-contact-form} the contact structure on $U_0$ is the kernel of the differential 1-form $dz+\mathrm{pr}_0^*\beta$ such that $\partial z/\partial l = \pm1,$ $z\big|_{S_0} \equiv 0$ and $\beta\in\Omega^1(S_0).$ Let $L'$ be a compact subarc of $L$ such that $\gamma_0\subset L' \subset U_0$ and $\partial\gamma_0\cap\partial L'=\varnothing.$ Then there exists an open subset $V_0\subset S_0$ containing $\mathrm{pr}_0 (\gamma_0)$ such that $\mathrm{pr}(\overline V_0)\cap\mathrm{pr} (L) \subset \mathrm{pr}(L'\setminus\partial L').$  We apply Proposition~\ref{Lavrentiev-smoothing} to the subarc $\mathrm{pr}_0 (\gamma_0)$ of the curve $\mathrm{pr}_0(L')$ lying on the surface $S_0$, to the open set $V_0$ and the real number $\varepsilon = r_0 - \max\{|z(p)|:\ p\in L'\}.$

Among the assumptions of Proposition~\ref{Lavrentiev-smoothing} we need to check only that the curve $\mathrm{pr}_0 (L')$ is Lavrentiev. Since $L'$ is Legendrian, its regular projection $\mathrm{pr}_0 (L')$ is locally Lavrentiev by Lemma~\ref{Lavrentiev-projection-lemma}. Since $\mathrm{pr}_0 (L')$ is compact and embedded, it is Lavrentiev by Lemma~\ref{compact-Lavrentiev}.

By Proposition~\ref{Lavrentiev-smoothing} we have an isotopy $F_0:\mathrm{pr}_0 (L')\times[0;1]\to S_0.$ Let $\partial L' = \{P, Q\}.$ We note that $\mathrm{pr}_0(P)\notin V_0$ by the definition of $V_0$. Thus by condition~(\ref{LScond6}) in Proposition~\ref{Lavrentiev-smoothing} the point $\mathrm{pr}_0(P)$ stays fixed under the isotopy $F_0.$ Let us prove that there exists the unique map $F':L'\times[0;1]\to U_0$ such that $F'(P,t) = P$, $F'(L'\times\{t\})$ is Legendrian and $\mathrm{pr}_0 \circ F'(\bullet, t) = F_0(\bullet, t)\circ\mathrm{pr}_0$ for any $t\in[0;1]$.

The last condition on $F'$ means that $F'$ is a lift of the isotopy $F_0$ under the projection $\mathrm{pr}_0.$ Since we want the curves $F'(L'\times\{t\})$ to be Legendrian, the $z$-coordinate is uniquely determined on these curves by Lemma~\ref{z-uniqueness} and the fact that $F'(P,t) = P.$ By condition~(\ref{LScond4}) in Proposition~\ref{Lavrentiev-smoothing} and by the definition of $\varepsilon$ the restriction of $z$-coordinate on $F'(L'\times\{t\})$ is bounded above and below by $\pm r_0$ thus the curves $F'(L'\times\{t\})$ are well defined for any $t$ and lie in $U_0.$

Then we note that $F'(Q,t) = Q$ for any $t\in[0;1].$ Indeed, this follows from condition~(\ref{LScond5}) in Proposition~\ref{Lavrentiev-smoothing}. That means that we can extend the map $F'$ to the map $F:L\times[0;1]\to V$ such that $F(p, t) = p$ for any $p\in L\setminus L'$ and $t\in[0;1].$ Then we check one by one that all conditions are satisfied by $F$
.

Condition (1) is trivial.

For conditions (6) and (7) we remind that $U_0\subset V$ and all our constructions were done in $U_0.$

Condition (8). By~(\ref{LScond8}) in  Proposition~\ref{Lavrentiev-smoothing} the arc $F_0(\mathrm{pr}_0(\gamma_0)\times\{1\})$ is contained in a smooth subarc of $F_0(\mathrm{pr}_0(L')\times\{1\})$. Since the contact form is smooth the Legendrian lifting of a smooth arc is also smooth.

To check the remaining conditions we introduce a suitable open cover of the curve $L$. There exists an open set $W'\subset S$ such that $W'\cap \mathrm{pr}(L) = \mathrm{pr}(L'\setminus\partial L').$ We set $W_0 = \mathrm{pr}(V_0)\cup W'.$ Since $V_0\subset S_0$ is open, $\mathrm{pr}(V_0)\subset S$ is also open and $W_0$ is open. By the definition of $V_0$, $W_0\cap \mathrm{pr} (L) = \mathrm{pr}(L'\setminus\partial L').$ Since the isotopy $F$ is supported in $\mathrm{pr}_0^{-1}(V_0)$ and by the definition of $V_0$ $\mathrm{pr}(\overline V_0)\cap\mathrm{pr} (L) \subset \mathrm{pr}(L'\setminus\partial L'),$ there exists an open subset $W_1\subset S$ such that $W_1\supset \mathrm{pr}((L \setminus L')\cup \partial L')$ and if $\mathrm{pr}(F(p, t)) \in W_1$ for some $t\in[0;1]$ then $F(p, t) = p$ for any $t\in[0;1].$ By construction $\mathrm{pr}(L)\subset W_0\cup W_1.$ What we need later is that in $\mathrm{pr}^{-1}(W_1)$ the isotopy $F$ is trivial, while $\mathrm{pr}^{-1}(W_0)\cap F(L\times \{t\}) = F'((L'\setminus\partial L')\times\{t\})$ for any $t\in[0;1]$. 

Condition (9). It is sufficient to consider subarcs which are contained either in $\mathrm{pr}^{-1}(W_0)$ or in $\mathrm{pr}^{-1}(W_1).$ The latter case is trivial. The former case follows from condition~(\ref{LScond9}) in Proposition~\ref{Lavrentiev-smoothing} and the fact that if the regular projection of a Legendrian curve is smooth and the contact form is smooth then the curve is smooth.

Condition (5). By condition~(\ref{LScond2}) in Proposition~\ref{Lavrentiev-smoothing} all maps $F_0(\bullet, t):\mathrm{pr}_0(L')\to S_0$ are $C$-bi-Lipschitz for some common $C$. All curves $F'(L'\times\{t\})$ are Legendrian. So by Proposition~\ref{bi-Lipschitz-isotopy-lifting} $F'$ is a continuous Legendrian isotopy. Since the definition of a Legendrian isotopy is local, $F\big|_{\mathrm{pr}^{-1}(W_1)\cap L\times[0;1]}$ is trivial, $F\big|_{\mathrm{pr}^{-1}(W_0)\cap L\times[0;1]} = F'\big|_{\mathrm{pr}^{-1}(W_0)\cap L\times[0;1]}$ and $L\subset \mathrm{pr}^{-1}(W_0)\cup \mathrm{pr}^{-1}(W_1)$, $F$ is also a Legendrian isotopy.

Condition (2). Similarly to the checking of condition (5) by Proposition~\ref{bi-Lipschitz-isotopy-lifting} the maps $F'(\bullet, t)$ are bi-Lipschitz with a common constant. By Lemma~\ref{bi-Lipschitz-family} the maps $F(\bullet, t)$ are bi-Lipschitz with a common constant.

Condition (3). By condition (3) in Proposition~\ref{Lavrentiev-smoothing} $F_0$ is Lipschitz. Then by Lemma~\ref{Lipschitz-isotopy-lifting} $F'$ is Lipschitz. Thus $F$ is locally Lipschitz. Since $L\times[0;1]$ is compact, $F$ is Lipschitz.

Condition (4). We only need to check that $\mathrm{pr}\big|_{F(L\times\{t\})}$ is injective for any $t\in[0;1].$ This is satisfied in $\mathrm{pr}^{-1}(W_1)$ because $U$ is a regular neighborhood of $L$. This is satisfied in $\mathrm{pr}^{-1}(W_0)$ because $\mathrm{pr}:S_0\to S$ is injective and $\mathrm{pr}_0: F'(L'\times\{t\})\to S_0$ is injective by construction.
\end{proof}

Let $L$ be a smooth Legendrian link. Let us prove that any $C^1$-continuous isotopy $F:L\times[0;1]\to (M,\xi)$ of smooth Legendrian links is a Legendrian isotopy in the sense of Definition~\ref{legendrian-link}. Indeed by Lemma~\ref{Lavrentiev-C1-isotopy} the curves $F(\gamma\times\{t\})$ are Lavrentiev with a common constant where $\gamma$ is any component of the link $L$. Since the curves $F(\gamma\times\{t\})$ are tangent to the contact structure, the integral of the contact form on any their subarc is zero, hence they are Legendrian and by Proposition~\ref{leg-C-isotopy} the isotopy is Legendrian.

\begin{prop}\label{smooth-Legendrian-isotopy}
Let two smooth Legendrian links be Legendrian isotopic as Legendrian Lavrentiev links. Then they are smoothly Legendrian isotopic.
\end{prop}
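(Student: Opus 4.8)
The plan is to avoid lifting the given (merely Lavrentiev) isotopy directly, and instead to use it only to produce a finite chain of $C^0$-close \emph{smooth} Legendrian links, each consecutive pair of which I connect by a genuinely smooth Legendrian isotopy. Write the hypothesis as a continuous isotopy $F\colon L\times[0;1]\to M$ that is Legendrian on each component, with $F(\bullet,0)$ and $F(\bullet,1)$ smooth, and set $L_t:=F(L\times\{t\})$. First I would build a subdivision $0=t_0<\dots<t_n=1$ together with regular neighborhoods $(U_k,S_k,l_k,r_k)$ such that both $L_{t_k}$ and $L_{t_{k+1}}$ lie in $U_k$ with injective regular projections and $\mathrm{pr}_k(L_{t_k})$, $\mathrm{pr}_k(L_{t_{k+1}})$ are $C^0$-close on $S_k$. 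To see this exists, for each $s\in[0;1]$ take a regular neighborhood of $L_s$ (Proposition~\ref{regular-neighbourhood-existence}); I claim it stays a regular neighborhood of $L_t$ for $t$ near $s$. The only nonobvious point is injectivity of the regular projection on $L_t$, which I would prove by the usual compactness argument: if it failed along $t_j\to s$ one extracts limit points $p',p''\in L_s$; the case $p'\neq p''$ contradicts injectivity on $L_s$, while $p'=p''$ forces chords of $L_{t_j}$ nearly parallel to the fibre direction $l$, contradicting the uniform-in-$t$ angle condition in the definition of a Legendrian isotopy (Definition~\ref{def-angle}), since $l$ is transverse to the contact structure. Continuity of $F$ and compactness of $[0;1]$ then give the finite subdivision and the $C^0$-closeness by a Lebesgue-number argument. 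Where $S_k$ is not coorientable I pass to its orientation double cover as in Remark~\ref{regular-neighbourhood-contact-form}, so that the contact form reads $dz+\mathrm{pr}_k^*\beta_k$.

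Next I would replace each interior node by a smooth link. Using the relative smoothing Proposition~\ref{smoothing-Legendrian-arc} (equivalently Proposition~\ref{smoothing-legendrian-link}) with supports chosen small, I obtain for each $k$ a smooth Legendrian link $\widetilde L_k$ that is $C^0$-close to $L_{t_k}$ and still lies in $U_{k-1}\cap U_k$ with injective projections; I set $\widetilde L_0=F(\bullet,0)$ and $\widetilde L_n=F(\bullet,1)$, which are already smooth. The non-smooth links $L_{t_k}$ and these smoothing isotopies serve only to guarantee that $\widetilde L_k$ and $\widetilde L_{k+1}$ are $C^0$-close smooth Legendrian links in the common neighborhood $U_k$. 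Crucially, the final isotopy will never contain a non-smooth curve, so I never need the uniform bi-Lipschitz control over the given family that Remark~\ref{bi-Lipschitz-curves} leaves open.

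It then remains to connect $\widetilde L_k$ to $\widetilde L_{k+1}$ by a smooth Legendrian isotopy inside $U_k$, after which the concatenation over $k$ is the required smooth Legendrian isotopy from $F(\bullet,0)$ to $F(\bullet,1)$. I work on $S_k$: the projections $\mathrm{pr}_k(\widetilde L_k)$ and $\mathrm{pr}_k(\widetilde L_{k+1})$ are $C^0$-close smooth embedded curves, each satisfying $\oint\beta_k=0$ on every component (forced because a closed Legendrian lift has single-valued $z$, so $\oint\beta_k=-\oint dz=0$). Being close, they are smoothly isotopic on $S_k$ by a small smooth ambient isotopy $F_0^{(k)}$; I then apply Proposition~\ref{correcting-isotopy-existence} to correct $F_0^{(k)}$ so that the total $\beta_k$-integral stays $0$ throughout, keeping the isotopy smooth and, by Lemma~\ref{Lavrentiev-C1-isotopy}, uniformly bi-Lipschitz. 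I lift this base isotopy to $U_k$ by the Legendrian condition: for a fixed base point $p_0$ I set the height over $q$ to be $z(p_0,t)-\int_{\sigma}\beta_k$, where $\sigma$ is the image under $F_0^{(k)}(\bullet,t)$ of the projected subarc from $p_0$ to $q$, and $z(p_0,t)$ is prescribed to vary smoothly in $t$. The maintained vanishing of $\oint\beta_k$ makes each lift a closed Legendrian curve, and smoothness of $\beta_k$ and of $F_0^{(k)}$ makes the lift smooth in $(q,t)$.

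By Lemma~\ref{z-uniqueness} the two ends of the lift are the unique Legendrian curves over $\mathrm{pr}_k(\widetilde L_k)$ and $\mathrm{pr}_k(\widetilde L_{k+1})$ through the prescribed base heights, so choosing $z(p_0,0)$ and $z(p_0,1)$ to be the heights of $\widetilde L_k$ and $\widetilde L_{k+1}$ at $p_0$ makes the endpoints equal to $\widetilde L_k$ and $\widetilde L_{k+1}$ exactly. A $C^1$-continuous isotopy of smooth Legendrian links is Legendrian (as recalled just before this proposition, via Lemma~\ref{Lavrentiev-C1-isotopy} and Proposition~\ref{leg-C-isotopy}), so each connecting lift is a smooth Legendrian isotopy and the concatenation finishes the proof. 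I expect the main obstacle to be the first step, namely verifying the uniform-in-$t$ stability of the regular projection (injectivity together with the transversality angle bound) that makes the common regular neighborhoods and the finite subdivision exist; once the problem has been pushed onto the surfaces $S_k$ between smooth curves, the remaining steps are the by-now standard surface smoothing and Legendrian lifting.
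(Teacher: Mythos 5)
Your overall strategy is the same as the paper's: subdivide $[0;1]$ so that consecutive links share a regular neighborhood (your stability argument for the injectivity of the regular projection under a Legendrian isotopy is exactly the one the paper uses at the start of its proof), smooth the links at the nodes, and connect consecutive smooth links by projecting to the surface, isotoping there, correcting the $\beta$-integral, and lifting. However, there are two genuine gaps. The first is your assertion that the smoothed node link $\widetilde L_k$ ``still lies in $U_{k-1}\cap U_k$ with injective projections.'' The smoothing Proposition~\ref{smoothing-Legendrian-arc} guarantees (condition~(\ref{leg-smoothing-P4})) that the \emph{one} regular neighborhood in which the smoothing is performed survives; it says nothing about the other one, whose surface and contact line field are different. $C^0$-closeness of $\widetilde L_k$ to $L_{t_k}$ does not give injectivity of $\mathrm{pr}_{k-1}\big|_{\widetilde L_k}$: local injectivity of the projection of a Legendrian curve holds only on neighborhoods whose size depends on the curve, and this modulus is not uniform over all Legendrian curves $C^0$-close to $L_{t_k}$ (the uniform angle bound in Definition~\ref{def-angle} is only guaranteed on a possibly short initial time interval of the smoothing isotopy, not at its endpoint). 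This is precisely the difficulty the paper's Lemmas~\ref{smooth-isotopy-lemma4} and~\ref{smooth-isotopy-lemma3} are built to solve: one constructs a smaller regular neighborhood whose surface embeds into both $S_{k-1}$ and $S_k$ under the respective projections, smooths there so that both projections of the smoothed curve factor through an injective map followed by embeddings, and interpolates between the two contact line fields by affine combinations. Without some version of this, your chain of connecting isotopies cannot be assembled.

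The second gap is the non-coorientable case. Passing to the orientation double cover, as you propose, replaces the curve by its connected double cover on which the deck transformation acts by negating $z$ and $\beta$; your base isotopy, the integral correction of Proposition~\ref{correcting-isotopy-existence}, and the lift would all have to be carried out equivariantly for the result to descend, and you do not address this. The paper avoids the issue entirely: over a M\"obius-band core the surface $\mathrm{pr}^{-1}(\gamma_t)$ carries a contact foliation with a unique closed leaf projecting injectively, and that leaf \emph{is} the Legendrian lift, with no integral correction needed (see the proof of Lemma~\ref{smooth-isotopy-lemma1}). Two smaller points: you should justify that $\mathrm{pr}_k(\widetilde L_k)$ and $\mathrm{pr}_k(\widetilde L_{k+1})$ are isotopic in $S_k$ (the paper does this via Lemma~\ref{core-curve-lemma}, identifying both with the core curve of the annulus), and for the integral correction of a possibly large connecting isotopy the bracketing condition~(4) of Definition~\ref{integral-correction-square-definition} may fail for any fixed square, which is why the paper corrects over the whole annulus rather than over a correction square.
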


We say that two Legendrian curves {\it have the common core} $K$ if there exists their common regular neighborhood such that $K$ is its core (see Definition~\ref{regular-neighbourhood-core}).

\begin{lemm}\label{smooth-isotopy-lemma6}
Let $K$ be a core of the regular neighborhood $U$ of the closed Legendrian curve $L$. Let $S$ denote the surface of the core $K$ and $\mathrm{pr}:K\to S$ be the projection along the trajectories of the contact field. Suppose that a Legendrian curve $L'$ is isotopic to $L$, $L'\subset K\setminus \partial K$ and $\mathrm{pr}\big|_{L'}$ is injective. Then $U$ is a regular neighborhood of $L'$ and $K$ is a core of $U$ as a regular neighborhood of $L'$.
\end{lemm}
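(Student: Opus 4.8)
The plan is to exploit the fact that almost none of the defining data in Definition~\ref{regular-neighbourhood-definition} and Definition~\ref{regular-neighbourhood-core} actually refers to the curve: the ambient objects $S$, $l$, $r$, $r_0$, $K$ and the two projections are fixed, and the curve enters only through (i) injectivity of a projection, (ii) the inclusion $L'\subset K\setminus\partial K$, and (iii) the topological type of $S$ dictated by coorientability. So I would isolate exactly these three curve-dependent requirements and verify each in turn. Let $(U, S_U, l, r)$ be the regular neighborhood data witnessing that $U$ is a regular neighborhood of $L$ (here $S_U$ is the surface of $U$, inside which the core surface $S$ and the core $K=\exp_l(\overline B_{r_0l}(S))$ lie), and let $\mathrm{pr}_U\colon U\to S_U$ be its projection.

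First I would show $U$ is a regular neighborhood of $L'$ using the same quadruple $(U,S_U,l,r)$. Conditions~(1)--(3) of Definition~\ref{regular-neighbourhood-definition} refer only to $l$, $S_U$ and $\exp_l$, hence are inherited verbatim, and the only point to check is condition~(4), injectivity of $\mathrm{pr}_U\big|_{L'}$. Here I would argue that for curves lying in $K$ the two projections have the same fibres: since $\exp_l\colon B_{rl}(S_U)\to U$ is a diffeomorphism, the trajectories of $l$ are pairwise disjoint embedded arcs, each meeting $S_U$ in exactly one point; because $K=\exp_l(\overline B_{r_0l}(S))$ is a tube, each such arc meets $K$ in a single connected subarc containing at most one point of $S$. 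Consequently, for any $p,q\in K$ one has $\mathrm{pr}_U(p)=\mathrm{pr}_U(q)$ iff $p$ and $q$ lie on a common trajectory of $l$ iff $\mathrm{pr}(p)=\mathrm{pr}(q)$. Thus $\mathrm{pr}_U\big|_{L'}$ and $\mathrm{pr}\big|_{L'}$ have identical fibres, so the injectivity of $\mathrm{pr}\big|_{L'}$ assumed in the lemma yields the injectivity of $\mathrm{pr}_U\big|_{L'}$, and $U$ is a regular neighborhood of $L'$.

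Next I would show that $K$ is a core of $U$ as a regular neighborhood of $L'$, reusing the same surface $S$ and radius $r_0$. In Definition~\ref{regular-neighbourhood-core}, conditions~(1), (2) and~(4) refer only to $S$, $l$, $\mathrm{pr}$ and $K=\exp_l(\overline B_{r_0l}(S))$ and hold unchanged, while $L'\subset K\setminus\partial K$ is part of the hypothesis. The only genuinely curve-dependent requirement is~(3): $S$ must be an annulus if $\xi$ is coorientable along $L'$ and a M\"obius band otherwise. Since $K$ is a core of $U$ for $L$, the topological type of $S$ already matches the coorientability of $\xi$ along $L$, so it suffices to prove that $\xi$ is coorientable along $L$ if and only if it is coorientable along $L'$. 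Coorientability of $\xi$ along a closed curve $\gamma$ is equivalent to the vanishing of $\langle w_1(\xi),[\gamma]\rangle\in\mathbb Z_2$ (with $w_1(\xi)$ as in Lemma~\ref{Reeb-line-element-existence}), since the line bundle $TM\big|_\gamma/\xi\big|_\gamma$ over a circle is trivial exactly when its first Stiefel--Whitney class vanishes and $TM$ is orientable. This pairing depends only on the class $[\gamma]\in H_1(M;\mathbb Z_2)$; as $L'$ is isotopic, hence freely homotopic, to $L$, we have $[L]=[L']$, so the two coorientabilities coincide. This establishes condition~(3) for $L'$ and completes the verification.

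The routine part is the bookkeeping showing that conditions~(1)--(3) of Definition~\ref{regular-neighbourhood-definition} and conditions~(1), (2), (4) of Definition~\ref{regular-neighbourhood-core} are curve-independent. The two points requiring genuine argument, and where I expect the only real obstacles, are the fibre-coincidence step (resting on $\exp_l$ being a diffeomorphism and $K$ being a tube, so that each $l$-trajectory meets $K$ in a single arc) and the reduction of condition~(3) to the free-homotopy invariance of the coorientability pairing $\langle w_1(\xi),\cdot\rangle$.
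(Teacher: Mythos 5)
Your proposal is correct and follows essentially the same route as the paper: both reduce the first claim to the injectivity of the regular-neighborhood projection on $L'$, obtained from the hypothesis together with the injectivity of $\mathrm{pr}_U$ on the core surface (your fibre-coincidence step is just a spelled-out version of the paper's factoring $\mathrm{pr}_U\big|_{L'}=\mathrm{pr}_U\big|_{S}\circ\mathrm{pr}\big|_{L'}$), and both reduce the second claim to the isotopy-invariance of coorientability of $\xi$ along the curve. Your write-up merely supplies details (the single-arc intersection of trajectories with $K$, the $w_1(\xi)$ pairing) that the paper leaves implicit.
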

\begin{proof}
Let us show that $U$ is a regular neighborhood of $L'$. We need to check only that the projection of $L'$ along the trajectories of the contact field is injective. Since it is injective in $K\setminus\partial K$, by~(\ref{core-P1}) in the definition of the core it is injective in $U$.

Then we show that $K$ is the core of $U$ as a regular neighborhood of $L'.$ We need to check only that the contact structure is coorientable on $L'$ if and only if it is coorientable on $L.$ Since $L$ and $L'$ are isotopic, this is true.
\end{proof}

\begin{lemm}\label{smooth-isotopy-lemma5}
Let $K$ be a core of a regular neighborhood $U$ of the closed Legendrian curve $L$. Then there exists a smooth Legendrian curve $L'$ which has the common core $K$ with $L$.
\end{lemm}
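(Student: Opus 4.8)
The plan is to obtain $L'$ as the Legendrian lift of a suitable smooth embedded curve on the surface of the core, and then to invoke Lemma~\ref{smooth-isotopy-lemma6} to conclude that $L$ and $L'$ share $K$. Writing $S$ and $\mathrm{pr}$ for the surface and projection of the core, it suffices to find a smooth closed curve $c$ on $S$, isotopic to $\mathrm{pr}(L)$, with $\int_c\beta=0$ and whose Legendrian lift lies in $K\setminus\partial K$; then $L'$ is this lift. Such an $L'$ is smooth (the Legendrian lift of a smooth curve under a smooth contact form is smooth), has injective regular projection (its projection is the embedded curve $c$), lies in $K\setminus\partial K$, and is isotopic to $L$, since $L\simeq\mathrm{pr}(L)\simeq c=\mathrm{pr}(L')\simeq L'$, where the outer isotopies are the projections along the trajectories of $l$ and the middle one holds by construction. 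Hence Lemma~\ref{smooth-isotopy-lemma6} applies and shows that $U$ is a common regular neighborhood of $L$ and $L'$ with $K$ a core of both, i.e.\ $L$ and $L'$ have the common core $K$.

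I would first treat the coorientable case. By Remark~\ref{regular-neighbourhood-contact-form} we may assume the contact structure near $K$ is $\ker(dz+\mathrm{pr}^*\beta)$ with $z|_S\equiv0$ and $\partial z/\partial l=\pm1$, so that $K$ is identified with $\{(q,z):q\in S,\ |z|\le r_0\}$ and $\mathrm{pr}$ is $(q,z)\mapsto q$. Since $L$ is Legendrian and closed, $\int_{\mathrm{pr}(L)}\beta=0$ by Proposition~\ref{leg-integral}; since $L\subset K\setminus\partial K$, the oscillation of $-\int\beta$ along $\mathrm{pr}(L)$, which equals the oscillation of $z$ along $L$, is strictly less than $2r_0$. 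Now $\mathrm{pr}(L)$ is Lavrentiev by Lemma~\ref{Lavrentiev-projection-lemma4} and (being isotopic to the core curve of the annulus $S$) coorientable, so it has a closed regular tubular neighborhood $f:\mathbb S^1\times[-1;1]\to S$ by Definition~\ref{closed-regular-tubular-neighbourhood-definition}. For small $t\neq0$ the smooth embedded curve $c_0=f(\mathbb S^1\times\{t\})$ is isotopic and $C^0$-close to $\mathrm{pr}(L)$, and since the curves $f(\mathbb S^1\times\{t\})$ form a uniformly bi-Lipschitz isotopy, Corollary~\ref{integral-continuity-corollary} shows that $\int_{c_0}\beta$ and the oscillation of $\int\beta$ along $c_0$ are as close as we wish to those of $\mathrm{pr}(L)$; in particular the oscillation stays below $2r_0$ and $\int_{c_0}\beta$ is small.

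It remains to correct this integral to exactly zero. Using $d\beta\neq0$ I would apply a small Stokes-type push as in Proposition~\ref{correcting-isotopy-existence} to the isotopy from $\mathrm{pr}(L)$ to $c_0$: a small integral correction square straddling the value $0$ produces a smooth ambient correcting isotopy after which the smooth embedded curve $c$ satisfies $\int_c\beta=0$, is still isotopic to $\mathrm{pr}(L)$, and, the correction being small, still has oscillation below $2r_0$. Choosing the integration constant to center the Legendrian lift of $c$ in $z$ (so that $|z|\le\tfrac12\,\mathrm{osc}(c)<r_0$) then confines this lift to $K\setminus\partial K$, giving the desired $L'$. For the non-coorientable case $S$ is a M\"obius band, and I would pass to the double covering corresponding to $w_1(\xi)$ exactly as in Case~2 of Lemma~\ref{transverse-push-off} and Proposition~\ref{r-invariant-regular-neighbourhood-existence}; there the lifted core is coorientable, and carrying out the construction above invariantly under the deck transformation $h$ (choosing $c$, hence its Legendrian lift, to be $h$-invariant and using $h^*\alpha=-\alpha$) yields an invariant smooth Legendrian curve $\widetilde L'$ whose image $L'=\pi(\widetilde L')$ is the sought curve.

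The main obstacle is the integral-correction step: arranging $\int_c\beta=0$ \emph{exactly} while simultaneously keeping $c$ smooth, embedded, isotopic to the core curve, and with its Legendrian lift confined to $K$. This is precisely where the continuity of the Riemann--Stieltjes integral (Corollary~\ref{integral-continuity-corollary}) and the controlled adjustment of Proposition~\ref{correcting-isotopy-existence} are needed, together with the strict oscillation bound inherited from $L\subset K\setminus\partial K$.
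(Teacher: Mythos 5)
Your argument for the coorientable case is sound and takes a genuinely different route from the paper. The paper disposes of the lemma in a few lines: it writes $L$ as the union of two compact subarcs and applies Proposition~\ref{smoothing-Legendrian-arc} twice inside the regular neighborhood $K\setminus\partial K$; conditions~(\ref{leg-smoothing-P8}) and~(\ref{leg-smoothing-P4}) of that proposition hand over a smooth Legendrian curve in $K\setminus\partial K$ with injective projection, and Lemma~\ref{smooth-isotopy-lemma6} finishes. You instead build $L'$ globally, as the Legendrian lift of a smooth push-off $f(\mathbb S^1\times\{t\})$ of $\mathrm{pr}(L)$ inside the core surface, followed by an integral correction; this is essentially the technique of Lemma~\ref{transverse-push-off} redirected at producing a Legendrian (rather than transverse) curve, and in the coorientable case it works: the strict oscillation bound coming from $L\subset K\setminus\partial K$, the smallness of $\int_{c_0}\beta$, the Stokes-type correction and the confinement of the centered lift to $\{|z|<r_0\}$ all go through as you describe. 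What your route buys is independence from the arc-smoothing machinery of Section~\ref{Lavrentiev-smoothing-section}; what it loses is uniformity over the two cases of Definition~\ref{regular-neighbourhood-core}.

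That loss is where the gap is. In the non-coorientable case there is no global $z$ or $\beta$ on $K$, so everything must be done equivariantly in the double cover, and the one step of your construction that cannot be made $h$-equivariant is precisely the production of the smooth approximating curve $c$. Since $\mathrm{pr}(L)$ is one-sided in the M\"obius band $S$, the deck transformation $h$ interchanges the two sides of the invariant curve $\pi^{-1}(\mathrm{pr}(L))$ in the annulus $\widetilde S$; hence the push-off $f(\mathbb S^1\times\{t\})$, $t\neq0$, is never $h$-invariant, and its image downstairs is an embedded two-sided circle winding twice around the M\"obius band (this is exactly how Lemma~\ref{transverse-push-off} produces the boundary of the core surface), not a curve isotopic to $\mathrm{pr}(L)$. ``Choosing $c$ to be $h$-invariant'' therefore amounts to an equivariant smoothing of the Lavrentiev curve $\pi^{-1}(\mathrm{pr}(L))$, which neither Tukia's theorem nor the tubular-neighborhood construction provides, and which is not easier than the lemma being proved. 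The paper's arc-by-arc approach sidesteps the issue, since each arc is smoothed inside an orientable subsurface where Remark~\ref{regular-neighbourhood-contact-form} applies; to repair your proof you would either have to do the same, or supply a genuinely equivariant push-off, neither of which is sketched in your proposal.
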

\begin{proof}
We represent the curve $L$ as the union of two compact subarcs $\gamma_1$ and $\gamma_2$. We apply Proposition~\ref{smoothing-Legendrian-arc} for the regular neighborhood $K\setminus\partial K$ of the curve $L$ two times. First, we put $\gamma_0 = \gamma_1$, and then we put $\gamma_0 = \gamma_2.$ By~(\ref{leg-smoothing-P8}) of that proposition we get a smooth Legendrian curve $L'$ in $K\setminus\partial K$. By~(\ref{leg-smoothing-P4}) $K\setminus\partial K$ is a regular neighborhood of $L'$. By~Lemma~\ref{smooth-isotopy-lemma6} $U$ is a regular neighborhood of $L'$ and $K$ is the core of $U$ as a regular neighborhood of $L'.$
\end{proof}

\begin{lemm}\label{smooth-isotopy-lemma1}
Let two smooth Legendrian closed curves $L_0$ and $L_1$ have a common core. Then $L_0$ and $L_1$ are smoothly Legendrian isotopic.
\end{lemm}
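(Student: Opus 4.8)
The plan is to pass to a regular neighborhood with infinite fiber radius, to transfer the problem to the regular projections $\mathrm{pr}(L_0)$ and $\mathrm{pr}(L_1)$ on the base surface, and then to lift a suitable isotopy of these plane curves back to a smooth Legendrian isotopy. First I would apply Proposition~\ref{r-invariant-regular-neighbourhood-existence} to the common core $(K,S_0,r_0)$ of $L_0$ and $L_1$, obtaining a regular neighborhood $(U,S,l,+\infty)$ with $K\subset U$, $S_0\subset S$ and $l=l_0$ on $K$. Since both curves lie in $K\setminus\partial K$ with injective regular projection landing in $S_0$, this $U$ is a regular neighborhood of each of $L_0$ and $L_1$. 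If the contact structure is not coorientable on the $L_i$ (equivalently $S_0$ is a M\"obius band), I would pass to the orientation double covering exactly as in Case~2 of the proof of Proposition~\ref{r-invariant-regular-neighbourhood-existence}, run the entire argument equivariantly with respect to the deck transformation and push the resulting isotopy back down, using Lemma~\ref{Lavrentiev-covering} to see that the relevant curves upstairs are again Lavrentiev. So I may assume $S$ is coorientable and, by Remark~\ref{regular-neighbourhood-contact-form}, that the contact structure on $U$ is $\ker(dz+\mathrm{pr}^*\beta)$, where $d\beta$ is an area form on $S$ and the fiber coordinate $z$ ranges over all of $\mathbb R$.

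Next I would work with the projections $c_i=\mathrm{pr}(L_i)$. These are smooth embedded curves on $S$, each isotopic to the core curve by Lemma~\ref{core-curve-lemma}, and each satisfies $\int_{c_i}\beta=0$ since $L_i$ is a closed Legendrian curve (Proposition~\ref{leg-integral}). Because any two smooth embedded curves isotopic to the core of an annulus are smoothly isotopic through such curves, I would choose a smooth family $\{c_t\}_{t\in[0;1]}$ of embedded curves, isotopic to the core, joining $c_0$ to $c_1$. The function $t\mapsto\int_{c_t}\beta$ is smooth and vanishes at $t=0,1$ but need not vanish in between, so I would correct it: fixing a small integral correction square $R$ of sufficient capacity crossed by every $c_t$ (Definition~\ref{integral-correction-square-definition}) and invoking Proposition~\ref{correcting-isotopy-existence}, I obtain a family of ambient diffeomorphisms $\Phi_t$ supported in $R$, trivial at $t=0,1$, with $\int_{\Phi_t(c_t)}\beta=0$ for all $t$; here the nondegenerate area-change rate (property~(5) of Lemma~\ref{contracting-flow-on-disc}) lets me take $\Phi_t$ smooth in $t$ via the implicit function theorem. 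Writing $\hat c_t=\Phi_t(c_t)$ yields a smooth isotopy of embedded curves, isotopic to the core, with vanishing $\beta$-integral throughout.

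Finally, since each $\hat c_t$ has zero total $\beta$-integral, it admits a well-defined closed Legendrian lift $L_t\subset U$, unique once the height of a single point is prescribed (Lemma~\ref{z-uniqueness}); I would choose this base height smoothly in $t$ so that $L_t$ reduces to the given $L_0$ at $t=0$ and to $L_1$ at $t=1$. Here the choice $r=+\infty$ is essential: however large the vertical extent $\max_{p,q}\left|\int_{\hat c_t\big|_p^q}\beta\right|$ of the lift becomes along the way, $L_t$ still lies in $U$, so no size constraint can obstruct the construction. As all data are smooth, $\{L_t\}$ is a $C^\infty$ isotopy through smooth Legendrian curves, that is, a smooth Legendrian isotopy (consistent with Lemma~\ref{Lavrentiev-C1-isotopy} and Proposition~\ref{leg-C-isotopy}), joining $L_0$ to $L_1$. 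The main obstacle is the integral-correction step, namely keeping $\int_{\hat c_t}\beta$ exactly zero while preserving smoothness in $t$, embeddedness, and the isotopy class, together with making the whole construction equivariant in the non-coorientable case; the passage to the $r=+\infty$ neighborhood is precisely what frees the argument from having to confine the lifts to a bounded tube.
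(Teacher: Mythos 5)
Your overall architecture matches the paper's: project to the core surface, run a smooth isotopy of the projections, fix the $\beta$-integral, and lift back using the $r=+\infty$ regular neighborhood from Proposition~\ref{r-invariant-regular-neighbourhood-existence}. But there is a genuine gap in your integral-correction step. You propose to fix $\int_{c_t}\beta$ with a single integral correction square $R$ ``of sufficient capacity crossed by every $c_t$''. Condition~(4) of Definition~\ref{integral-correction-square-definition} must hold for \emph{all} $t$, which forces the $d\beta$-area of $R$ to exceed the full swing of $t\mapsto\int_{c_t}\beta$ over the isotopy. For an arbitrary smooth isotopy from $c_0$ to $c_1$ inside the annulus this swing can be as large as (or, with multiplicity, larger than) the total $d\beta$-area of the annulus, while any embedded square intersecting every $c_t$ in a single arc carries only a fraction of that area; so such an $R$ need not exist, and ``small'' is in direct tension with ``of sufficient capacity''. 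This is exactly why the paper's proof of this lemma abandons the correction square here and instead corrects the integral using the \emph{whole} annulus $S$ (equivalently, deforms $\{\gamma_t\}$ so that the $d\beta$-area of the components of $S\setminus\gamma_t$ is constant in $t$). Your step can be repaired, e.g.\ by subdividing $[0;1]$ so that the integral drifts by less than the capacity of a square on each piece and composing the resulting corrections, but as written the existence claim is unjustified.

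The second deviation is your treatment of the non-coorientable case. The paper does not pass to the double cover at this point: it observes that when $S$ is a M\"obius band, $\mathrm{pr}^{-1}(\gamma_t)$ is a M\"obius band whose contact foliation consists of closed leaves with a unique leaf projecting injectively, and that leaf \emph{is} the closed Legendrian lift $L_t$ --- no integral condition arises at all, so the isotopy $\{\gamma_t\}$ lifts directly. Your equivariant double-cover route can be made to work, but the point that saves it is unstated: for an $h$-invariant curve upstairs one has $\int_{\tilde\gamma}\tilde\beta=\int_{h(\tilde\gamma)}\tilde\beta=-\int_{\tilde\gamma}\tilde\beta=0$ because $h^*\tilde\beta=-\tilde\beta$, so again no correction is needed (and trying to run your correction-square argument equivariantly would add complications for nothing). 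I would also note that you should justify, as the paper does via Lemma~\ref{core-curve-lemma} and the definition of a core, that in the coorientable case both projections are isotopic to the core circle \emph{through embedded essential curves}, which is what makes the smooth family $\{c_t\}$ available.
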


\begin{proof}
Let $S$ be the surface of the core. By Lemma~\ref{core-curve-lemma} the curves $\mathrm{pr}(L_0)$ and $\mathrm{pr}(L_1)$ are isotopic inside $S$ to the core curve of $S$. Let $\{\gamma_t\}_{t\in[0;1]}$ be a smooth isotopy between $\gamma_0=\mathrm{pr}(L_0)$ and $\gamma_1=\mathrm{pr}(L_1)$.

By Proposition~\ref{r-invariant-regular-neighbourhood-existence} there exists a common regular neighborhood $U$ of the curves $L_0$ and $L_1$ with $r=+\infty$ which contains the core. 

Suppose that $S$ is a M\"obius band. By the definition of the core, 
 the contact structure is not coorientable on $L_0$. Since $\gamma_0 = \mathrm{pr}(L_0)$ is isotopic to $L_0$, the contact structure is not coorientable on $\gamma_0.$ The same holds for all curves $\gamma_t.$ This means that for any $t$ the surface $\mathrm{pr}^{-1}(\gamma_t)$ is homeomorphic to a M\"obius band and the foliation on it given by the intersection with the contact planes consists of closed leaves only, and there is the unique leaf that projects to $S$ injectively (the projection of any other leaf is a double covering). This leaf is the unique closed Legendrian curve $L_t\subset U$ such that $\mathrm{pr}(L_t)=\gamma_t.$ Then $\{L_t\}_{t\in[0;1]}$ is a sought-for isotopy.

If $S$ is an annulus, by Remark~\ref{regular-neighbourhood-contact-form} the contact structure on $\mathrm{pr}^{-1}(S)$ is the kernel of the 1-form $dz+\mathrm{pr}^*\beta$ (see Subsection~\ref{regular-projection-section}). Then we can continuously deform the isotopy $\{\gamma_t\}_{t\in[0;1]}$ in such a way that the area with respect to the area form $d\beta$ of the connected components of $S\setminus \gamma_t$ is independent of $t$. This can be done as in Subsection~\ref{correct-integral-section} but instead of using a correction square we use the whole annulus $S$ to correct the integral. Here we do not need the results of that section in full generality because all is smooth. After the integral is corrected, we can lift the isotopy to a Legendrian isotopy of closed Legendrian curves.
\end{proof}

\begin{lemm}\label{smooth-isotopy-lemma4}
Let $l$ be a contact field of line elements transverse to the contact structure.
Let for each $j = 0$ and $j = 1$ the closed Legendrian curves $L_j$ and $L$ have a common core with the field $l$ and $L_j$ be smooth. Then the curves $L_0$ and $L_1$ are smoothly Legendrian isotopic.
\end{lemm}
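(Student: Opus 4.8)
The plan is to produce a single smooth Legendrian curve $L'$ that simultaneously shares the core $K_0$ with $L_0$ and the core $K_1$ with $L_1$, and then to invoke Lemma~\ref{smooth-isotopy-lemma1} twice to obtain smooth Legendrian isotopies $L_0\simeq L'\simeq L_1$. By hypothesis there are common regular neighborhoods $U_0$ and $U_1$ of the pairs $\{L,L_0\}$ and $\{L,L_1\}$ with cores $K_0$ and $K_1$ respectively, both having $l$ as their contact field of line elements. In particular $L$ lies in $\mathrm{int}(K_0)\cap\mathrm{int}(K_1)$.

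First I would build a small auxiliary core common to both. Since $l$ is transverse to the contact structure, applying Lemma~\ref{regular-neighbourhood-existence-lemma} relative to the open contact submanifold $\mathrm{int}(K_0)\cap\mathrm{int}(K_1)$ yields a regular neighborhood $U$ of $L$ with contact field $l$ and $U\subset\mathrm{int}(K_0)\cap\mathrm{int}(K_1)$. By Lemma~\ref{transverse-push-off} the neighborhood $U$ has a core $K$, and $K\subset U\subset\mathrm{int}(K_0)\cap\mathrm{int}(K_1)$. Applying Lemma~\ref{smooth-isotopy-lemma5} to $K$ produces a smooth closed Legendrian curve $L'$ having the common core $K$ with $L$; thus $L'\subset K\setminus\partial K\subset\mathrm{int}(K_0)\cap\mathrm{int}(K_1)$, the curve $L'$ is isotopic to $L$ (hence to $L_0$ and to $L_1$ by Lemma~\ref{core-curve-lemma}), and the projection of $L'$ to the core surface of $K$ along $l$ is injective.

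The crux is to promote $K_0$ to a common core of $L_0$ and $L'$, and symmetrically $K_1$ to a common core of $L_1$ and $L'$. For this I would apply Lemma~\ref{smooth-isotopy-lemma6} to the curve $L$, its core $K_0$, and the candidate $L'$: I already have $L'\subset K_0\setminus\partial K_0$ and $L'$ isotopic to $L$, so the only nontrivial hypothesis is that the projection $\mathrm{pr}_{K_0}$ of $L'$ to the core surface of $K_0$ along $l$ is injective. Here the common field $l$ is essential: since $\mathrm{int}(K_0)$ and $\mathrm{int}(K)$ are regular neighborhoods realized by $\exp_l$ as diffeomorphisms, for points $p,q\in L'$ one has $\mathrm{pr}_{K_0}(p)=\mathrm{pr}_{K_0}(q)$ iff $p$ and $q$ lie on the same trajectory of $l$ iff $\mathrm{pr}_{K}(p)=\mathrm{pr}_{K}(q)$, and the latter projection is injective on $L'$. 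Lemma~\ref{smooth-isotopy-lemma6} then shows that $U_0$ is a regular neighborhood of $L'$ with core $K_0$, so $U_0$ is a common regular neighborhood of $L_0$ and $L'$ whose core is $K_0$; that is, $L_0$ and $L'$ have the common core $K_0$. Both curves being smooth, Lemma~\ref{smooth-isotopy-lemma1} yields a smooth Legendrian isotopy $L_0\simeq L'$. The identical argument with $K_1$ and $U_1$ gives $L_1\simeq L'$, and concatenating the two isotopies proves the lemma.

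I expect the main obstacle to be precisely the projection-injectivity verification of the third paragraph, namely checking that the smooth curve $L'$, constructed inside the thin auxiliary core $K$, still projects injectively onto the core surfaces of $K_0$ and $K_1$; the assumption that all the cores share the one line field $l$ is what forces the various projections to collapse exactly the same pairs of points, and this is what lets the chain $L_0\simeq L'\simeq L_1$ close up \emph{without} assuming that $L$ itself is smooth. A secondary technical point to handle carefully is the existence of the small regular neighborhood $U$ with field $l$ inside $\mathrm{int}(K_0)\cap\mathrm{int}(K_1)$, which I would justify by running the construction of Proposition~\ref{regular-neighbourhood-existence} (via Lemma~\ref{regular-neighbourhood-existence-lemma}) relative to that open contact submanifold.
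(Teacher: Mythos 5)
Your overall strategy coincides with the paper's: build an auxiliary small core $K$ of $L$ inside $\mathrm{int}(K_0)\cap\mathrm{int}(K_1)$, use Lemma~\ref{smooth-isotopy-lemma5} to produce a smooth Legendrian curve $L'$ with common core $K$, upgrade $K_0$ and $K_1$ to common cores of $\{L_0,L'\}$ and $\{L_1,L'\}$ via Lemma~\ref{smooth-isotopy-lemma6}, and finish with Lemma~\ref{smooth-isotopy-lemma1} twice. However, the step you yourself flag as the crux is argued incorrectly. You claim that for $p,q\in L'$ one has $\mathrm{pr}_{K_0}(p)=\mathrm{pr}_{K_0}(q)$ if and only if $\mathrm{pr}_{K}(p)=\mathrm{pr}_{K}(q)$. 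Only one implication is true: a fiber arc of $K$ is contained in a fiber arc of $K_0$ (since $K\subset\mathrm{int}(K_0)$ and $\mathrm{pr}_{K_0}$ is locally constant along trajectories away from the horizontal boundary of $K_0$), so $\mathrm{pr}_K(p)=\mathrm{pr}_K(q)$ implies $\mathrm{pr}_{K_0}(p)=\mathrm{pr}_{K_0}(q)$. The converse --- the direction you actually need --- fails in general: a single fiber arc of $K_0$ may meet the core surface $S$ of $K$ in several points, i.e.\ the projection $\mathrm{pr}_{K_0}\big|_{S}\colon S\to S_0$ is only a local homeomorphism and need not be injective. In that case two points of $L'$ lying over distinct points of $S$ can project to the same point of $S_0$, so injectivity of $\mathrm{pr}_K\big|_{L'}$ does not give injectivity of $\mathrm{pr}_{K_0}\big|_{L'}$. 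Nothing in your construction of $U$ via Lemma~\ref{regular-neighbourhood-existence-lemma} rules this out, and Lemma~\ref{smooth-isotopy-lemma5} gives no control on where $L'$ sits inside $K$ beyond $L'\subset K\setminus\partial K$.

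The paper closes exactly this gap before constructing $L'$: the maps $\mathrm{pr}_j\colon S\to S_j$ are local homeomorphisms that are injective on the compact set $\mathrm{pr}(L)$ (because $\mathrm{pr}_j\big|_L$ and $\mathrm{pr}\big|_L$ are injective), hence there is an open neighborhood $\widetilde S\subset S$ of $\mathrm{pr}(L)$ on which both $\mathrm{pr}_0$ and $\mathrm{pr}_1$ restrict to open embeddings. Taking the core $K'$ of the shrunken regular neighborhood $\exp_l\bigl(B_{rl}(\widetilde S)\bigr)$, its core surface $S'$ embeds into $S_j$ under $\mathrm{pr}_j$, and then $\mathrm{pr}_j\big|_{L'}=\mathrm{pr}_j\big|_{S'}\circ\mathrm{pr}'\big|_{L'}$ is injective as a composition of injective maps. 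Inserting this shrinking step (and constructing $L'$ from $K'$ rather than from your $K$) repairs your argument; without it, the application of Lemma~\ref{smooth-isotopy-lemma6} to $K_0$ and $K_1$ is not justified.
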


\begin{proof}
For each $j=0,1$ let $K_j$ and $S_j$ denote the core for $L_j$ and the surface of the core respectively. Let $(U, S, l, r)$ be a regular neighborhood of $L$ such that $U\subset K_{0}\cap K_{1}.$ The projections $\mathrm{pr}_{j}:S\to S_j$ are local homeomorphisms and they are injective on the compact subset $\mathrm{pr}(L).$  Therefore there exists an open neighborhood $\widetilde S\subset S$ of $\mathrm{pr}(L),$ such that the restrictions of the projections $\mathrm{pr}_{0}$ and $\mathrm{pr}_{1}$ on this neighborhood are open embeddings. We set $\widetilde U = \exp_{l}(B_{r l}(\widetilde S)).$ It is clear that $(\widetilde U,\widetilde S, l, r)$ is a regular neighborhood of $L$. Let $K'$ be its core and $S'$ be the surface of the core. By~(\ref{core-P1}) in the definition of the core the maps $\mathrm{pr}_j:S' \to S_j$ are embeddings.

By Lemma~\ref{smooth-isotopy-lemma5} there exists a smooth closed Legendrian curve $L'$ having the common core $K'$ with $L$. Let us show that $K_j$ is a common core of $L'$ and $L_j$. The map $\mathrm{pr}_j:S' \to S_j$ is an embedding, $\mathrm{pr}_j\big|_{L'} = \mathrm{pr}_j\big|_{S'}\circ\mathrm{pr}'\big|_{L'}$  and the map $\mathrm{pr}'\big|_{L'}$ is injective, hence $\mathrm{pr}_j\big|_{L'}$ is injective.
By Lemma~\ref{smooth-isotopy-lemma6} $K_j$ is a common core of $L'$ and $L_j.$

Since $K_0$ is a common core of $L_0$ and $L'$, and $K_1$ is a common core of $L_1$ and $L'$, by Lemma~\ref{smooth-isotopy-lemma1} the curves $L_0, L_1$ are smoothly Legendrian isotopic.
\end{proof}

\begin{lemm}\label{smooth-isotopy-lemma2}
Let $\{l_s\}_{s\in[0;1]}$ be a smooth family of contact fields of line elements on a contact manifold. Let $(K_s, S, r)$ be a core of some regular neighborhood of the Legendrian curve $L$ with the contact field $l_s$ for any $s\in[0;1].$ Let for each $s=0$ and $s=1$ the Legendrian curves $L_s$ and $L$ have the common core $K_s.$ Let the curves $L_0$ and $L_1$ be smooth. Then the curves $L_0$ and $L_1$ are smoothly Legendrian isotopic.
\end{lemm}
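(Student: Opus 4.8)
The plan is to promote the fixed-field statement Lemma~\ref{smooth-isotopy-lemma4} to the family $\{l_s\}$ by a connectedness argument in $s$. For each $s\in[0;1]$ choose, by Lemma~\ref{smooth-isotopy-lemma5}, a smooth closed Legendrian curve $M_s$ having a common core with $L$ with respect to the field $l_s$, and let $f(s)$ be its smooth Legendrian isotopy class. This class does not depend on the choice of $M_s$: any two smooth curves having a common core with $L$ for the same field $l_s$ are smoothly Legendrian isotopic by Lemma~\ref{smooth-isotopy-lemma4} applied with $l=l_s$. Since $L_0$ and $L_1$ are smooth and have the common cores $K_0$ and $K_1$ with $L$ for the fields $l_0$ and $l_1$, we have $f(0)=[L_0]$ and $f(1)=[L_1]$. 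As $[0;1]$ is connected, it is enough to show that $f$ is locally constant, for then $[L_0]=f(0)=f(1)=[L_1]$.

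Fix $s_0\in[0;1]$. Using Lemma~\ref{smooth-isotopy-lemma5}, choose a representative $M$ of $f(s_0)$ that has the common core $K_{s_0}$ with $L$ (for the field $l_{s_0}$). The local step is the following claim: for all $s$ sufficiently close to $s_0$, the curves $M$ and $L$ also have a common core with respect to $l_s$. Granting the claim, $M$ is a smooth curve admitting a common core with $L$ for the field $l_s$, so it may be used as $M_s$, whence $f(s)=[M]=f(s_0)$; thus $f$ is locally constant and we are done.

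To prove the claim, recall that $K_{s_0}=\exp_{l_{s_0}}(\overline B_{r l_{s_0}}(S))$, that $M\cup L\subset\mathrm{int}(K_{s_0})$, and that both $M$ and $L$ project injectively onto $S$ along the trajectories of $l_{s_0}$. Because $\{l_s\}$ is smooth and $S$ is compact, the flows $\exp_{l_s}$ depend continuously on $s$, so $\mathrm{int}(K_s)=\exp_{l_s}(B_{r l_s}(S))$ still contains the compact set $M\cup L$ for $s$ near $s_0$; being the interior of a core, $\mathrm{int}(K_s)$ is a regular neighborhood of $L$ with field $l_s$, so $L$ already projects injectively along $l_s$. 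The one point that needs an argument is that $\mathrm{pr}_s|_M$ stays injective. I would argue this exactly as in Lemma~\ref{Lavrentiev-projection-lemma}: if it failed there would be $s_n\to s_0$ and distinct $p_n,q_n\in M$ lying on one trajectory of $l_{s_n}$; after passing to limits $p_n\to p$, $q_n\to q$, the points $p,q$ lie on one trajectory of $l_{s_0}$. If $p\neq q$ this contradicts injectivity of $\mathrm{pr}_{s_0}|_M$, and if $p=q$ then the chords $p_n-q_n$ are asymptotically directed along $l_{s_0}$ and hence transverse to $\xi_p$, contradicting the Legendrian angle condition (Definition~\ref{def-angle}) since $l_{s_0}$ is transverse to the contact structure. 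Therefore $\mathrm{int}(K_s)$ is a common regular neighborhood of $M$ and $L$ with field $l_s$, and applying the core construction of Lemma~\ref{transverse-push-off} inside it, with the transverse push-offs taken far enough out that the resulting core contains both $M$ and $L$ in its interior, produces the desired common core for $l_s$.

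The main obstacle is the openness used in the previous paragraph, namely that injectivity of the projection along $l_s$ persists under small perturbations of the contact field. This is the only place where the Legendrian hypothesis on $M$ and $L$ is essential: the angle condition of Definition~\ref{def-angle} is precisely what prevents two nearby points of a curve from collapsing onto a single trajectory in the limit, which a merely continuous or rectifiable curve could do. The remaining ingredients are only the continuity of $\exp_{l_s}$ in $s$ and repeated appeals to the fixed-field result Lemma~\ref{smooth-isotopy-lemma4}.
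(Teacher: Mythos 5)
Your proof is correct and follows essentially the same route as the paper's: both rest on the local persistence of a common core as $s$ varies (the paper establishes injectivity of $\mathrm{pr}_s\big|_M$ via smoothness of $M$ and $C^1$-stability of injective regular maps, while you use the compactness/angle argument that the paper itself deploys in the analogous step of Lemma~\ref{smooth-isotopy-lemma3}), followed by a compactness-in-$s$ chaining argument, which you phrase as local constancy of the smooth isotopy class and the paper phrases as a finite subcover plus Lemma~\ref{smooth-isotopy-lemma1}. One simplification worth making: the detour through Lemma~\ref{transverse-push-off} with push-offs ``taken far enough out'' is both unnecessary and the only under-justified spot in your write-up --- the hypothesis already hands you $K_s$ as a core of a regular neighborhood of $L$ with field $l_s$, so once you know $M\subset K_s\setminus\partial K_s$ and $\mathrm{pr}_s\big|_M$ is injective, Lemma~\ref{smooth-isotopy-lemma6} directly upgrades $K_s$ itself to the desired common core.
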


\begin{proof}
Let us fix $s_0\in[0;1].$ By Lemma~\ref{smooth-isotopy-lemma5} there exists a smooth Legendrian curve $L_{s_0}$ having the common core $K_{s_0}$ with $L$. Let us prove that there exists an interval $J\subset[0;1]$ containing $s_0$ such that $K_s$ is a common core of $L_{s_0}$ and $L$ for any $s\in J.$

The surface $S$ and the number $r$ are fixed, the field $l_s$ continuously depends on $s$, $L_{s_0}\subset K_{s_0}\setminus\partial K_{s_0}$ and $K_s = \exp_{ l_s}\left(\overline B_{r l_s}(S)\right)$. This means that $L_{s_0}\subset K_s\setminus\partial K_s$ for $s\in J$ where $J$ is some interval containing $s_0.$ Since $L_{s_0}$ is a smooth Legendrian curve and the field $l_{s_0}$ is transverse to the contact planes, the projection $\mathrm{pr}_{s_0}\big|_{L_{s_0}}$ is regular. Since the map $\mathrm{pr}_{s_0}\big|_{L_{s_0}}$ is injective and regular, any its $C^1$-small deformation is also injective. Hence we can shorten the interval $J$ to make the map $\mathrm{pr}_{s}\big|_{L_{s_0}}$ injective for any $s\in J.$ By Lemma~\ref{smooth-isotopy-lemma6} $K_s$ is a common core.

Since $[0;1]$ is compact, there exist a sequence $0 = s_0 < s_1 < \dots < s_M = 1,$ an open cover $[0;1] = \bigcup\limits_{i = 1}^M J_i$ and a sequence of smooth Legendrian curves $\{L_{s_i}\}_{i = 1}^{M-1}$ such that for any $i = 1,\dots, M$:  $\{s_{i-1}, s_i\}\subset J_i$ and for any $s\in J_i$ the curves $L_{s_i}$ and $L$ have the common core $K_s.$

Therefore for any $i = 0,\dots, M-1$ the curves $L_{s_{i}}$ and $L_{s_{i+1}}$ have the common core $K_{s_i}.$ So by Lemma~\ref{smooth-isotopy-lemma1} the curves $L_{s_{i}}$ and $L_{s_{i+1}}$ are smoothly Legendrian isotopic. Concatenating all constructed isotopies we obtain that $L_0$ and $L_1$ are Legendrian isotopic as smooth Legendrian curves.
\end{proof}

\begin{defi}
Let $l_0$ and $l_1$ be contact fields of line elements transverse to the contact structure. By the {\it affine combination of the fields $l_0$ and $l_1$ with the coefficient} $s\in[0;1]$ we call the field defined in any simply connected open set by the equation

$$
l(l_0, l_1, s) := \{v, -v\}, \text{ where } v= (1-s)v_0 + s v_1,\ l_0 = \{v_0, -v_0\},\ l_1 = \{v_1, -v_1\}
$$
and
$v_0$ and $v_1$ are directed to the same side of the contact plane. The field $l(l_0, l_1, s)$ is contact and is transverse to the contact structure for any $s\in[0;1].$ 
\end{defi}

\begin{lemm}\label{smooth-isotopy-lemma3}
Let for each $j = 0$ and $j = 1$ the closed Legendrian curves $L_j$ and $L$ have a common core and $L_j$ be smooth. Then the curves $L_0$ and $L_1$ are smoothly Legendrian isotopic.
\end{lemm}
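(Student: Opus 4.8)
The plan is to reduce the statement to the single-field case, Lemma~\ref{smooth-isotopy-lemma4}, together with the family case, Lemma~\ref{smooth-isotopy-lemma2}, by interpolating between the two contact fields supplied by the hypotheses. Write $l_0$ for the contact field of line elements of the common core of $L_0$ and $L$, and $l_1$ for that of the common core of $L_1$ and $L$; both are transverse to the contact structure. If $l_0=l_1$ the claim is exactly Lemma~\ref{smooth-isotopy-lemma4}. In general I would set $l_s:=l(l_0,l_1,s)$, the affine combination, which by definition is a smooth family of contact fields of line elements, each transverse to the contact structure and with the prescribed endpoints $l_0,l_1$.

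The key preparatory step is to produce a single family of cores $(K_s,S,r)$ of $L$ with field $l_s$, using one and the same surface $S$ and one and the same radius $r$ for all $s\in[0;1]$, so that Lemma~\ref{smooth-isotopy-lemma2} becomes applicable. Here I would exploit that each representative $v_s=(1-s)v_0+sv_1$ points to the same side of the contact plane as $v_0$ and $v_1$, so that the whole family $\{l_s\}$ lies in a fixed cone transverse to the contact planes along $L$; consequently a smooth surface $S$ whose tangent planes stay close to the contact planes near $L$ is transverse to every $l_s$ at once. Injectivity of $\mathrm{pr}_{l_s}\big|_L$ for all $s$ would follow, as in Lemma~\ref{regular-neighbourhood-existence-lemma}, from the Legendrian angle condition (Definition~\ref{def-angle}), while a uniform embedding radius $r$ with $L\subset K_s\setminus\partial K_s$ for all $s$ would follow from compactness of $L\times[0;1]$ exactly as in Lemmas~\ref{transverse-collar2} and~\ref{surface-of-trajectories2}. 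Finally I would arrange, as in Lemma~\ref{transverse-push-off}, that $S$ is an annulus or a M\"obius band (according to the coorientability of the contact structure on $L$) with boundary transverse to the contact structure, so that $(K_s,S,r)$ is genuinely a family of cores.

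Granting the family of cores, the remainder is formal bookkeeping. Applying Lemma~\ref{smooth-isotopy-lemma5} to the cores $K_0$ and $K_1$ produces smooth closed Legendrian curves $\widetilde L_0$ and $\widetilde L_1$ having the common cores $K_0$ and $K_1$ with $L$. Since $K_0$ is a core with field $l_0=l(l_0,l_1,0)$, both $L_0$ and $\widetilde L_0$ have a common core with $L$ with respect to the same field $l_0$, so Lemma~\ref{smooth-isotopy-lemma4} gives a smooth Legendrian isotopy $L_0\sim\widetilde L_0$; symmetrically $L_1\sim\widetilde L_1$ via the field $l_1$. Lemma~\ref{smooth-isotopy-lemma2}, applied to the family $\{l_s\}$, the cores $(K_s,S,r)$ and the smooth endpoints $\widetilde L_0,\widetilde L_1$ (which have the common cores $K_0,K_1$ with $L$ at $s=0$ and $s=1$), then yields $\widetilde L_0\sim\widetilde L_1$. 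Concatenating the three isotopies $L_0\sim\widetilde L_0\sim\widetilde L_1\sim L_1$ proves the lemma.

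The main obstacle I anticipate is the second step: assembling the family of cores over a single surface $S$ and radius $r$ valid for the entire family $\{l_s\}$, rather than constructing cores field by field as in Lemma~\ref{transverse-push-off}. The delicate points are the simultaneous transversality of $S$ to all $l_s$ and the uniform-in-$s$ injectivity of the projections together with a uniform embedding radius, all of which must be controlled for the possibly non-smooth curve $L$ by compactness together with the Legendrian condition.
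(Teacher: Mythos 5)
Your overall architecture matches the paper's: interpolate the two contact fields by the affine combination $l_s=l(l_0,l_1,s)$, reduce to Lemmas~\ref{smooth-isotopy-lemma2} and~\ref{smooth-isotopy-lemma4} plus the existence of smooth curves with prescribed cores (Lemma~\ref{smooth-isotopy-lemma5}). The genuine difference is in how the parameter $s$ is handled. The paper never builds a single family of cores over one surface and one radius for all $s\in[0;1]$: it fixes $s_0$, takes a core $(K,S,r)$ for $l_{s_0}$ via Lemma~\ref{regular-neighbourhood-existence-lemma}, shows by a perturbation argument that the \emph{same} $S$ and $r$ give cores $K_s=\exp_{l_s}(\overline B_{rl_s}(S))$ only for $s$ in a small interval $J\ni s_0$, and then uses compactness of $[0;1]$ to chain finitely many applications of Lemma~\ref{smooth-isotopy-lemma2} (within each $J_i$) and Lemma~\ref{smooth-isotopy-lemma4} (at the overlap parameters $s_i$, where two different cores share the field $l_{s_i}$) through a string of auxiliary smooth curves $L_i',L_i''$. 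Your version globalizes the core construction and then invokes Lemma~\ref{smooth-isotopy-lemma2} once, which is a cleaner endgame but shifts all the work into the uniform-in-$s$ core construction that you correctly flag as the main obstacle.

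One concrete caution about that step: the paper's proof of injectivity of $\mathrm{pr}_s\big|_L$ for $s$ near $s_0$ crucially uses that $\mathrm{pr}_{s_0}\big|_L$ is \emph{already known} to be injective (this is what forces the limit points $P=Q$ in the contradiction argument before the angle condition of Definition~\ref{def-angle} can be applied), so that argument does not transfer verbatim to a global-in-$s$ claim. To make your route work you would instead have to rerun the compactness arguments of Lemmas~\ref{transverse-collar2} and~\ref{surface-of-trajectories2} directly over $L\times[0;1]$, using that the angle between $l_s$ and the contact planes is bounded below uniformly in $s$ (by compactness and transversality of the affine family), so that a short $l_s$-trajectory joining two distinct points of $L$ would produce a chord too far from the contact plane. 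This does go through, and the same uniform-angle reasoning handles simultaneous transversality of a single surface $S$ to all $l_s$ and the conditions of Lemma~\ref{transverse-push-off}; but it is a real additional burden, and the paper's local-plus-chaining scheme is designed precisely to avoid it. So your proposal is viable, with the understanding that the sketched uniform core construction is where the substance lies and must be carried out along the lines above rather than by citing Lemma~\ref{regular-neighbourhood-existence-lemma} as is.
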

\begin{proof}
Denote the contact fields by $l_0$ and $l_1.$  For any $s\in[0;1]$ we define a field $l_s = l(l_0, l_1, s)$ in the intersection of the interiors of these two cores of $L$.

Let us fix $s_0\in[0;1].$ By Lemma~\ref{regular-neighbourhood-existence-lemma} the curve $L$ has a regular neighborhood with the field $l_{s_0}.$ Let $(K, S, r)$ denote a core of this regular neighborhood. We claim that there exists a neighborhood $J$ of $s_0$ such that for any $s\in J$ the compact set $K_{s} = \exp_{l_s}(\overline B_{rl_{s}}(S))$ is a core of some regular neighborhood of $L$ with the field $l_s.$ It is sufficient to construct such interval for each condition in Definition~\ref{regular-neighbourhood-core} and then take the intersection of these intervals.

The map $\exp_{l_s}\big|_{\overline B_{rl_{s}}(S)}$ is a smooth embedding for any $s$ in some neighborhood of $s_0$ because it is a smooth embedding for $s = s_0$, $\overline B_{rl_{s}}(S)$ is compact and $l_s$ depends smoothly on $s$.

The curve $L$ is contained in $K_s\setminus\partial K_s$ for any $s$ in some neighborhood of $s_0$ by continuity of $l_s$ on $s$. We prove that $\mathrm{pr}_s\big|_L$ is injective for $s$ sufficiently close to $s_0$. Suppose the contrary. Then there exist sequences $P_i\in L,$ $Q_i\in L,$ $s'_i\in[0;1]$ such that the points $P_i$ and $Q_i$ are connected in $K_{s'_i}$ by a trajectory of the field $l_{s'_i}$ and $s'_i\to s_0$ as $i\to+\infty.$ Since $L$ is compact, we can assume that $P_i\to P,$ $Q_i\to Q.$ Since $\mathrm{pr}_{s_0}\big|_L$ is injective, $P = Q.$ By the definition of a Legendrian curve for any local coordinate system at the point $P = Q$ there exists a neighborhood of this point such that the angle between the segment joining any two points of the curve in this neighborhood and the contact plane is small. This contradicts the fact that $l_{s_0}$ is transverse to the contact plane.

So by Lemma~\ref{smooth-isotopy-lemma6} $K_s$ is a core of some regular neighborhood of $L$ for any $s\in J$ where $J$ is some interval containing $s_0.$

Then by compactness of $[0;1]$ there exist a sequence $0 = s_0 < s_1 < \dots < s_M = 1$, an open cover $[0;1] =  \bigcup\limits_{i=1}^M J_i$, a sequence of surfaces $\{S_i\}_{i = 1}^M$ and a sequence of real numbers $\{r_i\}_{i = 1}^M$ such that for any $i = 1,\dots, M:$ $\{s_{i-1}, s_i\}\subset J_i$ and for any $s\in J_i$ the compact set $K_{i,s} = \exp_{l_s}(\overline B_{r_il_s}(S_i))$ is a core of a regular neighborhood of $L$ with the field $l_s$.

Therefore for any $i = 1,\dots,M-1$ the compacts $K_{i, s_{i}}$ and $K_{i+1, s_{i}}$ are cores of regular neighborhoods of $L$ with the same field $l_{s_i}.$

By Lemma~\ref{smooth-isotopy-lemma5} for any $i = 1,\dots, M$ there exist smooth Legendrian curves $L_i'$ and $L_{i}''$ such that $K_{i, s_{i-1}}$ is a common core of $L_i'$ and $L$, and $K_{i, s_{i}}$ is a common core of $L_i''$ and $L$. By Lemma~\ref{smooth-isotopy-lemma4} the curves $L_i''$ and $L_{i+1}'$ are smoothly Legendrian isotopic, and by Lemma~\ref{smooth-isotopy-lemma2} the curves $L_i'$ and $L_{i}''$ are smoothly Legendrian isotopic. So the curves $L_1'$ and $L_M''$ are smoothly Legendrian isotopic.

By Lemma~\ref{smooth-isotopy-lemma4} $L_0$ is smoothly Legendrian isotopic to $L_1'$, and $L_M''$ is smoothly Legendrian isotopic to $L_1.$ So $L_0$ is smoothly Legendrian isotopic to $L_1.$
\end{proof}

\begin{proof}[Proof of Proposition~\ref{smooth-Legendrian-isotopy}]
Let $L_0$ and $L_1$ be smooth Legendrian links and let $\{L_t\}_{t\in[0;1]}$ be a Legendrian isotopy.

For a component $c$ of the link $L_0$ let $c_t$ denote the corresponding component of the link $L_t.$  

Let $t_0\in[0;1]$ and $K$ be a core of some regular neighborhood of some connected component $c_{t_0}$ of the link $L_{t_0}.$ We claim that $K$ is a common core of the curves $c_{t_0}$ and $c_t$ for $t\in I$ where $I$ is some neighborhood of $t_0.$ It is clear that $c_{t}$ lies in the interior of $K$ for $t$ sufficiently close to $t_0$. Let $S$ be the surface of the core $K$. By Lemma~\ref{smooth-isotopy-lemma6} it only remains to prove that for $t$ sufficiently close to $t_0$ the curve $c_t$ is projected to $S$ along the trajectories of the contact field injectively. To show this we take some point $P\in c_{t_0}$ and choose such Euclidean coordinates in the neighborhood of $P$ that the orthogonal projection to the contact plane at $P$ coincides with the projection along trajectories. This is possible because the contact field is transverse to contact planes. We choose a neighborhood $V$ of $P$ such that the intersection of $V$ with each trajectory is either an interval or the empty set. We can assume that the coordinates are defined on the whole $V$ because we can take smaller $V$. By the definition of Legendrian isotopy (Definition~\ref{def-angle}) there exists a smaller neighborhood $U$ of $P$ and a neighborhood $I$ of $t_0$ such that for all moments $t\in I$ the projection of the set $c_t\cap U$ to the contact plane is injective. Since the intersection of $V$ with each trajectory is connected, the projection of $c_t\cap U$ to $S$ along trajectories is injective. Since $c_{t_0}$ is compact, we can cover it by a finite number of such neighborhoods and then intersect all corresponding intervals of time.

Since $[0;1]$ is compact, by the discussion above we have the following. There exist a sequence $0 = t_0 < t_1 < \dots < t_N = 1$ and a cover $[0;1] = \bigcup\limits_{j=1}^N I_j$ by connected open subsets such that $\{t_{j-1}, t_j\}\subset I_{j}$, for any component $c$ of the link $L_0$ and for any $j = 1,\dots, N$ there exist a sequence of compacts $\{K_j^c\}_{j=1}^N$ such that $K_j^c$ is a core of some regular neighborhood of the curve $c_t$ for $t\in I_j$.

By choosing the compacts $K_j^c$ sufficiently small, we can also assume that for any $j = 1, \dots, N-1$ the sets $K_j^c\cup K_{j+1}^c$ do not intersect each other for distinct $c$.

By Lemma~\ref{smooth-isotopy-lemma5} for any component $c$ and any $j \in \{1,\dots,N\}$ there exists a smooth Legendrian curve $L_j^c$ which has a regular neighborhood with the core $K_j^c.$

We see that for $j \in \{1,\dots,N\}$ the curves $c_{t_{j}}$ and $L^c_j$ have a common core $K^c_j$ and for $j\in \{0,\dots,N-1\}$ the curves $c_{t_{j}}$ and $L^c_{j+1}$ have a common core $K^c_{j+1}.$ By Lemma~\ref{smooth-isotopy-lemma3} the curves $L^c_j$ and $L^c_{j+1}$ are smoothly Legendrian isotopic for $j\in\{1,\dots,N-1\}.$ This isotopy can be chosen in a sufficiently small neighborhood of $K^c_j\cup K^c_{j+1}$ so that such neighborhoods do not intersect each other for distinct $c$. So this isotopy provides a smooth Legendrian isotopy between Legendrian links $\bigcup\limits_{c\in\pi_0(L_0)} L_j^c$ and $\bigcup\limits_{c\in\pi_0(L_0)} L_{j+1}^c.$

Since the curves $c_0$ and $L^c_1$ have a common core $K^c_{1},$ they are smoothly Legendrian isotopic by Lemma~\ref{smooth-isotopy-lemma1}. For each $c$ the isotopy can be chosen to be supported in a sufficiently small neighborhood of $K^c_1$ such that the union of these isotopies provides an isotopy between $L_0$ and $\bigcup\limits_{c\in\pi_0(L_0)} L_{1}^c.$ The same holds for the curves $c_1$ and $L^c_N.$

Concatenating all constructed smooth isotopies we obtain a smooth Legendrian isotopy between $L_0$ and $L_1.$
\end{proof}

\begin{rema}
Propositions~\ref{smoothing-legendrian-link} and~\ref{smooth-Legendrian-isotopy} constitutes Theorem~\ref{main-theorem}. Using this theorem we can associate canonically an equivalence class of smooth Legendrian links with a given Legendrian Lavrentiev link. Let us emphasize that this is not just a correspondence. Legendrian isotopies relate the result of smoothing with the initial link in a geometrical manner not depending on a coordinate system.
\end{rema}

\begin{coro}\label{Lipschitz-C-bi-Lipschitz-isotopy}
Let $L_0$ and $L_1$ be Legendrian links in the contact manifold $M$. If they are Legendrian isotopic, there exist a real number $C$ and a Legendrian isotopy $F:L_0\times[0;1]\to M$ from $L_0$ to $L_1$ such that $F$ is Lipschitz and the map $F(\bullet, t): L_0 \to M$ is $C$-bi-Lipschitz for any $t\in[0;1].$
\end{coro}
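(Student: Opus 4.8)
The plan is to assemble the corollary from the two halves of Theorem~\ref{main-theorem}, keeping track of the Lipschitz and uniform bi-Lipschitz bounds already built into their proofs. I would first record one general fact used throughout: the concatenation and the time-reversal of Legendrian isotopies are again Legendrian isotopies, because the defining condition in Definition~\ref{def-angle} is local in the time variable. Reversal is immediate since the condition is symmetric in $t$; for a concatenation, at an interior time of each piece the condition holds by hypothesis, and at a gluing time $t_0$ one intersects the two spatial neighborhoods $U$ and unites the one-sided time intervals supplied by the two adjacent pieces, which agree on the common slice $L_{t_0}$ and share the same contact plane $\xi_p$. Time-reversal and concatenation (with an affine reparametrization of $[0;1]$ onto the relevant subinterval) also preserve the Lipschitz property and the existence of a common bi-Lipschitz constant for the slices. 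In particular this yields transitivity of Legendrian isotopy. This bookkeeping is the main obstacle of the proof; everything else is an application of earlier results.

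Next I would smooth both ends. Applying Proposition~\ref{smoothing-Legendrian-arc} finitely many times to each component of $L_j$ (covering it by arcs and using condition~(9) to preserve the portions already made smooth), I obtain for $j=0,1$ a Legendrian isotopy $F^j\colon L_j\times[0;1]\to M$ with $F^j(\bullet,0)=\mathrm{id}_{L_j}$ whose time-$1$ image $\widehat L_j=F^j(L_j\times\{1\})$ is a smooth Legendrian link. By conditions~(2) and~(3) of that proposition every single application is Lipschitz and has uniformly bi-Lipschitz slices, so by the first paragraph the whole $F^j$ is Lipschitz and there is a constant $C_j$ with $F^j(\bullet,t)$ being $C_j$-bi-Lipschitz for all $t$.

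Then I would produce the middle isotopy. By hypothesis $L_0$ and $L_1$ are Legendrian isotopic, so by transitivity the smooth links $\widehat L_0$ and $\widehat L_1$ are Legendrian isotopic as Lavrentiev links. By Proposition~\ref{smooth-Legendrian-isotopy} they are smoothly Legendrian isotopic; let $G$ be a resulting smooth isotopy from $\widehat L_0$ to $\widehat L_1$. By the remark preceding Proposition~\ref{smooth-Legendrian-isotopy} the isotopy $G$ is Legendrian in the Lavrentiev sense, by Lemma~\ref{Lavrentiev-C1-isotopy} its slices are $C$-bi-Lipschitz for a common $C$, and being a smooth map on the compact set $\widehat L_0\times[0;1]$ it is Lipschitz.

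Finally I would concatenate $F^0$, then $G$, then the time-reversal of $F^1$, using the bi-Lipschitz identifications $F^0(\bullet,1)\colon L_0\to\widehat L_0$ and $F^1(\bullet,1)\colon L_1\to\widehat L_1$ to rewrite everything as a single map $F\colon L_0\times[0;1]\to M$ on the subintervals $[0;1/3]$, $[1/3;2/3]$, $[2/3;1]$. Its final slice $F(\bullet,1)=\bigl(F^1(\bullet,1)\bigr)^{-1}\circ G(\bullet,1)\circ F^0(\bullet,1)$ parametrizes $L_1$; being a composition of bi-Lipschitz maps it is bi-Lipschitz, and the same composition argument bounds the bi-Lipschitz constant of every slice $F(\bullet,t)$ by a product of the three constants above, giving the required uniform $C$. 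The affine time changes only multiply the temporal Lipschitz constant by $3$, so $F$ is Lipschitz, and by the first paragraph $F$ is a Legendrian isotopy from $L_0$ to $L_1$. No analytic input is needed beyond Lemma~\ref{Lavrentiev-C1-isotopy} and the regularity already recorded in Propositions~\ref{smoothing-Legendrian-arc} and~\ref{smooth-Legendrian-isotopy}.
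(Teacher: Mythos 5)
Your proposal is correct and follows essentially the same route as the paper: smooth each end by Proposition~\ref{smoothing-Legendrian-arc} (which already supplies the Lipschitz and uniform bi-Lipschitz bounds), bridge the two smooth links by Proposition~\ref{smooth-Legendrian-isotopy} together with Lemma~\ref{Lavrentiev-C1-isotopy}, and concatenate. The only difference is that you spell out the closure of Legendrian isotopies under reversal and concatenation and the bookkeeping of constants, which the paper leaves implicit.
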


\begin{proof}
By Proposition~\ref{smoothing-Legendrian-arc} each component of the link $L_0$ can be smoothed to a link $L_0'$. This smoothing can be done in pairwise disjoint neighborhoods of the components so it gives a Legendrian isotopy of the link. This isotopy is Lipschitz and all curves in the isotopy are $C$-bi-Lipschitz for a common $C$. Let $L_1'$ be a similar smoothing for the link $L_1.$ By Proposition~\ref{smooth-Legendrian-isotopy} the links $L_0'$ and $L_1'$ are smoothly Legendrian isotopic. So there exists a smooth Legendrian isotopy between the links $L_0'$ and $L_1'$. It is automatically Lipschitz and all curves in this isotopy are $C$-bi-Lipschitz for some $C$ by Lemma~\ref{Lavrentiev-C1-isotopy}. The concatenation of the considered isotopies is a sought-for Legendrian isotopy.
\end{proof}

\section{Continuous Legendrian curves}
\label{Continuous-Legendrian-curves-section}
It is natural to try to define Legendrian continuous curves in the same way as in Definition~\ref{def-angle} but replacing the phrase "locally Lavrentiev" by "continuous". Actually it was the starting point of the present work. In this section we discuss a problem which occurs on this way.

We start with the following equivalent definition that was introduced to me by Ivan Dynnikov. Formally, we do not use this definition in this paper. But we find this definition illuminating.

\begin{defi}
\label{def-projection}
A continuous curve $L$ is called {\it Legendrian} if for any point $p\in L$ and any Euclidean coordinates of class $C^1$ in a neighborhood of $p$ there exists a smaller neighborhood $U\ni p$ such that the orthogonal projection to the contact plane at $p$ is injective on $L\cap U.$

A continuous isotopy of Legendrian curves $\{L_t\}_{t\in[0;1]}$ is called {\it Legendrian} if for any moment $t_0\in[0;1]$, for any point $p\in L_{t_0}$ and any Euclidean coordinates of class $C^1$ in a neighborhood of $p$ there exist a smaller neighborhood $U\ni p$ and an interval $I\ni t_0$ such that for any moment $t\in I$ the orthogonal projection to the contact plane at $p$ is injective on $L_t\cap U.$
\end{defi}

\begin{prop}
\label{leg-criteria1}
Definition~\ref{def-projection} and Definition~\ref{def-angle} with "locally Lavrentiev" replaced by "continuous" are equivalent.
\end{prop}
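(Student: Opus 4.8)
The plan is to read the two definitions as two properties of a continuous curve at a point $p$ and to compare them as statements quantified over the choice of Euclidean structure near $p$. Write $(A)$ for the angle condition of Definition~\ref{def-angle} (with ``locally Lavrentiev'' replaced by ``continuous'') and $(P)$ for the projection condition of Definition~\ref{def-projection}. The point is that $(A)$ is coordinate-independent by Lemma~\ref{def-angle-any-some}, while $(P)$ is genuinely quantified over \emph{all} $C^1$ Euclidean charts (orthogonality, hence the projection, depends on the chart); so both are really statements about every Euclidean structure at $p$, and I will freely use that $(A)$ holds in one chart iff it holds in every chart.

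First I would prove the easy implication $(A)\Rightarrow(P)$. Fix an arbitrary $C^1$ chart. Two distinct points of the curve with equal orthogonal projection onto $\xi_p$ differ by a vector orthogonal to $\xi_p$, i.e. one making angle $\pi/2$ with $\xi_p$. Applying $(A)$ in this very chart with some $\varepsilon<\pi/2$ yields a neighborhood $U$ on which every secant makes angle $<\varepsilon<\pi/2$ with $\xi_p$; hence no secant is orthogonal to $\xi_p$, and the projection is injective on $L\cap U$. As the chart was arbitrary, $(P)$ follows. The isotopy version is identical, carrying the parameter $t$ through the same estimate.

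The substance is the converse, which I would prove contrapositively as $\neg(A)\Rightarrow\neg(P)$. If $(A)$ fails then, by Lemma~\ref{def-angle-any-some}, it fails in the standard chart: there are $\varepsilon_0>0$ and distinct points $p'_n,p''_n\to p$ with $\angle(p''_n-p'_n,\xi_p)\ge\varepsilon_0$; passing to a subsequence, the unit secants $\frac{p''_n-p'_n}{|p''_n-p'_n|}$ converge to a unit vector $u$ with $\angle(u,\xi_p)\ge\varepsilon_0$, so $u\notin\xi_p$. Completing a basis of $\xi_p$ by $e_3:=u$, I may assume $\xi_p=\{z=0\}$ with the limiting secant direction vertical. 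To contradict $(P)$ it suffices to produce a single linear (hence $C^1$) chart in which the orthogonal projection onto $\xi_p$ is non-injective on \emph{every} neighborhood of $p$; equivalently, to find a direction $d\notin\xi_p$ such that for every $\delta>0$ some pair of distinct points within distance $\delta$ of $p$ has secant parallel to $d$, since then the Euclidean structure making $d\perp\xi_p$ is the desired chart.

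Here is the key step, and the part I expect to be the main obstacle. The hypothesis supplies only secant directions \emph{accumulating} at $u$ (near-coincidences of the vertical projection), whereas $\neg(P)$ demands a \emph{fixed} direction $d$ realized \emph{exactly} at every scale; note that the untilted choice $d=u$ need not work, as the vertical projection may well remain injective, so a genuinely tilted $d$ producing an actual fold must be found. I would pass from near-coincidence to exact coincidence by an intermediate-value and degree argument for the continuous secant-direction map $g(a,b)=\frac{\gamma(a)-\gamma(b)}{|\gamma(a)-\gamma(b)|}$ on the space of distinct parameter pairs: sliding one endpoint connects a near-vertical secant to a radial secant $g(0,\cdot)$ within the connected off-diagonal region, so at each scale the image of $g$ is a connected subset of the sphere meeting directions both near $\xi_p$ and far from it; upgrading this from control of the angle to the realization of one full direction $d\notin\xi_p$ at every scale is exactly the delicate two-dimensional coincidence to be established. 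Once such a $d$ is obtained, the adapted chart violates $(P)$, finishing $\neg(A)\Rightarrow\neg(P)$. The isotopy statement follows by running the same dichotomy with the parameter $t$ kept near $t_0$: the easy direction is verbatim, and in the hard direction the bad secants occur for parameters $t_k\to t_0$, yielding a chart in which the projection is non-injective for $t$ arbitrarily close to $t_0$ at every spatial scale, which is precisely the negation of the isotopy form of $(P)$.
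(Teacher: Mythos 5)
Your easy direction $(A)\Rightarrow(P)$ matches the paper's. The converse, however, has a genuine gap, and it is exactly the one you flag yourself: by restricting to \emph{linear} charts you force yourself to find a single fixed direction $d\notin\xi_p$ that is realized \emph{exactly} as a secant direction at every scale. The hypothesis only gives secant directions accumulating near some $u\notin\xi_p$; the set of exactly realized secant directions at scale $\delta$ need not be closed, so nested-compact-set or connectedness arguments only produce a direction that is \emph{approximated} at every scale, not realized. Your proposed intermediate-value/degree argument for this "delicate two-dimensional coincidence" is left entirely unexecuted, and it is not clear it can be made to work at all; this is the central step of the proof and it is missing.

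The paper escapes the difficulty by exploiting the full strength of the quantifier "any $C^1$ Euclidean chart" in Definition~\ref{def-projection}: the chart it builds is genuinely nonlinear. Using Lemma~\ref{curve-through-infinite-number-of-points} it first constructs a $C^1$ curve $\gamma_1$ through $p$ and infinitely many of the $p'_{n_i}$; if $\gamma_1$ were transverse to $\xi_p$, straightening it into the $z$-axis already kills injectivity of the projection (all the $p'_{n_i}$ project to $p$), so $\gamma_1$ must be tangent to $\xi_p$. It then constructs a second $C^1$ curve $\gamma_2$ through the difference vectors $p''_{n_i}-p'_{n_i}$, whose limiting direction is transverse to $\xi_p$ by the angle bound, and considers the family of curves $s\mapsto\gamma_1(t)+\gamma_2(s)$: these are transverse to the contact plane, pairwise disjoint near $p$ because $\gamma_1$ is tangent while $\gamma_2$ is transverse, and the curve with parameter $t=t_i$ passes through \emph{both} $p'_{n_i}$ and $p''_{n_i}$. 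Straightening this family into vertical lines yields a $C^1$ chart whose orthogonal projection onto $\xi_p$ identifies $p'_{n_i}$ with $p''_{n_i}$ for every $i$, contradicting $(P)$. In short, the exact coincidence you were hunting for in a linear chart is manufactured for free once the fibers of the projection are allowed to bend so as to pass through the given pairs of points; that is the missing idea.
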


\begin{proof}
First we prove that if the curve (isotopy) is Legendrian by Definition~\ref{def-angle} then it is Legendrian by Definition~\ref{def-projection}.

Suppose that Euclidean coordinates in a neighborhood of $p\in L$ ($L_{t_0}$) are set. Then by Definition~\ref{def-angle} there exists a neighborhood $U$ of $p$ (and a time interval $I\ni t_0$) such that the angle between the contact plane $\xi_p$ and the vector $p''-p'$ is less than $\pi/2$ for any two points $p', p'' \in L\cap U$ ($\in L_t\cap U$ for $t\in I$). Therefore in $U$ the orthogonal projection to the contact plane $\xi_p$ is injective on the curve $L$ ($L_t$ for $t\in I$).

Now let $L$ (the isotopy $L_t$) be Legendrian by Definition~\ref{def-projection}. We prove by the contrary that it is Legendrian by Definition~\ref{def-angle}.

Suppose that for some $\varepsilon > 0$ for some local Euclidean coordinates at the point $p$ on the curve $L$ ($L_{t_0}$) there exist sequences $p_n', p_n''$ of points on the curve $L$ ($L_{t_n}$) such that
\begin{enumerate}
\item[\it 1.] $p_n' \to p$ and $p_n''\to p$ if $n\to\infty$.
\item[\it 2.] $\angle(p_n'-p_n'', \xi_p) > \varepsilon.$
\item[({\it 3.})] $t_n\to t_0$ if $n\to\infty$.
\end{enumerate}

By Lemma~\ref{curve-through-infinite-number-of-points} there exists a curve $\gamma_1 (t)$ of class $C^1$ which passes through the points $p$ and $p'_{n_i}$ where $n_i$ is some increasing sequence. If $\gamma_1(t)$ is transverse to $\xi_p$, there exist local Euclidean coordinates at $p$ such that the contact plane $\xi_p$ is the $xy$-plane and $\gamma_1(t)$ is $z$-axis. So the orthogonal projection to $\xi_p$ is not injective in any neighborhood of $p$ (and any neighborhood of $t_0$), a contradiction. Therefore $\gamma_1(t)$ is tangent to $\xi_p.$

By Lemma~\ref{curve-through-infinite-number-of-points} there exists a curve $\gamma_2 (s)$ of class $C^1$ which passes through the points $p''_{n_i} - p'_{n_i}$ for an infinite number of indices $i$. For any $t$ the curve $\gamma_1(t) + \gamma_2(s)$ of parameter $s$ is transverse to contact planes in some neighborhoods of $p$. Since the curve $\gamma_1(t)$ is tangent to $\xi_p$, for distinct $t$ the curves $\gamma_1(t) + \gamma_2(s)$ of parameter $s$ are disjoint in some neighborhood of $p$. Moreover, there exist Euclidean coordinates such that these curves are straight lines parallel to $z$-axis and $\xi_p$ is $xy$-plane. So the orthogonal projection to $\xi_p$ is not injective in any neighborhood of $p$ (and any neighborhood of $t_0$), a contradiction.
\end{proof}

\begin{lemm}
\label{curve-through-infinite-number-of-points}
Let $\{p_n\}_{n\in\mathbb N}$ be a sequence of points which converges to the point $p.$ Then there exists a $C^1$-smooth curve which passes through $p$ and through $p_n$ for an infinite number of indices.
\end{lemm}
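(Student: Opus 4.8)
The plan is to pass to a rapidly converging subsequence and interpolate it by an explicit regular curve, arranged so that the only nontrivial point is the $C^1$ condition at the accumulation point $p$. First I would dispose of the degenerate case: if $p_n=p$ for infinitely many $n$, any straight line through $p$ already passes through $p$ and through all those $p_n$, so I may discard such indices and assume $p_n\neq p$ for every $n$. Writing $p_n=p+r_n w_n$ with $r_n=\mathrm{dist}(p,p_n)>0$ and $w_n$ a unit vector, I would invoke compactness of the unit sphere to pass to a subsequence with $w_n\to u$ for some unit vector $u$. Splitting Euclidean space as $\mathbb R u\oplus u^{\perp}$ and writing $p_n-p=a_n u+b_n$ with $a_n\in\mathbb R$ and $b_n\in u^{\perp}$, the convergence $w_n\to u$ gives $\langle w_n,u\rangle\to 1$ and $b_n/r_n\to 0$; hence for large $n$ one has $a_n>0$, $a_n\to 0$, and $\|b_n\|=o(a_n)$. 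I would then thin once more to a subsequence $q_i:=p_{n_i}$ satisfying $a_{i+1}\le\tfrac12 a_i$ (so $a_i\searrow 0$) and $\|b_i\|\le a_i/i$.

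Next I would build the curve explicitly. Fixing a smooth step function $\phi\colon[0,1]\to[0,1]$ with $\phi(0)=0$, $\phi(1)=1$ and $\phi'(0)=\phi'(1)=0$ (for instance $\phi(s)=3s^2-2s^3$), I would define $B\colon[0,a_1]\to u^{\perp}$ by $B(0)=0$ and, on each interval $[a_{i+1},a_i]$,
$$B(t)=b_{i+1}+\phi\!\left(\tfrac{t-a_{i+1}}{a_i-a_{i+1}}\right)(b_i-b_{i+1}),$$
and then set $\gamma(t)=p+t\,u+B(t)$ for $t\in[0,a_1]$. By construction $\gamma(0)=p$ and $\gamma(a_i)=p+a_i u+b_i=q_i$, so $\gamma$ passes through $p$ and through infinitely many of the $p_n$. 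The vanishing of $\phi'$ at both endpoints forces $B'=0$ at every interior node $a_i$ from both sides, so $\gamma$ is smooth on each subinterval and $C^1$ across the nodes, hence $C^1$ on $(0,a_1]$ with $\gamma'=u+B'$.

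The hard part will be the $C^1$ condition at $t=0$. Continuity of $\gamma$ there is immediate, since for $t\in[a_{i+1},a_i]$ the value $B(t)$ lies on the segment $[b_{i+1},b_i]$ and $b_i\to 0$. For the derivative I would estimate, on $[a_{i+1},a_i]$,
$$\|B'(t)\|\le\|\phi'\|_{\infty}\,\frac{\|b_i\|+\|b_{i+1}\|}{a_i-a_{i+1}}.$$
The two thinning conditions give $a_i-a_{i+1}\ge\tfrac12 a_i$ and $\|b_i\|+\|b_{i+1}\|\le 2a_i/i$, so the right-hand side is $O(1/i)$ and therefore $B'(t)\to 0$ as $t\to 0^{+}$. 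Together with $B(0)=0$ this forces $B'(0)=0$ and continuity of $B'$ at $0$. Consequently $\gamma$ is $C^1$ on $[0,a_1]$ with $\gamma'(0)=u\neq 0$, i.e.\ a regular $C^1$ curve through $p$ and through $p_{n_i}$ for all $i$.

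Everything except this final estimate is routine smooth interpolation; the whole purpose of the two thinning steps is precisely to make the velocity of the interpolant tend to zero in the transverse directions fast enough to secure differentiability at the limit point. I would also note that the construction yields $\gamma'(0)=u\neq 0$, so the curve is regular at $p$ with a well-defined tangent direction, which is exactly what the application in the proof of Proposition~\ref{leg-criteria1} requires when distinguishing whether $\gamma$ is transverse or tangent to $\xi_p$.
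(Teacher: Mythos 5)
Your proof is correct, and it realizes the same basic idea as the paper's proof --- pass to a subsequence whose chord directions $(p_n-p)/|p_n-p|$ converge to a unit vector, then interpolate --- but the implementation is genuinely different and more quantitative. The paper builds a piecewise linear arc whose vertices are the chosen $p_{n_i}$, smooths each corner so that the unit tangent travels along a spherical geodesic between consecutive edge directions, and concludes $C^1$-ness at $p$ from the qualitative fact that the edge directions tend to $v$; the inductive choice of ``endpoints sufficiently close to $p$'' is left implicit. You instead reduce to a graph over the limit line $\mathbb Ru$: after the two explicit thinning conditions $a_{i+1}\le a_i/2$ and $\|b_i\|\le a_i/i$, the curve is $\gamma(t)=p+tu+B(t)$ with a Hermite-type interpolant $B$, and the bound $\|B'\|\le\|\phi'\|_\infty\,(\|b_i\|+\|b_{i+1}\|)/(a_i-a_{i+1})=O(1/i)$ gives $B'(0)=0$ and continuity of $B'$ at $0$ directly (via $\|B(t)\|\le t\sup_{s\le t}\|B'(s)\|=o(t)$). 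What your version buys is an explicit formula, a verifiable derivative estimate in place of the paper's sketched smoothing-of-corners step, and for free the regularity $\gamma'(0)=u\neq0$ and injectivity (the $u$-component of $\gamma$ is strictly increasing); what the paper's version buys is independence from a choice of splitting $\mathbb Ru\oplus u^{\perp}$ and a shorter write-up. Both deliver exactly what Proposition~\ref{leg-criteria1} needs, namely a $C^1$ curve through $p$ and infinitely many $p_n$ with a well-defined tangent direction at $p$.
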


\begin{proof}
We can assume that no point $p_n$ coincide with $p$. Also we can assume that all points lie in Euclidean space $\mathbb R^3.$

There exists a sequence $n_i$ such that $(p_{n_i}-p)/|p_{n_i}-p|$ converges to some vector $v$ because the sphere is compact. We can assume that $n_i = i.$

There exists a non-compact embedded piecewise linear curve such that any its breaking point belongs to the sequence $\{p_n\}_{n\in\mathbb N}$ and the direction of its edge tends to $v$. We can construct such curve by adding inductively to the curve a straight line segment such that its added endpoint is sufficiently close to $p$. Angles at breaking points can be smoothed in such a way that in a neighborhood of any breaking point the unit vector tangent to the obtained smooth curve moves along the geodesic on the sphere from the direction of the previous edge to the direction of the next edge. The compactification of this curve is $C^1$-smooth because the  unit tangent vector tends to $v$.
\end{proof}

\subsection{A counterexample}

First, we construct a family of Legendrian continuous curves having a common point and having the same regular projection.

The regular projection of the curve that we will construct is a curve in the unit square. This curve was introduced by Lance and Thomas in~\cite{LT}. Let $\{s_n\}_{n\in\mathbb N}$ be a sequence of real numbers such that $0<s_n<1$ for any $n\in\mathbb N.$ Let $S_1$ be a set containing the unique element which is a square whose side has the length $a_1.$ Let $C_1$ denote a set containing the unique element which is the cross lying symmetrically in the square from $S_1$ which touches each side of the square along the segment of length $s_1\cdot a_1.$ With the cross from $C_1$ we associate three segments inside this cross as in Figure~\ref{cross-with-three-segments}.

\begin{figure}[h]
\centering
\includegraphics[scale=1]{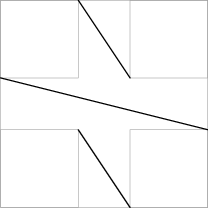}
\caption{A cross with three segments}
\label{cross-with-three-segments}
\end{figure}

Let $S_2$ be four squares which constitutes the complement to the cross from $C_1$ inside the square from $S_1.$ Then we iterate this procedure for each square in $S_2.$ For each square from $S_2$ we construct a similar cross which touches the side of the square along the segment of length $s_2\cdot a_2$ where $a_2 = (1 - 2s_1)\cdot a_1$ and $a_2$ is the length of the side of each square from $S_2$. In this way for each $n\in\mathbb N$ we obtain a set $S_n$ of squares whose sides have length $a_n$ and a set $C_n$ of crosses. The cross from $C_n$ contained in the square $X\in S_n$ we denote by $C(X).$ For each $n\in\mathbb N$ with each cross from $C_n$ we associate three segments as in Figure~\ref{cross-with-three-segments}.

The closure of the union of all segments associated with constructed crosses is an embedded curve. This curve is also the union of all associated segments and the Cantor set $\bigcap\limits_{n=1}^{\infty}\bigcup\limits_{X\in S_n} X.$ The measure of this curve is positive if and only if $\sum\limits_{n=1}^{\infty}s_n < \infty.$ We denote this curve by $\gamma.$ Denote by $\gamma_n$ the curve $\left(\gamma\setminus \bigcup\limits_{X\in S_n}X\right)\cup \bigcup\limits_{X\in S_n} X_{\diagup}$ where $X_{\diagup}$ denotes the diagonal of the square $X$ joining the left bottom corner with the right top corner. The curves $\gamma_n$ are shown in Figure~\ref{curve-of-positive-measure} for $n = 1,2,3,4.$

\begin{figure}[h]
\centering
\includegraphics[scale=1]{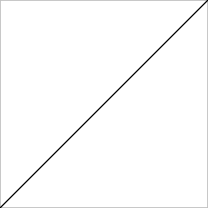}
\includegraphics[scale=1]{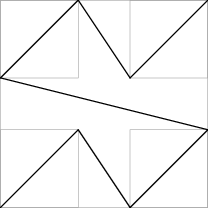}
\includegraphics[scale=1]{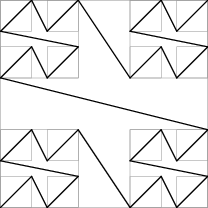}
\includegraphics[scale=1]{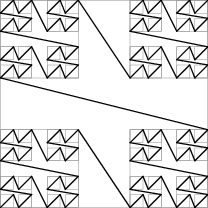}
\caption{Approximating the curve of positive measure}
\label{curve-of-positive-measure}
\end{figure}

Suppose that $\sum\limits_{n=1}^{\infty}s_n < \infty$ and $a_n/s_n\to0$ if $n\to\infty.$ These conditions are satisfied, for example, if $s_n = 1/(n+1)^2.$ Consider the contact manifold $(\mathbb R^3, \ker(dz-ydx))$ and suppose that the square from $S_1$ is the square $[0;a_1]\times[0;a_1]\times\{0\}.$ For any non-negative real number $K$ we construct a Legendrian curve $\widetilde\gamma (K)$ in $(\mathbb R^3, \ker(dz-ydx))$ whose orthogonal projection to $x,y$-plane coincides with $\gamma$ and which passes through the points $(0, 0, 0)$ and $(a_1, a_1, (K+1/2) a_1^2).$

First we define $\widetilde\gamma(0).$ We will define it as a graph of a function $z$ on $\gamma.$ The curve $\gamma_n$ is piecewise linear, so there is the unique Legendrian curve $\widetilde \gamma_n$ passing through the origin whose projection is $\gamma_n.$  Consider the $z$-coordinate of the point $p\in\widetilde\gamma_n$ as a function on $\gamma_n$ and denote it by $z_n(\mathrm{pr}(p)).$ It is a direct check that $z_1(a_1,a_1) = z_2(a_1,a_1).$ A similar argument shows that $z_n(p) = z_{n+m}(p)$ for any $m\ge 0$ where $p$ is the left bottom corner of any square from $S_n.$ So we define $z(p) = z_n(p).$ These corners constitute a dense subset of $\gamma.$ So we can define $z$ by continuity if we prove that for any square $X\in S_n$ the range of $z_{n+m}$ on $X$ is bounded above by something that is independent of $m$ and tends to zero if $n$ tends to infinity. 

Suppose that $p$ is the left bottom corner of $X\in S_n,$ $q$ is the left bottom corner of $Y\in S_{n+m}$ and $X\supset Y.$ Then we take a sequence $p = p_0, p_1, \dots, p_m = q$ of the left bottom corners of the squares $X = X_0, X_1, \dots, X_m = Y$ respectively such that $X_k\in S_{n+k},$ $X_k\supset X_{k+1}.$ Then
$$
z(q) - z(p) = \sum\limits_{k = 1}^m (z(p_k) - z(p_{k-1})) = \sum\limits_{k = 1}^m (z_{n+k}(p_k) - z_{n+k}(p_{k-1})).
$$

On any Legendrian curve we have $z_1 - z_0 = \int\limits_{t_0}^{t_1} y(t)dx(t).$ Also $|y|\le a_1$ and the total variation of $x$ on $\gamma_{n+k}\cap X_{k-1}$ is $3a_{n+k-1}$, so

$$
|z(q) - z(p)| \le \sum\limits_{k = 1}^m (3a_1a_{n+k-1}) < \sum\limits_{k = 1}^m (3a_1 a_n 2^{1-k}) < 6 a_1 a_n.
$$

So the function $z$ is well defined on $\gamma$.

Let $\{k_n\}_{n\in\mathbb N}$ be a sequence such that $k_1 = K$ and $k_{n+1} = k_n / (1-s_n)^2$ for each $n\in\mathbb N.$ It is clear that the sequence $k_n$ converges. We define the curve $\widetilde\gamma(K)$ as a graph of the function $z + \Delta z$ on $\gamma$ where $\Delta z$ is the unique monotonic continuous function such that
\begin{itemize}
\item $\Delta z$ is constant on each associated segment.
\item $\Delta z(0,0) = 0.$
\item For any $n\in\mathbb N$ for each square from $S_n$ we have $\Delta z(q) - \Delta z(p) = k_n a_n^2$ where $p$ and $q$ are the left bottom and the right top corners of the square respectively.
\end{itemize} 


If $(x,y,z)\in \widetilde\gamma(K)$, $z = z(x,y) + \Delta z(x,y).$

Let $X\in S_n,$ $(x,y)\in\gamma\cap X,$ $(x_0,y_0)$ be the left bottom corner of $X$. Let us prove that
\begin{equation}
\label{z-bound}
|z(x,y) - z(x_0,y_0) - (x-x_0)y_0 | \le a_n^2/2.
\end{equation}

Let $f(x,y) = z(x,y) - z(x_0,y_0) - (x-x_0)y_0$ be a function on $\gamma\cap X.$ We note that $f(x_1, y_1) = \int\limits_{t_0}^{t_1} (y(t)-y_0)dx(t)$ where $x_i = x(t_i)$ and $y_i = y(t_i)$ for $i = 0,1.$ Let us prove that the right top corner $(x_1,y_1)$ of the square $X$ is the point where the function $f$ achieves its maximum. Consider the right top corners of four squares from $S_{n+1}$ which are contained in $X.$ Since $z = z_{n+1}$ at these points, it is the direct check that the maximum value of $f$ among these points is being achieved at $(x_1, y_1).$ Let $X = X_1 \supset X_2 \supset \dots$ be a sequence of squares such that $\bigcap\limits_{k=1}^{\infty}X_k = \{(x,y)\}$ and $X_k\in S_{n+k}$ for any $k = 1,2,\dots.$ Let $(x_k, y_k)$ be the right top corner of $X_k.$ A similar argument shows that $f(x_1,y_1) \ge f(x_2,y_2) \ge \ldots.$ Therefore $f(x_1,y_1)\ge f(x,y).$ Since $f(x_1,y_1) = a_n^2/2,$ we get
$$
z(x,y) - z(x_0,y_0) - (x-x_0)y_0 \le a_n^2/2.
$$

In a similar way one can prove that $f(x_0, y_1)$ is the minimum of $f$ and that $f(x_0, y_1)> -a_n^2/2.$ So the inequality~(\ref{z-bound}) is proved.

Then we prove that $\widetilde\gamma(K)$ is Legendrian. Let $p(x,y,z)$ be a point on $\widetilde\gamma(K).$ Let us find a neighborhood of the point $p$ where condition~(\ref{angle-condition}) of Definition~\ref{def-angle} is satisfied.

\vspace{3mm}

\noindent{\bf Case 1.}
If $(x,y)$ lies in the interior of some associated segment, the curve $\widetilde\gamma(K)$ is a smooth Legendrian curve in some neighborhood of $p$. So this case is obvious.

\vspace{3mm}
\noindent{\bf Case 2.} Suppose that $(x,y)$ is neither the left bottom corner nor the right top corner of any square from $S_n$ for any $n$ and that $(x,y)$ does not lie on any associated segment. Let $p'(x',y',z')$ and $p''(x'',y'',z'')$ be two points on $\widetilde\gamma(K).$ Let $n$ be the largest number such that the points $(x',y')$ and $(x'', y'')$ lie in some square $X\in S_{n}.$ Let $m$ be the largest number such that the points $(x,y),$ $(x',y')$ and $(x'', y'')$ lie in some square from $S_m.$  Let $(x_0, y_0)$ be the left bottom corner of $X$ and $p_0(x_0,y_0,z_0)$ be the corresponding point on $\widetilde\gamma(K).$ We need to prove that $\angle(p''-p',\xi_p)\to 0$ if $m\to\infty.$
$$
\angle(p''-p',\xi_p) \le \angle(p''-p',\xi_{p_0}) + \angle(\xi_{p_0},\xi_p).
$$

Since $\xi$ is continuous, $\angle(\xi_{p_0},\xi_p)\to0$ if $m\to\infty.$ So it is sufficient to prove that $\angle(p''-p',\xi_{p_0})\to 0$ if $m\to\infty.$

\begin{equation}
\label{angle-bound}
\sin\angle(p''-p',\xi_{p_0}) = \left|-(x''-x')y_0 + z''-z'\right|/|p''-p'|\sqrt{y_0^2+1} < \left|-(x''-x')y_0 + z''-z'\right|/|p''-p'|.
\end{equation}

\noindent{\bf Case 2a.} If $(x',y')$ and $(x'',y'')$ lie in the union of squares from $S_{n+1}$, $|p''-p'|\ge s_na_n.$ Recall that $z' = z(x',y') + \Delta z(x', y')$ and $z'' = z(x'',y'') + \Delta z(x'', y'')$. Hence continuing inequality~(\ref{angle-bound})

\begin{multline*}
\sin\angle(p''-p',\xi_{p_0}) < \left|\left(z''-z(x_0,y_0)-(x''-x_0)y_0\right) - \left(z'-z(x_0,y_0)-(x'-x_0)y_0\right)\right|/|p''-p'| \le \\ \le \left|\Delta z(x'',y'') - \Delta z(x',y')\right|/|p''-p'| + \\ + \left|\left(z(x'',y'')-z(x_0,y_0)-(x''-x_0)y_0\right) - \left(z(x',y')-z(x_0,y_0)-(x'-x_0)y_0\right)\right|/|p''-p'|\le\\ \le (k_na_n^2 + a_n^2)/(s_na_n) = (k_n+1)a_n/s_n \to0, 
\end{multline*}
where we used estimate~(\ref{z-bound}).

\noindent{\bf Case 2b.} If $(x',y')$ and $(x'',y'')$ lie on the same segment associated with the cross $C(X)$  
$$z''-z' = \int\limits_{x'}^{x''} ydx = (x''-x')(y'+y'')/2.$$

Therefore continuing inequality~(\ref{angle-bound})
$$
\sin\angle(p''-p',\xi_{p_0}) < |x''-x'|\left((y'+y'')/2-y_0\right)/|p''-p'| < (y'+y'')/2-y_0 < a_n\to0.
$$

\noindent{\bf Case 2c.} If $(x',y')$ and $(x'',y'')$ lie on distinct segments associated with the cross $C(X),$ the estimate is similar to Case 2a.

\noindent{\bf Case 2d.} If $(x',y')$ lies inside some square $Y\in S_{n+1}$, $(x'', y'')$ lies on a segment $I$ associated with the cross $C(X)$ and $Y\cap I = \varnothing,$ the estimate is similar to Case 2a.

\noindent{\bf Case 2e.} Suppose that $(x',y')$ lies inside some square $Y\in S_{n+1}$, $(x'',y'')$ lies on a segment $I$ associated with the cross $C(X)$ and $Y\cap I = \{(x_1,y_1)\}.$ Let $l$ be the largest number such that the points $(x', y')$ and $(x_1, y_1)$ lie inside some square $Z\in S_l.$ It is clear that $X\supset Y\supset Z.$ Let $(x_0',y_0')$ be the left bottom corner of $Z.$

Let $p''' (x''', y''', z''')$ be the point on $\widetilde \gamma(K)$ such that its projection to $xy$-plane coincides with the orthogonal projection of $(x', y')$ to $I$. Let us continue bound~(\ref{angle-bound}):

\begin{multline*}
\left|-(x''-x')y_0 + z''-z'\right|/|p''-p'| = \left|-(x''-x''')y_0 + z''-z'''  -(x'''-x')y_0 + z'''-z'\right|/|p''-p'| \le \\ \le \left|-(x''-x''')y_0 + z''-z'''\right|/\sqrt{(x''-x''')^2 + (y''-y''')^2}  + \\ + \left|-(x'''-x')y_0 + z'''-z'\right|/\sqrt{(x'-x''')^2 + (y'-y''')^2}.
\end{multline*}

The first summand can be bounded in the same way as in Case 2b. Now we consider the second. We begin with the denominator. Since $\mathrm{dist}((x',y'), (x_1,y_1))\ge a_{l+1}$ and the angle between $I$ and the side of the square $Y$ is not less than $\arctan s_n$,

$$\sqrt{(x'-x''')^2 + (y'-y''')^2} > a_{l+1} \sin \arctan s_n = a_{l+1} s_n / \sqrt{1 + s_n^2 } > a_{l+1} s_n / \sqrt2.$$

Now we bound the nominator. Let $(x_1,y_1,z_1)\in\widetilde \gamma(K).$ 

\begin{multline*}
\left|-(x'-x''')y_0 + z'-z'''\right| = \left|\left(-(x'-x_1)y_0 + z'-z_1\right) + \left(-(x_1-x''')y_0 + z_1-z'''\right)\right| \le \\ \le \left|\left(-(x'-x_1)y_0 + z(x',y')-z(x_1,y_1)\right) + \Delta z(x',y') - \Delta z(x_1,y_1)\right| +\\ + \left|-(x_1-x''')y_0 + z(x_1,y_1)-z(x''',y''')\right| \le \left|-(x'-x_1)y_0 + z(x',y')-z(x_1,y_1)\right| +\\ + \left|\Delta z(x',y') - \Delta z(x_1,y_1)\right|   + \left|-(x_1-x''')y_0 + (x_1 - x''')(y_1+y''')/2\right| \le \\ \le \left|-(x'-x_1)y_0 + z(x',y')-z(x_1,y_1)\right| + k_l a_l^2 + a_l a_n,
\end{multline*}
because $|x_1 - x'''| < a_l.$ Recall that $(x_0',y_0')$ is the left bottom corner of the square $Z.$ Then we bound the first summand
\begin{multline*}
\left|-(x'-x_1)y_0 + z(x',y')-z(x_1,y_1)\right| = \left|\left(-(x'-x_1)y'_0 + z(x',y')-z(x_1,y_1)\right) + (x'-x_1)(y'_0-y_0)\right| \le\\ \le \left|\left(-(x'-x_0')y'_0 + z(x',y')-z(x_0',y_0')\right) - \left(-(x_1-x_0')y'_0 + z(x_1,y_1)-z(x_0',y_0')\right)\right| + \\ + \left|(x'-x_1)(y'_0-y_0)\right| \le a_l^2/2 + a_l^2/2 + a_l a_n
\end{multline*}
by (\ref{z-bound}) for the square $Z$. So the nominator is bounded above by $a_l^2 + 2a_la_n + k_la_l^2.$ Now we combine the estimates for the nominator and the denominator.

\begin{multline*}
\left|-(x'-x''')y_0 + z'-z'''\right|/\sqrt{(x'-x''')^2 + (y'-y''')^2} < \sqrt2\left(a_l^2 + 2a_l a_n + k_l a_l^2\right) / \left(a_{l+1} s_n\right) = \\ = \sqrt2\left(a_l^2 + 2a_l a_n + k_l a_l^2\right) / \left(a_l (1-s_l) s_n/2\right) = 2\sqrt2\left((1+k_l)a_l + 2a_n \right) / \left((1-s_l) s_n\right) < \\ < \frac{2\sqrt2 (3 + k_l)}{1-s_l} \frac{a_n}{s_n} \to 0.
\end{multline*}

\vspace{3mm}
\noindent{\bf Case 3.} Suppose that $(x,y)$ is the left bottom or the right top corner of some square from $S_n$ for some $n\in\mathbb N.$ This case is similar to Case 2.

\vspace{3mm}

So we proved that $\widetilde \gamma(K)$ is a Legendrian curve.

We can connect the ends of the curve $\gamma$ by a piecewise linear path to obtain a closed embedded curve on the plane. Then we can lift the obtained closed curve to a Legendrian curve which contains the curve $\widetilde\gamma(K).$ We can find $K$ such that the obtained Legendrian curve is closed. So we get a Legendrian unknot whose regular projection to the plane is an embedded curve. 

One can define the Thurston--Bennequin number for continuous closed Legendrian curves in the usual way. By Lemma~\ref{transverse-collar} any Legendrian curve has a collar transverse to the contact structure. By the Thurston--Bennequin number of the Legendrian curve (in $\mathbb R^3$) we call the linking number of the boundary components of the collar. This number keeps constant during Legendrian isotopies. To see this we note that Proposition~\ref{regular-neighbourhood-existence} also holds for continuous Legendrian curves, and that any regular neighborhood of the Legendrian curve serves as a regular neighborhood for all Legendrian curves obtained by a small Legendrian isotopy, see the end of the proof of Proposition~\ref{smooth-Legendrian-isotopy} for details.

The Thurston--Bennequin number of the constructed Legendrian unknot is zero. Hence by \cite{ben} it is not Legendrian isotopic to a smooth Legendrian knot in $(\mathbb R^3, dz - ydx)$. So Theorem~\ref{main-theorem} is wrong for continuous Legendrian curves.

\section{Contactomorphisms}
\label{contactomorphisms-section}
We continue to use Convention~\ref{Lavrentiev-metric-equivalence} assuming that the ambient manifold is equipped with some Riemannian metric.


We equip the set of all locally bi-Lipschitz self-homeomorphisms of the manifold with the compact-open topology. If the manifold is compact, this topology is induced by the standard metric

$$
\mathrm{dist}(f,g) = \sup\limits_{p\in M} \mathrm{dist}(f(p),g(p))
.$$

\begin{lemm}\label{space-of-bi-Lipschitz-homeomorphisms-is-complete}
The metric space of all $C$-bi-Lipschitz self-homeomorphisms of a compact manifold $M$ is complete for any $C\ge1.$
\end{lemm}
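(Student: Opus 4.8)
The plan is to take a Cauchy sequence $(f_n)$ of $C$-bi-Lipschitz self-homeomorphisms of $M$ in the metric $\mathrm{dist}(f,g)=\sup_{p\in M}\mathrm{dist}(f(p),g(p))$ and produce a limit which is again a $C$-bi-Lipschitz self-homeomorphism. First I would construct the limit map. Since $M$ is compact it is complete, so for each $p$ the Cauchy sequence $(f_n(p))$ converges to some point, which we call $f(p)$; the convergence is uniform because $(f_n)$ is Cauchy in the sup-metric, and therefore $f$ is continuous. The two-sided Lipschitz estimate passes to the limit: for fixed $p,q$ we have
\[
\frac1C\,\mathrm{dist}(p,q)\le\mathrm{dist}(f_n(p),f_n(q))\le C\,\mathrm{dist}(p,q),
\]
and letting $n\to\infty$, using continuity of $\mathrm{dist}$, gives the same inequalities for $f$. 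Thus $f$ is $C$-bi-Lipschitz; in particular it is injective.

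The main obstacle, and the point where the real work lies, is to show that $f$ is \emph{surjective}, i.e. a homeomorphism rather than merely an injective bi-Lipschitz map. Instead of invoking invariance of domain together with a connectedness argument, I would show that the sequence of inverses $(f_n^{-1})$ is also Cauchy. Fix $p\in M$ and set $q=f_n^{-1}(p)$, so that $f_n(q)=p$. Since $f_m^{-1}$ is $C$-Lipschitz,
\begin{multline*}
\mathrm{dist}(f_n^{-1}(p),f_m^{-1}(p))=\mathrm{dist}(f_m^{-1}(f_m(q)),f_m^{-1}(p))\le C\,\mathrm{dist}(f_m(q),p)=\\
=C\,\mathrm{dist}(f_m(f_n^{-1}(p)),f_n(f_n^{-1}(p)))\le C\,\mathrm{dist}(f_m,f_n).
\end{multline*}
Taking the supremum over $p$ yields $\mathrm{dist}(f_m^{-1},f_n^{-1})\le C\,\mathrm{dist}(f_m,f_n)$, so $(f_n^{-1})$ is Cauchy and converges uniformly to some map $g$, which is $C$-bi-Lipschitz by the same limiting argument applied to $f$.

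Finally I would identify $g$ as the inverse of $f$. Composition behaves continuously for uniformly convergent sequences of uniformly Lipschitz maps: from
\[
\mathrm{dist}(f_n(f_n^{-1}(p)),f(g(p)))\le C\,\mathrm{dist}(f_n^{-1}(p),g(p))+\mathrm{dist}(f_n(g(p)),f(g(p)))
\]
and the uniform convergences $f_n^{-1}\to g$ and $f_n\to f$, it follows that $f_n\circ f_n^{-1}\to f\circ g$ uniformly; since $f_n\circ f_n^{-1}=\mathrm{id}_M$ we obtain $f\circ g=\mathrm{id}_M$, and symmetrically $g\circ f=\mathrm{id}_M$. Hence $f$ is a homeomorphism with $C$-bi-Lipschitz inverse $g$, so $f$ belongs to the space, and the space is complete. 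The only delicate step is the surjectivity of the limit, and the estimate $\mathrm{dist}(f_m^{-1},f_n^{-1})\le C\,\mathrm{dist}(f_m,f_n)$ on the inverses disposes of it cleanly.
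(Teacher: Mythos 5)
Your proposal is correct. The core step — passing the two-sided inequality $\frac1C\,\mathrm{dist}(p,q)\le\mathrm{dist}(f_n(p),f_n(q))\le C\,\mathrm{dist}(p,q)$ to the uniform limit — is exactly what the paper does. Where you diverge is on surjectivity: the paper's proof is a three-line argument that simply \emph{asserts} the Cauchy sequence "converges to a surjective continuous map $f$," leaving the reader to supply the standard compactness argument (the image $f(M)$ is compact, hence closed, and every $y=f_n(x_n)$ is a limit of points of $f(M)$ after passing to a convergent subsequence of the $x_n$). You instead identify surjectivity as the one delicate point and dispose of it by proving the estimate $\mathrm{dist}(f_m^{-1},f_n^{-1})\le C\,\mathrm{dist}(f_m,f_n)$, so that the inverses converge uniformly to a map $g$ which you then verify is a two-sided inverse of $f$. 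Your route is longer but more self-contained: it avoids any appeal to compactness beyond completeness of $M$ (so it would survive in settings where $M$ is merely complete and the $f_n$ are uniformly bi-Lipschitz bijections), and it delivers the continuity of inversion on this space as a by-product. The paper's route is shorter but leans on compactness of $M$ for the unproved surjectivity claim. Both are valid; your estimate on the inverses is the cleaner way to close the one real gap in the terse published argument.
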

\begin{proof}
Pick a Cauchy sequence $f_n.$ It converges to a surjective continuous map $f.$ For any distinct $p,q\in M$

$$\frac{\mathrm{dist}(f(p),f(q))}{\mathrm{dist}(p,q)} = \lim\limits_{n\to\infty}\frac{\mathrm{dist}(f_n(p),f_n(q))}{\mathrm{dist}(p,q)}.$$

Since $f_n$ are $C$-bi-Lipschitz, $f$ is a $C$-bi-Lipschitz homeomorphism.
\end{proof}

\begin{rema}
Bi-Lipschitz homeomorphisms are given in local charts by Lipschitz functions. The set of all bounded Lipschitz functions on the open unit ball $B\subset\mathbb R^n$ coincides with the Sobolev space $W^{1,\infty}(B)$ (see~\cite{Hein} for details). It is natural to introduce a right-invariant metric on the space of bi-Lipschitz homeomorphisms which is topologically equivalent to the one induced by the Sobolev metric:
$$\mathrm{dist}_{\mathrm{Lip}}(f,g) = \sup\limits_{p\in M}\mathrm{dist}(f(p), g(p)) + \sup\limits_{p\neq q}\left|\ln\frac{\mathrm{dist}(f(p), f(q))}{\mathrm{dist}(g(p), g(q))}\right|.
$$

However this metric is left-discontinuous. So a continuous ambient isotopy can become discontinuous after multiplication from the left which is not convenient.
\end{rema}

\begin{defi}\label{contactomorphism-def}
A locally bi-Lipschitz self-homeomorphism of a smooth contact 3-manifold is called {\it contact} if it preserves the class of compact Lavrentiev Legendrian curves. We call such a homeomorphism a {\it contactomorphism}.
\end{defi}

Recall that if $\beta$ is a 1-form on the smooth surface $S$ such that $d\beta$ is an area form, $\mathrm{pr}$ is the projection from $M = S \times \mathbb R$ to $S$ and $z$ is the projection $S\times\mathbb R\to\mathbb R,$ then the kernel of the 1-form $dz + \mathrm{pr}^*\beta$ determines a contact structure on $M$. We equip $M$ with a product of a Riemannian metric on $S$ and the Euclidean metric on $\mathbb R$.

\begin{lemm}\label{bi-Lipschitz-homeomorphism-lift}
Let $h$ be a locally bi-Lipschitz homeomorphism of the smooth connected surface $S.$ Suppose that $\int\limits_{\gamma}\beta = \int\limits_{h(\gamma)}\beta$ for any closed rectifiable curve $\gamma.$  Let $p\in S.$ Then there is the unique contactomorphism $\widetilde h$ of $M = S\times\mathbb R$ such that $\widetilde h(p, z) = (h(p), z)$ for any $z\in\mathbb R$ and $\mathrm{pr}\circ\widetilde h = h\circ\mathrm{pr}.$
\end{lemm}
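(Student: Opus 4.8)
The plan is to write the lift explicitly. Since we require $\mathrm{pr}\circ\widetilde h=h\circ\mathrm{pr}$, any candidate has the form $\widetilde h(q,z)=(h(q),z+u(q))$ for a function $u\colon S\to\mathbb R$; the $z$-translation might a priori depend on $z$, but I take this pure-translation form as the definition and recover that it is forced from the uniqueness argument. Guided by the characterization of Legendrian curves in Proposition~\ref{leg-integral} (equivalently the relation $z(Q)-z(P)=-\int_{\mathrm{pr}(\gamma|_P^Q)}\beta$), I would define, for a rectifiable path $\sigma$ joining $p$ to $q$,
\[
u(q)=\int_{\sigma}\beta-\int_{h\circ\sigma}\beta .
\]
This is well defined: $S$ is connected, so a piecewise smooth $\sigma$ exists, and if $\sigma_1,\sigma_2$ are two choices then $\sigma_1\cdot\overline{\sigma_2}$ is a closed rectifiable loop $\gamma$, for which the hypothesis gives $\int_{\gamma}\beta=\int_{h(\gamma)}\beta=\int_{h\circ\gamma}\beta$, so the two values of $u(q)$ coincide. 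By construction $u(p)=0$, hence $\widetilde h(p,z)=(h(p),z)$.

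Next I would check regularity. Fix $q_0$ and a chart around it in which $h$ is $C_0$-bi-Lipschitz and the Riemannian metric is comparable to the Euclidean one; for $q$ near $q_0$ take $\sigma$ to be the straight segment from $q_0$ to $q$ and use path independence to write $u(q)-u(q_0)=\int_{\sigma}\beta-\int_{h\circ\sigma}\beta$. Since $h$ is bi-Lipschitz, $\ell(h\circ\sigma)\le C_0\,\ell(\sigma)$, and $\beta$ is bounded near $q_0$, so $|u(q)-u(q_0)|\le\mathrm{const}\cdot\mathrm{dist}(q_0,q)$; thus $u$ is locally Lipschitz. Consequently $\widetilde h$ is a locally bi-Lipschitz self-homeomorphism of $M$, being the composition of the shear $(q,z)\mapsto(q,z+u(q))$, which is locally bi-Lipschitz because $u$ is locally Lipschitz, with the locally bi-Lipschitz map $h\times\mathrm{id}_{\mathbb R}$; its inverse $(q',z')\mapsto(h^{-1}(q'),z'-u(h^{-1}(q')))$ is the analogous lift of $h^{-1}$.

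The core step is that $\widetilde h$ is contact. Let $L$ be a compact Legendrian Lavrentiev curve; since $\widetilde h$ is locally bi-Lipschitz and $L$ is compact, $\widetilde h(L)$ is locally Lavrentiev, hence Lavrentiev by Lemma~\ref{compact-Lavrentiev}, so it is legitimate to integrate the smooth contact form $\alpha=dz+\mathrm{pr}^*\beta$ along its subarcs. For any subarc $L'=L|_P^Q$ I would split $\alpha$ and use $\int_{\widetilde h(L')}dz=z(\widetilde h(Q))-z(\widetilde h(P))$ together with $\mathrm{pr}\circ\widetilde h=h\circ\mathrm{pr}$ to get
\[
\int_{\widetilde h(L')}\alpha=\bigl(z(Q)-z(P)\bigr)+\bigl(u(\mathrm{pr}(Q))-u(\mathrm{pr}(P))\bigr)+\int_{h\circ\mathrm{pr}\circ L'}\beta .
\]
Taking $\sigma=\mathrm{pr}\circ L'$ in the definition of $u$ gives $u(\mathrm{pr}(Q))-u(\mathrm{pr}(P))=\int_{\mathrm{pr}\circ L'}\beta-\int_{h\circ\mathrm{pr}\circ L'}\beta$, while $L$ Legendrian gives $z(Q)-z(P)=-\int_{\mathrm{pr}\circ L'}\beta$ by Proposition~\ref{leg-integral}; the three contributions cancel, so $\int_{\widetilde h(L')}\alpha=0$. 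By Proposition~\ref{leg-integral} again $\widetilde h(L)$ is Legendrian, so $\widetilde h$ preserves the class of compact Legendrian Lavrentiev curves and is a contactomorphism by Definition~\ref{contactomorphism-def}; the same argument applied to $h^{-1}$ shows $\widetilde h^{-1}$ does too.

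Finally, uniqueness. If $\widetilde h'$ is any contactomorphism with $\mathrm{pr}\circ\widetilde h'=h\circ\mathrm{pr}$ and $\widetilde h'(p,z)=(h(p),z)$, write $\widetilde h'(q,z)=(h(q),Z'(q,z))$. Given $(q,z)$, choose a smooth embedded arc $\sigma$ from $p$ to $q$ and its Legendrian lift $\gamma$ ending at $(q,z)$ (Legendrian by Lemma~\ref{Lavrentiev-projection-lemma2}); its starting point is $(p,z+\int_{\sigma}\beta)$. Then $\widetilde h'(\gamma)$ is Legendrian with projection $h(\sigma)$ and, by the fibre normalization, starts at $(h(p),z+\int_{\sigma}\beta)$; Proposition~\ref{leg-integral} applied to $\widetilde h'(\gamma)$ forces $Z'(q,z)=z+\int_{\sigma}\beta-\int_{h(\sigma)}\beta=z+u(q)$, i.e.\ $\widetilde h'=\widetilde h$. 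I expect the only genuinely delicate points to be the regularity of $u$ and the handling of line integrals of $\beta$ along images of merely rectifiable curves under the bi-Lipschitz map $h$; both are controlled by the local bi-Lipschitz bound on $h$ and by the continuity of such integrals (Corollary~\ref{integral-continuity-corollary}), whereas well-definedness of $u$ is exactly the hypothesis on closed curves.
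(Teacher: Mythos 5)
Your proposal is correct and follows essentially the same route as the paper: both define the lift by $\widetilde h(q,z)=\bigl(h(q),\,z+\int_{\sigma}\beta-\int_{h(\sigma)}\beta\bigr)$ for a rectifiable arc $\sigma$ from $p$ to $q$, use the hypothesis on closed curves for well-definedness, the local Lipschitz bound on $q\mapsto\int_{\sigma}\beta-\int_{h(\sigma)}\beta$ for bi-Lipschitz regularity, and the uniqueness of Legendrian lifts over a fixed projection for uniqueness of $\widetilde h$. You merely spell out more explicitly the cancellation showing $\int_{\widetilde h(L')}\alpha=0$, which the paper compresses into ``maps Legendrian curves to Legendrian curves by construction.''
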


\begin{proof}
Let $q\in S$ and $q\neq p.$ We connect the points $p$ and $q$ by a Lavrentiev arc $\gamma$ directed from $p$ to $q.$ By Lemmas~\ref{z-uniqueness} and~\ref{Lavrentiev-projection-lemma3} for any $z\in\mathbb R$ there exists the unique Legendrian curve having the endpoint $(q,z)$ and the projection $\gamma.$ The other endpoint of this curve is $(p, z + \int\limits_{\gamma}\beta).$ Since a contactomorphism must map Legendrian curves to Legendrian curves, we must have
$$
\widetilde h: (q, z) \mapsto \left(h(q), z + \int\limits_{\gamma}\beta - \int\limits_{h(\gamma)}\beta\right).
$$

The right side of this formula does not depend on $\gamma$ by our assumptions on $h$. So we can take this formula as the definition of $\widetilde h.$

$\widetilde h$ is locally bi-Lipschitz because the function $q \mapsto  \int\limits_{\gamma}\beta - \int\limits_{h(\gamma)}\beta$ is locally Lipschitz. $\widetilde h$ maps Legendrian curves to Legendrian curves by construction. Therefore $\widetilde h$ is a contactomorphism.
\end{proof}

\begin{exam}\label{bi-Lipschitz-contactomorphism-example}
Let $\rho, \varphi, z$ be the cylindrical coordinates on $\mathbb R^3.$ Consider the contact form $dz + \rho^2d\varphi.$ A locally bi-Lipschitz homeomorphism of $xy$-plane preserves the integral of $\rho^2d\varphi$ on any closed rectifiable curve if and only if it preserves the area form $d(\rho^2d\varphi) = 2dx\wedge dy.$ So the map 
$$(\rho, \varphi, z) \mapsto \left(\rho, \varphi + \ln\rho , z - \frac{\rho^2}2\right)$$
is a contactomorphism by the previous lemma. We note that the rays $\varphi = \mathrm{const}, z=0$ are mapped to Legendrian curves whose projections to the plane $z=0$ are logarithmic spirals.
\end{exam}

\begin{prop}\label{subspace-of-contactomorphisms-is-closed}
The space of all $C$-bi-Lipschitz contactomorphisms is closed in the space of all $C$-bi-Lipschitz homeomorphisms for any $C\ge 1.$
\end{prop}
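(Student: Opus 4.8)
The plan is to show that the $C^0$-limit of $C$-bi-Lipschitz contactomorphisms is again a contactomorphism. Let $f$ be a $C$-bi-Lipschitz homeomorphism lying in the closure of the set of $C$-bi-Lipschitz contactomorphisms; by the discussion preceding Lemma~\ref{space-of-bi-Lipschitz-homeomorphisms-is-complete} (uniform convergence on compacta) I may pick $C$-bi-Lipschitz contactomorphisms $f_n\to f$, and by that lemma $f$ is itself a $C$-bi-Lipschitz self-homeomorphism. First I would record that the inverses also converge: writing $x_n=f_n^{-1}(y)$ and $x=f^{-1}(y)$, the bi-Lipschitz bound gives $\mathrm{dist}(x_n,x)\le C\,\mathrm{dist}(f_n(x_n),f_n(x))=C\,\mathrm{dist}(f(x),f_n(x))\to0$ uniformly, so $f_n^{-1}\to f^{-1}$, and each $f_n^{-1}$ is again a $C$-bi-Lipschitz contactomorphism. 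Hence it suffices to prove the single implication: \emph{if $g$ is a $C^0$-limit of $C$-bi-Lipschitz maps each of which sends every compact Legendrian Lavrentiev curve to a Legendrian curve, then $g$ does too.} Applying this to both $\{f_n\}$ and $\{f_n^{-1}\}$ shows that $f$ and $f^{-1}$ each preserve Legendrianness of Lavrentiev curves, i.e. that $f$ preserves the whole class, so $f$ is a contactomorphism.

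Fix a compact $C_0$-Lavrentiev Legendrian curve $L$. Its image $f(L)$ is compact and, by Remark~\ref{bi-Lipschitz-curves}, Lavrentiev (a natural parametrization of $L$ is $C_0$-bi-Lipschitz, so composing with $f$ yields a $CC_0$-bi-Lipschitz parametrization of $f(L)$); the same estimate makes all the curves $f_n(L)$ uniformly $(CC_0)^2$-Lavrentiev. By Proposition~\ref{leg-integral} it is enough to show that the Riemann--Stieltjes integral of a local contact form $\alpha$ over every compact subarc of $f(L)$ vanishes. Since $f$ is a homeomorphism, each such subarc is $f(\delta)$ for a compact subarc $\delta\subset L$, and $f_n(\delta)$ is a compact subarc of the Legendrian curve $f_n(L)$; thus $\int_{f_n(\delta)}\alpha=0$ for every $n$ by Proposition~\ref{leg-integral}. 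So everything reduces to the continuity statement $\int_{f_n(\delta)}\alpha\to\int_{f(\delta)}\alpha$, whence $\int_{f(\delta)}\alpha=0$ and, Legendrianity being a local condition, $f(L)$ is Legendrian.

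The main obstacle is precisely this continuity of the integral of the contact form under $C^0$-convergence of uniformly Lavrentiev space curves, which I would establish by repeating in dimension three the argument behind Corollary~\ref{integral-continuity-corollary}. Concretely, $\delta$ is a flat chain and the smooth bounded form $\alpha$ with bounded differential is a flat cochain, so Whitney's inequality (the $\mathbb R^3$ analogue of~(\ref{integral-continuity-bound2})) gives, in any Euclidean chart $V$,
\[
\left|\int_{f(\delta)}\alpha-\int_{f_n(\delta)}\alpha\right|\le \sup_{p\in\delta}\mathrm{dist}(f(p),f_n(p))\cdot\bigl(CC_0\,\ell(\delta)+2\bigr)\cdot\sup_{q\in V}\{\|\alpha_q\|,\|d\alpha_q\|\},
\]
valid once the straight segments joining $f(p)$ to $f_n(p)$ lie in $V$. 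To drop the requirement that $f(\delta)$ fit in one chart I would cover $f(\delta)$ by finitely many charts, subdivide $\delta$ into finitely many subarcs whose images under $f$ (and, for $n$ large, under $f_n$) each lie in a single chart, apply the estimate piecewise, and sum. Since $f_n\to f$ uniformly on the compact set $\delta$, all the right-hand sides tend to $0$, giving the desired convergence. The only delicate points are checking that the flat-cochain estimate carries over verbatim to three dimensions (it does, as Whitney's theory is dimension-independent) and the bookkeeping of the chart subdivision, both of which are routine once the scheme of Corollary~\ref{integral-continuity-corollary} is imitated.
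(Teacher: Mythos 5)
Your proof is correct and follows essentially the same route as the paper: reduce to the vanishing of $\int_{f(\delta)}\alpha$ via Proposition~\ref{leg-integral}, and obtain it as the limit of $\int_{f_n(\delta)}\alpha=0$ using the three-dimensional analogue of Whitney's flat-chain estimate~(\ref{integral-continuity-bound2}) together with the uniform Lavrentiev bound on the curves $f_n(L)$. The only difference is that you additionally verify $f_n^{-1}\to f^{-1}$ and treat the inverse direction, a point the paper's proof leaves implicit; this is harmless and, if anything, slightly more careful.
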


\begin{proof}
Suppose that a sequence of $C$-bi-Lipschitz contactomorphisms $f_n$ converges to a $C$-bi-Lipschitz homeomorphism $f$. Let $L$ be any compact Lavrentiev Legendrian curve. We need to prove that $f(L)$ is Legendrian. It is clear that $f(L)$ is a Lavrentiev curve. 

The inequality~(\ref{integral-continuity-bound2}) holds actually in dimension 3 also (the same is true for Corollary~\ref{integral-continuity-corollary}). This means that the integral of the contact form on any subarc of $f(L)$ is the limit of the integral on the corresponding subarc of $f_n(L)$ which is zero because $f_n(L)$ is Legendrian. So the integral of the contact form on any subarc of $f(L)$ is zero and hence $f(L)$ is Legendrian by Proposition~\ref{leg-integral}.
\end{proof}

\begin{coro}\label{space-of-contactomorphisms-is-complete}
The metric space of all $C$-bi-Lipschitz self-contactomorphisms of a compact contact manifold $M$ is complete for any $C\ge1.$
\end{coro}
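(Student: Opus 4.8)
The plan is to combine the two results immediately preceding the corollary with the elementary topological fact that a closed subspace of a complete metric space is itself complete. Concretely, since $M$ is compact, the set of all $C$-bi-Lipschitz self-homeomorphisms of $M$ carries the sup-metric $\mathrm{dist}(f,g)=\sup_{p\in M}\mathrm{dist}(f(p),g(p))$, which is finite and well defined precisely because $M$ is compact; by Lemma~\ref{space-of-bi-Lipschitz-homeomorphisms-is-complete} this metric space is complete for every $C\ge1$.

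Next I would invoke Proposition~\ref{subspace-of-contactomorphisms-is-closed}, which states that the subset of $C$-bi-Lipschitz contactomorphisms is closed inside the space of all $C$-bi-Lipschitz homeomorphisms. The conclusion then follows from the standard fact that a closed subset of a complete metric space is complete in the induced metric. In detail: take any Cauchy sequence $\{f_n\}$ of $C$-bi-Lipschitz contactomorphisms. It is in particular Cauchy in the larger space of $C$-bi-Lipschitz homeomorphisms, hence by Lemma~\ref{space-of-bi-Lipschitz-homeomorphisms-is-complete} it converges to some $C$-bi-Lipschitz homeomorphism $f$. Because the contactomorphisms form a closed subset by Proposition~\ref{subspace-of-contactomorphisms-is-closed}, the limit $f$ is again a contactomorphism, so the Cauchy sequence converges within the space of contactomorphisms. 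This establishes completeness.

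I expect no genuine obstacle here, since all the substantive work has already been carried out in the two cited statements: the Lemma verifies that a uniform limit of $C$-bi-Lipschitz maps remains $C$-bi-Lipschitz and stays a homeomorphism, while the Proposition verifies, via the continuity of the Riemann--Stieltjes integral of the contact form (the three-dimensional analogue of inequality~(\ref{integral-continuity-bound2})), that the limit preserves the class of Legendrian Lavrentiev curves and hence is a contactomorphism. The only point worth flagging is that compactness of $M$ is used twice, namely to make the sup-metric finite and to apply the completeness Lemma; once these are in place the corollary is immediate.
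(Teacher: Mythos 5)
Your argument is exactly the paper's: the corollary is deduced by combining Lemma~\ref{space-of-bi-Lipschitz-homeomorphisms-is-complete} with Proposition~\ref{subspace-of-contactomorphisms-is-closed} via the standard fact that a closed subset of a complete metric space is complete. Your write-up merely spells out the details that the paper leaves implicit, so there is nothing to correct.
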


\begin{proof}
This is an immediate consequence of Lemma~\ref{space-of-bi-Lipschitz-homeomorphisms-is-complete} and Proposition~\ref{subspace-of-contactomorphisms-is-closed}.
\end{proof}

Corollary~\ref{space-of-contactomorphisms-is-complete} allows one to construct contactomorphisms as limits in $C^0$-topology.

\begin{prop}
A smooth contactomorphism in the usual sense, i.e. a diffeomorphism preserving the distribution of contact planes, is a  contactomorphism in the sense of Definition~\ref{contactomorphism-def}.
\end{prop}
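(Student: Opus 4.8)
The plan is to verify the two requirements of Definition~\ref{contactomorphism-def}: that $f$ is locally bi-Lipschitz, and that it preserves the class of compact Lavrentiev Legendrian curves. Let $f$ be a smooth diffeomorphism of the contact manifold $(M,\xi)$ preserving the distribution $\xi$. Since $f$ is smooth, in any pair of charts its differential $Df$ is everywhere nonsingular and continuous; restricting to a sufficiently small neighborhood of any point, we obtain that $f$ is bi-Lipschitz there with respect to the Euclidean metrics of the charts, hence (by Convention~\ref{Lavrentiev-metric-equivalence}) with respect to any Riemannian metric. Thus $f$ is locally bi-Lipschitz. The same remarks apply to $f^{-1}$, which is again a smooth contact diffeomorphism.

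Next I would check that $f$ sends compact Lavrentiev curves to compact Lavrentiev curves. If $L$ is such a curve, then $f\big|_L$ is a locally bi-Lipschitz embedding of a compact set, hence bi-Lipschitz by Lemma~\ref{compact-bi-Lipschitz}. By Remark~\ref{bi-Lipschitz-curves} the image of a bi-Lipschitz curve is Lavrentiev, so $f(L)$ is a compact Lavrentiev curve.

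The essential point is that $f$ preserves the Legendrian condition, and here I would use the characterization of Corollary~\ref{tangent-to-contact-plane-almost-everywhere}: a locally Lavrentiev curve is Legendrian if and only if almost everywhere it has a derivative lying in the contact plane. Let $L$ be a compact Lavrentiev Legendrian curve and let $\gamma$ be its natural (bi-Lipschitz) parametrization, so that $\gamma$ is Lipschitz and differentiable almost everywhere with $\dot\gamma(t)\in\xi_{\gamma(t)}$ for almost every $t$. Then $f\circ\gamma$ parametrizes $f(L)$. Since $f$ is $C^1$ and $\gamma$ is differentiable at $t$, the chain rule gives that $f\circ\gamma$ is differentiable at $t$ with $(f\circ\gamma)'(t)=Df_{\gamma(t)}\bigl(\dot\gamma(t)\bigr)$. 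Because $f$ preserves the distribution of contact planes, $Df_{\gamma(t)}(\xi_{\gamma(t)})=\xi_{f(\gamma(t))}$, so $(f\circ\gamma)'(t)\in\xi_{f(\gamma(t))}$ for almost every $t$. By Corollary~\ref{tangent-to-contact-plane-almost-everywhere} this shows that $f(L)$ is Legendrian.

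Finally, applying the same argument to the smooth contact diffeomorphism $f^{-1}$ shows that $L$ is a compact Lavrentiev Legendrian curve whenever $f(L)$ is, so $f$ preserves this class exactly. Together with the first two paragraphs this verifies all the conditions of Definition~\ref{contactomorphism-def}. The only point requiring a little care is the almost-everywhere chain rule for the composition of the smooth map $f$ with the merely Lipschitz parametrization $\gamma$; this is standard, since $f$ is $C^1$ and differentiability of $\gamma$ at a point already forces differentiability of $f\circ\gamma$ at that point with the expected derivative.
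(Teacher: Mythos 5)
Your proof is correct, but it takes a somewhat different route from the paper's. The paper argues via the integral characterization of Proposition~\ref{leg-integral}: since a smooth contact diffeomorphism pulls back the contact form $\alpha$ to $g\alpha$ for a nowhere-zero smooth function $g$, and the integral of $\alpha$ vanishes on every subarc of a Legendrian Lavrentiev curve, the integral of $g\alpha$ (hence of $\alpha$ over the image arc) vanishes as well; the bi-Lipschitz and Lavrentiev-preservation points are left implicit there. You instead work through the almost-everywhere tangent characterization of Corollary~\ref{tangent-to-contact-plane-almost-everywhere} and the chain rule $(f\circ\gamma)'(t)=Df_{\gamma(t)}(\dot\gamma(t))\in\xi_{f(\gamma(t))}$, and you explicitly verify the local bi-Lipschitz property, the preservation of the Lavrentiev class (via Lemma~\ref{compact-bi-Lipschitz} and Remark~\ref{bi-Lipschitz-curves}), and the reverse implication via $f^{-1}$ -- all of which the definition does require and the paper's two-line proof glosses over. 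The two arguments are essentially equivalent in substance, since the paper's claim that ``the integral remains zero'' is itself most cleanly justified by the a.e.-derivative description you use; your version is more self-contained, at the mild cost of the (standard, and correctly flagged) point about composing a $C^1$ map with a merely Lipschitz parametrization, together with the observation that passing between $f\circ\gamma$ and the natural parametrization of $f(L)$ is a bi-Lipschitz reparametrization and so does not affect the a.e.\ statement.
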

\begin{proof}
Let us prove that a smooth contactomorphism preserves the class of compact Legendrian Lavrentiev curves. Recall that Legendrian curves are characterized by the fact that the integral of the contact form on any their subarc is zero. Since the inverse image under the contactomorphism of the contact form is the contact form multiplied by some smooth nonzero function, the integral along any Legendrian arc remains to be zero after applying the contactomorphism.
\end{proof}

\begin{defi}
We say that a family of maps from $X$ to $Y$ is {\it locally uniformly (bi-)Lipschitz} if for any compact $K\subset X$ there exists a number $C$ such that the restriction to $K$ of any map is $C$-(bi-)Lipschitz.
\end{defi}

\begin{defi}\label{contact-isotopy}
A locally uniformly bi-Lipschitz ambient isotopy of a contact manifold is called {\it contact} if every homeomorphism in this isotopy is a contactomorphism.
\end{defi}

\begin{lemm}\label{bi-Lipschitz-isotopy-lift}
Let $h_t$ be a locally uniformly bi-Lipschitz continuous family of homeomorphisms of a connected surface as in Lemma~\ref{bi-Lipschitz-homeomorphism-lift}. Then the lift $\widetilde h_t$ constructed in that Lemma of this isotopy is a contact isotopy.
\end{lemm}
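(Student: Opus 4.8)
The plan is to check the three requirements packaged in Definition~\ref{contact-isotopy}: that each $\widetilde h_t$ is a contactomorphism, that the family $\{\widetilde h_t\}$ is locally uniformly bi-Lipschitz, and that it depends continuously on $t$. The first requirement is immediate, since for each fixed $t$ the homeomorphism $h_t$ satisfies the hypotheses of Lemma~\ref{bi-Lipschitz-homeomorphism-lift}; that lemma produces $\widetilde h_t$ as a contactomorphism given in product coordinates by the explicit shear $\widetilde h_t(q,z) = \left(h_t(q),\, z + G_t(q)\right)$, where $G_t(q) = \int_{\gamma}\beta - \int_{h_t(\gamma)}\beta$ and $\gamma$ is any Lavrentiev arc from the fixed basepoint $p$ to $q$ (the value being independent of $\gamma$ by the assumption on $h_t$). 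Thus the whole argument reduces to controlling the single scalar function $G_t$.

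For the locally uniform bi-Lipschitz bound I would fix a compact set and argue factorwise. On the $S$-factor the bound is exactly the standing assumption that $\{h_t\}$ is locally uniformly bi-Lipschitz, so it suffices to show $G_t$ is locally Lipschitz in $q$ with a constant independent of $t$. Given nearby points $q_1,q_2$ in a Euclidean chart, I take $\gamma$ through $q_1$ and append the straight segment $\delta$ from $q_1$ to $q_2$; by path independence this gives $G_t(q_2)-G_t(q_1) = \int_{\delta}\beta - \int_{h_t(\delta)}\beta$, hence $|G_t(q_2)-G_t(q_1)| \le \sup\|\beta\|\,\bigl(\ell(\delta)+\ell(h_t(\delta))\bigr) \le \sup\|\beta\|\,(1+C)\,\mathrm{dist}(q_1,q_2)$, where $C$ is a common local Lipschitz constant of the $h_t$ on the compact set (finite by local uniformity, and by Lemma~\ref{bi-Lipschitz-family} if one wants to extract it uniformly over $t$). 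A shear by a uniformly Lipschitz function over a uniformly bi-Lipschitz base map is again uniformly bi-Lipschitz, so $\{\widetilde h_t\}$ is locally uniformly bi-Lipschitz.

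It remains to prove joint continuity of $(q,z,t)\mapsto \widetilde h_t(q,z)$; as the base component $h_t(q)$ and the summand $z$ are already continuous, this is continuity of $(q,t)\mapsto G_t(q)$, which I expect to be the main obstacle. Fix $(q_0,t_0)$, a fixed Lavrentiev arc $\gamma_0$ from $p$ to $q_0$, and for $q$ near $q_0$ let $\delta$ be the short straight arc from $q_0$ to $q$. Using $\gamma_0$ for $q_0$ and $\gamma_0\cup\delta$ for $q$, one gets the splitting $G_t(q)-G_{t_0}(q_0) = \left(\int_{\delta}\beta - \int_{h_t(\delta)}\beta\right) + \left(\int_{h_{t_0}(\gamma_0)}\beta - \int_{h_t(\gamma_0)}\beta\right)$. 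The first bracket tends to $0$ as $q\to q_0$, uniformly for $t$ near $t_0$, by the length estimate of the previous paragraph; the second tends to $0$ as $t\to t_0$ by Corollary~\ref{integral-continuity-corollary} applied to the fixed compact arc $\gamma_0$ and the isotopy $h_t\big|_{\gamma_0}$, which is an isotopy of $C$-bi-Lipschitz maps with a common constant. If $S$ is non-compact one first restricts to a compact neighborhood containing all images $h_t(\gamma_0)$ for $t$ in a compact parameter interval, so that the corollary applies. Hence $G_t(q)$ is continuous, $\widetilde h_t$ is a continuous locally uniformly bi-Lipschitz family of contactomorphisms, and therefore a contact isotopy.
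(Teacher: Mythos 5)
Your proposal is correct and follows essentially the same route as the paper: the whole lemma reduces to showing that the shear functions $q \mapsto \int_{\gamma}\beta - \int_{h_t(\gamma)}\beta$ are locally uniformly Lipschitz, which follows from the uniform Lipschitz bound on the $h_t$ exactly as in your length estimate. The paper's own proof is a three-line version of this that does not spell out the joint continuity in $(q,t)$; your extra verification via Corollary~\ref{integral-continuity-corollary} is a sound filling-in of that omitted detail rather than a different argument.
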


\begin{proof}
For the proof it is only needed to check that the isotopy $\widetilde h_t$ is locally uniformly bi-Lipschitz. This is equivalent to the fact that the functions
$$
q \mapsto \int\limits_{\gamma}\beta - \int\limits_{h_t(\gamma)}\beta
$$
are locally uniformly Lipschitz. This is true because $h_t$ are uniformly Lipschitz.
\end{proof}

\begin{figure}
\center{
\includegraphics{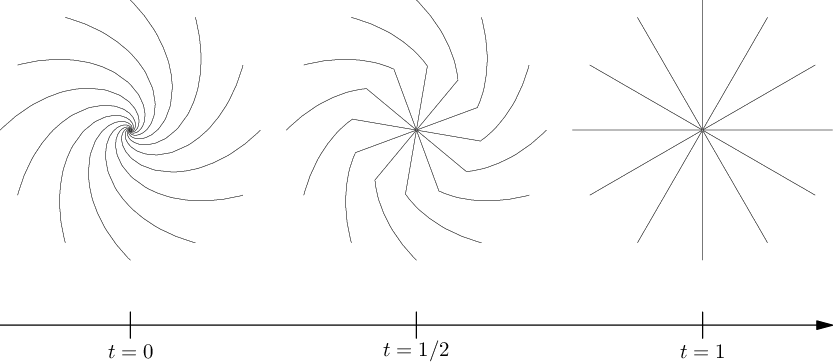}
}
\caption{The bi-Lipschitz homeomorphisms from Example~\ref{contact-isotopy-example}}
\label{contact-isotopy-example-Fig}
\end{figure}

\begin{exam}\label{contact-isotopy-example}
By Lemma~\ref{bi-Lipschitz-isotopy-lift} the following is a contact isotopy of the cylinder $\rho \le 1$ in $\left(\mathbb R^3, dz + \rho^2d\varphi\right)$:
$$(\rho, \varphi, z) \mapsto 
\left\{
\begin{aligned}
\left(\rho, \varphi + \ln\rho , z - \frac{\rho^2}{2} + \frac{t^2}{2}\right),& \quad \rho\ge t,\\
\left(\rho, \varphi + \ln t, z\right),& \quad \rho < t,
\end{aligned}
\right.
$$
where $t\in[0;1].$ See Figure~\ref{contact-isotopy-example-Fig} for the images of polar rays under the $xy$-part of this isotopy.
\end{exam}

\begin{prop}
Let $L$ be a Legendrian curve, $h_t$ be a contact isotopy. Then $h_t(L)$ is a Legendrian isotopy.
\end{prop}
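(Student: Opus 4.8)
The plan is to reduce the statement to Proposition~\ref{leg-C-isotopy}, which asserts that any continuous isotopy of Legendrian $C$-Lavrentiev curves is Legendrian. Write $F(p,t)=h_t(p)$ for the isotopy of $L$ induced by the ambient contact isotopy $h_t$. Since the defining condition of a Legendrian isotopy (Definition~\ref{def-angle}) is local both in the point of the curve and in the parameter $t$, it suffices to verify it on every compact subarc of $L$; hence I may assume that $L$ is a compact Legendrian Lavrentiev curve.

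Under this reduction I would check the three hypotheses of Proposition~\ref{leg-C-isotopy} for the family $\{h_t(L)\}_{t\in[0;1]}$. First, each $h_t(L)$ is Legendrian: by Definition~\ref{contactomorphism-def} every member $h_t$ of a contact isotopy (Definition~\ref{contact-isotopy}) is a contactomorphism, so it preserves the class of compact Lavrentiev Legendrian curves, and $L$ is such a curve. Second, the family is a continuous isotopy of curves, because a contact isotopy is in particular a continuous ambient isotopy, so $F(p,t)=h_t(p)$ is continuous and each $F(\bullet,t)=h_t\big|_L$ is an embedding.

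The remaining and crucial point is the uniform Lavrentiev bound. The curve $L$ is $C_0$-Lavrentiev for some $C_0$, so its natural parametrization is a $C_0$-bi-Lipschitz map of an interval (Remark~\ref{bi-Lipschitz-curves}). Because a contact isotopy is \emph{locally uniformly} bi-Lipschitz, applying the definition to the compact set $L$ yields a single constant $C$ such that $h_t\big|_L$ is $C$-bi-Lipschitz for all $t\in[0;1]$. Composing, $h_t\circ(\text{natural parametrization})$ is $(C_0C)$-bi-Lipschitz for every $t$, and therefore $h_t(L)$ is $(C_0C)^2$-Lavrentiev with a constant independent of $t$, again by Remark~\ref{bi-Lipschitz-curves}. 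With all three hypotheses in hand, Proposition~\ref{leg-C-isotopy} shows that $\{h_t(L)\}$ is a Legendrian isotopy.

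The main obstacle is precisely this $t$-uniform Lavrentiev estimate: it is not enough that each $h_t$ be bi-Lipschitz, one genuinely needs the common constant supplied by the locally uniform bi-Lipschitz condition built into the definition of a contact isotopy, since Proposition~\ref{leg-C-isotopy} fails without a shared Lavrentiev bound.
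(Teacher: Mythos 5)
Your proposal is correct and follows essentially the same route as the paper: both reduce to Proposition~\ref{leg-C-isotopy} after observing that the locally uniformly bi-Lipschitz condition in Definition~\ref{contact-isotopy} gives a $t$-independent bi-Lipschitz constant on the compact curve, hence a common Lavrentiev constant for all $h_t(L)$. Your write-up merely spells out the intermediate verifications (each $h_t(L)$ Legendrian, continuity of the family, the reduction to compact subarcs) that the paper's three-line proof leaves implicit.
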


\begin{proof}
By Definition~\ref{contact-isotopy} the contactomorphisms $h_t$ are bi-Lipschitz with a common constant. Therefore the curves $h_t(L)$ are Lavrentiev with a common constant. So $\{h_t(L)\}$ is a Legendrian isotopy by Proposition~\ref{leg-C-isotopy}.
\end{proof}

\end{document}